\newtheorem{theorem}{Theorem}
\theoremstyle{plain}
\newtheorem{claim}{Claim}
\newtheorem{corollary}{Corollary}
\newtheorem{definition}{Definition}
\newtheorem{example}{Example}
\newtheorem{lemma}{Lemma}
\newtheorem{proposition}{Proposition}
\newtheorem{remark}{Remark}
\numberwithin{equation}{section}
\begin{document}
\title{Hyperconvergence in topological dynamics}
\author{Josiney A. Souza}
\author{Richard W. M. Alves}
\address{Universidade Estadual de Maring\'{a}, Brazil. \\
Email: joasouza3@uem.br}

\begin{abstract}
In this manuscript the concept of hyperspace is revisited. The main purpose
is to study hyperconvergence and continuity of orbital and limit set
functions for semigroup action on completely regular space. Some general
facts on Hausdorff and Kuratowski hyperconvergence are presented.
\end{abstract}

\keywords{Uniformizable space, Hausdorff topology, hyperspace,
hyperconvergence; semicontinuity; semigroup action. }
\maketitle
\subjclass{54E15; 54D20; 54H15; 37B25}

\section{Introduction}

The present paper contributes to topological dynamics by studying
hyperconvergence and continuity of orbital and limit set functions for
semigroup actions on topological spaces. The phase space is required to be
completely regular for the purpose of selecting an admissible family of open
coverings. The admissible structure is employed to reproduce a uniformity
for hyperspace and define hyperconvergence and semicontinuity of set-valued
functions.

One of the main theorems in topology establishes the Hausdorff metric on the
set of all compact subsets of a metric space. E. Michael \cite{Michael}
reproduced the Hausdorff topology in the setting of compact spaces under
absence of metrization. The Hausdorff topology is currently a special
concept of the hyperspace theory on uniformizable spaces (e.g. \cite%
{Nadler,Will}). Inspired in the original metric space methods, we reproduce
a uniform structure on hyperspace by means of a binary function that
generalizes the Hausdorff metric. The strategy is to consider the
description of uniformizable spaces as admissible spaces, which are
topological spaces admitting admissible family of open coverings (\cite%
{Richard}). If $X$ is an admissible space endowed with an admissible family
of open coverings $\mathcal{O}$, it is possible to define a binary function $%
\rho _{H}$ on the hyperspace $\mathcal{H}\left( X\right) $ with values in
the powerset $\mathcal{P}\left( \mathcal{O}\right) $. This function is
employed to construct the surroundings for a base of diagonal uniformity on $%
\mathcal{H}\left( X\right) $ (Theorem \ref{T4}). The \textquotedblleft
balls\textquotedblright\ of $\rho _{H}$ form a base for a uniform topology
on $\mathcal{H}\left( X\right) $ (Theorem \ref{T3}). This uniform structure
yields the extension of classical results on hyperconvergence. In special,
we relate the Kuratowski hyperconvergence and the Hausdorff hyperconvergence
(Theorems \ref{PK} and \ref{KP}).

The main purpose of the paper is to study hyperconvergence in topological
dynamics. We follow the line of investigation of the papers \cite%
{BragaSouza,BBRS,BBRSCan} inspired by S. Saperstone and M. Nishihama \cite%
{SA} from studies of stability and continuity of orbital and limit set maps
of semiflows on metric spaces. For introducing the ideas of the paper, let $%
S $ be a semigroup acting on the admissible space $X$ and let $\mathcal{F}$
be a filter basis on the subsets of $S$. For a given $x\in X$, the $\omega $%
-limit set and the prolongational limit set of $x$ on the direction of $%
\mathcal{F}$ are respectively defined by
\begin{equation*}
L_{\mathcal{F}}\left( x\right) =\bigcap_{A\in \mathcal{F}}K_{A}\left(
x\right) \qquad \qquad J_{\mathcal{F}}\left( x\right) =\bigcap_{A\in
\mathcal{F}}D_{A}\left( x\right)
\end{equation*}%
where $K_{A}\left( x\right) =\mathrm{cls}\left( Ax\right) $ and $D_{A}\left(
x\right) =\bigcap_{\mathcal{U}\in \mathcal{O}}\mathrm{cls}\left( A\mathrm{St}%
\left[ x,\mathcal{U}\right] \right) $ (\cite{BragaSouza,BBRS,SouzaTozatti}).
It is intuitive that the nets $\left( K_{A}\left( x\right) \right) _{A\in
\mathcal{F}}$ and $\left( D_{A}\left( x\right) \right) _{A\in \mathcal{F}}$
in $\mathcal{H}\left( X\right) $ converge respectively to the limit set $L_{%
\mathcal{F}}\left( x\right) $ and to the prolongational limit set $J_{%
\mathcal{F}}\left( x\right) $. We prove these facts under certain conditions
(Propositions \ref{P4} and \ref{P5}). In other words, the nets of set-valued
functions $\left( K_{A}\right) _{A\in \mathcal{F}}$ and $\left( D_{A}\left(
x\right) \right) _{A\in \mathcal{F}}$ pointwise converge to the functions $%
L_{\mathcal{F}}$ and $J_{\mathcal{F}}$, respectively.

In line of these statements, we investigate the Hausdorff continuity of the
set-valued functions $K_{A}$, $D_{A}$, $L_{\mathcal{F}}$, and $J_{\mathcal{F}%
}$. The Hausdorff continuity of $K_{A}$ means that the $A$-orbits have
trivial prolongations, that is, $K_{A}=D_{A}$ (Proposition \ref{K}). If $%
K_{A}$ is Hausdorff continuous for all $A\in \mathcal{F}$, it follows that $%
L_{\mathcal{F}}=J_{\mathcal{F}}$. In special case, the Hausdorff continuity
of $K_{S}$ means the stability of the orbit closure $\mathrm{cls}\left(
Sx\right) $ for every $x\in X$ (Theorem \ref{T1}). In general, under certain
conditions, the Hausdorff continuity of $L_{\mathcal{F}}$ means that every
limit set $L_{\mathcal{F}}\left( x\right) $ is eventually stable (Theorem %
\ref{T10}).

The results of the paper extend to general topological spaces under absence
of metrization. For instance, if $G$ is a topological group and $S\subset G$
is a closed subgroup of $G$ then the function $K_{S}:G\rightarrow \mathcal{H}%
\left( G\right) $ assumes values in $\left. G\right/ S$ and then it
corresponds to the standard projection $\pi :G\rightarrow \left. G\right/ S$%
. A natural question is the following: by inducing a uniformity on $\left.
G\right/ S$ from the uniformity of $\mathcal{H}\left( G\right) $, does the
corresponding uniform topology coincide with the quotient topology? We have
a positive answer for this question (Theorem \ref{T12}). This means that $%
K_{S}:G\rightarrow \mathcal{H}\left( G\right) $ is Hausdorff continuous and,
in the case $S$ compact, every left coset $gS$ is stable.

For another nonmetrizable example, let $E^{E}$ be the function space of a
normed vector space $E$ endowed with the uniformity of pointwise
convergence. Let $F\subset E$ be a compact set and $X\subset E^{E}$ the
subspace of all contraction maps of $E$ with fixed point in $F$ and same
Lipschitz constant $L<1$. The subset $i\left( F\right) \subset X$ is the
global asymptotically stable set for the action of the multiplicative
positive integers on $X$ given by $nf=f^{n}$. The limit set function $L_{%
\mathcal{F}}$ is Hausdorff continuous on $X$, essentially because $L_{%
\mathcal{F}}\left( f\right) =i\left( \mathrm{fix}f\right) $ (Example \ref%
{Ex4}).

The paper is organized as follows. In Section \ref{Section1} we recall some
definitions and fix notations of admissible structure on completely regular
space. In Section \ref{SectionHyperspace} we describe hyperspace by means of
a binary function on the admissible space. The notions of hyperconvergence
are defined in Section \ref{HK}, where we reproduce some classical theorems
involving Kuratowski hyperconvergence and Hausdorff hyperconvergence. The
main results of the paper are present in Section \ref{s4} where we apply the
results on hyperconvergence to study semicontinuity and Hausdorff continuity
of functions defined by orbit closure, limit set, prolongation, and
prolongational limit set. In the last section, we provide illustrating
examples for the theory presented in the paper.

\section{\label{Section1}Admissible structure}

This section contains the basic definitions and properties of admissible
spaces. We refer to \cite{Richard}, \cite{patrao2}, and \cite{So} for the
previous development of admissible spaces.

Let $X$ be a topological space and $\mathcal{U},\mathcal{V}$ coverings of $X$%
. We write $\mathcal{V}\leqslant \mathcal{U}$ if $\mathcal{V}$ is a
refinement of $\mathcal{U}$. One says $\mathcal{V}$ double-refines $\mathcal{%
U}$, or $\mathcal{V}$ is a double-refinement of $\mathcal{U}$, written $%
\mathcal{V}\leqslant \frac{1}{2}\mathcal{U}$ or $2\mathcal{V}\leqslant
\mathcal{U}$, if for every $V,V^{\prime }\in \mathcal{V}$, with $V\cap
V^{\prime }\neq \emptyset $, there is $U\in \mathcal{U}$ such that $V\cup
V^{\prime }\subset U$. We write $\mathcal{V}\leqslant \frac{1}{2^{2}}%
\mathcal{U}$ if there is a covering $\mathcal{W}$ of $X$ such that $\mathcal{%
V}\leqslant \frac{1}{2}\mathcal{W}$ and $\mathcal{W}\leqslant \frac{1}{2}%
\mathcal{U}$. Inductively, we write $\mathcal{V}\leqslant \frac{1}{2^{n}}%
\mathcal{U}$ if there is $\mathcal{W}$ with $\mathcal{V}\leqslant \frac{1}{2}%
\mathcal{W}$ and $\mathcal{W}\leqslant \frac{1}{2^{n-1}}\mathcal{U}$. In
certain sense, the notion of double-refinement in topological spaces
corresponds to the property of triangle inequality in metric spaces.

Now, for a covering $\mathcal{U}$ of $X$ and a subset $Y\subset X$, the
\emph{star} of $Y$ with respect to $\mathcal{U}$ is the set
\begin{equation*}
\mathrm{St}\left[ Y,\mathcal{U}\right] =\bigcup \left\{ U\in \mathcal{U}%
:Y\cap U\neq \emptyset \right\} \text{.}
\end{equation*}%
If $Y=\left\{ x\right\} $, we usually write $\mathrm{St}\left[ x,\mathcal{U}%
\right] $ rather than $\mathrm{St}\left[ \left\{ x\right\} ,\mathcal{U}%
\right] $. Then one has $\mathrm{St}\left[ Y,\mathcal{U}\right]
=\bigcup\limits_{x\in Y}\mathrm{St}\left[ x,\mathcal{U}\right] $ for every
subset $Y\subset X$.

\begin{definition}
\label{Admiss} A family $\mathcal{O}$ of open coverings of $X$ is said to be
\textbf{admissible} if it satisfies the following properties:

\begin{enumerate}
\item For any $\mathcal{U},\mathcal{V}\in \mathcal{O}$, there is $\mathcal{W}%
\in \mathcal{O}$ such that $\mathcal{W}\leqslant \frac{1}{2}\mathcal{U}$ and
$\mathcal{W}\leqslant \frac{1}{2}\mathcal{V}$.;

\item The stars $\mathrm{St}\left[ x,\mathcal{U}\right] $, for $x\in X$ and $%
\mathcal{U}\in \mathcal{O}$, form a basis for the topology of $X$.
\end{enumerate}

The space $X$ is called \textbf{admissible} if it admits an admissible
family of open coverings.
\end{definition}

\begin{example}
\label{Ex1}

\begin{enumerate}
\item If $X$ is a paracompact Hausdorff space, then the family $\mathcal{O}$
of all open coverings of $X$ is admissible.

\item If $X$ is a compact Hausdorff space, then the family $\mathcal{O}_{f}$
of all finite open coverings of $X$ is admissible.

\item If $\left( X,\mathrm{d}\right) $ is a pseudometric space, then the
family $\mathcal{O}_{\mathrm{d}}$ of the coverings $\mathcal{U}_{\varepsilon
}=\left\{ \mathrm{B}_{\mathrm{d}}\left( x,\varepsilon \right) :x\in
X\right\} $ by $\varepsilon $-balls, for $\varepsilon >0$, is admissible.
For every $\varepsilon >0$ and $Y\subset X$ we have $\mathcal{U}%
_{\varepsilon /2}\leqslant \frac{1}{2}\mathcal{U}_{\varepsilon }$ and
\begin{equation*}
\mathrm{St}\left[ Y,\mathcal{U}_{\varepsilon /2}\right] \subset \mathrm{B}_{%
\mathrm{d}}\left( Y,\varepsilon \right) \subset \mathrm{St}\left[ Y,\mathcal{%
U}_{\varepsilon }\right] \text{.}
\end{equation*}

\item If $X$ is a uniformizable space then any covering uniformity of $X$ is
an admissible family of open coverings of $X$.
\end{enumerate}
\end{example}

\begin{remark}
\label{R1} Since the collection $\left\{ \mathrm{St}\left[ x,\mathcal{U}%
\right] :\mathcal{U}\in \mathcal{O}\right\} $ is a neighborhood base at $%
x\in X$, one has $\bigcap\limits_{\mathcal{U}\in \mathcal{O}}\mathrm{St}%
\left[ Y,\mathcal{U}\right] =\mathrm{cls}\left( Y\right) $ for every subset $%
Y\subset X$. If $X$ is Hausdorff, it follows that $\bigcap\limits_{\mathcal{U%
}\in \mathcal{O}}\mathrm{St}\left[ x,\mathcal{U}\right] =\left\{ x\right\} $
for every $x\in X$.
\end{remark}

\begin{remark}
If $K\subset X$ is compact and $V\subset X$ is open with $K\subset V$ then
there is $\mathcal{U}\in \mathcal{O}$ such that $\mathrm{St}\left[ K,%
\mathcal{U}\right] \subset V$.
\end{remark}

A topological space $X$ is admissible if and only if it is uniformizable (%
\cite{Richard}). It is well-known that $X$ is uniformizable if and only if
it is completely regular.

Let $X$ be a fixed completely regular space endowed with an admissible
family of open coverings $\mathcal{O}$. Let $\mathcal{P}\left( \mathcal{O}%
\right) $ denote the power set of $\mathcal{O}$ and consider the partial
ordering relation on $\mathcal{P}\left( \mathcal{O}\right) $ given by
inverse inclusion: for $\mathcal{E}_{1},\mathcal{E}_{2}\in \mathcal{P}\left(
\mathcal{O}\right) $%
\begin{equation*}
\mathcal{E}_{1}\prec \mathcal{E}_{2}\text{ if and only if }\mathcal{E}%
_{1}\supset \mathcal{E}_{2}.
\end{equation*}%
Concerning this relation, $\mathcal{O}$ is the lower bound for $\mathcal{P}%
\left( \mathcal{O}\right) $ (the \textquotedblleft zero\textquotedblright )
and the empty set $\emptyset $ is the upper bound for $\mathcal{P}\left(
\mathcal{O}\right) $ (the \textquotedblleft infinity\textquotedblright ).

For each $\mathcal{E}\in \mathcal{P}\left( \mathcal{O}\right) $ and $n\in
\mathbb{N}^{\ast }$ we define the set $n\mathcal{E}$ in $\mathcal{P}\left(
\mathcal{O}\right) $ by
\begin{equation*}
n\mathcal{E}=\left\{ \mathcal{U}\in \mathcal{O}:\text{there is }\mathcal{V}%
\in \mathcal{E}\text{ such that }\mathcal{V}\leqslant \tfrac{1}{2^{n}}%
\mathcal{U}\right\} .
\end{equation*}

This operation is order-preserving, that is, if $\mathcal{E}\prec \mathcal{D}
$ then $n\mathcal{E}\prec n\mathcal{D}$. In fact, if $\mathcal{U}\in n%
\mathcal{D}$ then there is $\mathcal{V}\in \mathcal{D}$ such that $\mathcal{V%
}\leqslant \tfrac{1}{2^{n}}\mathcal{U}$. As $\mathcal{D}\subset \mathcal{E}$%
, it follows that $\mathcal{U}\in n\mathcal{E}$, and therefore $n\mathcal{E}%
\prec n\mathcal{D}$. Note also that $n\mathcal{O}=\mathcal{O}$, for every $%
n\in \mathbb{N}^{\ast }$, since for each $\mathcal{U}\in \mathcal{O}$ there
is $\mathcal{V}\in \mathcal{O}$ such that $\mathcal{V}\leqslant \tfrac{1}{%
2^{n}}\mathcal{U}$, that is, $\mathcal{U}\in n\mathcal{O}$.

We often consider the following notion of convergence in $\mathcal{P}\left(
\mathcal{O}\right) $.

\begin{definition}
\label{Convergence}We say that a net $\left( \mathcal{E}_{\lambda }\right) $
in $\mathcal{P}\left( \mathcal{O}\right) $ \textbf{converges} to $\mathcal{O}
$, written $\mathcal{E}_{\lambda }\rightarrow \mathcal{O}$,\ if for every $%
\mathcal{U}\in \mathcal{O}$ there is a $\lambda _{0}$ such that $\mathcal{U}%
\in \mathcal{E}_{\lambda }$ whenever $\lambda \geq \lambda _{0}$.
\end{definition}

It is easily seen that $\mathcal{D}_{\lambda }\prec \mathcal{E}_{\lambda }$
and $\mathcal{E}_{\lambda }\rightarrow \mathcal{O}$ implies $\mathcal{D}%
_{\lambda }\rightarrow \mathcal{O}$. Moreover, $\mathcal{E}_{\lambda
}\rightarrow \mathcal{O}$ implies $n\mathcal{E}_{\lambda }\rightarrow
\mathcal{O}$ for every $n\in \mathbb{N}^{\ast }$ (see \cite[Proposition 1]%
{RichardA}).

We also need the auxiliary function $\rho :X\times X\rightarrow \mathcal{P}%
\left( \mathcal{O}\right) $ given by
\begin{equation*}
\rho \left( x,y\right) =\left\{ \mathcal{U}\in \mathcal{O}:y\in \mathrm{St}%
\left[ x,\mathcal{U}\right] \right\} .
\end{equation*}%
Note that the value $\rho \left( x,y\right) $ is upwards hereditary, that
is, if $\mathcal{U}\leqslant \mathcal{V}$ with $\mathcal{U}\in \rho \left(
x,y\right) $ then $\mathcal{V}\in \rho \left( x,y\right) $. The following
properties of the function $\rho $ are proved in \cite[Propositions 2 and 3]%
{RichardA}.

\begin{proposition}
\label{P1}

\begin{enumerate}
\item $\rho \left( x,y\right) =\rho \left( y,x\right) $ for all $x,y\in X$.

\item $\mathcal{O}\prec \rho \left( x,y\right) $, for all $x,y\in X$, and $%
\mathcal{O}=\rho \left( x,x\right) $.

\item If $X$ is Hausdorff, $\mathcal{O}=\rho \left( x,y\right) $ if and only
if $x=y$.

\item $\rho \left( x,y\right) \prec n\left( \rho \left( x,x_{1}\right) \cap
\rho \left( x_{1},x_{2}\right) \cap \ldots \cap \rho \left( x_{n},y\right)
\right) $ for all $x,y,x_{1},...,x_{n}\in X$.

\item A net $\left( x_{\lambda }\right) $ in $X$ converges to $x$ if and
only if $\rho \left( x_{\lambda },x\right) \rightarrow \mathcal{O}$.
\end{enumerate}
\end{proposition}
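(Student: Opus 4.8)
The plan is to treat the five assertions in order, observing that all but item (4) reduce to the definitions together with Remark~\ref{R1}. For item (1), I would unwind the defining condition: $\mathcal{U}\in\rho(x,y)$ says $y\in\mathrm{St}[x,\mathcal{U}]$, i.e.\ there is $U\in\mathcal{U}$ with $x\in U$ and $y\in U$. This condition is patently symmetric in $x$ and $y$, so the same $U$ witnesses $x\in\mathrm{St}[y,\mathcal{U}]$; hence $\rho(x,y)=\rho(y,x)$. For item (2), the inclusion $\rho(x,y)\subset\mathcal{O}$ is immediate from the definition, and since $\prec$ is inverse inclusion this is exactly $\mathcal{O}\prec\rho(x,y)$. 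The equality $\mathcal{O}=\rho(x,x)$ follows because any $\mathcal{U}\in\mathcal{O}$ is a covering, so some $U\in\mathcal{U}$ contains $x$, giving $x\in\mathrm{St}[x,\mathcal{U}]$ and thus $\mathcal{U}\in\rho(x,x)$.

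For item (3), one direction is item (2). Conversely, $\mathcal{O}=\rho(x,y)$ means $y\in\mathrm{St}[x,\mathcal{U}]$ for every $\mathcal{U}\in\mathcal{O}$, i.e.\ $y\in\bigcap_{\mathcal{U}\in\mathcal{O}}\mathrm{St}[x,\mathcal{U}]$, which equals $\{x\}$ by Remark~\ref{R1} when $X$ is Hausdorff; hence $y=x$. For item (5), recall that by the admissibility of $\mathcal{O}$ the stars $\{\mathrm{St}[x,\mathcal{U}]:\mathcal{U}\in\mathcal{O}\}$ form a neighborhood base at $x$ (Remark~\ref{R1}). Thus $x_\lambda\to x$ if and only if for every $\mathcal{U}\in\mathcal{O}$ one eventually has $x_\lambda\in\mathrm{St}[x,\mathcal{U}]$; using item (1) this says $\mathcal{U}\in\rho(x,x_\lambda)=\rho(x_\lambda,x)$ eventually, which is precisely $\rho(x_\lambda,x)\to\mathcal{O}$ by Definition~\ref{Convergence}.

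The real content is item (4), the triangle-type inequality, and here I expect the main obstacle. Since $\prec$ is inverse inclusion, I must show that every $\mathcal{U}\in n\bigl(\rho(x,x_1)\cap\cdots\cap\rho(x_n,y)\bigr)$ lies in $\rho(x,y)$. Writing $\mathcal{E}=\rho(x,x_1)\cap\cdots\cap\rho(x_n,y)$, the definition of $n\mathcal{E}$ furnishes $\mathcal{V}\in\mathcal{E}$ with $\mathcal{V}\leqslant\tfrac{1}{2^n}\mathcal{U}$. Membership $\mathcal{V}\in\mathcal{E}$ then produces sets $V_0,V_1,\dots,V_n\in\mathcal{V}$ with $x,x_1\in V_0$, $x_1,x_2\in V_1$, \dots, $x_n,y\in V_n$, so consecutive members overlap ($x_{i+1}\in V_i\cap V_{i+1}$) and $V_0\cup\cdots\cup V_n$ forms a \emph{chain} joining $x$ to $y$.

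The key lemma to isolate is that a single double-refinement collapses such a chain by one link: if $W_0,\dots,W_k$ lie in a covering $\mathcal{W}$ with $W_i\cap W_{i+1}\neq\emptyset$ and $\mathcal{W}\leqslant\tfrac12\mathcal{W}'$, then choosing $W_i'\in\mathcal{W}'$ with $W_i\cup W_{i+1}\subset W_i'$ yields a shorter chain $W_0',\dots,W_{k-1}'$ in $\mathcal{W}'$ (consecutive overlap persists since $W_i'\cap W_{i+1}'\supset W_{i+1}$) whose union still contains $\bigcup_i W_i$. Unrolling $\mathcal{V}\leqslant\tfrac{1}{2^n}\mathcal{U}$ as a tower of $n$ successive double-refinements and applying this lemma $n$ times turns the chain of $n+1$ sets $V_0,\dots,V_n$ into a single set $U\in\mathcal{U}$ with $V_0\cup\cdots\cup V_n\subset U$. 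In particular $x,y\in U$, so $y\in\mathrm{St}[x,\mathcal{U}]$ and $\mathcal{U}\in\rho(x,y)$, as required. The one point that genuinely needs care is the bookkeeping matching the chain length against the number of refinement steps, so that the induction terminates in exactly one set of $\mathcal{U}$.
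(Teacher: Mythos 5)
Your proof is correct. Note that the paper itself does not prove Proposition \ref{P1} at all: it simply cites \cite[Propositions 2 and 3]{RichardA}, so there is no in-paper argument to compare against, and your write-up supplies the missing details. Items (1)--(3) and (5) are, as you say, immediate from the definitions, Remark \ref{R1}, and Definition \ref{Convergence}; the only real content is item (4), and your chain-collapsing lemma handles it correctly. The bookkeeping does close up: $\mathcal{V}\leqslant \tfrac{1}{2^{n}}\mathcal{U}$ unrolls into a tower of exactly $n$ successive double-refinements, your chain $V_{0},\dots ,V_{n}$ has $n+1$ links with consecutive overlaps at $x_{1},\dots ,x_{n}$, each application of the lemma shortens the chain by one while preserving overlaps (via $W_{i}^{\prime }\cap W_{i+1}^{\prime }\supset W_{i+1}$) and enlarging the union, so after $n$ steps one lands on a single $U\in \mathcal{U}$ containing $V_{0}\cup \dots \cup V_{n}$, hence containing both $x$ and $y$, which gives $\mathcal{U}\in \rho \left( x,y\right) $ as required.
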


We now define bounded set and diameter.

\begin{definition}
A nonempty subset $Y\subset X$ is called\ \textbf{bounded} with respect to $%
\mathcal{O}$ if there is some $\mathcal{U}\in \mathcal{O}$ such that $%
\mathcal{U}\in \rho \left( x,y\right) $ for all $x,y\in Y$.
\end{definition}

\begin{definition}
Let $Y\subset X$ be a nonempty set. The \textbf{diameter} of $Y$ is the set $%
\mathrm{D}\left( Y\right) \in \mathcal{P}\left( \mathcal{O}\right) $ defined
as
\begin{equation*}
\mathrm{D}\left( Y\right) =\bigcap\limits_{x,y\in Y}\rho \left( x,y\right) .
\end{equation*}
\end{definition}

If $Y\subset X$ is a bounded set then $\mathrm{D}\left( Y\right) \neq
\emptyset $. It is easily seen that $\mathcal{U}\in \mathrm{D}\left(
Y\right) $ if and only if $Y$ is bounded by $\mathcal{U}$. The following
properties of diameter are proved in \cite[Proposition 4]{RichardA}.

\begin{proposition}
\label{P9}

\begin{enumerate}
\item $\rho \left( x,y\right) \prec \mathrm{D}\left( A\right) $ for all $%
x,y\in A$.

\item $\mathrm{D}\left( A\right) \prec \mathrm{D}\left( B\right) $ if $%
A\subset B$.

\item $\mathrm{D}\left( A\right) \prec \mathrm{D}\left( \mathrm{cls}\left(
A\right) \right) \prec 2\mathrm{D}\left( A\right) .$
\end{enumerate}
\end{proposition}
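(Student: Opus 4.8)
The plan is to rewrite the order relation $\prec$ as inverse inclusion throughout, recall that $\mathrm{D}(Y)=\bigcap_{x,y\in Y}\rho(x,y)$, and dispatch the items in increasing order of difficulty; only the right-hand estimate in (3) is substantial. For (1), since $x,y\in A$ the factor $\rho(x,y)$ occurs among the sets intersected to form $\mathrm{D}(A)$, so $\mathrm{D}(A)\subset\rho(x,y)$, which is exactly $\rho(x,y)\prec\mathrm{D}(A)$. For (2), if $A\subset B$ then the collection of pairs from $A$ is contained in that from $B$, so the intersection over $B$ is contained in the intersection over $A$, i.e. $\mathrm{D}(B)\subset\mathrm{D}(A)$, which reads $\mathrm{D}(A)\prec\mathrm{D}(B)$. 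The left inequality in (3) is then immediate from (2) applied to $A\subset\mathrm{cls}(A)$.

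The core is the right inequality $\mathrm{D}(\mathrm{cls}(A))\prec 2\mathrm{D}(A)$, that is, $2\mathrm{D}(A)\subset\mathrm{D}(\mathrm{cls}(A))$. I would take an arbitrary $\mathcal{U}\in 2\mathrm{D}(A)$ and unwind the definitions: there is $\mathcal{V}\in\mathrm{D}(A)$ with $\mathcal{V}\leqslant\frac{1}{2^{2}}\mathcal{U}$, hence a covering $\mathcal{W}$ with $\mathcal{V}\leqslant\frac{1}{2}\mathcal{W}$ and $\mathcal{W}\leqslant\frac{1}{2}\mathcal{U}$. Membership in $\mathrm{D}(A)$ means $\mathcal{V}$ bounds $A$, so any two points of $A$ lie in a common member of $\mathcal{V}$. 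Fixing $p,q\in\mathrm{cls}(A)$, I would invoke Remark \ref{R1} (so that $p,q\in\mathrm{St}[A,\mathcal{V}]$) to produce $a,b\in A$ and members $V_{1},V_{2}\in\mathcal{V}$ with $p,a\in V_{1}$ and $q,b\in V_{2}$, and a third member $V_{3}\in\mathcal{V}$ with $a,b\in V_{3}$ coming from the boundedness of $A$. This yields a three-link chain $p\,V_{1}\,a\,V_{3}\,b\,V_{2}\,q$ connecting $p$ to $q$.

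The conclusion then follows by applying double-refinement twice. Since $a\in V_{1}\cap V_{3}$ and $b\in V_{3}\cap V_{2}$, the relation $\mathcal{V}\leqslant\frac{1}{2}\mathcal{W}$ gives $W_{1},W_{2}\in\mathcal{W}$ with $V_{1}\cup V_{3}\subset W_{1}$ and $V_{3}\cup V_{2}\subset W_{2}$; as $W_{1}\cap W_{2}$ contains $a$ and $b$, the relation $\mathcal{W}\leqslant\frac{1}{2}\mathcal{U}$ yields $U\in\mathcal{U}$ with $W_{1}\cup W_{2}\subset U$, whence $p,q\in U$ and so $q\in\mathrm{St}[p,\mathcal{U}]$, i.e. $\mathcal{U}\in\rho(p,q)$. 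Since $p,q\in\mathrm{cls}(A)$ were arbitrary, $\mathcal{U}\in\mathrm{D}(\mathrm{cls}(A))$, as required. I expect the only delicate point to be the bookkeeping of the chain and checking that exactly two rounds of double-refinement are consumed, which is precisely what the index $n=2$ in $2\mathrm{D}(A)$ supplies; this is the topological surrogate of applying the triangle inequality twice to move from points of $A$ to their limit points in $\mathrm{cls}(A)$.
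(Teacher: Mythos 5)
Your proof is correct. Note that this paper does not actually prove Proposition \ref{P9}; it cites the result from \cite[Proposition 4]{RichardA}, so there is no in-paper argument to compare against, and your write-up stands as a self-contained proof.

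Items (1), (2), and the left half of (3) are indeed immediate from unwinding $\prec$ as reverse inclusion and $\mathrm{D}(Y)=\bigcap_{x,y\in Y}\rho(x,y)$, exactly as you say. For the right half of (3), your chain argument is sound and the bookkeeping checks out: given $\mathcal{U}\in 2\mathrm{D}(A)$, you get $\mathcal{V}\in\mathrm{D}(A)$ and an intermediate covering $\mathcal{W}$ with $\mathcal{V}\leqslant\frac{1}{2}\mathcal{W}$ and $\mathcal{W}\leqslant\frac{1}{2}\mathcal{U}$; for $p,q\in\mathrm{cls}(A)$, Remark \ref{R1} gives $\mathrm{cls}(A)\subset\mathrm{St}[A,\mathcal{V}]$, producing $V_{1}\ni p,a$ and $V_{2}\ni q,b$ with $a,b\in A$, while $\mathcal{V}\in\mathrm{D}(A)$ produces $V_{3}\ni a,b$. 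Each application of double-refinement is legitimate because you verify the required nonempty intersections ($a\in V_{1}\cap V_{3}$, $b\in V_{2}\cap V_{3}$, and $V_{3}\subset W_{1}\cap W_{2}$), and the final $U\supset W_{1}\cup W_{2}$ contains both $p$ and $q$, giving $\mathcal{U}\in\rho(p,q)$ and hence $\mathcal{U}\in\mathrm{D}(\mathrm{cls}(A))$. Two small remarks: the intermediate covering $\mathcal{W}$ need not belong to $\mathcal{O}$ (the definition of $\leqslant\frac{1}{2^{2}}$ only asks for a covering of $X$), and your argument correctly never uses more than that; and where you say "boundedness of $A$", the precise justification is $\mathcal{V}\in\mathrm{D}(A)$, i.e.\ $A$ is bounded by $\mathcal{V}$, which is what furnishes $V_{3}$.
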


We finally define measure of noncompactness.

\begin{definition}
Let $Y\subset X$ be a nonempty set. The \textbf{star measure of
noncompactness} of $Y$ is the set $\alpha \left( Y\right) \in \mathcal{P}%
\left( \mathcal{O}\right) $ defined as
\begin{equation*}
\alpha \left( Y\right) =\left\{ \mathcal{U}\in \mathcal{O}:Y\text{ admits a
finite cover }Y\subset \bigcup\limits_{i=1}^{n}\mathrm{St}\left[ x_{i},%
\mathcal{U}\right] \right\} ;
\end{equation*}%
the \textbf{Kuratowski measure of noncompactness} of $Y$ is the set $\gamma
\left( Y\right) \in \mathcal{P}\left( \mathcal{O}\right) $ defined as
\begin{equation*}
\gamma \left( Y\right) =\left\{ \mathcal{U}\in \mathcal{O}:Y\text{ admits a
finite cover }Y\subset \bigcup\limits_{i=1}^{n}X_{i}\text{ with }\mathcal{U}%
\in \mathrm{D}\left( X_{i}\right) \right\} .
\end{equation*}
\end{definition}

If $Y\subset X$ is a bounded set then both the sets $\alpha \left( Y\right) $
and $\gamma \left( Y\right) $ are nonempty. The following properties of
measure of noncompactness are proved in \cite[Proposition 10]{RichardA}.

\begin{proposition}
\label{P19}

\begin{enumerate}
\item $\alpha \left( Y\right) \prec \gamma \left( Y\right) \prec 1\alpha
\left( Y\right) .$

\item $\alpha \left( Y\right) \prec \mathrm{D}\left( Y\right) .$

\item $\alpha \left( Y\right) \prec \alpha \left( Z\right) $ if $Y\subset Z$.

\item $\alpha \left( Y\cup Z\right) =\alpha \left( Y\right) \cap \alpha
\left( Y\right) .$

\item $\alpha \left( Y\right) \prec \alpha \left( \mathrm{cls}\left(
Y\right) \right) \prec 1\alpha \left( Y\right) .$
\end{enumerate}
\end{proposition}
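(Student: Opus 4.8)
The plan is to unwind the relation $\prec$ as reverse inclusion throughout, so that each claimed inequality $\mathcal{E}_{1}\prec\mathcal{E}_{2}$ becomes the set inclusion $\mathcal{E}_{2}\subset\mathcal{E}_{1}$, and then to certify membership of a covering $\mathcal{U}\in\mathcal{O}$ in a given measure by exhibiting a suitable finite cover. First I would dispose of the purely monotone and set-theoretic parts. For part (3), if $\mathcal{U}\in\alpha(Z)$ and $Y\subset Z$, any finite star-cover $Z\subset\bigcup_{i}\mathrm{St}[x_{i},\mathcal{U}]$ already covers $Y$, giving $\alpha(Z)\subset\alpha(Y)$. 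Part (4) then follows from (3) for the inclusion $\alpha(Y\cup Z)\subset\alpha(Y)\cap\alpha(Z)$, while the reverse is immediate by concatenating a star-cover of $Y$ with one of $Z$. For part (2), if $\mathcal{U}\in\mathrm{D}(Y)$ then $Y$ is bounded by $\mathcal{U}$, so fixing any $x_{0}\in Y$ gives the single-set cover $Y\subset\mathrm{St}[x_{0},\mathcal{U}]$, whence $\mathcal{U}\in\alpha(Y)$. The first inclusion of (1), namely $\gamma(Y)\subset\alpha(Y)$, is handled the same way: a $\gamma$-cover $Y\subset\bigcup_{i}X_{i}$ with $\mathcal{U}\in\mathrm{D}(X_{i})$ yields $X_{i}\subset\mathrm{St}[x_{i},\mathcal{U}]$ after choosing $x_{i}\in X_{i}$, so the stars cover $Y$. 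Finally the first inclusion of (5), $\alpha(\mathrm{cls}(Y))\subset\alpha(Y)$, is just (3) applied to $Y\subset\mathrm{cls}(Y)$.

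The substance of the proposition lies in the two remaining inclusions, $1\alpha(Y)\subset\gamma(Y)$ in (1) and $1\alpha(Y)\subset\alpha(\mathrm{cls}(Y))$ in (5), and both rest on the double-refinement mechanism. Suppose $\mathcal{U}\in 1\alpha(Y)$; by definition there is $\mathcal{V}\in\alpha(Y)$ with $\mathcal{V}\leqslant\tfrac{1}{2}\mathcal{U}$, so we have a finite cover $Y\subset\bigcup_{i=1}^{n}\mathrm{St}[x_{i},\mathcal{V}]$. The key claim is that $\mathcal{U}\in\mathrm{D}(\mathrm{St}[x_{i},\mathcal{V}])$ for each $i$: if $y,z\in\mathrm{St}[x_{i},\mathcal{V}]$ then $y\in V$ and $z\in V^{\prime}$ for some $V,V^{\prime}\in\mathcal{V}$ with $x_{i}\in V\cap V^{\prime}$, so $V\cap V^{\prime}\neq\emptyset$, and double-refinement supplies $U\in\mathcal{U}$ with $V\cup V^{\prime}\subset U$; then $y,z\in U$ and $y\in U$ give $z\in\mathrm{St}[y,\mathcal{U}]$, i.e. $\mathcal{U}\in\rho(y,z)$. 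Taking $X_{i}=\mathrm{St}[x_{i},\mathcal{V}]$ this exhibits a $\gamma$-cover of $Y$ witnessing $\mathcal{U}$, proving $1\alpha(Y)\subset\gamma(Y)$.

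For the closure inclusion I would strengthen the same computation. First I would record the elementary fact that $\mathrm{cls}(A)\subset\mathrm{St}[A,\mathcal{V}]$ for every open covering $\mathcal{V}$: any $y\in\mathrm{cls}(A)$ lies in some $V\in\mathcal{V}$, which being an open neighborhood of $y$ meets $A$, so $V\subset\mathrm{St}[A,\mathcal{V}]$. Applying this with $A=\mathrm{St}[x_{i},\mathcal{V}]$ and then running the double-refinement argument once more yields $\mathrm{St}[\mathrm{St}[x_{i},\mathcal{V}],\mathcal{V}]\subset\mathrm{St}[x_{i},\mathcal{U}]$, hence $\mathrm{cls}(\mathrm{St}[x_{i},\mathcal{V}])\subset\mathrm{St}[x_{i},\mathcal{U}]$. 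Since closure commutes with finite unions, $\mathrm{cls}(Y)\subset\bigcup_{i}\mathrm{cls}(\mathrm{St}[x_{i},\mathcal{V}])\subset\bigcup_{i}\mathrm{St}[x_{i},\mathcal{U}]$, giving $\mathcal{U}\in\alpha(\mathrm{cls}(Y))$ and thus $1\alpha(Y)\subset\alpha(\mathrm{cls}(Y))$. I expect the main obstacle to be the bookkeeping in these double-refinement steps, in particular verifying that passing from the star to the star-of-the-star costs only one level of refinement, so that a single double-refinement $\mathcal{V}\leqslant\tfrac{1}{2}\mathcal{U}$ (rather than an iterated $\tfrac{1}{2^{2}}$-refinement) already suffices for both inclusions; everything else is routine translation between $\mathrm{St}$, $\rho$, and $\mathrm{D}$.
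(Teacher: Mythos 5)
Your proof is correct in all five parts. Note that this paper does not actually contain a proof of Proposition \ref{P19}: it defers to Proposition 10 of the cited reference (Alves--Souza, \emph{Cantor--Kuratowski theorem in uniformizable spaces}), so there is no in-paper argument to compare against; your write-up supplies a self-contained proof. The two substantive steps both check out: for $\gamma(Y)\prec 1\alpha(Y)$, your observation that $y,z\in\mathrm{St}[x_{i},\mathcal{V}]$ forces $y\in V$, $z\in V^{\prime}$ with $x_{i}\in V\cap V^{\prime}$, so that one application of $\mathcal{V}\leqslant\tfrac{1}{2}\mathcal{U}$ places $y,z$ in a common $U\in\mathcal{U}$ and hence $\mathcal{U}\in\mathrm{D}\left(\mathrm{St}[x_{i},\mathcal{V}]\right)$, is exactly what is needed; and for $\alpha(\mathrm{cls}(Y))\prec 1\alpha(Y)$, the chain $\mathrm{cls}\left(\mathrm{St}[x_{i},\mathcal{V}]\right)\subset\mathrm{St}\left[\mathrm{St}[x_{i},\mathcal{V}],\mathcal{V}\right]\subset\mathrm{St}[x_{i},\mathcal{U}]$ is valid with a single double-refinement, since any $V^{\prime}\in\mathcal{V}$ meeting $\mathrm{St}[x_{i},\mathcal{V}]$ meets some $V\in\mathcal{V}$ containing $x_{i}$, and the resulting $U\supset V\cup V^{\prime}$ contains both $x_{i}$ and the point in question. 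Your concern about whether an iterated $\tfrac{1}{2^{2}}$-refinement might be needed is thus resolved in the affirmative direction: one level suffices, which is precisely why the statement reads $1\alpha(Y)$ rather than $2\alpha(Y)$. One cosmetic remark: item (4) of the statement as printed contains a typo ($\alpha(Y)\cap\alpha(Y)$ should be $\alpha(Y)\cap\alpha(Z)$), which you silently and correctly repaired.
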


Recall that a net $\left( x_{\lambda }\right) _{\lambda \in \Lambda }$ in $X$
is $\mathcal{O}$-Cauchy if for each $\mathcal{U}\in \mathcal{O}$ there is
some $\lambda _{0}\in \Lambda $ such that such that $\mathcal{U}\in \rho
\left( x_{\lambda _{1}},x_{\lambda _{2}}\right) $ whenever $\lambda
_{1},\lambda _{2}\geqslant \lambda _{0}$. If every Cauchy net in the
admissible space $X$ converges then $X$ is called a \textbf{complete
admissible space}. If $\mathrm{cls}\left( Y\right) $ is compact then $\alpha
\left( Y\right) =\mathcal{O}$. The converse holds if $X$ is a complete
admissible space (see \cite[Proposition 11]{RichardA}). The following
theorem is proved in \cite{RichardA}.

\begin{theorem}[Cantor--Kuratowski theorem]
\label{TK} The admissible space $X$ is complete if and only if every
decreasing net $\left( F_{\lambda }\right) $ of nonempty bounded closed sets
of $X$, with $\gamma \left( F_{\lambda }\right) \rightarrow \mathcal{O}$,
has nonempty compact intersection.
\end{theorem}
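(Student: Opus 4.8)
The plan is to establish the two implications of the equivalence separately. The genuinely substantive direction is that completeness implies the stated intersection property, and within it the \emph{nonemptiness} of the intersection is the real obstacle; compactness of the intersection will come almost for free from the properties of $\alpha$.

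Assume first that $X$ is complete and let $(F_\lambda)$ be a decreasing net of nonempty bounded closed sets with $\gamma(F_\lambda)\rightarrow\mathcal{O}$; write $K=\bigcap_\lambda F_\lambda$. For the compactness of $K$ I would argue as follows. By Proposition \ref{P19}(1) we have $\alpha(F_\lambda)\prec\gamma(F_\lambda)$, so $\gamma(F_\lambda)\rightarrow\mathcal{O}$ forces $\alpha(F_\lambda)\rightarrow\mathcal{O}$. Since $K\subset F_\lambda$ for every $\lambda$, Proposition \ref{P19}(3) gives $\alpha(K)\prec\alpha(F_\lambda)$, and letting $\lambda$ run shows that every $\mathcal{U}\in\mathcal{O}$ eventually lies in $\alpha(F_\lambda)\subset\alpha(K)$; hence $\alpha(K)=\mathcal{O}$. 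As $K$ is closed and $X$ is complete, the converse part of \cite[Proposition 11]{RichardA} yields that $K$ is compact. It remains only to see that $K\neq\emptyset$.

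For nonemptiness, the naive idea of choosing $x_\lambda\in F_\lambda$ fails: the condition $\gamma(F_\lambda)\rightarrow\mathcal{O}$ only provides finite covers of $F_\lambda$ by $\mathcal{U}$-bounded pieces, not a single small set, so $(x_\lambda)$ need not be Cauchy, and the ``finite head plus small tail'' trick available for sequences has no analogue for an arbitrary directed index set. I would circumvent this by passing to an ultrafilter. Since $(F_\lambda)$ is decreasing with all $F_\lambda$ nonempty, the collection $\{F_\lambda\}$ is a filter base; extend it to an ultrafilter $\mathcal{W}$ on $X$. I claim $\mathcal{W}$ is Cauchy: given $\mathcal{U}\in\mathcal{O}$, pick $\lambda$ with $\mathcal{U}\in\gamma(F_\lambda)$, so $F_\lambda\subset X_1\cup\cdots\cup X_n$ with $\mathcal{U}\in\mathrm{D}(X_i)$ for each $i$; since $F_\lambda\in\mathcal{W}$ and $\mathcal{W}$ is an ultrafilter, some $X_{i_0}\cap F_\lambda$ belongs to $\mathcal{W}$, and this set is bounded by $\mathcal{U}$. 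Thus $\mathcal{W}$ has a $\mathcal{U}$-bounded member for every $\mathcal{U}$, i.e. $\mathcal{W}$ is Cauchy. The canonical net associated with $\mathcal{W}$ is then an $\mathcal{O}$-Cauchy net, which converges to some $p\in X$ by completeness. Finally, each $F_\lambda$ is a closed member of $\mathcal{W}$ and $\mathcal{W}\rightarrow p$, so $p\in\mathrm{cls}(F_\lambda)=F_\lambda$ for all $\lambda$, whence $p\in K$ and $K\neq\emptyset$.

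For the converse, assume the intersection property and let $(x_\lambda)_{\lambda\in\Lambda}$ be any $\mathcal{O}$-Cauchy net; I must show it converges. Fixing one $\mathcal{U}_0\in\mathcal{O}$, the Cauchy condition provides $\lambda_0$ past which the tail is $\mathcal{U}_0$-bounded, so for $\lambda\geqslant\lambda_0$ the closed tails $F_\lambda=\mathrm{cls}\{x_\mu:\mu\geqslant\lambda\}$ form a decreasing net of nonempty bounded closed sets. To check $\gamma(F_\lambda)\rightarrow\mathcal{O}$, I would use the Cauchy condition together with $\mathrm{D}(\mathrm{cls}(Y))\prec 2\mathrm{D}(Y)$ from Proposition \ref{P9}(3): given $\mathcal{U}$, choosing $\mathcal{V}\leqslant\tfrac{1}{2^{2}}\mathcal{U}$ and a $\mathcal{V}$-bounded tail produces a $\mathcal{U}$-bounded $F_\lambda$, and since a $\mathcal{U}$-bounded set is covered by itself we get $\mathcal{U}\in\mathrm{D}(F_\lambda)\subset\gamma(F_\lambda)$ for all large $\lambda$. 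The hypothesis now yields a point $p\in\bigcap_{\lambda\geqslant\lambda_0}F_\lambda$, which is a cluster point of $(x_\lambda)$; since a Cauchy net with a cluster point converges to it (an easy consequence of the triangle estimate in Proposition \ref{P1}(4)), we conclude $x_\lambda\rightarrow p$ and $X$ is complete. The main obstacle, as indicated, is the nonemptiness in the forward direction for general nets rather than sequences; the device that overcomes it is the passage to an ultrafilter, which upgrades the Kuratowski (finite-cover) smallness of the $F_\lambda$ into genuine $\mathcal{O}$-Cauchy smallness of a single filter, the rest being the bookkeeping of keeping all tail sets bounded and translating between $\mathcal{O}$-Cauchy filters and nets.
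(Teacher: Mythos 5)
The paper itself does not prove Theorem \ref{TK}: it imports it from \cite{RichardA} (``The following theorem is proved in \cite{RichardA}''), so your argument is necessarily an independent proof rather than a variant of anything in the text, and as such it is essentially correct. Your diagnosis of where the real difficulty sits is exactly right: for a net over an arbitrary directed set the selection $x_{\lambda }\in F_{\lambda }$ need not be $\mathcal{O}$-Cauchy, because there is no ``finite head plus small tail'' decomposition. The ultrafilter device genuinely overcomes this: if $\mathcal{U}\in \gamma \left( F_{\lambda }\right) $ and $F_{\lambda }\subset X_{1}\cup \dots \cup X_{n}$ with $\mathcal{U}\in \mathrm{D}\left( X_{i}\right) $, an ultrafilter containing $F_{\lambda }$ must contain some $X_{i}\cap F_{\lambda }$, which is $\mathcal{U}$-bounded; hence the ultrafilter is Cauchy, its canonical net is $\mathcal{O}$-Cauchy, and by completeness its limit $p$ lies in $\mathrm{cls}\left( F_{\lambda }\right) =F_{\lambda }$ for every $\lambda $, since the net is eventually inside each closed member $F_{\lambda }$ of the filter. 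Combined with $\alpha \left( K\right) =\mathcal{O}$ via Proposition \ref{P19} and the converse part of \cite[Proposition 11]{RichardA} (valid under completeness), this settles the forward direction; the converse, via closed tails and the fact that an $\mathcal{O}$-Cauchy net converges to any of its cluster points (Proposition \ref{P1}), is the expected argument.

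One step, however, is stated with a false justification and needs reordering. In the converse you assert that since the tail $\left\{ x_{\mu }:\mu \geqslant \lambda _{0}\right\} $ is $\mathcal{U}_{0}$-bounded, the closed tails $F_{\lambda }$ are bounded. In a general admissible space boundedness does not pass to closures: Proposition \ref{P9}(3) only gives that $\mathrm{cls}\left( Y\right) $ is $\mathcal{U}$-bounded when $Y$ is $\mathcal{V}$-bounded for some $\mathcal{V}\leqslant \frac{1}{2^{2}}\mathcal{U}$, and $\mathcal{O}$ may contain no covering that is coarse enough relative to $\mathcal{U}_{0}$, since the admissibility axioms only produce finer coverings, never coarser ones. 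Concretely, on $\mathbb{R}$ the family $\mathcal{O}=\left\{ \mathcal{U}_{\varepsilon }:0<\varepsilon \leqslant 1\right\} $ of coverings by $\varepsilon $-balls is admissible and satisfies $\mathrm{St}\left[ x,\mathcal{U}_{\varepsilon }\right] =\mathrm{B}\left( x,2\varepsilon \right) $; the interval $\left( 0,2\right) $ is $\mathcal{U}_{1}$-bounded, yet its closure $\left[ 0,2\right] $ is bounded by no member of $\mathcal{O}$. The repair is precisely the maneuver you already use to prove $\gamma \left( F_{\lambda }\right) \rightarrow \mathcal{O}$: fix $\mathcal{V}_{0}\leqslant \frac{1}{2^{2}}\mathcal{U}_{0}$ first, choose $\lambda _{0}$ so that the tail past $\lambda _{0}$ is $\mathcal{V}_{0}$-bounded, and then Proposition \ref{P9}(3) makes every closed tail $F_{\lambda }$, $\lambda \geqslant \lambda _{0}$, $\mathcal{U}_{0}$-bounded. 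With the quantifiers in that order, the proof goes through.
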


\section{\label{SectionHyperspace}The hyperspace}

In this section we define hyperspace by means of a binary function of the
admissible space. Throughout, there is a fixed completely regular space $X$
endowed with an admissible family of open coverings $\mathcal{O}$.

For a given point $x\in X$ and a subset $A\subset X$, we define the set $%
\rho \left( x,A\right) \in \mathcal{P}\left( \mathcal{O}\right) $ by%
\begin{equation*}
\rho \left( x,A\right) =\bigcup\limits_{a\in A}\rho \left( x,a\right) .
\end{equation*}%
In Propositions \ref{P2} and \ref{PKS} below, we present some relevant
properties of $\rho \left( x,A\right) $.

\begin{proposition}
\label{P2}For a given point $x\in X$ and subsets $A,B\subset X$, the
following properties hold:

\begin{enumerate}
\item $\rho \left( x,A\right) \prec \rho \left( x,a\right) $ for all $a\in A$%
.

\item If $B\supset A$ then $\rho \left( x,B\right) \prec \rho \left(
x,A\right) $.

\item $\rho \left( x,A\right) =\mathcal{O}$ if and only if $x\in \mathrm{cls}%
\left( A\right) $.

\item $\rho \left( x,\mathrm{cls}\left( A\right) \right) =\rho \left(
x,A\right) .$
\end{enumerate}
\end{proposition}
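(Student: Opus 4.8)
The plan is to handle the four items separately, since the first two are immediate from the definition $\rho\left(x,A\right)=\bigcup_{a\in A}\rho\left(x,a\right)$ together with the fact that $\prec$ is inverse inclusion, while the last two use the neighborhood-base structure of the admissible family. For (1), each $\rho\left(x,a\right)$ with $a\in A$ is one of the terms of the union, so $\rho\left(x,a\right)\subset\rho\left(x,A\right)$, which is precisely $\rho\left(x,A\right)\prec\rho\left(x,a\right)$. For (2), if $A\subset B$ then the union defining $\rho\left(x,A\right)$ ranges over a subfamily of the one defining $\rho\left(x,B\right)$, whence $\rho\left(x,A\right)\subset\rho\left(x,B\right)$, i.e.\ $\rho\left(x,B\right)\prec\rho\left(x,A\right)$.

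For (3), I would first record that $\rho\left(x,A\right)\subset\mathcal{O}$ always holds, because $\rho\left(x,a\right)\subset\mathcal{O}$ for every $a$ by Proposition \ref{P1}(2). Hence $\rho\left(x,A\right)=\mathcal{O}$ is equivalent to $\mathcal{O}\subset\rho\left(x,A\right)$, that is, to the assertion that every $\mathcal{U}\in\mathcal{O}$ lies in $\rho\left(x,a\right)$ for some $a\in A$. The key observation is the symmetry of the star relation: $\mathcal{U}\in\rho\left(x,a\right)$ means $a\in\mathrm{St}\left[x,\mathcal{U}\right]$, which holds exactly when some $U\in\mathcal{U}$ contains both $x$ and $a$, and this is in turn equivalent to $x\in\mathrm{St}\left[a,\mathcal{U}\right]$. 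Unioning over $a\in A$ shows that $\mathcal{U}\in\rho\left(x,A\right)$ if and only if $x\in\mathrm{St}\left[A,\mathcal{U}\right]$. Since Remark \ref{R1} gives $\bigcap_{\mathcal{U}\in\mathcal{O}}\mathrm{St}\left[A,\mathcal{U}\right]=\mathrm{cls}\left(A\right)$, we conclude that $x\in\mathrm{cls}\left(A\right)$ precisely when $x\in\mathrm{St}\left[A,\mathcal{U}\right]$ for all $\mathcal{U}$, i.e.\ precisely when $\mathcal{O}\subset\rho\left(x,A\right)$, which closes the equivalence.

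For (4), the inclusion $\rho\left(x,\mathrm{cls}\left(A\right)\right)\prec\rho\left(x,A\right)$ follows at once from (2) applied to $A\subset\mathrm{cls}\left(A\right)$. For the reverse, I would take $\mathcal{U}\in\rho\left(x,\mathrm{cls}\left(A\right)\right)$, witnessed by some $b\in\mathrm{cls}\left(A\right)$ with $b\in\mathrm{St}\left[x,\mathcal{U}\right]$, so that there is an open $U\in\mathcal{U}$ with $x,b\in U$. As $U$ is an open neighborhood of the closure point $b$, it must meet $A$; picking $a\in A\cap U$ gives $x,a\in U$, hence $\mathcal{U}\in\rho\left(x,a\right)\subset\rho\left(x,A\right)$. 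This yields $\rho\left(x,\mathrm{cls}\left(A\right)\right)\subset\rho\left(x,A\right)$ and equality follows.

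The only mildly delicate point is (3), where one must pass through the symmetric reformulation $\mathcal{U}\in\rho\left(x,a\right)\iff x\in\mathrm{St}\left[a,\mathcal{U}\right]$ in order to recognize $\rho\left(x,A\right)$ as encoding membership of $x$ in the stars $\mathrm{St}\left[A,\mathcal{U}\right]$, and then invoke Remark \ref{R1}. Item (4) additionally relies on each star being an open set, which is exactly what activates the closure hypothesis; everything else is bookkeeping with the order $\prec$.
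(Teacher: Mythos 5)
Your proof is correct and follows essentially the same route as the paper's: items (1) and (2) are read off from the definition, item (3) reduces to the fact that the stars encode closure membership, and item (4) uses openness to replace a closure point of $A$ by an actual point of $A$. The only cosmetic differences are that in (3) you pass through the symmetric reformulation $\mathcal{U}\in \rho \left( x,a\right) \iff x\in \mathrm{St}\left[ a,\mathcal{U}\right] $ and invoke Remark \ref{R1} for the set $A$, where the paper directly uses that the stars $\mathrm{St}\left[ x,\mathcal{U}\right] $ form a neighborhood base at $x$, and in (4) you work with a single covering element $U\in \mathcal{U}$ rather than the whole (open) star $\mathrm{St}\left[ x,\mathcal{U}\right] $; both variants are equivalent to the paper's argument.
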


\begin{proof}
Items $\left( 1\right) $ and $\left( 2\right) $ follow immediately by
definition. For item $\left( 3\right) $, note that $\rho \left( x,A\right) =%
\mathcal{O}$ if and only if $A\cap \mathrm{St}\left[ x,\mathcal{U}\right]
\neq \emptyset $ for all $\mathcal{U}\in \mathcal{O}$. Since the collection $%
\left\{ \mathrm{St}\left[ x,\mathcal{U}\right] :\mathcal{U}\in \mathcal{O}%
\right\} $ is a neighborhood base at $x$, it follows that $\rho \left(
x,A\right) =\mathcal{O}$ if and only if $x\in \mathrm{cls}\left( A\right) $.
We now prove item $\left( 4\right) $. If $\mathcal{U}\in \rho \left( x,%
\mathrm{cls}\left( A\right) \right) $ then $\mathcal{U}\in \rho \left(
x,y\right) $ for some $y\in \mathrm{cls}\left( A\right) $. Hence $y\in
\mathrm{St}\left[ x,\mathcal{U}\right] \cap \mathrm{cls}\left( A\right) $,
and therefore $\mathrm{St}\left[ x,\mathcal{U}\right] \cap A\neq \emptyset $
because $\mathrm{St}\left[ x,\mathcal{U}\right] $ is open. It follows that $%
\mathcal{U}\in \rho \left( x,a\right) $ for some $a\in A$. Thus $\mathcal{U}%
\in \rho \left( x,A\right) $ and we have the inclusion $\rho \left( x,%
\mathrm{cls}\left( A\right) \right) \subset \rho \left( x,A\right) $. The
inclusion $\rho \left( x,A\right) \subset \rho \left( x,\mathrm{cls}\left(
A\right) \right) $ is clear.
\end{proof}

\begin{proposition}
\label{PKS} Let $K$ be a compact subset of $X$ and $(x_{\lambda })_{\lambda
\in \Lambda }$ a net in $X$. If $\rho \left( x_{\lambda },K\right)
\rightarrow \mathcal{O}$ then $x_{\lambda }$ admits a convergent subnet $%
x_{\lambda _{\mu }}\rightarrow y$, with $y\in \mathrm{cls}\left( K\right) $.
\end{proposition}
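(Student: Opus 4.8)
The plan is to imitate the classical metric argument, in which the hypothesis $\rho\left(x_{\lambda},K\right)\rightarrow \mathcal{O}$ plays the role of $\mathrm{d}\left(x_{\lambda},K\right)\rightarrow 0$. First I would unwind the hypothesis: by Definition \ref{Convergence}, $\rho\left(x_{\lambda},K\right)\rightarrow \mathcal{O}$ means that for each $\mathcal{U}\in \mathcal{O}$ there is an index beyond which $\mathcal{U}\in \rho\left(x_{\lambda},K\right)=\bigcup_{a\in K}\rho\left(x_{\lambda},a\right)$; that is, beyond which one may choose $a\in K$ with $\mathcal{U}\in \rho\left(x_{\lambda},a\right)$. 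The role of compactness of $K$ is to supply, after extracting a subnet, a single limit point $y$ for these approximating points.

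Concretely, I would form the directed set $D=\left\{ \left(\lambda,\mathcal{U}\right)\in \Lambda\times \mathcal{O}:\mathcal{U}\in \rho\left(x_{\lambda},K\right)\right\}$, ordered by $\left(\lambda,\mathcal{U}\right)\le \left(\lambda^{\prime},\mathcal{U}^{\prime}\right)$ if and only if $\lambda\le \lambda^{\prime}$ and $\mathcal{U}^{\prime}\leqslant \mathcal{U}$. Directedness of $D$ uses property $\left(1\right)$ of Definition \ref{Admiss} (to refine two given coverings simultaneously) together with the hypothesis (to push the $\Lambda$-coordinate far enough that the common refinement belongs to $\rho\left(x_{\lambda},K\right)$). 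The projection $\left(\lambda,\mathcal{U}\right)\mapsto \lambda$ is monotone and cofinal, so $\left(x_{\lambda}\right)_{\left(\lambda,\mathcal{U}\right)\in D}$ is a subnet of $\left(x_{\lambda}\right)_{\lambda\in \Lambda}$; for each $\left(\lambda,\mathcal{U}\right)\in D$ I would select a point $a\left(\lambda,\mathcal{U}\right)\in K$ with $\mathcal{U}\in \rho\left(x_{\lambda},a\left(\lambda,\mathcal{U}\right)\right)$.

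Since $K$ is compact, the net $\left(a\left(\lambda,\mathcal{U}\right)\right)_{D}$ in $K$ has a subnet $\left(a\left(h\left(e\right)\right)\right)_{e\in E}$ converging to some $y\in K\subset \mathrm{cls}\left(K\right)$; write $h\left(e\right)=\left(\lambda_{e},\mathcal{U}_{e}\right)$. I claim the associated subnet $\left(x_{\lambda_{e}}\right)_{e\in E}$ of $\left(x_{\lambda}\right)$ converges to $y$, which finishes the proof. By Proposition \ref{P1}$\left(5\right)$ it suffices to show $\rho\left(x_{\lambda_{e}},y\right)\rightarrow \mathcal{O}$. The triangle-type estimate of Proposition \ref{P1}$\left(4\right)$ with $n=1$ gives $\rho\left(x_{\lambda_{e}},y\right)\prec 1\left(\rho\left(x_{\lambda_{e}},a\left(h\left(e\right)\right)\right)\cap \rho\left(a\left(h\left(e\right)\right),y\right)\right)$, so by the comparison rule for $\prec$ it is enough to prove the right-hand net converges to $\mathcal{O}$. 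The factor $\rho\left(a\left(h\left(e\right)\right),y\right)\rightarrow \mathcal{O}$ is Proposition \ref{P1}$\left(5\right)$ applied to $a\left(h\left(e\right)\right)\rightarrow y$. For the factor $\rho\left(x_{\lambda_{e}},a\left(h\left(e\right)\right)\right)$, fix $\mathcal{V}\in \mathcal{O}$: since $h$ is cofinal in $D$ and the coverings are cofinally fine there, eventually $\mathcal{U}_{e}\leqslant \mathcal{V}$, and since $\mathcal{U}_{e}\in \rho\left(x_{\lambda_{e}},a\left(h\left(e\right)\right)\right)$ by construction, upward heredity of $\rho$-values yields $\mathcal{V}\in \rho\left(x_{\lambda_{e}},a\left(h\left(e\right)\right)\right)$ eventually. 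Hence both factors, their intersection, and (by the rule $\mathcal{E}_{\lambda}\rightarrow \mathcal{O}\Rightarrow n\mathcal{E}_{\lambda}\rightarrow \mathcal{O}$) the term $1\left(\cdots\right)$ converge to $\mathcal{O}$.

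The main obstacle I anticipate is bookkeeping rather than a single hard estimate: one must verify that the double extraction of subnets (first over $D$, then over the compactness index $E$) still yields a genuine subnet of the original net, and that the reverse-inclusion order $\prec$ is used in the correct direction throughout. In particular, Proposition \ref{P1}$\left(4\right)$ bounds $\rho\left(x_{\lambda_{e}},y\right)$ \emph{below} in $\prec$ by a convergent term, which is exactly the input required by the comparison rule $\mathcal{D}_{\lambda}\prec \mathcal{E}_{\lambda}$, $\mathcal{E}_{\lambda}\rightarrow \mathcal{O}\Rightarrow \mathcal{D}_{\lambda}\rightarrow \mathcal{O}$.
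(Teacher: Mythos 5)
Your proposal is correct and follows essentially the same route as the paper's own proof: both build a directed set of pairs $\left( \lambda ,\mathcal{U}\right) $ with chosen approximating points in $K$, extract a convergent subnet of those points by compactness, and close the argument with the triangle-type estimate of Proposition \ref{P1}(4), upward heredity, and Proposition \ref{P1}(5). If anything, you are slightly more careful than the paper, which compresses your explicit second subnet extraction $h:E\rightarrow D$ into the phrase ``we may assume that $z_{\left( \lambda ,\mathcal{U}\right) }\rightarrow z$.''
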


\begin{proof}
For a given $\mathcal{U}\in \mathcal{O}$, there is $\lambda _{\mathcal{U}%
}\in \Lambda $ such that $\mathcal{U}\in \rho \left( x_{\lambda },K\right) $
whenever $\lambda \geq \lambda _{\mathcal{U}}$. Hence, for every $\lambda
\geq \lambda _{\mathcal{U}}$, we can take $z_{\left( \lambda ,\mathcal{U}%
\right) }\in K$ such that $\mathcal{U}\in \rho \left( x_{\lambda },z_{\left(
\lambda ,\mathcal{U}\right) }\right) $. Define the set
\begin{equation*}
\Gamma =\left\{ \left( \lambda ,\mathcal{U}\right) :\mathcal{U}\in \rho
\left( x_{\lambda },z_{\left( \lambda ,\mathcal{U}\right) }\right) \right\}
\end{equation*}%
directed by $\left( \lambda _{1},\mathcal{U}_{1}\right) \geq \left( \lambda
_{2},\mathcal{U}_{2}\right) $ if and only if $\lambda _{1}\geq \lambda _{2}$
and $\mathcal{U}_{1}\leqslant \mathcal{U}_{2}$. For each $\left( \lambda ,%
\mathcal{U}\right) \in \Gamma $, we define $x_{\left( \lambda ,\mathcal{U}%
\right) }=x_{\lambda }$. By compactness of $K$, we may assume that $%
z_{\left( \lambda ,\mathcal{U}\right) }\rightarrow z$ for some $z\in \mathrm{%
cls}\left( K\right) $. Now, for a given $\mathcal{U}\in \mathcal{O}$, take $%
\mathcal{U}^{\prime }\in \mathcal{O}$ with $\mathcal{U}^{\prime }\leqslant
\frac{1}{2}\mathcal{U}$. As $\rho \left( z_{\left( \lambda ,\mathcal{U}%
\right) },z\right) \rightarrow \mathcal{O}$, there is $\left( \lambda _{0},%
\mathcal{U}_{0}\right) $ such that $\mathcal{U}^{\prime }\in \rho \left(
z_{\left( \lambda ,\mathcal{V}\right) },z\right) $ whenever $\left( \lambda ,%
\mathcal{V}\right) \geq \left( \lambda _{0},\mathcal{U}_{0}\right) $. Choose
$\mathcal{U}_{0}^{\prime }\in \mathcal{O}$ such that $\mathcal{U}%
_{0}^{\prime }\leqslant \mathcal{U}^{\prime }$ and $\mathcal{U}_{0}^{\prime
}\leqslant \mathcal{U}_{0}$. If $\left( \lambda ,\mathcal{V}\right) \geq
\left( \lambda _{0},\mathcal{U}_{0}^{\prime }\right) $ then $\mathcal{U}%
^{\prime }\in \rho \left( x_{\lambda },z_{\left( \lambda ,\mathcal{V}\right)
}\right) $, by hereditariness since $\mathcal{V}\in \rho \left( x_{\lambda
},z_{\left( \lambda ,\mathcal{V}\right) }\right) $ and $\mathcal{U}^{\prime
}\in \rho \left( z_{\left( \lambda ,\mathcal{V}\right) },z\right) $, as $%
\left( \lambda ,\mathcal{V}\right) \geq \left( \lambda _{0},\mathcal{U}%
_{0}\right) $. It follows that%
\begin{equation*}
\rho \left( x_{\left( \lambda ,\mathcal{U}\right) },z\right) \prec 1(\rho
(x_{\lambda },z_{\left( \lambda ,\mathcal{V}\right) })\cap \rho (z_{\left(
\lambda ,\mathcal{V}\right) },z))\prec 1\{\mathcal{U}^{\prime }\}\prec \{%
\mathcal{U}\}
\end{equation*}%
for all $\left( \lambda ,\mathcal{V}\right) \geq \left( \lambda _{0},%
\mathcal{U}_{0}^{\prime }\right) $. Thus the subnet $\left( x_{\left(
\lambda ,\mathcal{U}\right) }\right) _{\left( \lambda ,\mathcal{U}\right)
\in \Gamma }$ of $\left( x_{\lambda }\right) _{\lambda \in \Lambda }$
converges to $z\in \mathrm{cls}\left( K\right) $.
\end{proof}

We now extend $\rho $ to an operation of sets.

\begin{definition}
For two closed subsets $A,B\subset X$, we define the collection $\rho
_{A}\left( B\right) \in \mathcal{P}\left( \mathcal{O}\right) $ by
\begin{equation*}
\rho _{A}\left( B\right) =\bigcap\limits_{b\in B}\rho \left( b,A\right)
=\bigcap\limits_{b\in B}\bigcup\limits_{a\in A}\rho \left( b,a\right)
\end{equation*}%
and the collection $\rho _{H}\left( A,B\right) \in \mathcal{P}\left(
\mathcal{O}\right) $ by
\begin{equation*}
\rho _{H}\left( A,B\right) =\rho _{A}\left( B\right) \cap \rho _{B}\left(
A\right) =\left\{ \bigcap\limits_{b\in B}\bigcup\limits_{a\in A}\rho \left(
a,b\right) \right\} \cap \left\{ \bigcap\limits_{a\in A}\bigcup\limits_{b\in
B}\rho \left( a,b\right) \right\} .
\end{equation*}
\end{definition}

Note that $\rho _{H}\left( A,B\right) $ is a symmetric relation, although $%
\rho _{A}\left( B\right) $ not. By Proposition \ref{P2}, $\rho _{H}\left(
A,B\right) =\rho _{H}\left( \mathrm{cls}\left( A\right) ,\mathrm{cls}\left(
B\right) \right) $ for all subsets $A,B\subset X$. Thus we may consider only
closed sets in working with the function $\rho _{H}$.

From now on, the collection of all nonempty closed subsets of $X$ will be
denoted by $\mathcal{H}\left( X\right) $. In the following we present some
properties of $\rho _{H}$ on $\mathcal{H}\left( X\right) $.

\begin{proposition}
\label{P3}The binary function $\rho _{H}:\mathcal{H}\left( X\right) \times
\mathcal{H}\left( X\right) \rightarrow \mathcal{P}\left( \mathcal{O}\right) $
satisfies the following properties:

\begin{enumerate}
\item $\rho _{H}\left( A,B\right) =\rho _{H}\left( B,A\right) $ for all $%
A,B\in \mathcal{H}\left( X\right) $.

\item $\rho _{H}\left( \left\{ x\right\} ,\left\{ y\right\} \right) =\rho
\left( x,y\right) $ for all $x,y\in X$.

\item $\rho _{H}\left( A,B\right) =\mathcal{O}$ if and only if $A=B$.

\item $\rho _{H}\left( A,C\right) \prec 1\left( \rho _{H}\left( A,B\right)
\cap \rho _{H}\left( B,C\right) \right) $ for all $A,B,C\in \mathcal{H}%
\left( X\right) $.

\item $\rho _{H}\left( A\cup B,C\cup D\right) \prec \rho _{H}\left(
A,C\right) \cap \rho _{H}\left( B,D\right) $ for all $A,B,C,D\in \mathcal{H}%
\left( X\right) $.

\item $\mathcal{U}\in \rho _{H}\left( A,B\right) $ if and only if $A\subset
\mathrm{St}\left[ B,\mathcal{U}\right] $ and $B\subset \mathrm{St}\left[ A,%
\mathcal{U}\right] $.
\end{enumerate}
\end{proposition}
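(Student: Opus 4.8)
The plan is to establish the star characterization of item (6) first, since it converts every assertion about $\rho_H$ into an inclusion between stars and thereby makes items (3), (4), and (5) transparent. Items (1) and (2) require essentially no work: symmetry of $\rho_H$ is immediate from its definition as $\rho_A(B)\cap\rho_B(A)$, and for singletons one computes $\rho_{\{x\}}(\{y\})=\rho(y,x)$ and $\rho_{\{y\}}(\{x\})=\rho(x,y)$, so item (2) follows from the symmetry $\rho(x,y)=\rho(y,x)$ of Proposition \ref{P1}. For item (6) I would simply unwind the definitions: note that $\mathcal{U}\in\rho(a,b)$ holds exactly when $a$ and $b$ lie in a common member of $\mathcal{U}$, so that $\mathcal{U}\in\rho_A(B)=\bigcap_{b\in B}\bigcup_{a\in A}\rho(a,b)$ says precisely that every $b\in B$ shares a member of $\mathcal{U}$ with some $a\in A$, i.e. $B\subset\mathrm{St}[A,\mathcal{U}]$. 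Reading $\rho_B(A)$ the same way gives $A\subset\mathrm{St}[B,\mathcal{U}]$, and intersecting yields item (6).

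With item (6) in hand, items (3) and (5) are short. For item (3), since each factor is a subset of $\mathcal{O}$, the equality $\rho_H(A,B)=\mathcal{O}$ forces $\rho_A(B)=\rho_B(A)=\mathcal{O}$; by Proposition \ref{P2}(3), $\rho(b,A)=\mathcal{O}$ is equivalent to $b\in\mathrm{cls}(A)=A$, so $\rho_A(B)=\mathcal{O}$ is equivalent to $B\subset A$, and symmetrically $\rho_B(A)=\mathcal{O}$ to $A\subset B$; hence $\rho_H(A,B)=\mathcal{O}$ iff $A=B$. For item (5), take $\mathcal{U}\in\rho_H(A,C)\cap\rho_H(B,D)$; item (6) gives $A\subset\mathrm{St}[C,\mathcal{U}]$ and $B\subset\mathrm{St}[D,\mathcal{U}]$, whence $A\cup B\subset\mathrm{St}[C\cup D,\mathcal{U}]$, and the reverse inclusion follows the same way, so $\mathcal{U}\in\rho_H(A\cup B,C\cup D)$ by item (6) again, using that finite unions of closed sets remain in $\mathcal{H}(X)$.

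The real work is item (4), which is the hyperspace analogue of the triangle inequality and the one place where double-refinement replaces the metric inequality. Recalling that $\prec$ means reverse inclusion, I must show that every $\mathcal{U}\in 1(\rho_H(A,B)\cap\rho_H(B,C))$ lies in $\rho_H(A,C)$. By definition of the operation $1(\cdot)$ there is $\mathcal{V}\in\rho_H(A,B)\cap\rho_H(B,C)$ with $\mathcal{V}\leqslant\frac{1}{2}\mathcal{U}$. Fixing $a\in A$, item (6) supplies $b\in B$ and $V_1\in\mathcal{V}$ with $a,b\in V_1$, and then $c\in C$ and $V_2\in\mathcal{V}$ with $b,c\in V_2$; since $b\in V_1\cap V_2$, the double-refinement $\mathcal{V}\leqslant\frac{1}{2}\mathcal{U}$ yields $U\in\mathcal{U}$ with $V_1\cup V_2\subset U$, so $a,c\in U$ and thus $a\in\mathrm{St}[C,\mathcal{U}]$. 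This gives $A\subset\mathrm{St}[C,\mathcal{U}]$; the inclusion $C\subset\mathrm{St}[A,\mathcal{U}]$ is obtained by the same chaining run from $C$ through $B$ to $A$, and item (6) then gives $\mathcal{U}\in\rho_H(A,C)$. I expect the gluing of two $\mathcal{V}$-stars into a single $\mathcal{U}$-star to be the main obstacle, since it is the only step that uses the admissible structure rather than pure set manipulation; the single application of double-refinement is precisely why exactly one factor $1(\cdot)$, and no more, appears on the right-hand side.
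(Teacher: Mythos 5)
Your proof is correct and follows essentially the same route as the paper's: item (6) by unwinding the definition of $\rho_H$ in terms of stars, item (3) via Proposition \ref{P2}, item (4) by a single double-refinement chaining through a common point of $B$, and item (5) by combining stars of unions. The only difference is organizational — you prove the star characterization (6) first and use it as the working tool for (3)--(5), whereas the paper proves the items in order directly in the $\rho$ formalism — but the underlying arguments coincide step for step.
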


\begin{proof}
Items $\left( 1\right) $ and $\left( 2\right) $ are immediate from the
definition. For item $\left( 3\right) $, note that $\rho _{A}\left( B\right)
=\mathcal{O}$ if and only if $B\subset \mathrm{cls}\left( A\right) =A$, by
Proposition \ref{P2}. Analogously, $\rho _{B}\left( A\right) =\mathcal{O}$
if and only if $A\subset B$. Hence, $\rho _{H}\left( A,B\right) =\mathcal{O}$
if and only if $\rho _{A}\left( B\right) =\rho _{B}\left( A\right) =\mathcal{%
O}$ if and only if $A=B$. For item $\left( 4\right) $, suppose that $%
\mathcal{U}\in 1\left( \rho _{H}\left( A,B\right) \cap \rho _{H}\left(
B,C\right) \right) $. Then there is $\mathcal{V}\in \rho _{H}\left(
A,B\right) \cap \rho _{H}\left( B,C\right) $ such that $\mathcal{V}\leqslant
\frac{1}{2}\mathcal{U}$. It follows that
\begin{eqnarray*}
\mathcal{V} &\in &\rho _{A}\left( B\right) \cap \rho _{B}\left( A\right)
\cap \rho _{B}\left( C\right) \cap \rho _{C}\left( B\right) \\
&=&\left\{ \bigcap\limits_{b\in B}\bigcup\limits_{a\in A}\rho \left(
a,b\right) \right\} \cap \left\{ \bigcap\limits_{a\in A}\bigcup\limits_{b\in
B}\rho \left( a,b\right) \right\} \cap \left\{ \bigcap\limits_{c\in
C}\bigcup\limits_{b\in B}\rho \left( b,c\right) \right\} \cap \left\{
\bigcap\limits_{b\in B}\bigcup\limits_{c\in C}\rho \left( b,c\right)
\right\} .
\end{eqnarray*}%
For a given $c\in C$, there is $b\in B$ such that $\mathcal{V}\in \rho
\left( b,c\right) $, because $\mathcal{V}\in \rho _{B}\left( C\right) $. For
this $b$, there is $a\in A$ such that $\mathcal{V}\in \rho \left( a,b\right)
$, since $\mathcal{V}\in \rho _{A}\left( B\right) $. Hence $a,c\in \mathrm{St%
}\left[ b,\mathcal{V}\right] $, which implies $a\in \mathrm{St}\left[ c,%
\mathcal{U}\right] $, as $\mathcal{V}\leqslant \frac{1}{2}\mathcal{U}$. Thus
$\mathcal{U}\in \rho \left( a,c\right) $. Since $c\in C$ is arbitrary, it
follows that $\mathcal{U}\in \bigcap\limits_{c\in C}\rho \left( c,A\right)
=\rho _{A}\left( C\right) $. Similarly, by using $\mathcal{V}\in \rho
_{B}\left( A\right) \cap \rho _{C}\left( B\right) $, we obtain $\mathcal{U}%
\in \bigcap\limits_{a\in A}\rho \left( a,C\right) =\rho _{A}\left( C\right) $%
. Therefore $1\left( \rho _{H}\left( A,B\right) \cap \rho _{H}\left(
B,C\right) \right) \subset \rho _{A}\left( C\right) \cap \rho _{C}\left(
A\right) =\rho _{H}\left( A,C\right) $. We now show the item $\left(
5\right) $. Suppose that $\mathcal{U}\in \rho _{H}\left( A,C\right) \cap
\rho _{H}\left( B,D\right) $ and let $x\in A\cup B$ and $y\in C\cup D$. In
the case $x\in A$, we have
\begin{equation*}
\mathcal{U}\in \rho _{C}\left( A\right) \subset \rho \left( x,C\right)
\subset \rho \left( x,C\cup D\right) .
\end{equation*}%
If $x\in B$, we have
\begin{equation*}
\mathcal{U}\in \rho _{D}\left( B\right) \subset \rho \left( x,D\right)
\subset \rho \left( x,C\cup D\right) .
\end{equation*}%
Hence $\mathcal{U}\in \rho _{C\cup D}\left( A\cup B\right) $. If $y\in C$ or
$y\in D$, we have respectively
\begin{equation*}
\mathcal{U}\in \rho _{A}\left( C\right) \subset \rho \left( y,A\right)
\subset \rho \left( y,A\cup B\right)
\end{equation*}%
or
\begin{equation*}
\mathcal{U}\in \rho _{B}\left( D\right) \subset \rho \left( y,B\right)
\subset \rho \left( y,A\cup B\right) .
\end{equation*}%
Hence $\mathcal{U}\in \rho _{A\cup B}\left( C\cup D\right) $, and therefore $%
\mathcal{U}\in \rho _{H}\left( A\cup B,C\cup D\right) $. It follows that $%
\rho _{H}\left( A,C\right) \cap \rho _{H}\left( B,D\right) \subset \rho
_{H}\left( A\cup B,C\cup D\right) $. Finally, we show the item $\left(
6\right) $. We have $\mathcal{U}\in \rho _{H}\left( A,B\right) $ if and only
if $\mathcal{U}\in \rho \left( a,B\right) $, for every $a\in A$, and $%
\mathcal{U}\in \rho \left( b,A\right) $, for every $b\in B$, which means
that $A\subset \mathrm{St}\left[ B,\mathcal{U}\right] $ and $B\subset
\mathrm{St}\left[ A,\mathcal{U}\right] $.
\end{proof}

The function $\rho _{H}$ also relates to the notion of diameter. In fact, by
estimating the diameter of a reunion of sets, the \textquotedblleft
distance\textquotedblright\ between the sets should be regarded, as the
following.

\begin{proposition}
Let $A,B\subset X$ be nonempty subsets. The following properties hold:

\begin{enumerate}
\item $\mathrm{D}\left( A\cup B\right) \prec 1\left( \mathrm{D}\left(
A\right) \cap \mathrm{D}\left( B\right) \cap \rho _{A}\left( B\right)
\right) .$

\item $\mathrm{D}\left( A\cup B\right) \prec 1\left( \mathrm{D}\left(
A\right) \cap \mathrm{D}\left( B\right) \cap \rho _{B}\left( A\right)
\right) .$

\item $\mathrm{D}\left( A\cup B\right) \prec 1\left( \mathrm{D}\left(
A\right) \cap \mathrm{D}\left( B\right) \cap \rho _{H}\left( A,B\right)
\right) .$
\end{enumerate}
\end{proposition}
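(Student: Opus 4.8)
The plan is to read $\prec$ as reverse inclusion: since $\mathcal{E}_1 \prec \mathcal{E}_2$ means $\mathcal{E}_1 \supset \mathcal{E}_2$, proving item $(1)$ amounts to showing $\mathrm{D}(A\cup B) \supseteq 1\left(\mathrm{D}(A) \cap \mathrm{D}(B) \cap \rho_A(B)\right)$. I would therefore fix a covering $\mathcal{U}$ in the right-hand side and verify that $\mathcal{U}\in\mathrm{D}(A\cup B)$, i.e. $\mathcal{U}\in\rho(x,y)$ for every pair $x,y\in A\cup B$. By the definition of the operation $1(\cdot)$, membership of $\mathcal{U}$ supplies a covering $\mathcal{V}\in\mathrm{D}(A)\cap\mathrm{D}(B)\cap\rho_A(B)$ with $\mathcal{V}\leqslant\tfrac12\mathcal{U}$; in particular $\mathcal{V}\leqslant\mathcal{U}$, so upward hereditariness of $\rho$ lets me pass from $\mathcal{V}$-membership to $\mathcal{U}$-membership.

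The core is a case analysis on where $x$ and $y$ lie. If both lie in $A$, then $\mathcal{V}\in\mathrm{D}(A)\subseteq\rho(x,y)$ (reading $\mathrm{D}(A)=\bigcap_{x,y\in A}\rho(x,y)$), and hereditariness gives $\mathcal{U}\in\rho(x,y)$; the case $x,y\in B$ is identical using $\mathrm{D}(B)$. The decisive case is the mixed one, say $x\in A$ and $y\in B$. Here I would use $\mathcal{V}\in\rho_A(B)=\bigcap_{b\in B}\bigcup_{a\in A}\rho(b,a)$ to extract, for the point $y\in B$, an intermediate $a\in A$ with $\mathcal{V}\in\rho(y,a)$; since $x,a\in A$ and $\mathcal{V}\in\mathrm{D}(A)$, also $\mathcal{V}\in\rho(x,a)$. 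The triangle inequality of Proposition \ref{P1}$(4)$ with $n=1$ gives $\rho(x,y)\prec 1\left(\rho(x,a)\cap\rho(a,y)\right)$, and because $\mathcal{V}\in\rho(x,a)\cap\rho(a,y)$ with $\mathcal{V}\leqslant\tfrac12\mathcal{U}$, we obtain $\mathcal{U}\in 1\left(\rho(x,a)\cap\rho(a,y)\right)\subseteq\rho(x,y)$. The remaining mixed case $x\in B$, $y\in A$ is handled identically (extract $a\in A$ for the point $x\in B$), or by the symmetry $\rho(x,y)=\rho(y,x)$. This exhausts all pairs and proves item $(1)$.

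For item $(2)$ I would simply interchange the roles of $A$ and $B$: since $\mathrm{D}(A\cup B)=\mathrm{D}(B\cup A)$ and $\mathrm{D}(A)\cap\mathrm{D}(B)=\mathrm{D}(B)\cap\mathrm{D}(A)$, while the swap turns $\rho_A(B)$ into $\rho_B(A)$, item $(2)$ is exactly item $(1)$ applied to the pair $(B,A)$. Item $(3)$ then follows formally: from $\rho_H(A,B)=\rho_A(B)\cap\rho_B(A)$ we have $\rho_H(A,B)\subseteq\rho_A(B)$, hence $\mathrm{D}(A)\cap\mathrm{D}(B)\cap\rho_H(A,B)\subseteq\mathrm{D}(A)\cap\mathrm{D}(B)\cap\rho_A(B)$; since the operation $1(\cdot)$ is order-preserving, applying it preserves this inclusion, and combining with item $(1)$ gives $\mathrm{D}(A\cup B)\supseteq 1\left(\mathrm{D}(A)\cap\mathrm{D}(B)\cap\rho_A(B)\right)\supseteq 1\left(\mathrm{D}(A)\cap\mathrm{D}(B)\cap\rho_H(A,B)\right)$, which is item $(3)$.

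The one point needing care — the main obstacle — is the mixed case, specifically the bookkeeping of the factor $1$: the intermediate point $a\in A$ is reached from both $x$ and $y$ only at the refined scale $\mathcal{V}$, and it is precisely the double-refinement $\mathcal{V}\leqslant\tfrac12\mathcal{U}$ together with Proposition \ref{P1}$(4)$ that upgrades the two $\mathcal{V}$-estimates into the single $\mathcal{U}$-estimate $\mathcal{U}\in\rho(x,y)$. Everything else is a direct application of hereditariness and of the definitions of $\mathrm{D}$ and $\rho_A(B)$.
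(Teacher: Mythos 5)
Your proposal is correct and follows essentially the same route as the paper's own proof: fix $\mathcal{U}$ in the right-hand side, pass to a double-refinement $\mathcal{V}\leqslant\tfrac12\mathcal{U}$, split into the cases $x,y\in A$, $x,y\in B$, and the mixed case, where $\rho_A(B)$ supplies an intermediate point $a\in A$ and Proposition \ref{P1}$(4)$ with hereditariness upgrades the two $\mathcal{V}$-estimates to $\mathcal{U}\in\rho(x,y)$. The only (harmless) divergence is that you deduce item $(3)$ from item $(1)$ alone via order-preservation of the operation $1(\cdot)$, whereas the paper cites items $(1)$ and $(2)$ together; both derivations are immediate.
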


\begin{proof}
For item $\left( 1\right) $ we suppose the nontrivial case $1\left( \mathrm{D%
}\left( A\right) \cap \mathrm{D}\left( B\right) \cap \rho _{H}\left(
A,B\right) \right) \neq \emptyset $. If $\mathcal{U}\in 1\left( \mathrm{D}%
\left( A\right) \cap \mathrm{D}\left( B\right) \cap \rho _{H}\left(
A,B\right) \right) $ then there is $\mathcal{V}\in \mathrm{D}\left( A\right)
\cap \mathrm{D}\left( B\right) \cap \rho _{H}\left( A,B\right) $ such that $%
\mathcal{V}\leqslant \frac{1}{2}\mathcal{U}$. Let $x,y\in A\cup B$. If $%
x,y\in A$, we have $\mathcal{V}\in \rho \left( x,y\right) $, and then $%
\mathcal{U}\in \rho \left( x,y\right) $ because $\rho \left( x,y\right) $ is
upward hereditary. By the same reason, $\mathcal{U}\in \rho \left(
x,y\right) $ if $x,y\in B$. Suppose $x\in A$ and $y\in B$. Since $\mathcal{V}%
\in \rho _{A}\left( B\right) =\bigcap\limits_{b\in B}\bigcup\limits_{a\in
A}\rho \left( a,b\right) $, there is $a\in A$ such that $\mathcal{V}\in \rho
\left( a,y\right) $. As $\mathcal{V}\in \mathrm{D}\left( A\right) $, we have
$\mathcal{V}\in \rho \left( x,a\right) $. Hence $\mathcal{V}\in \rho \left(
x,a\right) \cap \rho \left( a,y\right) $ and then $\mathcal{U}\in 1\left(
\rho \left( x,a\right) \cap \rho \left( a,y\right) \right) $. Since $\rho
\left( x,y\right) \prec 1\left( \rho \left( x,a\right) \cap \rho \left(
a,y\right) \right) $, it follows that $\mathcal{U}\in \rho \left( x,y\right)
$. In any case we have $\mathcal{U}\in \rho \left( x,y\right) $ for
arbitraries $x,y\in A\cup B$. Therefore $\mathcal{U}\in \mathrm{D}\left(
A\cup B\right) $. Item $\left( 2\right) $ can be analogously proved and item
$\left( 3\right) $ is a straightforward consequence of item $\left( 1\right)
$ together with item $\left( 2\right) $.
\end{proof}

In order to provide a uniformity on $\mathcal{H}\left( X\right) $, we
construct a base for diagonal uniformity by means of the function $\rho _{H}$%
. For each $A\in \mathcal{H}\left( X\right) $ and $\mathcal{U}\in \mathcal{O}
$, we define the set $\mathrm{B}_{H}\left( A,\mathcal{U}\right) $ in $%
\mathcal{H}\left( X\right) $ by
\begin{eqnarray*}
\mathrm{B}_{H}\left( A,\mathcal{U}\right) &=&\left\{ B\in \mathcal{H}\left(
X\right) :\mathcal{U}\in \rho _{H}\left( A,B\right) \right\} \\
&=&\left\{ B\in \mathcal{H}\left( X\right) :A\subset \mathrm{St}\left[ B,%
\mathcal{U}\right] \text{ and }B\subset \mathrm{St}\left[ A,\mathcal{U}%
\right] \right\} .
\end{eqnarray*}

We now construct a diagonal uniformity on $\mathcal{H}\left( X\right) $ by
means of the sets $\mathrm{B}_{H}\left( A,\mathcal{U}\right) $. It should be
remembered that for two surroundings $D,E\subset \mathcal{H}\left( X\right)
\times \mathcal{H}\left( X\right) $, one has the composition
\begin{equation*}
D\circ E=\left\{ \left( A,B\right) \in \mathcal{H}\left( X\right) \times
\mathcal{H}\left( X\right) :\left( A,C\right) \in E\text{ and }\left(
C,B\right) \in D\text{ for some }C\in \mathcal{H}\left( X\right) \right\}
\end{equation*}%
and the inverse
\begin{equation*}
D^{-1}=\left\{ \left( A,B\right) \in \mathcal{H}\left( X\right) \times
\mathcal{H}\left( X\right) :\left( B,A\right) \in D\right\} .
\end{equation*}

\begin{theorem}
\label{T4} The collection of the sets $D_{\mathcal{U}}=\bigcup \left\{
\mathrm{B}_{H}\left( A,\mathcal{U}\right) \times \mathrm{B}_{H}\left( A,%
\mathcal{U}\right) :A\in \mathcal{H}\left( X\right) \right\} $, for $%
\mathcal{U}\in \mathcal{O}$, is a base for a diagonal uniformity $\mathfrak{D%
}_{\mathcal{H}}$ on $\mathcal{H}\left( X\right) $.
\end{theorem}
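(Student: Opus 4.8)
The plan is to verify directly the four standard conditions for a family of surroundings to be a base for a diagonal uniformity: (i) every $D_{\mathcal{U}}$ contains the diagonal of $\mathcal{H}(X)$; (ii) every $D_{\mathcal{U}}$ contains a symmetric member of the family; (iii) for each $\mathcal{U}$ there is $\mathcal{V}$ with $D_{\mathcal{V}}\circ D_{\mathcal{V}}\subset D_{\mathcal{U}}$; and (iv) any two members $D_{\mathcal{U}_{1}},D_{\mathcal{U}_{2}}$ contain a common member $D_{\mathcal{W}}$. Throughout I would exploit that, because each factor $\mathrm{B}_{H}(A,\mathcal{U})$ enters $D_{\mathcal{U}}$ as a square $\mathrm{B}_{H}(A,\mathcal{U})\times\mathrm{B}_{H}(A,\mathcal{U})$, a pair $(B,C)$ lies in $D_{\mathcal{U}}$ precisely when there is a single ``center'' $A\in\mathcal{H}(X)$ with $\mathcal{U}\in\rho_{H}(A,B)\cap\rho_{H}(A,C)$; this symmetric description makes (ii) immediate, since it is unchanged under swapping $B$ and $C$, so $D_{\mathcal{U}}^{-1}=D_{\mathcal{U}}$.

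For (i), given $B\in\mathcal{H}(X)$ I would take $A=B$ as center: by Proposition \ref{P3}(3) one has $\rho_{H}(B,B)=\mathcal{O}$, so every $\mathcal{U}\in\mathcal{O}$ satisfies $\mathcal{U}\in\rho_{H}(B,B)$ and hence $(B,B)\in D_{\mathcal{U}}$. For (iv), the essential observation is a monotonicity lemma: if $\mathcal{W}\leqslant\mathcal{U}$ then $D_{\mathcal{W}}\subset D_{\mathcal{U}}$. This follows because $\rho_{H}$ is upward hereditary (being built from the upward hereditary $\rho$ by unions and intersections), so from a center $A$ with $\mathcal{W}\in\rho_{H}(A,B)\cap\rho_{H}(A,C)$ and $\mathcal{W}\leqslant\mathcal{U}$ we get $\mathcal{U}\in\rho_{H}(A,B)\cap\rho_{H}(A,C)$. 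Then, using property (1) of Definition \ref{Admiss}, I would pick $\mathcal{W}\in\mathcal{O}$ double-refining both $\mathcal{U}_{1}$ and $\mathcal{U}_{2}$ (so in particular $\mathcal{W}\leqslant\mathcal{U}_{1}$ and $\mathcal{W}\leqslant\mathcal{U}_{2}$), whence $D_{\mathcal{W}}\subset D_{\mathcal{U}_{1}}\cap D_{\mathcal{U}_{2}}$.

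The main step is (iii), and here the key idea is to choose the intermediate set itself as the common center. Given $\mathcal{U}$, I would pick $\mathcal{V}\in\mathcal{O}$ with $\mathcal{V}\leqslant\frac{1}{2}\mathcal{U}$ (possible by Definition \ref{Admiss}(1)). Let $(B,C)\in D_{\mathcal{V}}\circ D_{\mathcal{V}}$; unpacking the composition there is $M\in\mathcal{H}(X)$ with $(B,M),(M,C)\in D_{\mathcal{V}}$, hence centers $A_{1},A_{2}$ satisfying $\mathcal{V}\in\rho_{H}(A_{1},B)\cap\rho_{H}(A_{1},M)$ and $\mathcal{V}\in\rho_{H}(A_{2},M)\cap\rho_{H}(A_{2},C)$. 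Applying the triangle inequality of Proposition \ref{P3}(4) to the triples $M,A_{1},B$ and $M,A_{2},C$ (and using the symmetry of Proposition \ref{P3}(1)) gives
\begin{equation*}
\rho_{H}(M,B)\prec 1\!\left(\rho_{H}(M,A_{1})\cap\rho_{H}(A_{1},B)\right),\qquad \rho_{H}(M,C)\prec 1\!\left(\rho_{H}(M,A_{2})\cap\rho_{H}(A_{2},C)\right).
\end{equation*}
Since $\mathcal{V}$ lies in each intersection on the right and $\mathcal{V}\leqslant\frac{1}{2}\mathcal{U}$, the definition of the operation $1(\cdot)$ yields $\mathcal{U}\in\rho_{H}(M,B)$ and $\mathcal{U}\in\rho_{H}(M,C)$. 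Thus $M$ serves as a center witnessing $(B,C)\in D_{\mathcal{U}}$, proving $D_{\mathcal{V}}\circ D_{\mathcal{V}}\subset D_{\mathcal{U}}$.

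I expect the composition condition (iii) to be the only real obstacle. The temptation is to chain the four-term sequence $B,A_{1},M,A_{2},C$ and iterate the triangle inequality to bound $\rho_{H}(B,C)$, which would force a coarser choice of $\mathcal{V}$ (a double-refinement of higher order) and complicate the bookkeeping with the operation $n(\cdot)$. Recognizing that $M$ can be used directly as the common center reduces the argument to two single applications of Proposition \ref{P3}(4) and makes $\mathcal{V}\leqslant\frac{1}{2}\mathcal{U}$ suffice; confirming that this choice of $\mathcal{V}$ exists and that $1(\cdot)$ interacts correctly with $\leqslant\frac{1}{2}$ is the point requiring care.
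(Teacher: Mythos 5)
Your proof is correct and takes essentially the same approach as the paper: both verify the standard base axioms for a diagonal uniformity and handle the key composition step by choosing $\mathcal{V}\leqslant\frac{1}{2}\mathcal{U}$ and promoting the intermediate set to the common center --- your appeal to Proposition \ref{P3}(4) is just the packaged form of the star-chasing the paper writes out explicitly. The only small divergence is in the filter-base condition, where the paper re-centers at $B$ using the common double-refinement $\mathcal{W}$, while you keep the original center and use upward hereditariness of $\rho_{H}$; both are valid.
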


\begin{proof}
Let $\Delta \subset \mathcal{H}\left( X\right) \times \mathcal{H}\left(
X\right) $ be the diagonal of $\mathcal{H}\left( X\right) $, that is, $%
\Delta =\left\{ \left( A,A\right) :A\in \mathcal{H}\left( X\right) \right\} $%
. It is easily seen that $\Delta \subset D_{\mathcal{U}}$ for all $\mathcal{U%
}\in \mathcal{O}$. For $\mathcal{U},\mathcal{V}\in \mathcal{O}$, take $%
\mathcal{W}\in \mathcal{O}$ such that $\mathcal{W}\leqslant \frac{1}{2}%
\mathcal{U}$ and $\mathcal{W}\leqslant \frac{1}{2}\mathcal{V}$. If $\left(
A,B\right) \in D_{\mathcal{W}}$ then there is $C\in \mathcal{H}\left(
X\right) $ such that $\left( A,B\right) \in \mathrm{B}_{H}\left( C,\mathcal{W%
}\right) \times \mathrm{B}_{H}\left( C,\mathcal{W}\right) $. Hence
\begin{eqnarray*}
A &\subset &\mathrm{St}\left[ C,\mathcal{W}\right] \text{ and }C\subset
\mathrm{St}\left[ A,\mathcal{W}\right] , \\
B &\subset &\mathrm{St}\left[ C,\mathcal{W}\right] \text{ and }C\subset
\mathrm{St}\left[ B,\mathcal{W}\right] .
\end{eqnarray*}%
Thus $A\subset \mathrm{St}\left[ C,\mathcal{W}\right] $ and $C\subset
\mathrm{St}\left[ B,\mathcal{W}\right] $, which implies $A\subset \mathrm{St}%
\left[ B,\mathcal{U}\right] $, since $\mathcal{W}\leqslant \frac{1}{2}%
\mathcal{U}$. On the other hand $B\subset \mathrm{St}\left[ C,\mathcal{W}%
\right] $ and $C\subset \mathrm{St}\left[ A,\mathcal{W}\right] $, which
implies $B\subset \mathrm{St}\left[ A,\mathcal{U}\right] $. Hence $A\in
\mathrm{B}_{H}\left( B,\mathcal{U}\right) $, and therefore $\left(
A,B\right) \in D_{\mathcal{U}}$. As $\mathcal{W}\leqslant \frac{1}{2}%
\mathcal{V}$, we can similarly prove that $\left( A,B\right) \in D_{\mathcal{%
V}}$. Thus $D_{\mathcal{W}}\subset D_{\mathcal{U}}\cap D_{\mathcal{V}}$.
Now, for a given surrounding $D_{\mathcal{U}}$, we should find $\mathcal{V},%
\mathcal{W}\in \mathcal{O}$ such that $D_{\mathcal{V}}\circ D_{\mathcal{V}%
}\subset D_{\mathcal{U}}$ and $D_{\mathcal{W}}^{-1}\subset D_{\mathcal{U}}$.
For indeed, since $D_{\mathcal{U}}$ is symmetric, we have $D_{\mathcal{U}%
}^{-1}=D_{\mathcal{U}}$. Take $\mathcal{V}\in \mathcal{O}$ such that $%
\mathcal{V}\leqslant \frac{1}{2}\mathcal{U}$. If $\left( A,B\right) \in D_{%
\mathcal{V}}\circ D_{\mathcal{V}}$ then there is $C\in \mathcal{H}\left(
X\right) $ such that $\left( A,C\right) ,\left( C,B\right) \in D_{\mathcal{V}%
}$. Hence there are $E,F\in \mathcal{H}\left( X\right) $ such that $\left(
A,C\right) \in \mathrm{B}_{H}\left( E,\mathcal{V}\right) \times \mathrm{B}%
_{H}\left( E,\mathcal{V}\right) $ and $\left( C,B\right) \in \mathrm{B}%
_{H}\left( F,\mathcal{V}\right) \times \mathrm{B}_{H}\left( F,\mathcal{V}%
\right) $. This means that
\begin{eqnarray*}
A &\subset &\mathrm{St}\left[ E,\mathcal{V}\right] \text{ and }E\subset
\mathrm{St}\left[ A,\mathcal{V}\right] , \\
C &\subset &\mathrm{St}\left[ E,\mathcal{V}\right] \text{ and }E\subset
\mathrm{St}\left[ C,\mathcal{V}\right] , \\
C &\subset &\mathrm{St}\left[ F,\mathcal{V}\right] \text{ and }F\subset
\mathrm{St}\left[ C,\mathcal{V}\right] , \\
B &\subset &\mathrm{St}\left[ F,\mathcal{V}\right] \text{ and }F\subset
\mathrm{St}\left[ B,\mathcal{V}\right] .
\end{eqnarray*}%
As $\mathcal{V}\leqslant \frac{1}{2}\mathcal{U}$, it follows that
\begin{eqnarray*}
A &\subset &\mathrm{St}\left[ C,\mathcal{U}\right] \text{ and }C\subset
\mathrm{St}\left[ A,\mathcal{U}\right] , \\
B &\subset &\mathrm{St}\left[ C,\mathcal{U}\right] \text{ and }C\subset
\mathrm{St}\left[ B,\mathcal{U}\right] .
\end{eqnarray*}%
Hence $\left( A,B\right) \in \mathrm{B}_{H}\left( C,\mathcal{U}\right)
\times \mathrm{B}_{H}\left( C,\mathcal{U}\right) $, and therefore $D_{%
\mathcal{V}}\circ D_{\mathcal{V}}\subset D_{\mathcal{U}}$.
\end{proof}

The resulting uniform space $\left( \mathcal{H}\left( X\right) ,\mathfrak{D}%
_{\mathcal{H}}\right) $ is called \emph{hyperspace} of $X$ with respect to
the admissible family $\mathcal{O}$.

Recall that the uniform topology on $\mathcal{H}\left( X\right) $ generated
by $\mathfrak{D}_{\mathcal{H}}$ is the topology whose neighborhood base at $%
A\in \mathcal{H}\left( X\right) $ is formed by the collection $\mathcal{N}%
_{A}=\left\{ D\left[ A\right] :D\in \mathfrak{D}_{\mathcal{H}}\right\} $
where
\begin{equation*}
D\left[ A\right] =\left\{ B\in \mathcal{H}\left( X\right) :\left( A,B\right)
\in D\right\} .
\end{equation*}%
The same topology is produced if we consider only elements $D_{\mathcal{U}}$
in the base for $\mathfrak{D}_{\mathcal{H}}$ (see \cite[Theorem 35.6]{Will}%
). The covering uniformity $\mathcal{O}_{\mathcal{H}}$ associated to the
diagonal uniformity $\mathfrak{D}_{\mathcal{H}}$ has a base of uniform
coverings of the form $\upsilon \left( D\right) =\left\{ D\left[ A\right]
:A\in \mathcal{H}\left( X\right) \right\} $ for $D\in \mathfrak{D}_{\mathcal{%
H}}$.

\begin{theorem}
\label{T3}For any $A\in \mathcal{H}\left( X\right) $, one has the inclusions:

\begin{enumerate}
\item $D_{\mathcal{V}}\left[ A\right] \subset \mathrm{B}_{H}\left( A,%
\mathcal{U}\right) \subset D_{\mathcal{U}}\left[ A\right] $ whenever $%
\mathcal{V}\leqslant \frac{1}{2}\mathcal{U}$.

\item $\mathrm{St}\left[ A,\upsilon \left( D_{\mathcal{V}}\right) \right]
\subset \mathrm{B}_{H}\left( A,\mathcal{U}\right) \subset \mathrm{St}\left[
A,\upsilon \left( D_{\mathcal{U}}\right) \right] $ whenever $\mathcal{V}%
\leqslant \frac{1}{2^{2}}\mathcal{U}$.
\end{enumerate}
\end{theorem}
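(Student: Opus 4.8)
The plan is to reduce everything to the characterization of $\mathrm{B}_{H}\left( A,\mathcal{U}\right) $ through the function $\rho _{H}$ together with the triangle-type inequality of Proposition \ref{P3}(4), so that no star computation has to be repeated. Throughout I will use that $\left( A,B\right) \in D_{\mathcal{U}}$ means exactly that there is some $C\in \mathcal{H}\left( X\right) $ with $\mathcal{U}\in \rho _{H}\left( C,A\right) \cap \rho _{H}\left( C,B\right) $, that $\Delta \subset D_{\mathcal{U}}$ (established in the proof of Theorem \ref{T4}), and that $\rho _{H}\left( A,A\right) =\mathcal{O}$ by Proposition \ref{P3}(3), so that $A\in \mathrm{B}_{H}\left( A,\mathcal{U}\right) $.

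For part (1), the inclusion $\mathrm{B}_{H}\left( A,\mathcal{U}\right) \subset D_{\mathcal{U}}\left[ A\right] $ is immediate and needs no refinement hypothesis: taking $C=A$, for every $B\in \mathrm{B}_{H}\left( A,\mathcal{U}\right) $ one has $A,B\in \mathrm{B}_{H}\left( A,\mathcal{U}\right) $, hence $\left( A,B\right) \in \mathrm{B}_{H}\left( A,\mathcal{U}\right) \times \mathrm{B}_{H}\left( A,\mathcal{U}\right) \subset D_{\mathcal{U}}$. For the first inclusion $D_{\mathcal{V}}\left[ A\right] \subset \mathrm{B}_{H}\left( A,\mathcal{U}\right) $, I would take $B\in D_{\mathcal{V}}\left[ A\right] $, choose the witnessing $C$ with $\mathcal{V}\in \rho _{H}\left( A,C\right) \cap \rho _{H}\left( C,B\right) $ (via symmetry, Proposition \ref{P3}(1)), and then invoke Proposition \ref{P3}(4): since $\mathcal{V}\leqslant \frac{1}{2}\mathcal{U}$, the definition of the operation $n\mathcal{E}$ gives $\mathcal{U}\in 1\left( \rho _{H}\left( A,C\right) \cap \rho _{H}\left( C,B\right) \right) \subset \rho _{H}\left( A,B\right) $, so $B\in \mathrm{B}_{H}\left( A,\mathcal{U}\right) $.

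For part (2), the right inclusion $\mathrm{B}_{H}\left( A,\mathcal{U}\right) \subset \mathrm{St}\left[ A,\upsilon \left( D_{\mathcal{U}}\right) \right] $ again holds with no refinement: $D_{\mathcal{U}}\left[ A\right] $ is a member of the covering $\upsilon \left( D_{\mathcal{U}}\right) $ that contains $A$ (because $\Delta \subset D_{\mathcal{U}}$), and by part (1) it contains all of $\mathrm{B}_{H}\left( A,\mathcal{U}\right) $, whence $\mathrm{B}_{H}\left( A,\mathcal{U}\right) \subset D_{\mathcal{U}}\left[ A\right] \subset \mathrm{St}\left[ A,\upsilon \left( D_{\mathcal{U}}\right) \right] $. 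The substantive step, and the place where the hypothesis $\mathcal{V}\leqslant \frac{1}{2^{2}}\mathcal{U}$ is genuinely used, is the left inclusion $\mathrm{St}\left[ A,\upsilon \left( D_{\mathcal{V}}\right) \right] \subset \mathrm{B}_{H}\left( A,\mathcal{U}\right) $. Given $B$ in this star, there is a single $C$ with $A,B\in D_{\mathcal{V}}\left[ C\right] $, which unwinds to two witnesses $E_{1},E_{2}$ satisfying $\mathcal{V}\in \rho _{H}\left( A,E_{1}\right) \cap \rho _{H}\left( E_{1},C\right) $ and $\mathcal{V}\in \rho _{H}\left( C,E_{2}\right) \cap \rho _{H}\left( E_{2},B\right) $; this is a chain $A,E_{1},C,E_{2},B$ of four $\mathcal{V}$-small links, and the crux is to collapse it to a single $\mathcal{U}$-link.

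I would carry out the collapse by fixing, via the defining property of $\mathcal{V}\leqslant \frac{1}{2^{2}}\mathcal{U}$, an intermediate covering $\mathcal{W}$ with $\mathcal{V}\leqslant \frac{1}{2}\mathcal{W}$ and $\mathcal{W}\leqslant \frac{1}{2}\mathcal{U}$. Applying Proposition \ref{P3}(4) to the two halves $A,E_{1},C$ and $C,E_{2},B$ yields $\mathcal{W}\in \rho _{H}\left( A,C\right) $ and $\mathcal{W}\in \rho _{H}\left( C,B\right) $, after which a final application of Proposition \ref{P3}(4) to $A,C,B$ gives $\mathcal{U}\in \rho _{H}\left( A,B\right) $, that is, $B\in \mathrm{B}_{H}\left( A,\mathcal{U}\right) $. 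The main obstacle is precisely the bookkeeping of this two-level collapse: one must see that a length-four chain forces exactly two rounds of double-refinement (hence $1/2^{2}$ rather than $1/2$), and that routing both halves through the common midpoint $C$, instead of stringing all four points into one long chain, is what keeps the loss down to the factor $1/2^{2}$. A parallel argument could be run at the level of stars using Proposition \ref{P3}(6) and the two-step star estimate implicit in the proof of Theorem \ref{T4}, but the $\rho _{H}$-triangle inequality keeps the combinatorics shortest.
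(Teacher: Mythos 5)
Your proof is correct and takes essentially the same approach as the paper's: part (1) uses the same witness $C$ and a single triangle step, and part (2) introduces the same intermediate covering $\mathcal{W}$ (with $\mathcal{V}\leqslant \frac{1}{2}\mathcal{W}$ and $\mathcal{W}\leqslant \frac{1}{2}\mathcal{U}$), collapses the two halves $A,E_{1},C$ and $C,E_{2},B$ to $\mathcal{W}$-links, and then combines them through the common midpoint $C$. The only difference is presentational: you route each triangle step through Proposition \ref{P3}(4) and the operation $1\mathcal{E}$, whereas the paper invokes its own item (1) and the star computation carried out in the proof of Theorem \ref{T4}; these are the same estimate.
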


\begin{proof}
$\left( 1\right) $ If $B\in D_{\mathcal{V}}\left[ A\right] $ then $\left(
A,B\right) \in D_{\mathcal{V}}$, hence there is $C\in \mathcal{H}\left(
X\right) $ such that $\left( A,B\right) \in \mathrm{B}_{H}\left( C,\mathcal{V%
}\right) \times \mathrm{B}_{H}\left( C,\mathcal{V}\right) $. As $\mathcal{V}%
\leqslant \frac{1}{2}\mathcal{U}$, it follows that $B\in \mathrm{B}%
_{H}\left( A,\mathcal{U}\right) $. The inclusion $\mathrm{B}_{H}\left( A,%
\mathcal{U}\right) \subset D_{\mathcal{U}}\left[ A\right] $ is obvious.

$\left( 2\right) $ Let $\mathcal{W}\in \mathcal{O}$ such that $\mathcal{V}%
\leqslant \frac{1}{2}\mathcal{W}$ and $\mathcal{W}\leqslant \frac{1}{2}%
\mathcal{U}$. If $B\in \mathrm{St}\left[ A,\upsilon \left( D_{\mathcal{V}%
}\right) \right] $ then $A,B\in D_{\mathcal{V}}\left[ C\right] $ for some $%
C\in \mathcal{H}\left( X\right) $. By item $\left( 1\right) $, we have $D_{%
\mathcal{V}}\left[ C\right] \subset \mathrm{B}_{H}\left( C,\mathcal{W}%
\right) $, hence $A,B\in \mathrm{B}_{H}\left( C,\mathcal{W}\right) $. As $%
\mathcal{W}\leqslant \frac{1}{2}\mathcal{U}$, it follows that $B\in \mathrm{B%
}_{H}\left( A,\mathcal{U}\right) $. Thus $\mathrm{St}\left[ A,\upsilon
\left( D_{\mathcal{V}}\right) \right] \subset \mathrm{B}_{H}\left( A,%
\mathcal{U}\right) $. Now, since $\mathrm{B}_{H}\left( A,\mathcal{U}\right)
\subset D_{\mathcal{U}}\left[ A\right] $, the inclusion $\mathrm{B}%
_{H}\left( A,\mathcal{U}\right) \subset \mathrm{St}\left[ A,\upsilon \left(
D_{\mathcal{U}}\right) \right] $ is clear.
\end{proof}

The following results are immediate consequences of Proposition \ref{P3},
item 3, and Theorem \ref{T3}.

\begin{corollary}
\label{EC} Let $\left( F_{\lambda }\right) $ be a net in $\mathcal{H}(X)$
and $F\in \mathcal{H}\left( X\right) $. Then $F_{\lambda }\rightarrow F$ if
and only if $\rho _{H}\left( F_{\lambda },F\right) \rightarrow \mathcal{O}$.
\end{corollary}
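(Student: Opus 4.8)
The plan is to unwind both notions of convergence and observe that they become the same statement once the neighborhood base $\{D_{\mathcal{U}}\left[ F\right]\}_{\mathcal{U}\in \mathcal{O}}$ is compared with the family $\{\mathrm{B}_{H}\left( F,\mathcal{U}\right)\}_{\mathcal{U}\in \mathcal{O}}$ through the interleaving inclusions of Theorem \ref{T3}.

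First I would record the key translation. By the definition of $\mathrm{B}_{H}\left( F,\mathcal{U}\right) $ together with the symmetry of $\rho _{H}$ (Proposition \ref{P3}, item $1$), one has $F_{\lambda }\in \mathrm{B}_{H}\left( F,\mathcal{U}\right) $ if and only if $\mathcal{U}\in \rho _{H}\left( F_{\lambda },F\right) $. Hence, by Definition \ref{Convergence}, the assertion $\rho _{H}\left( F_{\lambda },F\right) \rightarrow \mathcal{O}$ is precisely the statement that for every $\mathcal{U}\in \mathcal{O}$ there is $\lambda _{0}$ with $F_{\lambda }\in \mathrm{B}_{H}\left( F,\mathcal{U}\right) $ for all $\lambda \geq \lambda _{0}$. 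On the topological side, since the sets $D_{\mathcal{U}}\left[ F\right] $, for $\mathcal{U}\in \mathcal{O}$, form a neighborhood base at $F$ for the uniform topology (as noted before Theorem \ref{T3}), the relation $F_{\lambda }\rightarrow F$ means exactly that for every $\mathcal{U}\in \mathcal{O}$ the net is eventually in $D_{\mathcal{U}}\left[ F\right] $.

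For the forward implication, suppose $F_{\lambda }\rightarrow F$ and fix $\mathcal{U}\in \mathcal{O}$. I would choose $\mathcal{V}\in \mathcal{O}$ with $\mathcal{V}\leqslant \frac{1}{2}\mathcal{U}$; since $D_{\mathcal{V}}\left[ F\right] $ is a neighborhood of $F$, the net is eventually in it, and Theorem \ref{T3}, item $1$, gives $D_{\mathcal{V}}\left[ F\right] \subset \mathrm{B}_{H}\left( F,\mathcal{U}\right) $, so the net is eventually in $\mathrm{B}_{H}\left( F,\mathcal{U}\right) $, that is, eventually $\mathcal{U}\in \rho _{H}\left( F_{\lambda },F\right) $. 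As $\mathcal{U}$ was arbitrary, $\rho _{H}\left( F_{\lambda },F\right) \rightarrow \mathcal{O}$. For the converse, suppose $\rho _{H}\left( F_{\lambda },F\right) \rightarrow \mathcal{O}$ and fix $\mathcal{U}\in \mathcal{O}$; then the net is eventually in $\mathrm{B}_{H}\left( F,\mathcal{U}\right) $, and the inclusion $\mathrm{B}_{H}\left( F,\mathcal{U}\right) \subset D_{\mathcal{U}}\left[ F\right] $ from Theorem \ref{T3}, item $1$, shows the net is eventually in $D_{\mathcal{U}}\left[ F\right] $, whence $F_{\lambda }\rightarrow F$.

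No step is a genuine obstacle here, so the only care required is bookkeeping: confirming that $\{D_{\mathcal{U}}\left[ F\right]\}$ really is a neighborhood base, and inserting the double-refinement $\mathcal{V}\leqslant \frac{1}{2}\mathcal{U}$ at the right place so that the two interleaved inclusions of Theorem \ref{T3}, item $1$, can be applied in each direction. Proposition \ref{P3}, item $3$, is not strictly needed for the equivalence itself, but it certifies that the limit value $\mathcal{O}$ genuinely encodes $F_{\lambda }$ coinciding with $F$ in the limit, which is what makes the formulation in terms of convergence to $\mathcal{O}$ meaningful.
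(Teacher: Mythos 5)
Your proof is correct and follows exactly the route the paper intends: the paper declares Corollary \ref{EC} an immediate consequence of Theorem \ref{T3}, and your argument simply unwinds that, using the interleaving $D_{\mathcal{V}}\left[ F\right] \subset \mathrm{B}_{H}\left( F,\mathcal{U}\right) \subset D_{\mathcal{U}}\left[ F\right] $ from item $\left( 1\right) $ together with the fact that the sets $D_{\mathcal{U}}\left[ F\right] $ form a neighborhood base at $F$. Your side remark is also apt: Proposition \ref{P3}, item $\left( 3\right) $, is cited by the paper mainly for the companion corollary (Hausdorffness of the uniform topology), not for this equivalence itself.
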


\begin{corollary}
The uniform topology in $\mathcal{H}\left( X\right) $ is Hausdorff.
\end{corollary}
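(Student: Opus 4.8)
The plan is to verify the separation axiom for the diagonal uniformity $\mathfrak{D}_{\mathcal{H}}$, namely that $\bigcap_{\mathcal{U}\in\mathcal{O}}D_{\mathcal{U}}=\Delta$, since the topology generated by a uniformity is Hausdorff precisely when the uniformity is separated (see \cite{Will}). Because the sets $D_{\mathcal{U}}$ form a base for $\mathfrak{D}_{\mathcal{H}}$ by Theorem \ref{T4}, it suffices to intersect over this base rather than over all surroundings.

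First I would take an arbitrary pair $(A,B)\in\bigcap_{\mathcal{U}\in\mathcal{O}}D_{\mathcal{U}}$ and show $A=B$. Fix $\mathcal{U}\in\mathcal{O}$ and, using admissibility (Definition \ref{Admiss}(1)), choose $\mathcal{V}\in\mathcal{O}$ with $\mathcal{V}\leqslant\frac{1}{2}\mathcal{U}$. Since $(A,B)\in D_{\mathcal{V}}$, that is $B\in D_{\mathcal{V}}\left[A\right]$, Theorem \ref{T3}(1) yields $B\in D_{\mathcal{V}}\left[A\right]\subset\mathrm{B}_{H}\left(A,\mathcal{U}\right)$, which by definition of $\mathrm{B}_{H}$ means $\mathcal{U}\in\rho_{H}\left(A,B\right)$. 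As $\mathcal{U}$ was arbitrary, every element of $\mathcal{O}$ lies in $\rho_{H}\left(A,B\right)$, so $\rho_{H}\left(A,B\right)=\mathcal{O}$. Applying Proposition \ref{P3}(3) gives $A=B$, whence $\bigcap_{\mathcal{U}}D_{\mathcal{U}}\subset\Delta$; the reverse inclusion is immediate since $\Delta\subset D_{\mathcal{U}}$ for every $\mathcal{U}$, as already observed in the proof of Theorem \ref{T4}. This establishes separation, hence the Hausdorff property.

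Alternatively, and more in the spirit of the preceding Corollary \ref{EC}, one may argue through uniqueness of net limits: if a net $\left(F_{\lambda}\right)$ in $\mathcal{H}(X)$ converges to both $F$ and $G$, then $\rho_{H}\left(F_{\lambda},F\right)\rightarrow\mathcal{O}$ and $\rho_{H}\left(F_{\lambda},G\right)\rightarrow\mathcal{O}$ by Corollary \ref{EC}, so the net $1\left(\rho_{H}\left(F,F_{\lambda}\right)\cap\rho_{H}\left(F_{\lambda},G\right)\right)$ converges to $\mathcal{O}$, using that $\mathcal{E}_{\lambda}\rightarrow\mathcal{O}$ implies $n\mathcal{E}_{\lambda}\rightarrow\mathcal{O}$. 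The triangle-type inequality of Proposition \ref{P3}(4) forces $\rho_{H}\left(F,G\right)\prec 1\left(\rho_{H}\left(F,F_{\lambda}\right)\cap\rho_{H}\left(F_{\lambda},G\right)\right)$, i.e. $\rho_{H}\left(F,G\right)$ contains this net; letting the net run then shows every $\mathcal{U}\in\mathcal{O}$ lies in $\rho_{H}\left(F,G\right)$, so $\rho_{H}\left(F,G\right)=\mathcal{O}$ and again $F=G$ by Proposition \ref{P3}(3).

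There is no genuine obstacle here, as the entire content is already packaged in Proposition \ref{P3}(3) together with Theorem \ref{T3}(1). The only point requiring minor care is the passage from ``$(A,B)$ lies in every $D_{\mathcal{U}}$'' to ``$\mathcal{U}\in\rho_{H}\left(A,B\right)$ for every $\mathcal{U}$'': this needs the double-refinement step $\mathcal{V}\leqslant\frac{1}{2}\mathcal{U}$ so that $D_{\mathcal{V}}\left[A\right]$ is trapped inside $\mathrm{B}_{H}\left(A,\mathcal{U}\right)$ rather than merely inside the coarser $D_{\mathcal{U}}\left[A\right]$, the latter inclusion being too weak to recover membership in $\rho_{H}$.
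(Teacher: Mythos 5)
Your proposal is correct and takes essentially the same route as the paper, which presents this corollary as an immediate consequence of Proposition \ref{P3}, item 3, and Theorem \ref{T3}: your main argument is exactly that consequence spelled out, verifying separatedness of the uniformity $\mathfrak{D}_{\mathcal{H}}$ via the inclusion $D_{\mathcal{V}}\left[ A\right] \subset \mathrm{B}_{H}\left( A,\mathcal{U}\right) $ for $\mathcal{V}\leqslant \frac{1}{2}\mathcal{U}$ and then invoking $\rho _{H}\left( A,B\right) =\mathcal{O}$ if and only if $A=B$. Your alternative argument via uniqueness of net limits (Corollary \ref{EC} together with Proposition \ref{P3}, item 4) is a harmless variant of the same ingredients.
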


Note that the convergence in the uniform topology generalizes the notion of
Hausdorff convergence in metric spaces.

We now discuss the case of compact admissible space. Assume that $X$ is a
compact Hausdorff space and let $\mathcal{O}_{f}$ be the admissible family
of all finite open coverings of $X$. Then the hyperspace $\mathcal{H}\left(
X\right) $ is the set of all nonempty compact subsets of $X$. We shall prove
that the uniform topology of $\mathcal{H}\left( X\right) $ coincides with
the Hausdorff topology. For a given finite collection $\mathcal{C}=\left\{
U_{1},...,U_{n}\right\} $ of open sets in $X$, we define the set $%
\left\langle \mathcal{C}\right\rangle \subset \mathcal{H}\left( X\right) $
by
\begin{equation*}
\left\langle \mathcal{C}\right\rangle =\left\{ A\in \mathcal{H}\left(
X\right) :A\subset \bigcup\limits_{i=1}^{n}U_{i}\text{ and }A\cap U_{i}\neq
\emptyset \text{ for every }i=1,...,n\right\} .
\end{equation*}%
For $A\in \mathcal{H}\left( X\right) $ and $\mathcal{U}\in \mathcal{O}_{f}$,
we define the collection
\begin{equation*}
\left[ A,\mathcal{U}\right] =\left\{ U\in \mathcal{U}:A\cap U\neq \emptyset
\right\} .
\end{equation*}%
It is well-known that the set $\mathcal{B}_{\mathcal{H}}=\left\{
\left\langle \left[ A,\mathcal{U}\right] \right\rangle :A\in \mathcal{H}%
\left( X\right) ,\mathcal{U}\in \mathcal{O}_{f}\right\} $ is a base for the
compact Hausdorff topology of $\mathcal{H}\left( X\right) $ (\cite{Michael}).

\begin{theorem}
\label{T5}The uniform topology on $\mathcal{H}\left( X\right) $, generated
by the diagonal uniformity $\mathcal{D}_{\mathcal{H}}$, coincides with the
Hausdorff topology.
\end{theorem}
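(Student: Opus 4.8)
The plan is to show that the two topologies have the same neighborhood filter at every $A\in\mathcal{H}(X)$. By Theorem \ref{T3} the sets $\mathrm{B}_{H}(A,\mathcal{U})$, for $\mathcal{U}\in\mathcal{O}_{f}$, form a neighborhood base for the uniform topology at $A$, since each is squeezed between the basic surroundings $D_{\mathcal{V}}[A]$ and $D_{\mathcal{U}}[A]$; on the Hausdorff side, $\mathcal{B}_{\mathcal{H}}$ is a base by Michael's theorem. The computational key, used throughout, is the identity $\bigcup\{U:U\in[A,\mathcal{U}]\}=\mathrm{St}[A,\mathcal{U}]$, so that $B\in\langle[A,\mathcal{U}]\rangle$ says exactly that $B\subset\mathrm{St}[A,\mathcal{U}]$ and $B$ meets every $U\in\mathcal{U}$ meeting $A$. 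Write $\tau_{u}$ and $\tau_{h}$ for the uniform and Hausdorff topologies.

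For $\tau_{u}\subseteq\tau_{h}$, I would first prove the inclusion $\langle[A,\mathcal{U}]\rangle\subset\mathrm{B}_{H}(A,\mathcal{U})$ for all $A$ and $\mathcal{U}$. The condition $B\subset\mathrm{St}[A,\mathcal{U}]$ is immediate from the identity above. For the other condition $A\subset\mathrm{St}[B,\mathcal{U}]$, take $a\in A$ and choose $U\in\mathcal{U}$ with $a\in U$; then $U\in[A,\mathcal{U}]$, so $B$ meets $U$, whence $a\in U\subset\mathrm{St}[B,\mathcal{U}]$. Since $A\in\langle[A,\mathcal{U}]\rangle$ (trivially $A\subset\mathrm{St}[A,\mathcal{U}]$ and $A$ meets each member of $[A,\mathcal{U}]$) and this set is Hausdorff-open, every uniform basic neighborhood $\mathrm{B}_{H}(A,\mathcal{U})$ contains a Hausdorff-open neighborhood of $A$; hence every $\tau_{u}$-open set is $\tau_{h}$-open.

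For the reverse inclusion $\tau_{h}\subseteq\tau_{u}$, given a basic Hausdorff neighborhood $\langle\mathcal{C}\rangle$ of $A$ with $\mathcal{C}=\{V_{1},\dots,V_{m}\}$, I would construct a finite open cover $\mathcal{V}$ with $\mathrm{B}_{H}(A,\mathcal{V})\subset\langle\mathcal{C}\rangle$. Choose $a_{i}\in A\cap V_{i}$. Applying the remark that a compact set inside an open set admits a star inside it (valid here with $\mathcal{O}=\mathcal{O}_{f}$, so the coverings are finite) to the compact sets $A\subset\bigcup_{j}V_{j}$ and $\{a_{i}\}\subset V_{i}$, I obtain finite coverings whose stars lie inside $\bigcup_{j}V_{j}$ and inside $V_{i}$, respectively. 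Iterating property (1) of Definition \ref{Admiss} yields a common refinement $\mathcal{V}\in\mathcal{O}_{f}$ for which $\mathrm{St}[A,\mathcal{V}]\subset\bigcup_{j}V_{j}$ and $\mathrm{St}[a_{i},\mathcal{V}]\subset V_{i}$ for every $i$, since passing to a refinement only shrinks stars. Then for $B\in\mathrm{B}_{H}(A,\mathcal{V})$ one has $B\subset\mathrm{St}[A,\mathcal{V}]\subset\bigcup_{j}V_{j}$, and for each $i$, from $a_{i}\in A\subset\mathrm{St}[B,\mathcal{V}]$ there is $W\in\mathcal{V}$ with $a_{i}\in W$ and $W\cap B\neq\emptyset$; as $W\subset\mathrm{St}[a_{i},\mathcal{V}]\subset V_{i}$, this forces $B\cap V_{i}\neq\emptyset$, so $B\in\langle\mathcal{C}\rangle$. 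Consequently every $\tau_{h}$-open set is $\tau_{u}$-open, and the two topologies coincide.

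The main obstacle is exactly this reverse direction: the naive inclusion $\mathrm{B}_{H}(A,\mathcal{U})\subset\langle[A,\mathcal{U}]\rangle$ is false, because a set $B$ that is $\rho_{H}$-close to $A$ may meet a neighboring element of $\mathcal{U}$ rather than a prescribed $U\in[A,\mathcal{U}]$ itself. Circumventing this is what forces the use of the compactness remark to pin each witness $a_{i}$ into its own $V_{i}$ via star control, combined with closure of $\mathcal{O}_{f}$ under finite common refinement. Compactness of $X$—and hence of every member of $\mathcal{H}(X)$—is essential, both to produce these finite coverings and to keep the refinement $\mathcal{V}$ within the admissible family $\mathcal{O}_{f}$.
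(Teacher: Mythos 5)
Your proposal is correct, and it handles the easy inclusion $\langle[A,\mathcal{U}]\rangle\subset\mathrm{B}_{H}(A,\mathcal{U})$ exactly as the paper does (via $[A,\mathcal{U}]\subset[B,\mathcal{U}]$), but your treatment of the hard direction is genuinely different. The paper constructs, for each witness $x_{i}\in A\cap U_{i}$, an explicit punctured covering $\mathcal{V}_{i}=\{U_{1}\setminus\{x_{i}\},\dots,U_{i},\dots,U_{n}\setminus\{x_{i}\},X\setminus A\}$ (open because points are closed in a compact Hausdorff space), takes $\mathcal{V}\leqslant\mathcal{V}_{i}$ for all $i$, and derives a contradiction: if $B\cap U_{i}=\emptyset$, then $A\subset\mathrm{St}[B,\mathcal{V}_{i}]$ would force $x_{i}$ into a set that excludes it. You instead invoke the remark of Section~\ref{Section1} that a compact set inside an open set admits a star inside it, applied to $A\subset\bigcup_{j}V_{j}$ and to each singleton $\{a_{i}\}\subset V_{i}$, and then pass to a common refinement; the star control $\mathrm{St}[a_{i},\mathcal{V}]\subset V_{i}$ then pins each intersection witness directly, with no contradiction argument and no puncturing. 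Your route is the standard Vietoris-versus-Hausdorff-uniformity argument: it is cleaner, it works verbatim for arbitrary basic Vietoris sets $\langle\mathcal{C}\rangle$ rather than only those of the form $\langle[A,\mathcal{U}]\rangle$, and it isolates exactly where compactness enters (through the star remark and the finiteness of common refinements in $\mathcal{O}_{f}$). What the paper's construction buys in exchange is self-containedness: it never uses the unproved compactness remark, producing the required refinement $\mathcal{V}$ by hand, at the cost of the slightly delicate puncturing device and the implicit use of the $T_{1}$ property. Your diagnosis of why the naive inclusion $\mathrm{B}_{H}(A,\mathcal{U})\subset\langle[A,\mathcal{U}]\rangle$ fails is also exactly the obstacle the paper's $\mathcal{V}_{i}$ are designed to overcome.
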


\begin{proof}
According to Theorem \ref{T3}, it is enough to show that for each $A\in
\mathcal{H}\left( X\right) $ and $\mathcal{U}\in \mathcal{O}_{f}$ there is $%
\mathcal{V}\in \mathcal{O}_{f}$ such that $\mathrm{B}_{H}\left( A,\mathcal{V}%
\right) \subset \left\langle \left[ A,\mathcal{U}\right] \right\rangle
\subset \mathrm{B}_{H}\left( A,\mathcal{U}\right) $. Firstly, note that $%
\mathrm{St}\left[ A,\mathcal{U}\right] =\bigcup\limits_{U\in \left[ A,%
\mathcal{U}\right] }U$. If $B\in \left\langle \left[ A,\mathcal{U}\right]
\right\rangle $ then $B\subset \bigcup\limits_{U\in \left[ A,\mathcal{U}%
\right] }U$ and $B\cap U\neq \emptyset $ for every $U\in \left[ A,\mathcal{U}%
\right] $. Hence $B\subset \mathrm{St}\left[ A,\mathcal{U}\right] $ and $%
\left[ A,\mathcal{U}\right] \subset \left[ B,\mathcal{U}\right] $. The
second inclusion implies $A\subset \bigcup\limits_{U\in \left[ B,\mathcal{U}%
\right] }U=\mathrm{St}\left[ B,\mathcal{U}\right] $. Hence $B\in \mathrm{B}%
_{H}\left( A,\mathcal{U}\right) $, and therefore $\left\langle \left[ A,%
\mathcal{U}\right] \right\rangle \subset \mathrm{B}_{H}\left( A,\mathcal{U}%
\right) $. Now, let $\left[ A,\mathcal{U}\right] =\left\{
U_{1},...,U_{n}\right\} $. For each $i=1,...,n$, choose $x_{i}\in A\cap
U_{i} $ and define $\mathcal{V}_{i}\in \mathcal{O}_{f}$ by%
\begin{equation*}
\mathcal{V}_{i}=\left\{ U_{1}\setminus \left\{ x_{i}\right\}
,...,U_{i},...,U_{n}\setminus \left\{ x_{i}\right\} ,X\setminus A\right\} .
\end{equation*}%
Take $\mathcal{V}\in \mathcal{O}_{f}$ such that $\mathcal{V}\leqslant
\mathcal{V}_{i}$ for every $i=1,...,n$. We claim that $\mathrm{B}_{H}\left(
A,\mathcal{V}\right) \subset \left\langle \left[ A,\mathcal{U}\right]
\right\rangle $. For indeed, if $B\in \mathrm{B}_{H}\left( A,\mathcal{V}%
\right) $ then $B\subset \mathrm{St}\left[ A,\mathcal{V}\right] $ and $%
A\subset \mathrm{St}\left[ B,\mathcal{V}\right] $. Since $\mathcal{V}%
\leqslant \mathcal{V}_{i}$, we have $B\subset \mathrm{St}\left[ A,\mathcal{V}%
_{i}\right] $ and $A\subset \mathrm{St}\left[ B,\mathcal{V}_{i}\right] $. As
\begin{equation*}
\left[ A,\mathcal{V}_{i}\right] \subset \left\{ U_{1}\setminus \left\{
x_{i}\right\} ,...,U_{i},...,U_{n}\setminus \left\{ x_{i}\right\} \right\}
\end{equation*}%
it follows that
\begin{equation*}
B\subset \left( U_{1}\setminus \left\{ x_{i}\right\} \right) \cup \ldots
\cup U_{i}\cup \ldots \cup \left( U_{n}\setminus \left\{ x_{i}\right\}
\right) \subset \bigcup\limits_{j=1}^{n}U_{j}.
\end{equation*}%
To conclude that $B\in \left\langle \left[ A,\mathcal{U}\right]
\right\rangle $, it remains to prove the inclusion $\left[ A,\mathcal{U}%
\right] \subset \left[ B,\mathcal{U}\right] $. Suppose by contradiction that
$B\cap U_{i}=\emptyset $ for some $U_{i}\in \left[ A,\mathcal{U}\right] $.
Then $U_{i}\notin \left[ B,\mathcal{V}_{i}\right] $. As $A\subset \mathrm{St}%
\left[ B,\mathcal{V}_{i}\right] $, it follows that $A\subset \left(
\bigcup\limits_{j=1,j\neq i}^{n}U_{j}\right) \setminus \left\{ x_{i}\right\}
$, which is a contradiction. Hence $B\cap U_{i}\neq \emptyset $, and
therefore $\left[ A,\mathcal{U}\right] \subset \left[ B,\mathcal{U}\right] $.
\end{proof}

\section{\label{HK}Hyperconvergence}

In this section we present some analogues of classical theorems involving
set convergence on hyperspace. We define the notion of Kuratowski
hyperconvergence and show that it coincides with the Hausdorff convergence
in the compact case. Throughout, there is a fixed admissible space $X$
endowed with an admissible family of open coverings $\mathcal{O}$.

For the following, we call \emph{Hausdorff convergence} the convergence with
respect to the uniform topology on $\mathcal{H}\left( X\right) $.

\begin{proposition}
\label{b2'} Let $\left( F_{\lambda }\right) ,\left( G_{\lambda }\right) $ be
nets in $\mathcal{H}\left( X\right) $ and $F,G\in \mathcal{H}\left( X\right)
$. The following statement holds:

\begin{enumerate}
\item If $\rho _{F}\left( F_{\lambda }\right) \rightarrow \mathcal{O}$ and $%
F\subset G$ then $\rho _{G}(F_{\lambda })\rightarrow \mathcal{O}$.

\item If $G_{\lambda }\subset F_{\lambda }$, for every $\lambda $, and $\rho
_{F}(F_{\lambda })\rightarrow \mathcal{O}$ then $\rho _{F}(G_{\lambda
})\rightarrow \mathcal{O}$.

\item If $\rho _{G}(F_{\lambda })\rightarrow \mathcal{O}$ and $\rho
_{F_{\lambda }}(F)\rightarrow \mathcal{\mathcal{\mathcal{\mathcal{\mathcal{O}%
}}}}$ then $F\subset G.$

\item If If $G_{\lambda }\subset F_{\lambda }$, for every $\lambda $, $%
F_{\lambda }\rightarrow F$, and $G_{\lambda }\rightarrow G$ then $F\subset G$%
.
\end{enumerate}
\end{proposition}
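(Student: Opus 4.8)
The plan is to translate everything into the "star dictionary" and then reduce items (1), (2), (4) to the single substantial item (3). First I would record the reformulation, extracted from Proposition \ref{P2} exactly as in the proof of Proposition \ref{P3}(6): for $A,B\in\mathcal{H}(X)$ one has $\mathcal{U}\in\rho_{A}(B)$ if and only if $B\subset\mathrm{St}[A,\mathcal{U}]$. Hence $\rho_{A}(B_{\lambda})\rightarrow\mathcal{O}$ means precisely that for every $\mathcal{U}\in\mathcal{O}$ there is $\lambda_{0}$ with $B_{\lambda}\subset\mathrm{St}[A,\mathcal{U}]$ for all $\lambda\geq\lambda_{0}$. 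With this dictionary items (1) and (2) are immediate from the monotonicity $Y\subset Z\Rightarrow\mathrm{St}[Y,\mathcal{U}]\subset\mathrm{St}[Z,\mathcal{U}]$: in (1), eventually $F_{\lambda}\subset\mathrm{St}[F,\mathcal{U}]\subset\mathrm{St}[G,\mathcal{U}]$ because $F\subset G$; in (2), eventually $G_{\lambda}\subset F_{\lambda}\subset\mathrm{St}[F,\mathcal{U}]$.

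The core is item (3), which I would prove directly by a double-refinement. Since $G$ is closed, Remark \ref{R1} gives $G=\bigcap_{\mathcal{U}\in\mathcal{O}}\mathrm{St}[G,\mathcal{U}]$, so it suffices to fix $x\in F$ and an arbitrary $\mathcal{U}\in\mathcal{O}$ and show $x\in\mathrm{St}[G,\mathcal{U}]$. Choose $\mathcal{V}\in\mathcal{O}$ with $\mathcal{V}\leqslant\frac{1}{2}\mathcal{U}$. By the two hypotheses and the dictionary, there is a single index $\lambda$ (an upper bound of the two thresholds) with $F\subset\mathrm{St}[F_{\lambda},\mathcal{V}]$ and $F_{\lambda}\subset\mathrm{St}[G,\mathcal{V}]$. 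Then I pick $z\in F_{\lambda}$ with $x\in\mathrm{St}[z,\mathcal{V}]$ and $g\in G$ with $z\in\mathrm{St}[g,\mathcal{V}]$; equivalently $\mathcal{V}\in\rho(x,z)\cap\rho(z,g)$. Proposition \ref{P1}(4) with $n=1$ then yields $\mathcal{U}\in\rho(x,g)$, that is $x\in\mathrm{St}[g,\mathcal{U}]\subset\mathrm{St}[G,\mathcal{U}]$, as required. (Concretely, the members of $\mathcal{V}$ witnessing the pairs $x,z$ and $z,g$ meet at $z$, so double-refinement merges them into one member of $\mathcal{U}$ containing both $x$ and $g$.)

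Finally, item (4) I would deduce from (2) and (3) together with Corollary \ref{EC}. From $F_{\lambda}\rightarrow F$ and $G_{\lambda}\rightarrow G$, Corollary \ref{EC} gives $\rho_{H}(F_{\lambda},F)\rightarrow\mathcal{O}$ and $\rho_{H}(G_{\lambda},G)\rightarrow\mathcal{O}$; since $\rho_{H}(F_{\lambda},F)\subset\rho_{F}(F_{\lambda})$ and $\rho_{H}(G_{\lambda},G)\subset\rho_{G_{\lambda}}(G)$, the order-preserving convergence recalled after Definition \ref{Convergence} gives $\rho_{F}(F_{\lambda})\rightarrow\mathcal{O}$ and $\rho_{G_{\lambda}}(G)\rightarrow\mathcal{O}$. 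Now $G_{\lambda}\subset F_{\lambda}$ and $\rho_{F}(F_{\lambda})\rightarrow\mathcal{O}$ give $\rho_{F}(G_{\lambda})\rightarrow\mathcal{O}$ by item (2). Applying item (3) to the net $(G_{\lambda})$ with $G$ and $F$ playing the roles of $F$ and $G$ then yields the inclusion between the limits. I note that the inclusion that actually follows is $G\subset F$, which is the one consistent with $G_{\lambda}\subset F_{\lambda}$: the constant nets $G_{\lambda}=\{p\}$ and $F_{\lambda}=K$ with $p\in K$ have limits $G=\{p\}$ and $F=K$, for which $G\subset F$ holds but $F\subset G$ fails whenever $K\neq\{p\}$.

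The main obstacle is the bookkeeping in item (3): one must commit to a double-refinement $\mathcal{V}$ of $\mathcal{U}$ \emph{before} choosing the common index $\lambda$, and then chain the two one-sided stars through the intermediate point $z\in F_{\lambda}$. Everything else is monotonicity of the star operation and the order-preserving passage to limits in $\mathcal{P}(\mathcal{O})$.
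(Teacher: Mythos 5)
Your proof is correct, and for items (1)--(3) its substance coincides with the paper's: the paper disposes of (1) and (2) by the monotonicity relations $\rho _{G}(F_{\lambda })\prec \rho _{F}(F_{\lambda })$ and $\rho _{F}(G_{\lambda })\prec \rho _{F}(F_{\lambda })$, which is exactly your star-dictionary argument written in the $\prec $ notation, and it proves (3) by the same double-refinement chaining that you carry out pointwise, stated there as a set-level inequality followed by the observation that a constant minorant of a net converging to $\mathcal{O}$ must equal $\mathcal{O}$. (Note that the paper's display for (3) misprints the inequality as $\rho _{G}(F)\prec 1\left( \rho _{F_{\lambda }}(F)\cap \rho _{F}(F_{\lambda })\right) $, which cannot be correct since $G$ does not occur on the right-hand side; the intended and correct form, $\rho _{G}(F)\prec 1\left( \rho _{F_{\lambda }}(F)\cap \rho _{G}(F_{\lambda })\right) $, is precisely what your pointwise derivation establishes, and it is the inequality the paper itself uses later in its proof of (4).) The genuine differences are in item (4). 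The paper proves (4) directly by repeating the triangle-inequality estimate, whereas you reduce it to (2) plus (3); your reduction is shorter and makes the logical dependence among the four items explicit. More importantly, your erratum is right: with the stated hypothesis $G_{\lambda }\subset F_{\lambda }$ the only conclusion available is $G\subset F$, as your constant-net example $G_{\lambda }=\{p\}$, $F_{\lambda }=K$ shows, and in fact the paper's own proof of (4) silently replaces the hypothesis by $F_{\lambda }\subset G_{\lambda }$ (it asserts ``since $F_{\lambda }\subset G_{\lambda }$, we have $\rho _{G}(F_{\lambda })\prec \rho _{G}(G_{\lambda })$''), so the statement and the proof in the paper are mutually inconsistent; your version resolves the inconsistency in the direction forced by the hypotheses.
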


\begin{proof}
Items $\left( 1\right) ,\left( 2\right) $ are obvious since $\rho _{G}\left(
F_{\lambda }\right) \prec \rho _{F}\left( F_{\lambda }\right) $ and $\rho
_{F}\left( G_{\lambda }\right) \prec \rho _{F}\left( F_{\lambda }\right) $.
For item $\left( 3\right) $, we have $\rho _{G}\left( F\right) \prec 1\left(
\rho _{F_{\lambda }}\left( F\right) \cap \rho _{F}\left( F_{\lambda }\right)
\right) $. Since $\rho _{F_{\lambda }}\left( F\right) \cap \rho _{F}\left(
F_{\lambda }\right) \rightarrow \mathcal{O}$, it follows that $1\left( \rho
_{F_{\lambda }}\left( F\right) \cap \rho _{F}\left( F_{\lambda }\right)
\right) \rightarrow \mathcal{O}$, and therefore $\rho _{G}\left( F\right) =%
\mathcal{O}$. This means that $F\subset G$. For proving item $\left(
4\right) $, let $\mathcal{U},\mathcal{V}\in \mathcal{O}$ with $\mathcal{V}%
\leqslant \frac{1}{2}\mathcal{U}$. As $\rho _{H}\left( F_{\lambda },F\right)
\rightarrow \mathcal{O}$ and $\rho _{H}\left( G_{\lambda },G\right)
\rightarrow \mathcal{O}$, there exists $\lambda _{0}$ such that $\mathcal{V}%
\in \rho _{H}\left( F_{\lambda },F\right) \cap \rho _{H}\left( G_{\lambda
},G\right) $ for all $\lambda \geq \lambda _{0}$. Hence $\mathcal{V}\in \rho
_{F_{\lambda }}\left( F\right) \cap \rho _{G}\left( G_{\lambda }\right) $
whenever $\lambda \geq \lambda _{0}$. Now, since $F_{\lambda }\subset
G_{\lambda }$, we have $\rho _{G}\left( F_{\lambda }\right) \prec \rho
_{G}\left( G_{\lambda }\right) $, and then $\mathcal{V}\in \rho _{G}\left(
F_{\lambda }\right) $ for $\lambda \geq \lambda _{0}$. Hence
\begin{equation*}
\rho _{G}(F)\prec 1(\rho _{F_{\lambda }}(F)\cap \rho _{G}(F_{\lambda
}))\prec 1\left\{ \mathcal{V}\right\} \prec \{\mathcal{U}\}.
\end{equation*}%
This means that $\rho _{G}\left( F\right) =\mathcal{O}$ and therefore $%
F\subset G$.
\end{proof}

\begin{proposition}
\label{b2''} Assume that $X$ is a Hausdorff space. Let $\left( K_{\lambda
}\right) \subset \mathcal{H}\left( X\right) $ be a net of compacts subsets
and $B\subset X$ a compact subset. If $K_{\lambda }\rightarrow K$, with $K$
compact and $K_{\lambda }\cap B\neq \emptyset $ for all $\lambda $, then $%
K\cap B\neq \emptyset $.
\end{proposition}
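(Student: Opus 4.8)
The plan is to show that $K \cap B \neq \emptyset$ by producing a point in $K \cap B$ as a limit. First I would, for each $\lambda$, pick a point $b_\lambda \in K_\lambda \cap B$, which is possible by the hypothesis $K_\lambda \cap B \neq \emptyset$. Since $B$ is compact, the net $(b_\lambda)$ admits a convergent subnet $b_{\lambda_\mu} \to b$ with $b \in B$ (here I use that $X$ is Hausdorff so the limit is unique, and compactness of $B$ guarantees the existence of the subnet and that its limit lies in $B$). It then remains to show that $b \in K$, and since $K$ is closed, it suffices to show $b \in \mathrm{cls}(K) = K$.

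To see that $b \in K$, I would use the characterization of Hausdorff convergence from Corollary \ref{EC}, namely that $K_\lambda \to K$ means $\rho_H(K_\lambda, K) \to \mathcal{O}$. In particular $\rho_{K}(K_\lambda) \to \mathcal{O}$, which by definition says that for every $\mathcal{U} \in \mathcal{O}$ there is $\lambda_0$ with $\mathcal{U} \in \rho_{K}(K_\lambda) = \bigcap_{z \in K_\lambda} \rho(z, K)$ for $\lambda \geq \lambda_0$; that is, each point of $K_\lambda$ lies in $\mathrm{St}[K, \mathcal{U}]$. Applying this to $b_{\lambda_\mu} \in K_{\lambda_\mu}$, I get that $b_{\lambda_\mu} \in \mathrm{St}[K, \mathcal{U}]$ eventually along the subnet. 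Combining with the convergence $b_{\lambda_\mu} \to b$ and Proposition \ref{P1}(5), the goal is to transfer membership of the $b_{\lambda_\mu}$ in stars of $K$ to membership of the limit $b$ in $\mathrm{cls}(K)$.

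The key computation is to show $\rho(b, K) = \mathcal{O}$, which by Proposition \ref{P2}(3) is equivalent to $b \in \mathrm{cls}(K) = K$. Fix $\mathcal{U} \in \mathcal{O}$ and choose $\mathcal{V} \leqslant \frac{1}{2}\mathcal{U}$. On one hand, $b_{\lambda_\mu} \to b$ gives $\rho(b_{\lambda_\mu}, b) \to \mathcal{O}$, so $\mathcal{V} \in \rho(b_{\lambda_\mu}, b)$ eventually. On the other hand, $\rho_{K}(K_{\lambda_\mu}) \to \mathcal{O}$ gives $\mathcal{V} \in \rho(b_{\lambda_\mu}, K)$ eventually, hence there is a point $k \in K$ with $\mathcal{V} \in \rho(b_{\lambda_\mu}, k)$. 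Then using the triangle-type inequality of Proposition \ref{P1}(4), I would estimate $\rho(b, k) \prec 1(\rho(b, b_{\lambda_\mu}) \cap \rho(b_{\lambda_\mu}, k))$ so that $\mathcal{U} \in \rho(b, k) \subset \rho(b, K)$. Since $\mathcal{U} \in \mathcal{O}$ was arbitrary, this yields $\rho(b, K) = \mathcal{O}$, and therefore $b \in K$. Together with $b \in B$, this gives $b \in K \cap B$, completing the argument.

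The main obstacle I anticipate is the bookkeeping of the two separate "eventually" conditions along the subnet: one coming from the convergence $b_{\lambda_\mu} \to b$ and one from $\rho_{K}(K_{\lambda_\mu}) \to \mathcal{O}$. I would need to pass to a common tail where both $\mathcal{V} \in \rho(b_{\lambda_\mu}, b)$ and $\mathcal{V} \in \rho(b_{\lambda_\mu}, K)$ hold simultaneously, and carefully apply the halving $\mathcal{V} \leqslant \frac{1}{2}\mathcal{U}$ together with the triangle inequality to land in $\rho(b, K)$. The compactness of $B$ (to extract the convergent subnet with limit in $B$) and the Hausdorff hypothesis (ensuring well-behaved limits, already invoked in Remark \ref{R1} so that $\bigcap_{\mathcal{U}} \mathrm{St}[x, \mathcal{U}] = \{x\}$) are both essential and should be flagged where used.
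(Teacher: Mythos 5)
Your proposal is correct and follows essentially the same route as the paper's proof: select points $b_\lambda \in K_\lambda \cap B$, use compactness of $B$ to extract a convergent subnet with limit $b \in B$, and then combine $\rho_K(K_\lambda) \to \mathcal{O}$ with the convergence $\rho(b_{\lambda_\mu}, b) \to \mathcal{O}$ via the triangle-type estimate $\rho(b,k) \prec 1\left(\rho(b,b_{\lambda_\mu}) \cap \rho(b_{\lambda_\mu},k)\right)$ and a double-refinement $\mathcal{V} \leqslant \frac{1}{2}\mathcal{U}$ to conclude $\rho(b,K) = \mathcal{O}$, hence $b \in K \cap B$. If anything, your bookkeeping is slightly more careful than the paper's (you let the point $k \in K$ depend on the index, whereas the paper fixes a single $a' \in K$ "for all $\lambda \geq \lambda_0$," which is a harmless imprecision).
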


\begin{proof}
Let $\left( x_{\lambda }\right) $ be a net with $x_{\lambda }\in K_{\lambda
}\cap B$. Since $B$ is compact, we may assume that $x_{\lambda }\rightarrow
a $ for some $a\in B$. Now, considerer $\mathcal{U},\mathcal{V}\in \mathcal{O%
}$ with $\mathcal{V}\leqslant \frac{1}{2}\mathcal{U}$. As $\rho _{H}\left(
K_{\lambda },K\right) \rightarrow \mathcal{O}$, there exists $\lambda _{0}$
such that $\mathcal{V}\in \rho _{K}\left( K_{\lambda }\right) $ for all $%
\lambda \geq \lambda _{0}$. This step concludes that $\mathcal{V}\in \rho
_{K}\left( x_{\lambda }\right) $ whenever $\lambda \geq \lambda _{0}$. Then
we can take some point $a^{\prime }\in K$ such that $\mathcal{V}\in \rho
\left( a^{\prime },x_{\lambda }\right) $ for all $\lambda \geq \lambda _{0}$%
. Since $\rho \left( x_{\lambda },a\right) \rightarrow \mathcal{O}$, there
exists $\lambda _{1}$ such that $\mathcal{V}\in \rho \left( x_{\lambda
},a\right) $ for all $\lambda \geq \lambda _{1}$. For $\lambda \geq \lambda
_{0},\lambda _{1}$, we have
\begin{equation*}
\rho \left( a,K\right) \prec \rho (a,a^{\prime })\prec 1(\rho (a,x_{\lambda
})\cap \rho (x_{\lambda },a^{\prime }))\prec 1\{\mathcal{V}\}\prec \{%
\mathcal{U}\}.
\end{equation*}%
It follows that $\rho (a,K)=\mathcal{O}$, which implies $a\in K$. Therefore $%
a\in K\cap B$.
\end{proof}

We now define Kuratowski convergence.

\begin{definition}
Let $\left( F_{\lambda }\right) $ be a net in $\mathcal{H}\left( X\right) $.
The sets
\begin{eqnarray*}
\mathcal{LS}\left( F_{\lambda }\right) &=&\left\{ x\in X:\text{for every
open neighborhood }U\text{ of }x\text{, }U\cap F_{\lambda }\neq \emptyset
\hspace{0.1cm}\text{frequently}\right\} , \\
\mathcal{LI}\left( F_{\lambda }\right) &=&\left\{ x\in X:\text{for every
open neighborhood }U\text{ of }x\text{, }U\cap F_{\lambda }\neq \emptyset
\hspace{0.1cm}\text{residually}\right\}
\end{eqnarray*}%
are called respectively \textbf{upper limit} and \textbf{lower limit} of $%
\left( F_{\lambda }\right) $. We say that $\left( F_{\lambda }\right) $ is
\textbf{Kuratowski convergent} to $F\in \mathcal{H}(X)$ ($F_{\lambda }%
\overset{k}{\rightarrow }F$ for short) if and only if $F=\mathcal{LS}\left(
F_{\lambda }\right) =\mathcal{LI}\left( F_{\lambda }\right) $.
\end{definition}

Note that $\mathcal{LI}\left( F_{\lambda }\right) \subset \mathcal{LS}\left(
F_{\lambda }\right) $. Then, to verify the Kuratowski convergence, it is
enough to prove the inclusions $\mathcal{LS}\left( F_{\lambda }\right)
\subset F\subset \mathcal{LI}\left( F_{\lambda }\right) $.

\begin{theorem}
\label{PK} Let $(F_{\lambda })$ be a net in $\mathcal{H}\left( X\right) $.

\begin{enumerate}
\item If $\rho _{F}(F_{\lambda })\rightarrow \mathcal{O}$ then $\mathcal{LS}%
\left( F_{\lambda }\right) \subset F$.

\item If $\rho _{F_{\lambda }}(F)\rightarrow \mathcal{O}$ then $F\subset
\mathcal{LI}\left( F_{\lambda }\right) $.

\item If $\left( F_{\lambda }\right) $ is a decreasing net and $%
F=\bigcap_{\lambda \in \Lambda }{F_{\lambda }}$, with $\rho _{F}\left(
F_{\lambda }\right) \rightarrow \mathcal{O}$, then $F_{\lambda }\rightarrow
F $.

\item Suppose that $X$ is complete and $\left( F_{\lambda }\right) $ is a
net of nonempty closed subsets of $X$ such that $\gamma \left( F_{\lambda
}\right) \rightarrow \mathcal{O}$. Then $F=\bigcap_{\lambda \in \Lambda }{%
F_{\lambda }}$ is nonempty, compact, and $\rho _{H}\left( F_{\lambda
},F\right) \rightarrow \mathcal{O}$.
\end{enumerate}
\end{theorem}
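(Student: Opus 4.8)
The plan is to treat the four items separately, relying on the ``triangle inequality'' of Proposition \ref{P1}(4), the monotonicity of convergence in $\mathcal{P}(\mathcal{O})$ recorded after Definition \ref{Convergence}, and the Cantor--Kuratowski theorem \ref{TK}. The organizing observation is that $x\in\mathcal{LS}(F_\lambda)$ (resp. $x\in\mathcal{LI}(F_\lambda)$) holds exactly when $\mathcal{U}\in\rho(x,F_\lambda)$ frequently (resp. eventually) for every $\mathcal{U}\in\mathcal{O}$, since the stars form a neighborhood base and $\mathcal{U}\in\rho(x,F_\lambda)$ is precisely $\mathrm{St}[x,\mathcal{U}]\cap F_\lambda\neq\emptyset$.

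For item (1) I take $x\in\mathcal{LS}(F_\lambda)$ and aim to prove $\rho(x,F)=\mathcal{O}$, which gives $x\in\mathrm{cls}(F)=F$ by Proposition \ref{P2}(3). Given $\mathcal{U}\in\mathcal{O}$, choose $\mathcal{V}\leqslant\frac{1}{2}\mathcal{U}$. The hypothesis $\rho_F(F_\lambda)\rightarrow\mathcal{O}$ yields $\lambda_0$ with $\mathcal{V}\in\rho(y,F)$ for all $y\in F_\lambda$ and $\lambda\geq\lambda_0$, while $x\in\mathcal{LS}(F_\lambda)$ produces (cofinally) some $\lambda\geq\lambda_0$ and $y\in F_\lambda$ with $\mathcal{V}\in\rho(x,y)$. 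Picking $a\in F$ with $\mathcal{V}\in\rho(y,a)$ and applying Proposition \ref{P1}(4) gives $\mathcal{U}\in\rho(x,a)\subset\rho(x,F)$; as $\mathcal{U}$ is arbitrary, $\rho(x,F)=\mathcal{O}$. Item (2) is then immediate from monotonicity: for each $x\in F$ the inclusion $\rho_{F_\lambda}(F)\subset\rho(x,F_\lambda)$ means $\rho(x,F_\lambda)\prec\rho_{F_\lambda}(F)$, so $\rho_{F_\lambda}(F)\rightarrow\mathcal{O}$ forces $\rho(x,F_\lambda)\rightarrow\mathcal{O}$, that is, $x\in\mathcal{LI}(F_\lambda)$.

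For item (3), by Corollary \ref{EC} it suffices to show $\rho_H(F_\lambda,F)=\rho_{F_\lambda}(F)\cap\rho_F(F_\lambda)\rightarrow\mathcal{O}$. Because the net is decreasing, $F=\bigcap_\lambda F_\lambda\subset F_\lambda$ for every $\lambda$, so each $a\in F$ lies in $F_\lambda=\mathrm{cls}(F_\lambda)$ and hence $\rho(a,F_\lambda)=\mathcal{O}$ by Proposition \ref{P2}(3); thus $\rho_{F_\lambda}(F)=\mathcal{O}$ identically and $\rho_H(F_\lambda,F)=\rho_F(F_\lambda)\rightarrow\mathcal{O}$ by hypothesis.

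Item (4) carries the real difficulty. From $\gamma(F_\lambda)\rightarrow\mathcal{O}$ and $\alpha(Y)\prec\gamma(Y)$ (Proposition \ref{P19}(1)) I obtain $\alpha(F_\lambda)\rightarrow\mathcal{O}$ and eventual boundedness, so that, after restricting to a cofinal tail (which leaves the intersection unchanged), the decreasing net $(F_\lambda)$ meets the hypotheses of Theorem \ref{TK}; therefore $F=\bigcap_\lambda F_\lambda$ is nonempty and compact. As in item (3), $F\subset F_\lambda$ gives $\rho_{F_\lambda}(F)=\mathcal{O}$, so by Corollary \ref{EC} it remains to prove $\rho_F(F_\lambda)\rightarrow\mathcal{O}$, which I do by contradiction: if it fails, there are $\mathcal{U}\in\mathcal{O}$ and cofinally many indices with points $x_\lambda\in F_\lambda$ satisfying $\mathcal{U}\notin\rho(x_\lambda,F)$, i.e. $x_\lambda\notin\mathrm{St}[F,\mathcal{U}]$. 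The main obstacle is to extract a convergent subnet of $(x_\lambda)$ whose limit lies in $F$. The device is to form the decreasing net of tail closures $E_\lambda=\mathrm{cls}\{x_\mu:\mu\geq\lambda\}$; since $E_\lambda\subset F_\lambda$, Proposition \ref{P19} yields $\gamma(E_\lambda)\rightarrow\mathcal{O}$, and a second application of Theorem \ref{TK} makes $\bigcap_\lambda E_\lambda$ a nonempty compact subset of $\bigcap_\lambda F_\lambda=F$. Any $y$ in this intersection is a cluster point of $(x_\lambda)$, giving a subnet $x_{\lambda_\mu}\rightarrow y\in F$; then $\rho(x_{\lambda_\mu},F)\prec\rho(x_{\lambda_\mu},y)\rightarrow\mathcal{O}$ forces $\mathcal{U}\in\rho(x_{\lambda_\mu},F)$ eventually, contradicting $\mathcal{U}\notin\rho(x_\lambda,F)$. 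Hence $\rho_F(F_\lambda)\rightarrow\mathcal{O}$, and combining with $\rho_{F_\lambda}(F)=\mathcal{O}$ gives $\rho_H(F_\lambda,F)\rightarrow\mathcal{O}$, completing the argument.
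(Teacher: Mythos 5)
Your items (1)--(3) are correct and follow the paper's own proof essentially verbatim: the star-chaining via Proposition \ref{P1}(4) to get $\rho(x,F)=\mathcal{O}$ in (1), the observation $\rho_{F_{\lambda}}(F)\subset\rho(x,F_{\lambda})$ for $x\in F$ in (2), and the reduction of (3) to $\rho_{F_{\lambda}}(F)=\mathcal{O}$ combined with Corollary \ref{EC}. Item (4) is where you genuinely depart from the paper. Both arguments reason by contradiction and invoke Theorem \ref{TK} a second time, but the paper's witness is the decreasing net of closed sets $C_{\lambda}=F_{\lambda}\cap\left(X\setminus\mathrm{St}\left[F,\mathcal{U}\right]\right)$: Theorem \ref{TK} forces $\bigcap_{\lambda}C_{\lambda}\neq\emptyset$, while plainly $\bigcap_{\lambda}C_{\lambda}=F\cap\left(X\setminus\mathrm{St}\left[F,\mathcal{U}\right]\right)=\emptyset$, a contradiction obtained with no subnets at all. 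You instead pick witness points $x_{\lambda}\in F_{\lambda}\setminus\mathrm{St}\left[F,\mathcal{U}\right]$, apply Theorem \ref{TK} to the tail closures $E_{\lambda}=\mathrm{cls}\left\{x_{\mu}:\mu\geq\lambda\right\}$, and extract a cluster point $y\in F$ of $(x_{\lambda})$, contradicting $x_{\lambda}\notin\mathrm{St}\left[F,\mathcal{U}\right]$. This works: the set of cluster points of a net is exactly $\bigcap_{\lambda}E_{\lambda}$, and $E_{\lambda}\subset F_{\lambda}$ inherits $\gamma(E_{\lambda})\rightarrow\mathcal{O}$. It is simply more roundabout than the paper's choice of $C_{\lambda}$, which converts the contradiction into bare nonemptiness of an intersection. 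One small citation slip: monotonicity of $\gamma$ is not literally in Proposition \ref{P19} (item (3) there concerns $\alpha$); it is immediate from the definition, or via the chain $\gamma(E_{\lambda})\prec 1\alpha(E_{\lambda})\prec 1\alpha(F_{\lambda})\prec 1\gamma(F_{\lambda})$.

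The one step you should not let stand is the claim that $\gamma(F_{\lambda})\rightarrow\mathcal{O}$ yields eventual boundedness of $F_{\lambda}$. That inference is false in a general admissible space: a finite union of sets of small diameter need not be bounded, because $\mathcal{O}$ need not contain arbitrarily coarse coverings. For instance, on the two-point discrete space with the single covering by singletons (an admissible, complete space), one has $\gamma(X)=\mathcal{O}$ while $X$ is unbounded, since $\rho(0,1)=\emptyset$. To be fair, the paper has the same blind spot: Theorem \ref{TK} is stated for decreasing nets of nonempty \emph{bounded} closed sets, yet the statement of item (4) assumes neither decreasingness nor boundedness, and the paper's proof applies Theorem \ref{TK} to $(F_{\lambda})$ and to $(C_{\lambda})$ without comment. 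So you and the authors are both implicitly strengthening the hypotheses; the difference is that you assert a justification for boundedness that does not hold, whereas the honest repair is to add decreasingness and boundedness (of some tail) to the hypotheses of item (4) -- as you in effect already do for decreasingness by speaking of "the decreasing net".
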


\begin{proof}
$\left( 1\right) $ Take $x\in \mathcal{LS}\left( F_{\lambda }\right) $ and $%
\mathcal{U},\mathcal{V}\in \mathcal{O}$ with $\mathcal{V}\leqslant \frac{1}{2%
}\mathcal{U}$. Since $\rho _{F}\left( F_{\lambda }\right) \rightarrow
\mathcal{O}$, there is $\lambda _{0}\in \Lambda $ such that $\mathcal{V}\in
\rho _{F}\left( F_{\lambda }\right) $ for all $\lambda \geq \lambda _{0}$.
Hence $F_{\lambda }\subset \mathrm{St}\left[ F,\mathcal{V}\right] $ for all $%
\lambda \geq \lambda _{0}$. Since $x\in \mathcal{LS}\left( F_{\lambda
}\right) $, there exists $\lambda ^{\prime }\geq \lambda _{0}$ such that $%
\mathrm{St}\left[ x,\mathcal{V}\right] \cap F_{\lambda ^{\prime }}\neq
\emptyset $. Let $y\in \mathrm{St}\left[ x,\mathcal{V}\right] \cap
F_{\lambda ^{\prime }}$. As $y\in F_{\lambda ^{\prime }}$, it follows that $%
y\in \mathrm{St}\left[ F,\mathcal{V}\right] $, and therefore $\mathcal{V}\in
\rho \left( y,k\right) $ for some $k\in F$. Then we have
\begin{equation*}
\rho (x,k)\prec 1(\rho (x,y)\cap \rho (y,k))\prec 1\{\mathcal{V}\}\prec \{%
\mathcal{U}\}
\end{equation*}%
for all $\mathcal{U}\in \mathcal{O}$. This implies $\rho \left( x,F\right) =%
\mathcal{O}$, and then $x\in F$.

$\left( 2\right) $ Let $x\in F$ and $\mathcal{U}\in \mathcal{O}$. Since $%
\rho _{F_{\lambda }}\left( F\right) \rightarrow \mathcal{O}$, we can take $%
\lambda _{0}\in \Lambda $ such that $\mathcal{U}\in \rho _{F_{\lambda
}}\left( F\right) $ for all $\lambda \geq \lambda _{0}$. Thus $x\in \mathrm{%
St}\left[ F_{\lambda },\mathcal{U}\right] $, which means $F_{\lambda }\cap
\mathrm{St}\left[ x,\mathcal{U}\right] \neq \emptyset $ for all $\lambda
\geq \lambda _{0}$. Therefore $x\in \mathcal{LI}(F_{\lambda })$.

$\left( 3\right) $ It is obvious since $\rho _{F_{\lambda }}\left( F\right) =%
\mathcal{O}$ for every $\lambda $.

$\left( 4\right) $ By Theorem \ref{TK}, $F=\bigcap_{\lambda \in \Lambda }{%
F_{\lambda }}$ is nonempty and compact. Suppose by contradiction that $%
\left( F_{\lambda }\right) $ does not converges to $F$ in the uniform
topology. Then there is some $\mathcal{U}\in \mathcal{O}$ such that for
every $\lambda \in \Lambda $ there exists $\lambda ^{\prime }\geq \lambda $
with $\mathcal{U}\notin \rho _{H}\left( F_{\lambda ^{\prime }},F\right) $.
Since $F\subset F_{\lambda ^{\prime }}$, this means that $F_{\lambda
^{\prime }}\nsubseteq \mathrm{St}\left[ F,\mathcal{U}\right] $. As $%
F_{\lambda ^{\prime }}\subset F_{\lambda }$ it follows that $F_{\lambda
}\nsubseteq \mathrm{St}\left[ F,\mathcal{U}\right] $ for all $\lambda \in
\Lambda $. We now define $C_{\lambda }=F_{\lambda }\cap \left( X\setminus
\mathrm{St}\left[ F,\mathcal{U}\right] \right) $ for each $\lambda \in
\Lambda $. Then $\left( C_{\lambda }\right) $ is a decreasing net of
nonempty closed sets such that $\gamma \left( C_{\lambda }\right)
\rightarrow \mathcal{O}$, as $C_{\lambda }\subset F_{\lambda }$ and $\gamma
\left( F_{\lambda }\right) \rightarrow \mathcal{O}$. By Theorem \ref{TK}, $%
\bigcap_{\lambda \in \Lambda }{C_{\lambda }}\neq \emptyset $. On the other
hand
\begin{equation*}
\bigcap_{\lambda \in \Lambda }{C_{\lambda }}=\bigcap_{\lambda \in \Lambda }{%
F_{\lambda }\cap }\left( X\setminus \mathrm{St}\left[ F,\mathcal{U}\right]
\right) =\bigcap_{\lambda \in \Lambda }{F_{\lambda }}\cap \left( X\setminus
\mathrm{St}\left[ \bigcap_{\lambda \in \Lambda }{F_{\lambda }},\mathcal{U}%
\right] \right) =\emptyset
\end{equation*}%
and then we have a contradiction.
\end{proof}

This theorem allows to relate Hausdorff convergence and Kuratowski
convergence.

\begin{corollary}
Every convergent net in $\mathcal{H}\left( X\right) $ is Kuratowski
convergent.
\end{corollary}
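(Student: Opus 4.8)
The plan is to show that Hausdorff convergence implies Kuratowski convergence by establishing the two set inclusions $\mathcal{LS}(F_{\lambda}) \subset F \subset \mathcal{LI}(F_{\lambda})$, which by the remark preceding Theorem \ref{PK} is exactly what the definition of Kuratowski convergence requires. The key observation is that items $(1)$ and $(2)$ of Theorem \ref{PK} already package the hard work, so the corollary reduces to verifying that Hausdorff convergence supplies the hypotheses of both those items simultaneously.

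First I would suppose $F_{\lambda} \rightarrow F$ in the uniform topology. By Corollary \ref{EC}, this is equivalent to $\rho_{H}(F_{\lambda}, F) \rightarrow \mathcal{O}$. The next step is to unwind the definition of $\rho_{H}$: since $\rho_{H}(F_{\lambda}, F) = \rho_{F_{\lambda}}(F) \cap \rho_{F}(F_{\lambda})$, and convergence to $\mathcal{O}$ means that for each $\mathcal{U} \in \mathcal{O}$ the covering $\mathcal{U}$ eventually belongs to the intersection, it follows immediately that both $\rho_{F}(F_{\lambda}) \rightarrow \mathcal{O}$ and $\rho_{F_{\lambda}}(F) \rightarrow \mathcal{O}$. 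Here I would invoke the elementary fact, noted just after Definition \ref{Convergence}, that $\mathcal{D}_{\lambda} \prec \mathcal{E}_{\lambda}$ and $\mathcal{E}_{\lambda} \rightarrow \mathcal{O}$ implies $\mathcal{D}_{\lambda} \rightarrow \mathcal{O}$, applied to the inclusions $\rho_{H}(F_{\lambda},F) \subset \rho_{F}(F_{\lambda})$ and $\rho_{H}(F_{\lambda},F) \subset \rho_{F_{\lambda}}(F)$ (inverse inclusion is the order $\prec$).

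With both convergences in hand, item $(1)$ of Theorem \ref{PK} gives $\mathcal{LS}(F_{\lambda}) \subset F$ and item $(2)$ gives $F \subset \mathcal{LI}(F_{\lambda})$. Chaining these with the always-true inclusion $\mathcal{LI}(F_{\lambda}) \subset \mathcal{LS}(F_{\lambda})$ yields $F = \mathcal{LS}(F_{\lambda}) = \mathcal{LI}(F_{\lambda})$, which is precisely the statement $F_{\lambda} \overset{k}{\rightarrow} F$.

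I do not expect any genuine obstacle here: the corollary is a direct assembly of results already proved, and the only point requiring minor care is the bookkeeping in passing from convergence of the intersection $\rho_{H}(F_{\lambda},F)$ to convergence of each of its two factors. That step is purely formal, relying on the monotonicity of convergence under $\prec$ together with the fact that membership of $\mathcal{U}$ in an intersection forces membership in each component.
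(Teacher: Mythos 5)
Your proposal is correct and follows the same route as the paper's own proof: extract $\rho_{F}\left( F_{\lambda }\right) \rightarrow \mathcal{O}$ and $\rho_{F_{\lambda }}\left( F\right) \rightarrow \mathcal{O}$ from Hausdorff convergence, then apply items $\left( 1\right)$ and $\left( 2\right)$ of Theorem \ref{PK} together with the standing inclusion $\mathcal{LI}\left( F_{\lambda }\right) \subset \mathcal{LS}\left( F_{\lambda }\right)$. The only difference is that you spell out the bookkeeping (Corollary \ref{EC} and the monotonicity of convergence under $\prec$) that the paper leaves implicit.
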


\begin{proof}
If $F_{\lambda }\rightarrow F$ then $\rho _{F}\left( F_{\lambda }\right)
\rightarrow \mathcal{O}$ and $\rho _{F_{\lambda }}\left( F\right)
\rightarrow \mathcal{O}$. By Theorem \ref{PK}, we have $\mathcal{LS}\left(
F_{\lambda }\right) \subset F\subset \mathcal{LI}\left( F_{\lambda }\right) $%
.
\end{proof}

We now show that Hausdorff convergence and Kuratowski convergence are
equivalent for compact space.

\begin{theorem}
\label{KP} Assume that $X$ is a compact admissible space. Let $\left(
K_{\lambda }\right) $ be a net in $\mathcal{H}\left( X\right) $. If $%
K_{\lambda }\overset{k}{\rightarrow }K$ then $K_{\lambda }\rightarrow K$.
\end{theorem}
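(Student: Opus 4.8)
The plan is to translate Kuratowski convergence $K_{\lambda }\overset{k}{\rightarrow }K$ into the uniform statement $\rho _{H}\left( K_{\lambda },K\right) \rightarrow \mathcal{O}$, which by Corollary \ref{EC} is precisely Hausdorff convergence. By Proposition \ref{P3}, item $(6)$, the condition $\mathcal{U}\in \rho _{H}\left( K_{\lambda },K\right) $ means exactly $K\subset \mathrm{St}\left[ K_{\lambda },\mathcal{U}\right] $ and $K_{\lambda }\subset \mathrm{St}\left[ K,\mathcal{U}\right] $; so, fixing $\mathcal{U}\in \mathcal{O}$, it suffices to produce a single index $\lambda _{0}$ beyond which both inclusions hold. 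I would establish these two inclusions separately: the first from the lower limit $K\subset \mathcal{LI}\left( K_{\lambda }\right) $, the second from the upper limit $\mathcal{LS}\left( K_{\lambda }\right) \subset K$, using throughout that $K$ and each $K_{\lambda }$ are compact, being closed subsets of the compact space $X$.

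For $K\subset \mathrm{St}\left[ K_{\lambda },\mathcal{U}\right] $, first choose $\mathcal{V}\in \mathcal{O}$ with $\mathcal{V}\leqslant \frac{1}{2}\mathcal{U}$. By compactness of $K$ and since the stars form a base, finitely many stars $\mathrm{St}\left[ x_{1},\mathcal{V}\right] ,\ldots ,\mathrm{St}\left[ x_{n},\mathcal{V}\right] $ with $x_{i}\in K$ cover $K$. Since each $x_{i}\in \mathcal{LI}\left( K_{\lambda }\right) $, the neighborhood $\mathrm{St}\left[ x_{i},\mathcal{V}\right] $ meets $K_{\lambda }$ residually, so there is $\lambda _{i}$ with $\mathrm{St}\left[ x_{i},\mathcal{V}\right] \cap K_{\lambda }\neq \emptyset $ for $\lambda \geq \lambda _{i}$; taking $\lambda _{0}$ above the finitely many $\lambda _{i}$, any $x\in K$ lies in some $\mathrm{St}\left[ x_{i},\mathcal{V}\right] $ and is joined through $x_{i}$ to a point $y\in K_{\lambda }\cap \mathrm{St}\left[ x_{i},\mathcal{V}\right] $, whence $\mathcal{V}\in \rho \left( x,x_{i}\right) \cap \rho \left( x_{i},y\right) $. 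The triangle-type inequality Proposition \ref{P1}, item $(4)$, together with $\mathcal{V}\leqslant \frac{1}{2}\mathcal{U}$, then gives $\mathcal{U}\in \rho \left( x,y\right) $, i.e. $x\in \mathrm{St}\left[ K_{\lambda },\mathcal{U}\right] $, so $K\subset \mathrm{St}\left[ K_{\lambda },\mathcal{U}\right] $ for all $\lambda \geq \lambda _{0}$.

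The inclusion $K_{\lambda }\subset \mathrm{St}\left[ K,\mathcal{U}\right] $ is the crux, and it is here that compactness of the whole space $X$ is essential. I would argue by contradiction: if this inclusion failed frequently, then along a cofinal set of indices there would be points $y_{\lambda }\in K_{\lambda }\setminus \mathrm{St}\left[ K,\mathcal{U}\right] $. Compactness of $X$ lets me pass to a subnet with $y_{\lambda _{\mu }}\rightarrow y$. Since each $y_{\lambda _{\mu }}\in K_{\lambda _{\mu }}$ and the indices remain cofinal, every open neighborhood of $y$ meets $K_{\lambda }$ frequently, so $y\in \mathcal{LS}\left( K_{\lambda }\right) =K$. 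But then $\mathrm{St}\left[ y,\mathcal{U}\right] \subset \mathrm{St}\left[ K,\mathcal{U}\right] $ is an open neighborhood of $y$, forcing $y_{\lambda _{\mu }}\in \mathrm{St}\left[ K,\mathcal{U}\right] $ eventually, which contradicts the choice $y_{\lambda _{\mu }}\notin \mathrm{St}\left[ K,\mathcal{U}\right] $.

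The main obstacle is this second, upper-limit inclusion. The lower-limit part is a routine finite-subcover plus triangle-inequality estimate, whereas the possible escape of mass in the upper part can only be controlled by extracting a convergent subnet, which is exactly what compactness of $X$ supplies, and then by recognizing its limit as a point of $\mathcal{LS}\left( K_{\lambda }\right) $, which is forced to lie in $K$. Combining the two halves yields a common $\lambda _{0}$ with $\mathcal{U}\in \rho _{H}\left( K_{\lambda },K\right) $ for $\lambda \geq \lambda _{0}$; as $\mathcal{U}\in \mathcal{O}$ was arbitrary, $\rho _{H}\left( K_{\lambda },K\right) \rightarrow \mathcal{O}$, and hence $K_{\lambda }\rightarrow K$ by Corollary \ref{EC}.
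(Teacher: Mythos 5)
Your proof is correct, and its skeleton matches the paper's: fix $\mathcal{U}\in \mathcal{O}$ and produce an index beyond which both $K\subset \mathrm{St}\left[ K_{\lambda },\mathcal{U}\right] $ and $K_{\lambda }\subset \mathrm{St}\left[ K,\mathcal{U}\right] $ hold, deducing the first from $K=\mathcal{LI}\left( K_{\lambda }\right) $ and the second from $\mathcal{LS}\left( K_{\lambda }\right) =K$. The lower-limit half is essentially identical to the paper's: your stars $\mathrm{St}\left[ x_{i},\mathcal{V}\right] $ with $\mathcal{V}\leqslant \frac{1}{2}\mathcal{U}$ are exactly a realization of the paper's finite cover of $K$ by sets $U_{j}$ with $\mathcal{U}\in \mathrm{D}\left( U_{j}\right) $ meeting $K$, and your triangle-inequality step via Proposition \ref{P1}(4) plays the role of the paper's estimate $\rho \left( x,y\right) \prec \mathrm{D}\left( U_{j}\right) \prec \left\{ \mathcal{U}\right\} $. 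Where you genuinely diverge is the upper-limit half: the paper argues directly, observing that each $x\in X\setminus \mathrm{St}\left[ K,\mathcal{U}\right] $ lies outside $\mathcal{LS}\left( K_{\lambda }\right) $ and hence has a neighborhood $U_{x}$ eventually disjoint from $K_{\lambda }$, and then uses compactness of the closed set $X\setminus \mathrm{St}\left[ K,\mathcal{U}\right] $ to extract a finite subcover and a single index; you instead argue by contradiction, using compactness of $X$ to extract a convergent subnet of escaping points $y_{\lambda }\in K_{\lambda }\setminus \mathrm{St}\left[ K,\mathcal{U}\right] $ and identifying its limit as a point of $\mathcal{LS}\left( K_{\lambda }\right) =K$, contradicting openness of $\mathrm{St}\left[ K,\mathcal{U}\right] $. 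Both uses of compactness are legitimate: the paper's covering argument is somewhat more economical, since it avoids the subnet bookkeeping (cofinality of the failing indices, and the fact that a limit of such a subnet lies in the upper limit) which your write-up does handle correctly; your version, on the other hand, transfers verbatim the familiar metric-space picture of escaping mass and a convergent subsequence, and makes transparent exactly where compactness of the whole space, rather than of $K$ alone, enters.
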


\begin{proof}
Let $\mathcal{U}\in \mathcal{O}$ and take $x\in X\setminus \mathrm{St}\left[
K,\mathcal{U}\right] $. We have $x\notin \mathcal{LS}\left( K_{\lambda
}\right) =K$. Thus there is an open neighborhood $U_{x}$ of $x$ and $\lambda
_{x}\in \Lambda $ such that $U_{x}\cap K_{\lambda }=\emptyset $ for all $%
\lambda \geq \lambda _{x}$. Since $X\setminus \mathrm{St}\left[ K,\mathcal{U}%
\right] $ is compact, we can take a finite subcovering $X\setminus \mathrm{St%
}\left[ K,\mathcal{U}\right] \subset \bigcup_{i=1}^{n}{U_{x_{i}}}$. Now,
choose $\lambda _{0}\in \Lambda $ such that $\lambda _{0}\geq \lambda
_{x_{i}}$ for all $i=1,...,n$. We have $\bigcup_{i=1}^{n}{U_{x_{i}}}\cap
K_{\lambda _{0}}=\emptyset $. This means that $X\setminus \mathrm{St}\left[
K,\mathcal{U}\right] \cap K_{\lambda }=\emptyset $ whenever $\lambda \geq
\lambda _{0}$. It follows that $K_{\lambda }\subset \mathrm{St}\left[ K,%
\mathcal{U}\right] $ for all $\lambda \geq \lambda _{0}$, and therefore $%
\mathcal{U}\in \rho _{K}\left( K_{\lambda }\right) $ whenever $\lambda \geq
\lambda _{0}$. By the compactness of $K$, we can take a finite open covering
$K\subset \bigcup_{j=i}^{m}U_{j}$ such that $\mathcal{U}\in \mathrm{D}\left(
U_{j}\right) $ and $U_{j}\cap K\neq \emptyset $ for all $j=1,...,m$. On the
other hand, $\mathcal{LI}\left( K_{\lambda }\right) =K$, hence there exists $%
\lambda _{j}\in \Lambda $ for each $j\in \left\{ 1,...,k\right\} $ such that
$U_{j}\cap K_{\lambda }\neq \emptyset $ whenever $\lambda \geq \lambda _{j}$%
. Fix some $\lambda ^{\prime }\geq \lambda _{j}$ for all $j\in \left\{
1,...,k\right\} $. We claim that $K\subset \mathrm{St}\left[ K_{\lambda },%
\mathcal{U}\right] $ for all $\lambda \geq \lambda ^{\prime }$. In fact, for
a given $x\in K$ there is $j\in \left\{ 1,...,k\right\} $ such that $x\in
U_{j}$. Since $\lambda \geq \lambda _{j}$ for all $j$, we have $U_{j}\cap
K_{\lambda }\neq \emptyset $. Hence there exists $y\in U_{j}\cap K_{\lambda
} $, and then $\rho \left( x,y\right) \prec \mathrm{D}\left( U_{j}\right)
\prec \left\{ \mathcal{U}\right\} $. Therefore $K\subset \mathrm{St}\left[
K_{\lambda },\mathcal{U}\right] $ for all $\lambda \geq \lambda ^{\prime }$.
Finally, choose $\lambda _{0}^{\prime }\geq \lambda _{0},\lambda ^{\prime }$%
. This implies that $K\subset \mathrm{St}\left[ K_{\lambda },\mathcal{U}%
\right] $ and $K_{\lambda }\subset \mathrm{St}\left[ K,\mathcal{U}\right] $
for every $\lambda \geq \lambda _{0}^{\prime }$. In other words, $\mathcal{U}%
\in \rho _{K}\left( K_{\lambda }\right) \cap \rho _{K_{\lambda }}\left(
K\right) $ for all $\lambda \geq \lambda _{0}^{\prime }$, and therefore $%
\rho _{H}\left( K_{\lambda },K\right) \rightarrow \mathcal{O}$.
\end{proof}

We now define some notions of continuity of set-valued functions. For a
given set-valued function $F:X\rightarrow \mathcal{H}\left( X\right) $ and a
set $A\subset X$, we define the sets
\begin{eqnarray*}
F^{+}\left( A\right) &=&\left\{ x\in X:F(x)\subset A\right\} , \\
F^{-}\left( A\right) &=&\left\{ x\in X:F(x)\cap A\neq \emptyset \right\} .
\end{eqnarray*}

\begin{definition}
A set-valued function $F:X\rightarrow \mathcal{H}\left( X\right) $ is called

\begin{enumerate}
\item \textbf{lower semicontinuous} (LSC) at $x\in X$ if for every open set $%
V\subset X$ such that $F\left( x\right) \cap V\neq \emptyset $, there is a
neighborhood $U$ of $x$ such that $F\left( y\right) \cap V\neq \emptyset $
for every $y\in U$, that is, $U\subset F^{-}\left( V\right) $.

\item \textbf{upper semicontinuous} (USC) at $x\in X$ if for every open set $%
V\subset X$ such that $F\left( x\right) \subset V$, there is a neighborhood $%
U$ of $x$ such that $F\left( y\right) \subset V$ for every $y\in U$, that
is, $U\subset F^{+}\left( V\right) $.

\item \textbf{Hausdorff continuous} at $x\in X$ if it is lower and upper
semicontinuous at $x$.
\end{enumerate}

We say that $F$ is upper semicontinuous (lower semicontinuous, Hausdorff
continuous) if it is upper semicontinuous (lower semicontinuous, Hausdorff
continuous) at every point of $X$.
\end{definition}

\begin{remark}
Let $F,F_{1},F_{2}:X\rightarrow \mathcal{H}\left( X\right) $ be set-valued
functions such that $F\left( x\right) =F_{1}\left( x\right) \cup F_{2}\left(
x\right) $ for some $x\in X$. If both $F_{1}$ and $F_{2}$ are USC at $x$
then $F$ is USC at $x$. If both $F_{1}$ and $F_{2}$ are closed functions
which are LSC at $x$ then $F$ is LSC at $x$.
\end{remark}

\begin{proposition}
\label{P6} Let $F:X\rightarrow \mathcal{H}\left( X\right) $ be a set-valued
function with $F\left( x\right) $ compact for every $x\in X$. The following
statements holds:

\begin{enumerate}
\item $F$ is USC at $x$ if and only if $\rho _{F(x)}\left( F(x_{\lambda
})\right) \rightarrow \mathcal{O}$ for any net $x_{\lambda }\rightarrow x$.

\item $F$ is LSC at $x$ if and only if $\rho _{F(x_{\lambda })}\left(
F(x)\right) \rightarrow \mathcal{O}$ for any net $x_{\lambda }\rightarrow x$.

\item $F$ is Hausdorff continuous at $x$ if and only if $\rho _{H}\left(
F(x_{\lambda }),F(x)\right) \rightarrow \mathcal{O}$ for any net $x_{\lambda
}\rightarrow x$.
\end{enumerate}
\end{proposition}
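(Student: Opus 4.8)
The plan is to translate each of the three convergence conditions into an inclusion between stars and then read it off against the definitions of upper and lower semicontinuity. From the definition of $\rho_A$ (or Proposition \ref{P3}, item 6) one has, for closed sets $C,D$, that $\mathcal U\in\rho_C(D)$ exactly when $D\subset\mathrm{St}\left[C,\mathcal U\right]$. Instantiating this, $\mathcal U\in\rho_{F(x)}\left(F(x_\lambda)\right)$ means $F(x_\lambda)\subset\mathrm{St}\left[F(x),\mathcal U\right]$, while $\mathcal U\in\rho_{F(x_\lambda)}\left(F(x)\right)$ means $F(x)\subset\mathrm{St}\left[F(x_\lambda),\mathcal U\right]$. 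By Definition \ref{Convergence}, the hypotheses of (1) and (2) thus say, respectively, that for each $\mathcal U\in\mathcal O$ one eventually has $F(x_\lambda)\subset\mathrm{St}\left[F(x),\mathcal U\right]$, and that for each $\mathcal U\in\mathcal O$ one eventually has $F(x)\subset\mathrm{St}\left[F(x_\lambda),\mathcal U\right]$. With these reformulations the proof becomes a matter of matching neighborhoods.

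For (1) I would prove both implications directly. Given USC and a net $x_\lambda\to x$, for each $\mathcal U$ the open set $V=\mathrm{St}\left[F(x),\mathcal U\right]$ contains $F(x)$, so USC yields a neighborhood $W$ with $F(y)\subset V$ on $W$, and $x_\lambda\to x$ puts $x_\lambda$ in $W$ eventually, giving $F(x_\lambda)\subset\mathrm{St}\left[F(x),\mathcal U\right]$ eventually. For the converse I would argue by contraposition: if $F$ fails USC at $x$ there is an open $V\supset F(x)$ with points $y$ arbitrarily near $x$ satisfying $F(y)\not\subset V$, and indexing by the neighborhood filter of $x$ produces a net $x_\lambda\to x$ with $F(x_\lambda)\not\subset V$ for all $\lambda$. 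Here I would invoke compactness of $F(x)$: since $F(x)\subset V$ with $V$ open there is $\mathcal U$ with $\mathrm{St}\left[F(x),\mathcal U\right]\subset V$, so $F(x_\lambda)\not\subset\mathrm{St}\left[F(x),\mathcal U\right]$ for every $\lambda$, contradicting the reformulated hypothesis.

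Item (2) is where I expect the main obstacle, because LSC is a pointwise condition on the target whereas $F(x)\subset\mathrm{St}\left[F(x_\lambda),\mathcal U\right]$ is uniform over all of $F(x)$; compactness of $F(x)$ and a double-refinement are what close this gap. The plan for the forward direction: given $x_\lambda\to x$ and $\mathcal U$, pick $\mathcal V\leqslant\frac12\mathcal U$, cover the compact set $F(x)$ by finitely many stars $\mathrm{St}\left[b_i,\mathcal V\right]$ (with $b_i\in F(x)$), apply LSC to each open set $\mathrm{St}\left[b_i,\mathcal V\right]$ to obtain a single neighborhood $U$ of $x$ on which $F(y)$ meets every $\mathrm{St}\left[b_i,\mathcal V\right]$, and then for $x_\lambda\in U$ and any $b\in F(x)$ choose $i$ with $b\in\mathrm{St}\left[b_i,\mathcal V\right]$ and $c\in F(x_\lambda)\cap\mathrm{St}\left[b_i,\mathcal V\right]$; the members of $\mathcal V$ witnessing $b$ and $c$ both contain $b_i$, so $\mathcal V\leqslant\frac12\mathcal U$ places $b,c$ in a common $U\in\mathcal U$ and hence $c\in\mathrm{St}\left[b,\mathcal U\right]$, yielding $F(x)\subset\mathrm{St}\left[F(x_\lambda),\mathcal U\right]$. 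For the converse I would again contrapose: failure of LSC gives an open $V$ with $F(x)\cap V\neq\emptyset$ and a net $x_\lambda\to x$ with $F(x_\lambda)\cap V=\emptyset$; choosing $p\in F(x)\cap V$ and, using that stars form a neighborhood base (Remark \ref{R1}), some $\mathcal U$ with $\mathrm{St}\left[p,\mathcal U\right]\subset V$, I get $\mathrm{St}\left[p,\mathcal U\right]\cap F(x_\lambda)=\emptyset$, so $p\notin\mathrm{St}\left[F(x_\lambda),\mathcal U\right]$ and the reformulated hypothesis fails.

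For (3) I would simply combine (1) and (2). Since $\rho_H\left(F(x_\lambda),F(x)\right)=\rho_{F(x_\lambda)}\left(F(x)\right)\cap\rho_{F(x)}\left(F(x_\lambda)\right)$ and a net of intersections in $\mathcal P(\mathcal O)$ tends to $\mathcal O$ precisely when both factors do (for a fixed $\mathcal U$ take an index past the two thresholds), the condition $\rho_H\left(F(x_\lambda),F(x)\right)\to\mathcal O$ along every net $x_\lambda\to x$ is equivalent to the conjunction of the conditions in (1) and (2), that is, to $F$ being simultaneously USC and LSC at $x$, which is Hausdorff continuity at $x$.
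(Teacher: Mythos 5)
Your proposal is correct and follows essentially the same route as the paper's proof: the same star-inclusion reformulation of $\rho_{F(x)}(F(x_\lambda))$ and $\rho_{F(x_\lambda)}(F(x))$, the same use of compactness (the star-refinement remark for the USC converse, a finite star cover plus a double-refinement $\mathcal{V}\leqslant\frac{1}{2}\mathcal{U}$ for the LSC forward direction), and the same contradiction/contraposition arguments for the converses, with item (3) obtained by combining (1) and (2). The only differences are cosmetic, such as indexing the approximating net by the neighborhood filter of $x$ rather than by $\mathcal{O}$.
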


\begin{proof}
$\left( 1\right) $ Suppose that $F$ is USC at $x$. Let $x_{\lambda
}\rightarrow x$ and $\mathcal{U}\in \mathcal{O}$. There is $\mathcal{V}\in
\mathcal{O}$ such that $F\left( y\right) \subset \mathrm{St}\left[ F\left(
x\right) ,\mathcal{U}\right] $ for every $y\in \mathrm{St}\left[ x,\mathcal{V%
}\right] $. Take $\lambda _{0}$ such that $x_{\lambda }\in \mathrm{St}\left[
x,\mathcal{V}\right] $ whenever $\lambda \geq \lambda _{0}$. Then $F\left(
x_{\lambda }\right) \subset \mathrm{St}\left[ F\left( x\right) ,\mathcal{U}%
\right] $, and hence $\mathcal{U}\in \rho _{F(x)}\left( F(x_{\lambda
})\right) $ for all $\lambda \geq \lambda _{0}$. Thus $\rho _{F(x)}\left(
F(x_{\lambda })\right) \rightarrow \mathcal{O}$. On the other hand, suppose
that $\rho _{F(x)}\left( F(x_{\lambda })\right) \rightarrow \mathcal{O}$ for
any net $x_{\lambda }\rightarrow x$. Let $V\subset X$ be an open set such
that $F\left( x\right) \subset V$. By the compactness of $F\left( x\right) $%
, there is $\mathcal{V}\in \mathcal{O}$ such that $\mathrm{St}\left[ F\left(
x\right) ,\mathcal{V}\right] \subset V$. Suppose by contradiction that for
every $\mathcal{U}\in \mathcal{O}$ there is $x_{\mathcal{U}}\in \mathrm{St}%
\left[ x,\mathcal{U}\right] $ such that $F\left( x_{\mathcal{U}}\right)
\nsubseteq V$. By hypothesis, $\rho _{F(x)}\left( F(x_{\mathcal{U}})\right)
\rightarrow \mathcal{O}$ since $x_{\mathcal{U}}\rightarrow x$. Then there is
$\mathcal{U}_{0}\in \mathcal{O}$ such that $\mathcal{V}\in \rho
_{F(x)}\left( F(x_{\mathcal{U}})\right) $ whenever $\mathcal{U}\leqslant
\mathcal{U}_{0}$. Hence $F(x_{\mathcal{U}})\subset \mathrm{St}\left[ F\left(
x\right) ,\mathcal{V}\right] \subset V$ for $\mathcal{U}\leqslant \mathcal{U}%
_{0}$, which is a contradiction. Thus $F$ is USC at $x$.

$\left( 2\right) $ Suppose that $F$ is LSC at $x$. Let $x_{\lambda
}\rightarrow x$ and $\mathcal{U}\in \mathcal{O}$. Take $\mathcal{V}\in
\mathcal{O}$ with $\mathcal{V}\leqslant \frac{1}{2}\mathcal{U}$. By the
compactness of $F\left( x\right) $, we can get a finite sequence $%
y_{1},...,y_{n}\in F\left( x\right) $ such that $F\left( x\right) \subset
\bigcup_{i=1}^{n}\mathrm{St}\left[ y_{i},\mathcal{V}\right] $. Since $F$ is
LSC at $x$, there is a neighborhood $U$ of $x$ such that $F\left( y\right)
\cap \mathrm{St}\left[ y_{i},\mathcal{V}\right] \neq \emptyset $ for all $%
y\in U$ and $i=1,...,n$. Take $\lambda _{0}$ such that $x_{\lambda }\in U$
whenever $\lambda \geq \lambda _{0}$. For $y\in F\left( x\right) $ and $%
\lambda \geq \lambda _{0}$, we have $y\in \mathrm{St}\left[ y_{i},\mathcal{V}%
\right] $, for some $i$, and $F\left( x_{\lambda }\right) \cap \mathrm{St}%
\left[ y_{i},\mathcal{V}\right] \neq \emptyset $. As $\mathcal{V}\leqslant
\frac{1}{2}\mathcal{U}$, it follows that $\mathcal{U}\in \rho \left(
y,F(x_{\lambda })\right) $. Hence $\mathcal{U}\in \bigcap_{y\in F\left(
x\right) }\rho \left( y,F(x_{\lambda })\right) $ whenever $\lambda \geq
\lambda _{0}$, and therefore $\rho _{F(x_{\lambda })}\left( F(x)\right)
\rightarrow \mathcal{O}$. Conversely, suppose that $\rho _{F(x_{\lambda
})}\left( F(x)\right) \rightarrow \mathcal{O}$ for any net $x_{\lambda
}\rightarrow x$ and $F$ is not LSC at $x$. Then there is an open set $%
V\subset X$ such that $F\left( x\right) \cap V\neq \emptyset $ and $F\left(
x_{\lambda }\right) \cap V=\emptyset $ for some net $x_{\lambda }\rightarrow
x$. Choose $y\in F\left( x\right) \cap V$ and take $\mathcal{U}\in \mathcal{O%
}$ such that $\mathrm{St}\left[ y,\mathcal{U}\right] \subset V$. By
hypothesis, $\rho _{F(x_{\lambda })}\left( F(x)\right) \rightarrow \mathcal{O%
}$. Hence there is $\lambda _{0}$ such that $\lambda \geq \lambda _{0}$
implies $\mathcal{U}\in \rho _{F(x_{\lambda })}\left( F(x)\right) $. It
follows that $\mathcal{U}\in \rho \left( y,F(x_{\lambda })\right) $ whenever
$\lambda \geq \lambda _{0}$, and then $\emptyset \neq F\left( x_{\lambda
}\right) \cap \mathrm{St}\left[ y,\mathcal{U}\right] \subset F\left(
x_{\lambda }\right) \cap V$, a contradiction.

$\left( 3\right) $ It follows by applying item $\left( 1\right) $ together
with $\left( 2\right) $.
\end{proof}

We after need the following technical result.

\begin{proposition}
\label{LH} Let $F:X\rightarrow \mathcal{H}(X)$ be a function with $F\left(
x\right) $ compact. If $F$ is USC at $x$ then $F(x)\cup \bigcup_{\lambda \in
\Lambda }{F(x_{\lambda })}$ is compact for any convergent net $x_{\lambda
}\rightarrow x$.
\end{proposition}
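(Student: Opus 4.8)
The plan is to prove compactness of $K=F(x)\cup\bigcup_{\lambda\in\Lambda}F(x_{\lambda})$ by showing that every net in $K$ admits a cluster point lying in $K$, which characterizes compactness in a general topological space. The quantitative input I would use is Proposition \ref{P6}, item $(1)$: since $F$ is USC at $x$ and $F(x)$ is compact, one has $\rho_{F(x)}(F(x_{\lambda}))\rightarrow\mathcal{O}$, i.e. for each $\mathcal{U}\in\mathcal{O}$ there is $\lambda_{\mathcal{U}}$ with $F(x_{\lambda})\subset\mathrm{St}[F(x),\mathcal{U}]$ whenever $\lambda\geq\lambda_{\mathcal{U}}$. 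Thus the far tail of the family $(F(x_{\lambda}))$ is squeezed into arbitrarily small stars of the compact set $F(x)$, and this is the control I would exploit together with Proposition \ref{PKS}.

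Let $(z_{\mu})_{\mu\in M}$ be a net in $K$. For each $\mu$ either $z_{\mu}\in F(x)$ or $z_{\mu}\in F(x_{\lambda(\mu)})$ for some $\lambda(\mu)\in\Lambda$. First I would dispose of the two favorable alternatives. If $(z_{\mu})$ is frequently in $F(x)$, the corresponding subnet clusters to a point of $F(x)\subset K$ by compactness of $F(x)$ (note $F(x)\in\mathcal{H}(X)$ is closed, so $\mathrm{cls}(F(x))=F(x)$). Otherwise $(z_{\mu})$ is eventually of the form $z_{\mu}\in F(x_{\lambda(\mu)})$, and I would pass to the associated net $(\lambda(\mu))$ in $\Lambda$. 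If $(\lambda(\mu))$ is cofinally large, meaning it admits a subnet along which, for every $\mathcal{U}$, one has $\lambda(\mu)\geq\lambda_{\mathcal{U}}$ eventually, then along that subnet $z_{\mu}\in F(x_{\lambda(\mu)})\subset\mathrm{St}[F(x),\mathcal{U}]$ eventually for each $\mathcal{U}$; hence $\rho(z_{\mu},F(x))\rightarrow\mathcal{O}$, and Proposition \ref{PKS} (applied with the compact set $F(x)$) yields a convergent subnet $z_{\mu_{\nu}}\rightarrow z$ with $z\in F(x)\subset K$. Both cases close cleanly.

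The remaining case, and the step I expect to be the genuine obstacle, is when the index net $(\lambda(\mu))$ is \emph{not} cofinally large, so that the $z_{\mu}$ are drawn from sets $F(x_{\lambda})$ with $\lambda$ ranging outside every tail $\{\lambda:\lambda\geq\lambda_{\mathcal{U}}\}$. For such $\lambda$ upper semicontinuity gives no control on $F(x_{\lambda})$, since the points $x_{\lambda}$ need not lie in the neighborhood of $x$ furnished by USC. Here I would fall back on the structure of the index set: for any fixed $\lambda_{0}$ the set $\{\lambda:\lambda\not\geq\lambda_{0}\}$ is never cofinal in $\Lambda$, and for a \emph{sequential} net it is finite, so that the leftover $\bigcup_{\lambda\not\geq\lambda_{0}}F(x_{\lambda})$ is a finite union of compact pieces and is harmless; the open-cover version of the argument (cover $F(x)$ by finitely many members of a given open cover, enlarge to an open $V\supset F(x)$, absorb the tail into $V$ by USC) reduces to exactly the same leftover. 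For a general net one must show that this non-tail part still contributes only a relatively compact set, and this is precisely the crux on which the statement turns: it is here that compactness of the individual values $F(x_{\lambda})$ and a careful directed-set argument are indispensable, and I would concentrate the technical effort on verifying that the non-tail indices cannot disperse the $z_{\mu}$ without producing a cluster point in $K$.
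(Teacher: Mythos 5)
Your decomposition into three cases is sound, and your cases (1) and (2) are carried out correctly with the right tools (Proposition \ref{P6}, item (1), plus Proposition \ref{PKS}); but the gap you leave open in case (3) is a genuine gap, so your proposal does not prove the proposition. What you should know is that case (3) is exactly the point the paper's own proof passes over in silence: the paper takes an arbitrary net $\left( y_{\mu }\right) $ in $\bigcup_{\lambda \in \Lambda }F\left( x_{\lambda }\right) $, writes $y_{\mu }\in F\left( x_{\lambda _{\mu }}\right) $, and asserts $\rho _{F(x)}\left( F(x_{\lambda _{\mu }})\right) \rightarrow \mathcal{O}$. By Proposition \ref{P6} that assertion is justified only if $x_{\lambda _{\mu }}\rightarrow x$, i.e.\ only when the index assignment $\mu \mapsto \lambda _{\mu }$ is cofinally large in your sense. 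So the paper's argument is precisely your cases (1)--(2) and never treats your case (3) at all; you have located a hole in the published proof, not merely in your own.

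The more important point is that case (3) cannot be repaired, because the statement is false for nets over a general directed set. Take $X=\mathbb{R}$ with its usual admissible structure and $F\left( y\right) =\left\{ y\right\} $ for every $y$, so $F$ is Hausdorff continuous with compact values. Let $\Lambda =\mathbb{N}\times \mathbb{N}$ with the product order and define
\begin{equation*}
x_{(m,n)}=n\text{ if }m=1,\qquad x_{(m,n)}=0\text{ if }m\geq 2.
\end{equation*}
Then $x_{\lambda }\rightarrow 0$, since $x_{(m,n)}=0$ for all $\left( m,n\right) \geq \left( 2,1\right) $; yet
\begin{equation*}
F\left( 0\right) \cup \bigcup_{\lambda \in \Lambda }F\left( x_{\lambda
}\right) =\left\{ 0,1,2,3,\ldots \right\}
\end{equation*}
is not compact. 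The indices $\left( 1,n\right) $ lie outside every tail of $\Lambda $, so convergence of the net places no constraint whatever on $x_{(1,n)}$ --- exactly the dispersion you anticipated. What survives is the sequential statement (more generally, $\Lambda $ with all non-tail index sets finite), and even there one must assume every value $F\left( x_{\lambda }\right) $ compact, not only $F\left( x\right) $ as the proposition states; under those hypotheses your finite-leftover argument closes case (3) and your three cases give a complete proof. For arbitrary nets both your attempt and the paper's proof fail, and the defect propagates to the theorem on upper semicontinuity of $D_{A}$ that follows Proposition \ref{LH}, whose converse direction invokes it for arbitrary nets.
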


\begin{proof}
Let $\left( y_{\mu }\right) $ be a net in $\bigcup_{\lambda \in \Lambda }{F}%
\left( {x_{\lambda }}\right) $. Assume that $y_{\mu }\in F\left( x_{\lambda
_{\mu }}\right) $. Since $\rho _{F(x)}\left( F(x_{\lambda _{\mu }})\right)
\rightarrow \mathcal{O}$, we have $\rho \left( y_{\mu },F(x)\right)
\rightarrow \mathcal{O}$. As $F\left( x\right) $ is compact, we can find a
subnet $\left( y_{\mu _{\sigma }}\right) $ and $y\in F\left( x\right) $ so
that $y_{\mu _{\sigma }}\rightarrow y$, by Proposition \ref{PKS}. Therefore $%
\bigcup_{\lambda \in \Lambda }{F}\left( {x_{\lambda }}\right) $ is compact.
\end{proof}

\section{\label{s4}Topological dynamics}

We now apply the previous results on hyperconvergence to topological
dynamics. We study semicontinuity and Hausdorff continuity of set-valued
functions defined by orbit closure, limit set, prolongation, and
prolongational limit set. Throughout, there is a fixed admissible space $X$
endowed with an admissible family of open coverings $\mathcal{O}$.

Let $S$ be a topological semigroup. An \emph{action} of $S$ on $X$ is a
continuous mapping

\begin{equation*}
\mu :%
\begin{array}[t]{ccc}
S\times X & \rightarrow & X \\
(s,x) & \mapsto & \mu (s,x)=sx%
\end{array}%
\end{equation*}%
satisfying $s\left( tx\right) =\left( st\right) x$ for all $x\in X$ and $%
s,t\in S$. We denote by $\mu _{s}:X\rightarrow X$ the map $\mu _{s}\left(
\cdot \right) =\mu \left( s,\cdot \right) $. A subset $Y\subset X$ is said
to be forward invariant if $SY\subset Y$. A subset $M\subset X$ is called
minimal if it is nonempty, closed, forward invariant, and has no proper
subset satisfying these properties. In other words, $M$ is minimal if and
only if $M=\mathrm{cls}\left( Sx\right) $ for every $x\in M$.

For limit behavior of $\left( S,X\right) $, we fix a filter basis $\mathcal{F%
}$ on the subsets of $S$ ($\emptyset \notin \mathcal{F}$ and given $A,B\in
\mathcal{F}$ there is $C\in \mathcal{F}$ with $C\subset A\cap B$). We often
consider $\mathcal{F}$ directed by set inclusion. We might assume that $%
\mathcal{F}$ is a co-compact filter basis, that is, for each $A\in \mathcal{F%
}$, the complement $S\setminus A$ is compact in $S$.

The following notion of stability was stated in \cite{BBRS}.

\begin{definition}
A subset $Y\subset X$ is called \textbf{stable} if for every neighborhood $U$
of $Y$ and $y\in Y$, there is $\mathcal{U}\in \mathcal{O}$ such that $S%
\mathrm{St}\left[ y,\mathcal{U}\right] \subset U$; the set $Y$ is said to be
\textbf{uniformly stable} if for every neighborhood $U$ of $Y$ there $%
\mathcal{U}\in \mathcal{O}$ such that $S\mathrm{St}\left[ Y,\mathcal{U}%
\right] \subset U$.
\end{definition}

\begin{definition}
Let $\mathcal{F}$ be a filter basis on the subsets of $S$. The set $Y$ is
said to be $\mathcal{F}$\textbf{-eventually stable} if for every
neighborhood $U$ of $Y$ there is a neighborhood $V$ of $Y$ such that for
each $x\in V$ one has $Ax\subset U$ for some $A\in \mathcal{F}$.
\end{definition}

\begin{remark}
Every uniformly stable set is stable and $\mathcal{F}$-eventually stable.
Any compact stable set is uniformly stable. If $Y$ is compact then $Y$ is
stable if and only if for every neighborhood $U$ of $Y$ there is a
neighborhood $V$ of $Y$ such that $SV\subset U$.
\end{remark}

The following notion of divergent net was introduced in \cite{Ra}.

\begin{definition}
\label{Note} A net $\left( t_{\lambda }\right) _{\lambda \in \Lambda }$ in $%
S $ \textbf{diverges} on the direction of $\mathcal{F}$ ($\mathcal{F}$%
\textbf{-diverges}) if for each $A\in \mathcal{F}$ there is $\lambda _{0}\in
\Lambda $ such that $t_{\lambda }\in A$ whenever $\lambda \geq \lambda _{0}$%
. The notation $t_{\lambda }\rightarrow _{\mathcal{F}}\infty $ means that $%
\left( t_{\lambda }\right) $ $\mathcal{F}$-diverges.
\end{definition}

The following concept of limit set for semigroup action was introduced in
\cite{BragaSouza}.

\begin{definition}
The $\omega $\textbf{-limit set} of $Y\subset X$ on the direction of $%
\mathcal{F}$ is defined as
\begin{eqnarray*}
\omega \left( Y,\mathcal{F}\right) &=&\bigcap_{A\in \mathcal{F}}\mathrm{cls}%
\left( AY\right) \\
&=&\left\{
\begin{array}{c}
x\in X:\text{ there are nets }\left( t_{\lambda }\right) _{\lambda \in
\Lambda }\text{ in }S\text{ and }\left( x_{\lambda }\right) _{\lambda \in
\Lambda }\text{ in }Y \\
\text{such that }t_{\lambda }\rightarrow _{\mathcal{F}}\infty \text{ and }%
t_{\lambda }x_{\lambda }\rightarrow x%
\end{array}%
\right\} .
\end{eqnarray*}
\end{definition}

This notion of $\omega $-limit set extends the Conley definition of $\omega $%
-limit set for the Morse theory in dynamical systems (\cite{c}). Note that $%
\bigcup_{y\in Y}\omega \left( y,\mathcal{F}\right) \subset \omega \left( Y,%
\mathcal{F}\right) $, but the equality does not hold in general. Actually,
the Conley definition of $\omega $-limit set of a subset approaches to the
notion of prolongational limit set defined afterwards (Remarks \ref{R2} and %
\ref{R3}). We will show that how the continuity of the orbital functions
depends on the equality $\bigcup_{y\in Y}\omega \left( y,\mathcal{F}\right)
=\omega \left( Y,\mathcal{F}\right) $, and vice-versa.

We might assume the following additional hypothesis on the family $\mathcal{F%
}$.

\begin{definition}
\label{hipH} The family $\mathcal{F}$ is said to satisfy:

\begin{enumerate}
\item Hypothesis $\mathrm{H}_{1}$ if for all $s\in S$ and $A\in \mathcal{F}$
there exists $B\in \mathcal{F}$ such that $sB\subset A$.

\item Hypothesis $\mathrm{H}_{2}$ if for all $s\in S$ and $A\in \mathcal{F}$
there exists $B\in \mathcal{F}$ such that $Bs\subset A$.

\item Hypothesis $\mathrm{H}_{3}$ if for all $s\in S$ and $A\in \mathcal{F}$
there exists $B\in \mathcal{F}$ such that $B\subset As$.
\end{enumerate}
\end{definition}

Hypothesis $\mathrm{H}_{1}$ yields the limit set $\omega \left( Y,\mathcal{F}%
\right) $ is forward invariant. Hypotheses $\mathrm{H}_{2}$ and $\mathrm{H}%
_{3}$ implies respectively $\omega \left( sx,\mathcal{F}\right) \subset
\omega \left( x,\mathcal{F}\right) $ and $\omega \left( x,\mathcal{F}\right)
\subset \omega \left( sx,\mathcal{F}\right) $ for every $s\in S$ and $x\in X$%
.

The following notion of attraction was introduced in \cite{BBRSCan}.

\begin{definition}
The $\mathcal{F}$\textbf{-domain of attraction} of a set $Y\subset X$ is
defined by
\begin{equation*}
\mathfrak{A}\left( Y,\mathcal{F}\right) =\left\{ x\in X:\text{for each }%
\mathcal{U}\in \mathcal{O}\text{ there is }A\in \mathcal{F}\text{ such that }%
Ax\subset \mathrm{St}\left[ Y,\mathcal{U}\right] \right\} .
\end{equation*}%
The set $Y$ is called $\mathcal{F}$\textbf{-attractor} if there is $\mathcal{%
U}\in \mathcal{O}$ such that $\mathrm{St}\left[ Y,\mathcal{U}\right] \subset
\mathfrak{A}\left( Y,\mathcal{F}\right) $; it is called \textbf{global }$%
\mathcal{F}$\textbf{-attractor} if $\mathfrak{A}\left( Y,\mathcal{F}\right)
=X$.
\end{definition}

\begin{remark}
If $K\subset X$ is compact then
\begin{equation*}
\mathfrak{A}\left( K,\mathcal{F}\right) \subset \left\{ x\in X:\omega \left(
x,\mathcal{F}\right) \neq \emptyset \text{ and }\omega \left( x,\mathcal{F}%
\right) \subset K\right\} .
\end{equation*}%
The equality holds if $X$ is locally compact and $Ax$ is connected for all $%
A\in \mathcal{F}$ and $x\in X$ (\cite[Theorem 3.6]{BBRSCan}).
\end{remark}

\begin{definition}
The semigroup action $\left( S,X\right) $ is $\mathcal{F}$\textbf{-limit
compact} if for every bounded set $Y\subset X$ and any $\mathcal{U}\in
\mathcal{O}$ there is $A\in \mathcal{F}$ such that $\mathcal{U}\in \gamma
\left( AY\right) $.
\end{definition}

The following result is an application of Theorem \ref{PK}.

\begin{proposition}
\label{P4}Assume that $X$ is a complete admissible space. If the semigroup
action $\left( S,X\right) $ is $\mathcal{F}$-limit compact then $\omega
\left( Y,\mathcal{F}\right) $ is nonempty, compact, and the net $\left(
\mathrm{cls}(AY)\right) _{A\in \mathcal{F}}$ converges to $\omega \left( Y,%
\mathcal{F}\right) $, for all bounded set $Y\subset X$.
\end{proposition}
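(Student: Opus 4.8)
The plan is to recognize the net $\left(\mathrm{cls}(AY)\right)_{A\in\mathcal{F}}$ as a decreasing net of nonempty closed sets whose Kuratowski measures of noncompactness converge to $\mathcal{O}$, and then to invoke Theorem \ref{PK}, item $(4)$.

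First I would observe that the net $\left(\mathrm{cls}(AY)\right)_{A\in\mathcal{F}}$ is decreasing. Since $\mathcal{F}$ is a filter basis, for $A,B\in\mathcal{F}$ there is $C\in\mathcal{F}$ with $C\subset A\cap B$, and $C\subset A$ forces $CY\subset AY$, hence $\mathrm{cls}\left(CY\right)\subset\mathrm{cls}\left(AY\right)$; directing $\mathcal{F}$ so that later indices are smaller sets makes $\left(\mathrm{cls}(AY)\right)_{A\in\mathcal{F}}$ a decreasing net of nonempty closed subsets of $X$ (nonempty because $A\neq\emptyset$ and $Y\neq\emptyset$), whose intersection is by definition $\bigcap_{A\in\mathcal{F}}\mathrm{cls}\left(AY\right)=\omega\left(Y,\mathcal{F}\right)$.

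Next I would verify the hypothesis $\gamma\left(\mathrm{cls}(AY)\right)\rightarrow\mathcal{O}$. The measure $\gamma$ is monotone (if $Y\subset Z$ then $\gamma\left(Y\right)\prec\gamma\left(Z\right)$, immediate from the definition of $\gamma$), so the $\mathcal{F}$-limit compactness of $\left(S,X\right)$ yields, for each $\mathcal{U}\in\mathcal{O}$, some $A_{0}\in\mathcal{F}$ with $\mathcal{U}\in\gamma\left(A_{0}Y\right)$ and hence $\mathcal{U}\in\gamma\left(AY\right)$ for every $A\subset A_{0}$; this is precisely $\gamma\left(AY\right)\rightarrow\mathcal{O}$. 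To pass to closures I would combine Proposition \ref{P19}, items $(1)$ and $(5)$, with the fact that $n\left(\cdot\right)$ is order-preserving, to obtain the fixed estimate
\begin{equation*}
\gamma\left(\mathrm{cls}(AY)\right)\prec 1\alpha\left(\mathrm{cls}(AY)\right)\prec 1\left(1\alpha\left(AY\right)\right)\prec 1\left(1\gamma\left(AY\right)\right).
\end{equation*}
Since $\gamma\left(AY\right)\rightarrow\mathcal{O}$ implies $1\left(1\gamma\left(AY\right)\right)\rightarrow\mathcal{O}$, and since $\mathcal{D}_{\lambda}\prec\mathcal{E}_{\lambda}$ together with $\mathcal{E}_{\lambda}\rightarrow\mathcal{O}$ forces $\mathcal{D}_{\lambda}\rightarrow\mathcal{O}$, I conclude $\gamma\left(\mathrm{cls}(AY)\right)\rightarrow\mathcal{O}$.

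Finally, with $X$ complete and the decreasing net $\left(\mathrm{cls}(AY)\right)_{A\in\mathcal{F}}$ satisfying $\gamma\left(\mathrm{cls}(AY)\right)\rightarrow\mathcal{O}$, Theorem \ref{PK}$(4)$ gives at once that $\omega\left(Y,\mathcal{F}\right)=\bigcap_{A\in\mathcal{F}}\mathrm{cls}\left(AY\right)$ is nonempty and compact and that $\rho_{H}\left(\mathrm{cls}(AY),\omega\left(Y,\mathcal{F}\right)\right)\rightarrow\mathcal{O}$; by Corollary \ref{EC} the latter is exactly the convergence of $\left(\mathrm{cls}(AY)\right)_{A\in\mathcal{F}}$ to $\omega\left(Y,\mathcal{F}\right)$ in $\mathcal{H}\left(X\right)$. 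I expect the only delicate point to be the transfer from $\gamma\left(AY\right)$ to $\gamma\left(\mathrm{cls}(AY)\right)$, that is, keeping careful track of the $n$-operations in the displayed estimate; everything else is bookkeeping with the definitions and the cited propositions.
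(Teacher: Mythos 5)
Your proof is correct and takes essentially the same route as the paper's: establish $\gamma\left(\mathrm{cls}(AY)\right)\rightarrow\mathcal{O}$ from the $\mathcal{F}$-limit compactness of $\left(S,X\right)$ together with Proposition \ref{P19}, then invoke Theorem \ref{PK}, item $(4)$. The only differences are bookkeeping: you pass from $\gamma\left(AY\right)$ to $\gamma\left(\mathrm{cls}(AY)\right)$ via the explicit chain through $\alpha$ and the order-preservation of $n\left(\cdot\right)$, and you verify the decreasing property of the net explicitly, whereas the paper compresses both into a single double-refinement estimate $\gamma\left(\mathrm{cls}(A_{1}Y)\right)\prec 1\left\{\mathcal{V}\right\}\prec\left\{\mathcal{U}\right\}$; both routes are sound.
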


\begin{proof}
Firstly, we prove that $\gamma \left( \mathrm{cls}(AY)\right) \rightarrow
\mathcal{O}$. For a given $\mathcal{U}\in \mathcal{O}$, take $\mathcal{V}\in
\mathcal{O}$ such that $\mathcal{V}\leqslant \frac{1}{2}\mathcal{U}$. Since $%
Y$ is a bounded subset of $X$, there exists $A_{1}\in \mathcal{F}$ such that
$\mathcal{V}\in \gamma \left( A_{1}Y\right) $. By Proposition \ref{P19}, we
have $\gamma \left( \mathrm{cls}(A_{1}Y)\right) \prec 1\left\{ \mathcal{V}%
\right\} \prec \left\{ \mathcal{U}\right\} $. For every $A\subset A_{1}$, we
have $\gamma \left( \mathrm{cls}(AY)\right) \prec \gamma \left( \mathrm{cls}%
(A_{1}Y)\right) $. Hence $\mathcal{U}\in \gamma \left( \mathrm{cls}%
(AY)\right) $ whenever $A\subset A_{1}$, and therefore $\gamma \left(
\mathrm{cls}(AY)\right) \rightarrow \mathcal{O}$. By Theorem \ref{PK}, it
follows that $\bigcap_{A\in \mathcal{F}}\mathrm{cls}\left( AY\right) =\omega
\left( Y,\mathcal{F}\right) $ is nonempty, compact, and $\rho _{H}\left(
\mathrm{cls}(AY),\omega \left( Y,\mathcal{F}\right) \right) \rightarrow
\mathcal{O}$.
\end{proof}

In line of this statement, we define the set-valued functions $K_{A},L_{%
\mathcal{F}}:X\rightarrow \mathcal{H}\left( X\right) $ by
\begin{equation*}
K_{A}\left( x\right) =\mathrm{cls}\left( Ax\right) ,\quad A\subset S,\qquad
L_{\mathcal{F}}\left( x\right) =\bigcap_{A\in \mathcal{F}}K_{A}\left(
x\right) =\omega \left( x,\mathcal{F}\right) .
\end{equation*}

If $X$ is complete and the semigroup action $\left( S,X\right) $ is $%
\mathcal{F}$-limit compact, Proposition \ref{P4} says that the net $\left(
K_{A}\right) _{A\in \mathcal{F}}$ pointwise converges to $L_{\mathcal{F}}$.

We also define the set-valued functions $D_{A},J_{\mathcal{F}}:X\rightarrow
\mathcal{H}\left( X\right) $ by%
\begin{equation*}
D_{A}\left( x\right) =\bigcap_{\mathcal{U}\in \mathcal{O}}{\mathrm{cls}}%
\left( {A\mathrm{St}}\left[ {x,\mathcal{U}}\right] \right) ,\quad A\subset
S,\qquad J_{\mathcal{F}}\left( x\right) =\bigcap_{A\in \mathcal{F}%
}D_{A}\left( x\right) .
\end{equation*}

In the language of semigroup actions, $D_{A}\left( x\right) $ is the $A$%
-prolongation of $x$ and $J_{\mathcal{F}}\left( x\right) $ is the $\mathcal{F%
}$-prolongational limit set of $x$. They are described as%
\begin{eqnarray*}
D_{A}\left( x\right) &=&\left\{
\begin{array}{c}
y\in X:\text{there are nets }\left( t_{\lambda }\right) _{\lambda \in
\Lambda }\text{ in }A\text{ and }\left( x_{\lambda }\right) _{\lambda \in
\Lambda }\text{ in }X\text{ } \\
\text{ such that }x_{\lambda }\rightarrow x\text{ and }t_{\lambda
}x_{\lambda }\rightarrow y%
\end{array}%
\right\} , \\
J_{\mathcal{F}}\left( x\right) &=&\left\{
\begin{array}{c}
y\in X:\text{there are nets }\left( t_{\lambda }\right) _{\lambda \in
\Lambda }\text{ in }S\text{ and }\left( x_{\lambda }\right) _{\lambda \in
\Lambda }\text{ in }X\text{ such that } \\
t_{\lambda }\rightarrow _{\mathcal{F}}\infty \text{, }x_{\lambda
}\rightarrow x\text{, and }t_{\lambda }x_{\lambda }\rightarrow y%
\end{array}%
\right\} .
\end{eqnarray*}%
See \cite{SouzaTozatti} for details.

\begin{remark}
\label{R2}It is not difficult to check that $J_{\mathcal{F}}\left( x\right)
=\bigcap_{\mathcal{U}\in \mathcal{O}}{\omega }\left( {\mathrm{St}[x,\mathcal{%
U}],\mathcal{F}}\right) $ for every $x\in X$.
\end{remark}

\begin{remark}
\label{R3}If $K\subset X$ is compact and $U\subset X$ is open then $\omega
\left( K,\mathcal{F}\right) \subset \bigcup_{x\in K}J_{\mathcal{F}}\left(
x\right) $ and $\bigcup_{x\in U}J_{\mathcal{F}}\left( x\right) \subset
\omega \left( U,\mathcal{F}\right) $ (\cite[Proposition 2.14]{BBRSmn}).
\end{remark}

\begin{remark}
If $\mathcal{F}$ is a co-compact filter basis then $D_{A}\left( x\right)
=K_{A}\left( x\right) \cup J_{\mathcal{F}}\left( x\right) $ for every $x\in
X $. This is a little variation of Theorem 2.2 in \cite[Theorem 2.2]%
{SouzaTozatti}.
\end{remark}

\begin{remark}
If $\mathcal{F}$ satisfies the translation hypotheses $\mathrm{H}_{1},%
\mathrm{H}_{2}$, $Ax$ is connected and $\left( S\setminus A\right) x$ is
closed, for all $x\in X$ and $A\in \mathcal{F}$, then $L_{\mathcal{F}}\left(
x\right) $ is nonempty and compact whenever $J_{\mathcal{F}}\left( x\right) $
is nonempty and compact (\cite[Proposition 3.1]{BBRSmn}).
\end{remark}

For the following, we assume $\mathcal{O}$ directed by refinements.

\begin{proposition}
\label{P5}Assume that $X$ is a complete admissible space and the semigroup
action $\left( S,X\right) $ is $\mathcal{F}$-limit compact. Then $J_{%
\mathcal{F}}\left( x\right) $ is nonempty and compact and the net $\left(
\mathrm{cls}(A\mathrm{St}\left[ x,\mathcal{U}\right] )\right) _{(A,\mathcal{U%
})\in \mathcal{F}\times \mathcal{O}}$ converges to $J_{\mathcal{F}}\left(
x\right) $, for all $x\in X$.
\end{proposition}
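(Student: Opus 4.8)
The plan is to mirror the proof of Proposition \ref{P4}, replacing the bounded set $Y$ by the stars $\mathrm{St}[x,\mathcal{U}]$ and the filter $\mathcal{F}$ by the product directed set $\mathcal{F}\times\mathcal{O}$ (with $\mathcal{F}$ directed by inclusion and $\mathcal{O}$ by refinement). Writing $F_{(A,\mathcal{U})}=\mathrm{cls}(A\mathrm{St}[x,\mathcal{U}])$, the definitions give
\begin{equation*}
J_{\mathcal{F}}(x)=\bigcap_{A\in\mathcal{F}}\bigcap_{\mathcal{U}\in\mathcal{O}}\mathrm{cls}\left(A\mathrm{St}\left[x,\mathcal{U}\right]\right)=\bigcap_{(A,\mathcal{U})\in\mathcal{F}\times\mathcal{O}}F_{(A,\mathcal{U})},
\end{equation*}
so $J_{\mathcal{F}}(x)$ is exactly the intersection of the net in the statement. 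First I would check that $(F_{(A,\mathcal{U})})$ is a decreasing net of nonempty closed sets: if $(A_1,\mathcal{U}_1)\geq(A_2,\mathcal{U}_2)$, that is $A_1\subset A_2$ and $\mathcal{U}_1\leqslant\mathcal{U}_2$, then $A_1\mathrm{St}[x,\mathcal{U}_1]\subset A_2\mathrm{St}[x,\mathcal{U}_2]$ and hence $F_{(A_1,\mathcal{U}_1)}\subset F_{(A_2,\mathcal{U}_2)}$. By Theorem \ref{PK}$(4)$ it then suffices to prove that $\gamma(F_{(A,\mathcal{U})})\rightarrow\mathcal{O}$ along $\mathcal{F}\times\mathcal{O}$; the theorem will immediately yield that $J_{\mathcal{F}}(x)$ is nonempty and compact and that $\rho_H(F_{(A,\mathcal{U})},J_{\mathcal{F}}(x))\rightarrow\mathcal{O}$, which by Corollary \ref{EC} is the claimed convergence.

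To establish $\gamma(F_{(A,\mathcal{U})})\rightarrow\mathcal{O}$, fix a target $\mathcal{U}_0\in\mathcal{O}$. Since the net is decreasing and $\gamma$ is monotone under inclusion ($Y\subset Z$ gives $\gamma(Y)\prec\gamma(Z)$), it is enough to exhibit one index $(A_0,\mathcal{U}_1)$ with $\mathcal{U}_0\in\gamma(F_{(A_0,\mathcal{U}_1)})$, for then $\mathcal{U}_0\in\gamma(F_{(A,\mathcal{U})})$ for every $(A,\mathcal{U})\geq(A_0,\mathcal{U}_1)$. Choose $\mathcal{U}_1\in\mathcal{O}$ fine enough that $\mathrm{St}[x,\mathcal{U}_1]$ is bounded (see the next paragraph) and choose $\mathcal{V}\in\mathcal{O}$ with $\mathcal{V}\leqslant\frac{1}{2^{2}}\mathcal{U}_0$. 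Applying the $\mathcal{F}$-limit compactness of $(S,X)$ to the bounded set $\mathrm{St}[x,\mathcal{U}_1]$ and the covering $\mathcal{V}$ yields $A_0\in\mathcal{F}$ with $\mathcal{V}\in\gamma(A_0\mathrm{St}[x,\mathcal{U}_1])$. Passing to the closure via Proposition \ref{P19} then gives $\mathcal{U}_0\in\gamma(\mathrm{cls}(A_0\mathrm{St}[x,\mathcal{U}_1]))=\gamma(F_{(A_0,\mathcal{U}_1)})$, which is exactly the index sought. This is the computation already carried out in Proposition \ref{P4}, now with $\mathrm{St}[x,\mathcal{U}_1]$ in place of $Y$.

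The point requiring care---and the one I expect to be the main obstacle---is the boundedness of the star $\mathrm{St}[x,\mathcal{U}_1]$, needed so that the definition of $\mathcal{F}$-limit compactness applies. This is secured by the admissible structure. Given $\mathcal{U}_0$, axiom $(1)$ of Definition \ref{Admiss} provides $\mathcal{U}_1\in\mathcal{O}$ with $\mathcal{U}_1\leqslant\frac{1}{2}\mathcal{U}_0$; for any $y,y'\in\mathrm{St}[x,\mathcal{U}_1]$ one has $\mathcal{U}_1\in\rho(y,x)\cap\rho(x,y')$, and since $\mathcal{U}_1\leqslant\frac{1}{2}\mathcal{U}_0$, Proposition \ref{P1}$(4)$ (with $n=1$ and intermediate point $x$) gives $\mathcal{U}_0\in\rho(y,y')$. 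Hence $\mathrm{St}[x,\mathcal{U}_1]$ is bounded by $\mathcal{U}_0$. Because such $\mathcal{U}_1$ may be taken arbitrarily fine, the choice in the previous paragraph is always available, and the monotonicity argument propagates the estimate $\mathcal{U}_0\in\gamma(F_{(A,\mathcal{U})})$ to the entire tail. With $\gamma(F_{(A,\mathcal{U})})\rightarrow\mathcal{O}$ in hand, Theorem \ref{PK}$(4)$ completes the proof.
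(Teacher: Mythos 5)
Your proof is correct and follows essentially the same route as the paper's: reduce to showing $\gamma \left( \mathrm{cls}(A\mathrm{St}[x,\mathcal{U}])\right) \rightarrow \mathcal{O}$ along the product direction on $\mathcal{F}\times \mathcal{O}$, apply $\mathcal{F}$-limit compactness to a bounded star, pass to closures via Proposition \ref{P19}, use the decreasing property to propagate the estimate to the tail, and conclude with Theorem \ref{PK}(4). If anything, your version is slightly more careful than the paper's, which simply asserts that $\mathrm{St}\left[ x,\mathcal{U}_{0}\right] $ is bounded: your refinement trick (choosing $\mathcal{U}_{1}\leqslant \frac{1}{2}\mathcal{U}_{0}$ so that $\mathrm{St}\left[ x,\mathcal{U}_{1}\right] $ is bounded by $\mathcal{U}_{0}$ via Proposition \ref{P1}(4)) and the $\frac{1}{2^{2}}$ factor in the closure step supply exactly the details the paper leaves implicit.
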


\begin{proof}
Consider the product direction on $\mathcal{F}\times \mathcal{O}$: $\left( A,%
\mathcal{U}\right) \leq \left( B,\mathcal{V}\right) $ if $B\subset A$ and $%
\mathcal{V}\leq \mathcal{U}$. We claim that $\gamma \left( A\mathrm{St}[x,%
\mathcal{U}]\right) \rightarrow \mathcal{O}$. Fix some $\mathcal{U}_{0}\in
\mathcal{O}$. For a given $\mathcal{U}\in \mathcal{O}$, take $\mathcal{V}\in
\mathcal{O}$ such that $\mathcal{V}\leq \frac{1}{2}\mathcal{U}$. Since $%
\mathrm{St}\left[ x,\mathcal{U}_{0}\right] $ is bounded, there exits $%
A_{0}\in \mathcal{F}$ such that $\mathcal{V}\in \gamma \left( A_{0}\mathrm{St%
}[x,\mathcal{U}_{0}]\right) $, and then $\gamma \left( \mathrm{cls}(A_{0}%
\mathrm{St}[x,\mathcal{U}_{0}])\right) \prec 1\left\{ \mathcal{V}\right\}
\prec \left\{ \mathcal{U}\right\} $, by Proposition \ref{P19}. Moreover, we
have $\mathrm{cls}\left( A\mathrm{St}[x,\mathcal{W}]\right) )\subset \mathrm{%
cls}\left( A_{0}\mathrm{St}[x,\mathcal{U}_{0}]\right) $, for all $(A_{0},%
\mathcal{U}_{0})\leq (A,\mathcal{W})$. Thus $\gamma \left( \mathrm{cls}(A%
\mathrm{St}[x,\mathcal{W}])\right) \prec \gamma \left( \mathrm{cls}(A_{0}%
\mathrm{St}[x,\mathcal{U}_{0}])\right) $, which implies $\mathcal{U}\in
\gamma \left( \mathrm{cls}(A\mathrm{St}[x,\mathcal{W}]\right) $ for all $%
\left( A_{0},\mathcal{U}_{0}\right) \leq \left( A,\mathcal{W}\right) $. By
Proposition \ref{PK}, it follows that $J_{\mathcal{F}}\left( x\right)
=\bigcap_{A\in \mathcal{F},\mathcal{U}\in \mathcal{O}}{\mathrm{cls}}\left( {A%
\mathrm{St}[x,\mathcal{U}]}\right) $ is nonempty, compact, and
\begin{equation*}
\rho _{H}(\mathrm{cls}(A\mathrm{St}[x,\mathcal{U}]),D(x,A))\rightarrow
\mathcal{O}.
\end{equation*}
\end{proof}

In the sequence, we study the continuity of the set-valued functions $K_{A}$%
, $D_{A}$, $L_{\mathcal{F}}$, and $J_{\mathcal{F}}$. We assume hereon that $%
K_{A}\left( x\right) $, $D_{A}\left( x\right) $, $L_{\mathcal{F}}\left(
x\right) $, and $J_{\mathcal{F}}\left( x\right) $ are nonempty and compact
for every $x\in X$.

\begin{proposition}
\label{KA}The function $K_{A}$ is LSC.
\end{proposition}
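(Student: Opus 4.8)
The plan is to verify lower semicontinuity pointwise, directly from the definition, and the guiding idea is that a \emph{single} element $a\in A$ suffices to witness the required intersection at the base point, after which continuity of the corresponding action map will carry that witness to all nearby points. So I would fix $x\in X$ together with an open set $V\subset X$ satisfying $K_{A}\left( x\right) \cap V\neq \emptyset $, and aim to produce a neighborhood $U$ of $x$ with $K_{A}\left( y\right) \cap V\neq \emptyset $ for every $y\in U$.

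First I would pass from the closure to the orbit itself. Since $K_{A}\left( x\right) =\mathrm{cls}\left( Ax\right) $ meets the open set $V$, choose $z\in \mathrm{cls}\left( Ax\right) \cap V$; as $V$ is an open neighborhood of the point $z\in \mathrm{cls}\left( Ax\right) $, it must meet $Ax$, so there is $a\in A$ with $ax\in V$. Next I would invoke continuity of the map $\mu _{a}:X\rightarrow X$, $\mu _{a}\left( y\right) =ay$, which is guaranteed because the action $\mu $ is continuous. Setting $U=\mu _{a}^{-1}\left( V\right) $ yields an open neighborhood of $x$, and it indeed contains $x$ because $\mu _{a}\left( x\right) =ax\in V$. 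Finally, for any $y\in U$ we have $ay=\mu _{a}\left( y\right) \in V$ and $ay\in Ay\subset \mathrm{cls}\left( Ay\right) =K_{A}\left( y\right) $, whence $K_{A}\left( y\right) \cap V\neq \emptyset $. Since $x$ and $V$ are arbitrary, $K_{A}$ is LSC.

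The only step needing any care is the passage from $\mathrm{cls}\left( Ax\right) \cap V\neq \emptyset $ to $Ax\cap V\neq \emptyset $, and this rests solely on $V$ being open; no separation, compactness, or admissibility hypotheses are invoked here. I expect this to constitute essentially the entire content of the argument: lower semicontinuity of $K_{A}$ requires nothing beyond the continuity of each individual map $\mu _{a}$. This is in sharp contrast with upper semicontinuity or Hausdorff continuity of $K_{A}$, where one must control the whole orbit closure simultaneously, and where the net characterizations of Proposition \ref{P6} together with the compactness of $K_{A}\left( x\right) $ genuinely enter.
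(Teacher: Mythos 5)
Your proof is correct and is essentially the paper's own argument: both extract a single witness $a\in A$ with $ax\in V$ (using openness of $V$ to pass from $\mathrm{cls}(Ax)$ to $Ax$), then use continuity of the map $y\mapsto ay$ to obtain a neighborhood of $x$ whose every point $y$ satisfies $ay\in Ay\cap V\subset K_{A}(y)\cap V$. The only cosmetic difference is that the paper phrases the neighborhood as a star $\mathrm{St}\left[ y,\mathcal{U}\right]$ from the admissible family rather than as $\mu_{a}^{-1}(V)$, which changes nothing of substance.
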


\begin{proof}
Let $x\in X$ and $U\subset X$ be an open neighborhood of $x$. Take $y\in
K_{A}^{-}\left( U\right) $. Then $K_{A}\left( y\right) \cap U\neq \emptyset $%
, and hence exists $ty\in Ay\cap U$ with $t\in A$. By continuity, we can
take $\mathcal{U}\in \mathcal{O}$ such that $t\mathrm{St}\left[ y,\mathcal{U}%
\right] \subset U$. Now, for any $z\in \mathrm{St}\left[ y,\mathcal{U}\right]
$, $tz\in U\cap Az$, hence $K_{A}\left( z\right) \cap U\neq \emptyset $.
This means that $z\in K_{A}^{+}\left( U\right) $, and therefore $\mathrm{St}%
\left[ y,\mathcal{U}\right] \subset K_{A}^{+}\left( U\right) $. It follows
that $K_{A}^{+}\left( U\right) $ is an open set and thus $K_{A}$ is LSC at $%
x $.
\end{proof}

By Proposition \ref{P6}, we have $\rho _{K_{A}(x_{\lambda })}\left(
K_{A}(x)\right) \rightarrow \mathcal{O}$ for any net $x_{\lambda
}\rightarrow x$. The following is an immediate consequence of Proposition %
\ref{KA}.

\begin{corollary}
\label{Co1}The following statements are equivalent.

\begin{enumerate}
\item $K_{A}$ is USC at $x\in X$.

\item $K_{A}$ is continuous at $x$.
\end{enumerate}
\end{corollary}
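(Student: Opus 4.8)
The plan is to observe that this equivalence is essentially bookkeeping on top of Proposition \ref{KA}, which already establishes that $K_{A}$ is lower semicontinuous at every point of $X$ (in particular at the given $x$). Recall that by definition a set-valued function is Hausdorff continuous at $x$ precisely when it is simultaneously lower semicontinuous and upper semicontinuous at $x$. So the two asserted conditions differ only by the upper-semicontinuity requirement, with lower semicontinuity always available for free.

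First I would argue the direction $(2)\Rightarrow(1)$, which is immediate: if $K_{A}$ is Hausdorff continuous at $x$, then by definition it is both lower and upper semicontinuous at $x$, and in particular upper semicontinuous at $x$.

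Next I would argue $(1)\Rightarrow(2)$. Suppose $K_{A}$ is upper semicontinuous at $x$. By Proposition \ref{KA}, $K_{A}$ is lower semicontinuous, hence lower semicontinuous at $x$ in particular. Combining the two semicontinuity properties at $x$ gives, by definition, that $K_{A}$ is Hausdorff continuous at $x$, which is exactly statement $(2)$. If one prefers the net formulation, one may instead invoke Proposition \ref{P6}: the standing fact $\rho_{K_{A}(x_{\lambda})}(K_{A}(x))\rightarrow\mathcal{O}$ for every net $x_{\lambda}\rightarrow x$ (lower semicontinuity) together with the upper-semicontinuity characterization $\rho_{K_{A}(x)}(K_{A}(x_{\lambda}))\rightarrow\mathcal{O}$ yields $\rho_{H}(K_{A}(x_{\lambda}),K_{A}(x))\rightarrow\mathcal{O}$ by item $(3)$ of that proposition, which is Hausdorff continuity at $x$.

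I do not expect any genuine obstacle here, since all of the analytic content has been discharged in Proposition \ref{KA} (and, if the net route is taken, in Proposition \ref{P6}); the corollary is a purely definitional repackaging, so the proof will be only a couple of lines.
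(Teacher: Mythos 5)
Your proof is correct and follows the same route as the paper: the corollary is stated there as an immediate consequence of Proposition \ref{KA} (lower semicontinuity of $K_{A}$, available at every point), so that upper semicontinuity at $x$ is exactly what is missing for Hausdorff continuity at $x$; your optional net-based variant via Proposition \ref{P6} likewise matches the remark the paper places just before the corollary. Nothing is missing.
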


\begin{theorem}
The map $D_{A}$ is USC at $x$ if and only if the reunion $D_{A}\left(
x\right) \cup \bigcup_{\lambda \in \Lambda }{D_{A}}\left( {x_{\lambda }}%
\right) $ is compact for every net $x_{\lambda }\rightarrow x$.
\end{theorem}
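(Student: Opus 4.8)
The plan is to prove the two implications separately, since one direction is essentially already available. For the forward implication, suppose $D_{A}$ is USC at $x$. Because $D_{A}\left( x\right)$ is compact by the standing assumption, Proposition \ref{LH} applies verbatim and yields that $D_{A}\left( x\right) \cup \bigcup_{\lambda \in \Lambda }D_{A}\left( x_{\lambda }\right)$ is compact for every convergent net $x_{\lambda }\rightarrow x$. Thus this direction requires no new argument beyond citing Proposition \ref{LH}.

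For the converse I would argue by contradiction, using the characterization of upper semicontinuity in Proposition \ref{P6}, item $\left( 1\right)$: $D_{A}$ is USC at $x$ if and only if $\rho _{D_{A}(x)}\left( D_{A}(x_{\lambda })\right) \rightarrow \mathcal{O}$ for every net $x_{\lambda }\rightarrow x$. Assuming $D_{A}$ is not USC at $x$, there is a net $x_{\lambda }\rightarrow x$ and a covering $\mathcal{U}\in \mathcal{O}$ such that, along a cofinal subnet, $\mathcal{U}\notin \rho _{D_{A}(x)}\left( D_{A}(x_{\lambda })\right)$; recalling that $\mathcal{U}\in \rho _{D_{A}(x)}\left( D_{A}(x_{\lambda })\right)$ is equivalent to $D_{A}(x_{\lambda })\subset \mathrm{St}\left[ D_{A}(x),\mathcal{U}\right]$, this means $D_{A}(x_{\lambda })\not\subset \mathrm{St}\left[ D_{A}(x),\mathcal{U}\right]$ for these indices. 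For each of them I would pick $y_{\lambda }\in D_{A}(x_{\lambda })\setminus \mathrm{St}\left[ D_{A}(x),\mathcal{U}\right]$. Since the reunion $D_{A}\left( x\right) \cup \bigcup_{\lambda }D_{A}\left( x_{\lambda }\right)$ is compact by hypothesis, $\left( y_{\lambda }\right)$ admits a subnet $y_{\lambda _{\sigma }}\rightarrow y$; as each $y_{\lambda }$ lies in the closed set $X\setminus \mathrm{St}\left[ D_{A}(x),\mathcal{U}\right]$, so does $y$, whence $y\notin D_{A}(x)$ because $D_{A}(x)\subset \mathrm{St}\left[ D_{A}(x),\mathcal{U}\right]$.

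The heart of the proof is then a closed-graph-type property of the prolongation that produces the contradiction: if $x_{\lambda }\rightarrow x$, $y_{\lambda }\in D_{A}(x_{\lambda })$, and $y_{\lambda }\rightarrow y$, then $y\in D_{A}(x)$. To establish this I would fix $\mathcal{U}\in \mathcal{O}$, choose $\mathcal{V}\leqslant \frac{1}{2}\mathcal{U}$, and use $x_{\lambda }\rightarrow x$ to get $x_{\lambda }\in \mathrm{St}\left[ x,\mathcal{V}\right]$ eventually. The double-refinement property then gives $\mathrm{St}\left[ x_{\lambda },\mathcal{V}\right] \subset \mathrm{St}\left[ x,\mathcal{U}\right]$, hence $A\mathrm{St}\left[ x_{\lambda },\mathcal{V}\right] \subset A\mathrm{St}\left[ x,\mathcal{U}\right]$ and consequently $D_{A}(x_{\lambda })\subset \mathrm{cls}\left( A\mathrm{St}\left[ x_{\lambda },\mathcal{V}\right] \right) \subset \mathrm{cls}\left( A\mathrm{St}\left[ x,\mathcal{U}\right] \right)$. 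Thus $y_{\lambda }$ lies eventually in the closed set $\mathrm{cls}\left( A\mathrm{St}\left[ x,\mathcal{U}\right] \right)$, so its limit $y$ does too; letting $\mathcal{U}$ range over $\mathcal{O}$ yields $y\in \bigcap_{\mathcal{U}\in \mathcal{O}}\mathrm{cls}\left( A\mathrm{St}\left[ x,\mathcal{U}\right] \right) =D_{A}(x)$. Applying this to the subnet $y_{\lambda _{\sigma }}\rightarrow y$ constructed above gives $y\in D_{A}(x)$, contradicting $y\notin D_{A}(x)$.

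I expect the main obstacle to be the verification of the inclusion $\mathrm{St}\left[ x_{\lambda },\mathcal{V}\right] \subset \mathrm{St}\left[ x,\mathcal{U}\right]$ from $x_{\lambda }\in \mathrm{St}\left[ x,\mathcal{V}\right]$ and $\mathcal{V}\leqslant \frac{1}{2}\mathcal{U}$: given $z\in \mathrm{St}\left[ x_{\lambda },\mathcal{V}\right]$ one takes $V_{1}\in \mathcal{V}$ with $x_{\lambda },z\in V_{1}$ and $V_{2}\in \mathcal{V}$ with $x,x_{\lambda }\in V_{2}$, and since $x_{\lambda }\in V_{1}\cap V_{2}\neq \emptyset$ the double-refinement supplies $U\in \mathcal{U}$ with $V_{1}\cup V_{2}\subset U$, placing both $x$ and $z$ in the single element $U$ of $\mathcal{U}$. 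The remaining steps are routine manipulations with stars and closures, and no Hausdorff separation is needed, since the argument only uses that compact sets admit convergent subnets and that closed sets contain the limits of their convergent nets.
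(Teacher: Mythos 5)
Your proposal is correct and follows essentially the same route as the paper's proof: one direction by citing Proposition \ref{LH}, and the converse by contradiction, selecting points $y_{\lambda }\in D_{A}\left( x_{\lambda }\right) \setminus \mathrm{St}\left[ D_{A}\left( x\right) ,\mathcal{U}\right] $, extracting a convergent subnet from the compact reunion, and showing the limit must lie in $D_{A}\left( x\right) $. The only cosmetic difference is how that last membership is verified — you nest $D_{A}\left( x_{\lambda }\right) \subset \mathrm{cls}\left( A\mathrm{St}\left[ x,\mathcal{U}\right] \right) $ via the double-refinement star inclusion and use closedness, while the paper checks directly that every star of the limit meets $A\mathrm{St}\left[ x,\mathcal{V}\right] $ — but this is the same closed-graph idea.
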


\begin{proof}
Suppose that $D_{A}\left( x\right) \cup \bigcup_{\lambda \in \Lambda }{D_{A}}%
\left( {x_{\lambda }}\right) $ is compact for every net $x_{\lambda
}\rightarrow x$. Suppose by contradiction that $D_{A}$ is not USC at $x$.
Then there exists a net $x_{\lambda }\rightarrow x$ such that $\rho
_{D_{A}(x)}\left( D_{A}(x_{\lambda })\right) $ does not converges to $%
\mathcal{O}$. Hence there exists some $\mathcal{U}\in \mathcal{O}$ such that
for every $\lambda $ there is $\lambda ^{\prime }\geq \lambda $ such that $%
\mathcal{U}\notin \rho _{D_{A}(x)}\left( D_{A}(x_{\lambda ^{\prime
}})\right) $. Then we can take $y_{\lambda }\in D_{A}\left( x_{\lambda
^{\prime }}\right) $ such that $\mathcal{U}\notin \rho \left( y_{\lambda
},D_{A}(x)\right) $, that is, $y_{\lambda }\notin \mathrm{St}\left[
D_{A}\left( x\right) ,\mathcal{U}\right] $. Now, since $D_{A}\left( x\right)
\cup \bigcup_{\lambda \in \Lambda }{D_{A}}\left( {x_{\lambda }}\right) $ is
compact by hypothesis, we may assume that the net $\left( y_{\lambda
}\right) $ converges to some point $y$. As $y_{\lambda }\in X\setminus
\mathrm{St}\left[ D_{A}\left( x\right) ,\mathcal{U}\right] $, it follows
that $y\in X\setminus \mathrm{St}\left[ D_{A}\left( x\right) ,\mathcal{U}%
\right] $. We claim this is a contradiction. Indeed, for arbitraries $%
\mathcal{V},\mathcal{W}\in \mathcal{O}$, we can choose a $\lambda $ such
that $x_{\lambda ^{\prime }}\in \mathrm{St}\left[ x,\mathcal{V}\right] $ and
$y_{\lambda }\in \mathrm{St}\left[ y,\mathcal{W}\right] $. Take $\mathcal{V}%
^{\prime }\in \mathcal{O}$ such that $\mathrm{St}\left[ x_{\lambda ^{\prime
}},\mathcal{V}^{\prime }\right] \subset \mathrm{St}\left[ x,\mathcal{V}%
\right] $. As $y_{\lambda }\in D_{A}\left( x_{\lambda ^{\prime }}\right) $,
it follows that $\mathrm{St}\left[ y,\mathcal{W}\right] \cap A\mathrm{St}%
\left[ x_{\lambda ^{\prime }},\mathcal{V}^{\prime }\right] \neq \emptyset $,
and hence $\mathrm{St}\left[ y,\mathcal{W}\right] \cap A\mathrm{St}\left[ x,%
\mathcal{V}\right] \neq \emptyset $. This means that $y\in D_{A}\left(
x\right) $, which contradicts $y\in X\setminus \mathrm{St}\left[ D_{A}\left(
x\right) ,\mathcal{U}\right] $. Thus $D_{A}$ is USC at $x$. The converse
follows by Proposition \ref{LH}.
\end{proof}

\begin{theorem}
\label{T6}Assume that $\mathcal{F}$ is a filter basis on the connected
subsets of $S$. If $X$ is Hausdorff locally compact then $J_{\mathcal{F}}$
is USC.
\end{theorem}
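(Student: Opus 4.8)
The plan is to exploit the identity $J_{\mathcal{F}}(x)=\bigcap_{\mathcal{U}\in\mathcal{O}}\omega(\mathrm{St}[x,\mathcal{U}],\mathcal{F})$ from Remark \ref{R2} together with the inclusion $\bigcup_{w\in U}J_{\mathcal{F}}(w)\subset\omega(U,\mathcal{F})$ for open $U$ from Remark \ref{R3}. Write $\Omega_{\mathcal{U}}=\omega(\mathrm{St}[x,\mathcal{U}],\mathcal{F})=\bigcap_{A\in\mathcal{F}}\mathrm{cls}(A\,\mathrm{St}[x,\mathcal{U}])$. Each $\Omega_{\mathcal{U}}$ is closed, the family $(\Omega_{\mathcal{U}})$ is decreasing as $\mathcal{U}$ is refined (since $\mathcal{V}\leq\mathcal{U}$ gives $\mathrm{St}[x,\mathcal{V}]\subset\mathrm{St}[x,\mathcal{U}]$), and $\bigcap_{\mathcal{U}}\Omega_{\mathcal{U}}=J_{\mathcal{F}}(x)$. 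Moreover, since $\mathrm{St}[x,\mathcal{U}]$ is open, Remark \ref{R3} gives $J_{\mathcal{F}}(w)\subset\Omega_{\mathcal{U}}$ for every $w\in\mathrm{St}[x,\mathcal{U}]$. Thus the whole statement is governed by how the closed sets $\Omega_{\mathcal{U}}$ collapse onto the compact set $J_{\mathcal{F}}(x)$.

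First I would reduce upper semicontinuity at $x$ to a single compactness assertion: it suffices to show that $\Omega_{\mathcal{U}_{0}}$ is compact for some $\mathcal{U}_{0}\in\mathcal{O}$. Indeed, suppose this holds and let $V$ be open with $J_{\mathcal{F}}(x)\subset V$. For $\mathcal{U}\leq\mathcal{U}_{0}$ the sets $C_{\mathcal{U}}=\Omega_{\mathcal{U}}\cap(X\setminus V)$ are closed subsets of the compact set $\Omega_{\mathcal{U}_{0}}$, hence compact; they are decreasing, and $\bigcap_{\mathcal{U}}C_{\mathcal{U}}=J_{\mathcal{F}}(x)\cap(X\setminus V)=\emptyset$ by Remark \ref{R2}. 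By the finite intersection property there is $\mathcal{U}_{1}\leq\mathcal{U}_{0}$ with $C_{\mathcal{U}_{1}}=\emptyset$, that is $\Omega_{\mathcal{U}_{1}}\subset V$. Then Remark \ref{R3} yields $J_{\mathcal{F}}(w)\subset\Omega_{\mathcal{U}_{1}}\subset V$ for every $w\in\mathrm{St}[x,\mathcal{U}_{1}]$, which is precisely upper semicontinuity at $x$. (Equivalently, one may phrase this last step through Proposition \ref{P6}, item 1.)

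It therefore remains to produce $\mathcal{U}_{0}$ with $\Omega_{\mathcal{U}_{0}}$ compact, and this is where local compactness and the connectedness of the members of $\mathcal{F}$ are essential. Using that $X$ is locally compact Hausdorff and $J_{\mathcal{F}}(x)$ is compact, I would fix a compact neighborhood $N$ of $J_{\mathcal{F}}(x)$ with $J_{\mathcal{F}}(x)\subset\mathrm{int}\,N$ and a covering $\mathcal{U}_{0}$ for which $\mathrm{cls}(\mathrm{St}[x,\mathcal{U}_{0}])$ is compact, and then confine $\Omega_{\mathcal{U}_{0}}$ inside $N$. The mechanism is a frontier-crossing argument: a point $p\in\Omega_{\mathcal{U}_{0}}$ is a limit of points $t_{\mu}w_{\mu}$ with $t_{\mu}\rightarrow_{\mathcal{F}}\infty$ and $w_{\mu}\in\mathrm{St}[x,\mathcal{U}_{0}]$, and for connected $A\in\mathcal{F}$ the orbit $Aw_{\mu}$ is a connected subset of $X$. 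If $p$ lay outside $N$, the connected orbit would run from the region of $J_{\mathcal{F}}(w_{\mu})$ inside $\mathrm{int}\,N$ to a point near $p$ outside $N$, hence meet the compact frontier $\partial N$. Extracting, by local compactness, a convergent subnet of these crossing points and feeding its limit back through the closed inclusion $J_{\mathcal{F}}(w)\subset\Omega_{\mathcal{U}}$ of Remark \ref{R3} would place a point simultaneously in $J_{\mathcal{F}}(x)\subset\mathrm{int}\,N$ and on $\partial N$, a contradiction; thus $\Omega_{\mathcal{U}_{0}}\subset N$ is compact.

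The hard part is exactly this confinement, and more precisely guaranteeing that the connected realizing orbit has a point inside $\mathrm{int}\,N$ so that the crossing of $\partial N$ can be forced. This is where the nonemptiness and compactness of each $J_{\mathcal{F}}(w)$ (our standing hypothesis) and the connectedness of $\mathcal{F}$ must be combined carefully, since an orbit whose relevant endpoints both lie outside $N$ would not be pinned to the frontier. By contrast, the reduction carried out in the first two paragraphs is purely formal and uses only Remarks \ref{R2} and \ref{R3} together with local compactness of $X$.
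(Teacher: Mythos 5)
Your reduction in the first two paragraphs is sound: granting that some $\Omega_{\mathcal{U}_0}=\omega(\mathrm{St}[x,\mathcal{U}_0],\mathcal{F})$ is compact, the directed decreasing family $C_{\mathcal{U}}=\Omega_{\mathcal{U}}\cap(X\setminus V)$, the identity of Remark \ref{R2}, and the finite intersection property do give $\Omega_{\mathcal{U}_1}\subset V$ for some $\mathcal{U}_1$, and Remark \ref{R3} then yields upper semicontinuity at $x$. The gap is exactly where you place it, and it is genuine: your frontier-crossing mechanism requires that each connected orbit $Aw_{\mu}$ (for $w_{\mu}\in\mathrm{St}[x,\mathcal{U}_0]$, with $t_{\mu}w_{\mu}$ near the putative point $p\notin N$) also possess a point inside $\mathrm{int}\left(N\right)$, and nothing in the hypotheses supplies this. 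Knowing that $J_{\mathcal{F}}(w_{\mu})$, or any portion of $Aw_{\mu}$, lies in $\mathrm{int}\left(N\right)$ for $w_{\mu}$ close to $x$ is essentially the upper semicontinuity you are trying to prove, so the sketch is circular at its crucial step: an orbit of a nearby point can, a priori, live entirely outside $N$, in which case it is never pinned to $\mathrm{fr}\left(N\right)$ and there is no crossing point to extract.

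The paper resolves precisely this difficulty by inverting the logic: instead of forcing orbits of nearby points to cross a frontier, it shows that orbits near $x$ cannot touch the frontier at all, and then uses connectedness to turn this into a dichotomy. Concretely, it first proves that there are $A_{0}\in\mathcal{F}$ and $\mathcal{U}_{0}\in\mathcal{O}$ with $\mathrm{fr}\left(\mathrm{cls}\left(U\right)\right)\cap A_{0}\mathrm{St}\left[x,\mathcal{U}_{0}\right]=\emptyset$, by a net argument indexed by pairs $(A,\mathcal{W})\in\mathcal{F}\times\mathcal{O}$ whose base points converge to $x$ itself, so that the limit of the frontier points lands in $J_{\mathcal{F}}(x)\subset U$ (not in some unknown $J_{\mathcal{F}}(w_{\mu})$), a contradiction. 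Connectedness of $A_{0}y$ then forces each orbit $A_{0}y$, $y\in\mathrm{St}\left[x,\mathcal{U}_{0}\right]$, to lie either entirely in $U$ or entirely in $X\setminus\mathrm{cls}\left(U\right)$; the two alternatives define open sets $S_{0},S_{1}$ (openness via continuity of $\mu_{t}$); and the standing hypothesis that $J_{\mathcal{F}}(x)$ is nonempty rules out $x\in S_{1}$, since otherwise $J_{\mathcal{F}}(x)\subset\mathrm{cls}\left(A_{0}\mathrm{St}\left[x,\mathcal{V}\right]\right)\subset X\setminus U$. Hence an entire tube $A_{0}\mathrm{St}\left[x,\mathcal{W}_{0}\right]$ lies in $U$, which both confines $\omega(\mathrm{St}\left[x,\mathcal{W}_{0}\right],\mathcal{F})$ inside the compact set $\mathrm{cls}\left(U\right)$ (your reduction target) and gives directly $J_{\mathcal{F}}(x^{\prime})\subset\mathrm{cls}\left(U\right)\subset\mathrm{St}\left[J_{\mathcal{F}}(x),\mathcal{U}\right]$ for all $x^{\prime}$ near $x$. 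This dichotomy-plus-nonemptiness step is the ingredient your outline is missing; without it the crossing-point extraction cannot be carried out.
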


\begin{proof}
For a given $\mathcal{U}\in \mathcal{O}$, we take an open set $U\subset X$
such that $\mathrm{cls}\left( U\right) $ is compact and $J_{\mathcal{F}%
}\left( x\right) \subset U\subset \mathrm{cls}\left( U\right) \subset
\mathrm{St}\left[ J_{\mathcal{F}}\left( x\right) ,\mathcal{U}\right] $.

\begin{claim}
There exist $\mathcal{W}\in \mathcal{O}$ and $A\in \mathcal{F}$ such that $A%
\mathrm{St}\left[ x,\mathcal{W}\right] \subset U$. In fact, since $\mathrm{fr%
}\left( \mathrm{cls}\left( U\right) \right) $ is compact there exist $%
A_{0}\in \mathcal{F}$ and $\mathcal{U}_{0}\in \mathcal{O}$ such that $%
\mathrm{fr}\left( \mathrm{cls}\left( U\right) \right) \cap A_{0}\mathrm{St}%
\left[ x,\mathcal{U}_{0}\right] =\emptyset $. Suppose to the contrary that
we can obtain $t_{(A,\mathcal{W})}x_{(A,\mathcal{W})}\in \mathrm{fr}\left(
\mathrm{cls}\left( U\right) \right) \cap A\mathrm{St}\left[ x,\mathcal{W}%
\right] $, with $t_{(A,\mathcal{W})}\in A$ and $x_{(A,\mathcal{W})}\in
\mathrm{St}\left[ x,\mathcal{W}\right] $, for every $A\in \mathcal{F}$ and $%
\mathcal{W}\in \mathcal{O}$. By compactness, we may assume that $t_{(A,%
\mathcal{W})}x_{(A,\mathcal{W})}\rightarrow y$ for some $y\in \mathrm{fr}%
\left( \mathrm{cls}\left( U\right) \right) $. As $t_{(A,\mathcal{W}%
)}\rightarrow _{\mathcal{F}}\infty $ and $x_{(A,\mathcal{W})}\rightarrow x$
we have $y\in J_{\mathcal{F}}\left( x\right) $, which is impossible. Now,
for $t\in A_{0}$ and $y\in \mathrm{St}\left[ x,\mathcal{U}_{0}\right] $, we
have two possibility: either$\ ty\in U$, in which $A_{0}y\subset U$ by
connectedness of $A_{0}y$, or $ty\in X\setminus \mathrm{cls}\left( U\right) $%
, in which $A_{0}y\subset X\setminus \mathrm{cls}\left( U\right) $. Then we
have the equalities
\begin{eqnarray*}
S_{0} &=&\left\{ y\in \mathrm{St}\left[ x,\mathcal{U}_{0}\right]
:A_{0}y\subset U\right\} =\mathrm{St}\left[ x,\mathcal{U}_{0}\right] \cap
\mu _{t}^{-1}\left( U\right) , \\
S_{1} &=&\left\{ y\in \mathrm{St}\left[ x,\mathcal{U}_{0}\right]
:A_{0}y\subset X\setminus \mathrm{cls}\left( U\right) \right\} =\mathrm{St}%
\left[ x,\mathcal{U}_{0}\right] \cap \mu _{t}^{-1}\left( X\setminus \mathrm{%
cls}\left( U\right) \right) ,
\end{eqnarray*}%
which imply the sets $S_{0},S_{1}$ are open. Suppose that $x\in S_{1}$ and
take $\mathcal{V}\in \mathcal{O}$ such that $\mathrm{St}\left[ x,\mathcal{V}%
\right] \subset S_{1}$. As $\mathrm{cls}\left( A_{0}S_{1}\right) \subset
X\setminus U$, it follows that $J_{\mathcal{F}}\left( x\right) \subset
\mathrm{cls}\left( A_{0}\mathrm{St}\left[ x,\mathcal{V}\right] \right)
\subset X\setminus U$, which is impossible. Hence $x\notin S_{1}$, and then $%
tx\notin X\setminus \mathrm{cls}\left( U\right) $. This means that $tx\in U$
and therefore $x\in S_{0}$. Then consider $\mathcal{W}_{0}\in \mathcal{O}$
such that $\mathrm{St}\left[ x,\mathcal{W}_{0}\right] \subset S_{0}$. This
implies that $A_{0}\mathrm{St}\left[ x,\mathcal{W}_{0}\right] \subset
A_{0}S_{0}\subset U$. This proves the claim.
\end{claim}

Finally, for a given $x^{\prime }\in \mathrm{St}\left[ x,\mathcal{W}_{0}%
\right] $, let $\mathcal{W}_{0}^{\prime }\in \mathcal{O}$ with $\mathrm{St}%
\left[ x^{\prime },\mathcal{W}_{0}^{\prime }\right] \subset \mathrm{St}\left[
x,\mathcal{W}_{0}\right] $. We have%
\begin{equation*}
J_{\mathcal{F}}\left( x^{\prime }\right) \subset {\mathrm{cls}(A_{0}\mathrm{%
St}\left[ x^{\prime },\mathcal{W}_{0}^{\prime }\right] )}\subset \mathrm{cls}%
(A_{0}\mathrm{St}\left[ x,\mathcal{W}_{0}\right] )\subset \mathrm{cls}%
(U)\subset \mathrm{St}\left[ J_{\mathcal{F}}\left( x\right) ,\mathcal{U}%
\right] .
\end{equation*}%
Consequently, $\mathcal{U}\in \rho _{J_{\mathcal{F}}\left( x\right) }\left(
J_{\mathcal{F}}\left( x^{\prime }\right) \right) $ whenever $\mathcal{W}%
_{0}\in \rho \left( x^{\prime },x\right) $. Therefore $J$ is USC at $x\in X$.
\end{proof}

Under the conditions of Theorem \ref{T6}, we have $\rho _{J_{\mathcal{F}%
}(x)}\left( J_{\mathcal{F}}(x_{\lambda })\right) \rightarrow \mathcal{O}$
for any net $x_{\lambda }\rightarrow x$. By taking in particular $\mathcal{F}%
=\left\{ A\right\} $, we have the following consequence.

\begin{corollary}
If $X$ is Hausdorff locally compact and $A\subset S$ is nonempty and
connected then $D_{A}$ is USC. Thus $D_{A}$ is Hausdorff continuous if and
only if $D_{A}$ is LSC.
\end{corollary}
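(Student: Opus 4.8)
The plan is to deduce both assertions directly from Theorem \ref{T6} by specializing the filter basis to a singleton. First I would set $\mathcal{F}=\left\{ A\right\} $ and verify that this one-element family satisfies the hypotheses of Theorem \ref{T6}. Since $A$ is nonempty we have $\emptyset \notin \mathcal{F}$, and the filter basis condition is trivial: the only pair of elements to test is $A$ and $A$, and the set $C=A$ satisfies $C\subset A\cap A$. Because $A$ is connected, every member of $\mathcal{F}$ is connected, so $\mathcal{F}$ is a filter basis on the connected subsets of $S$. With $X$ assumed Hausdorff locally compact, all the hypotheses of Theorem \ref{T6} are in force.

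Next I would identify $J_{\mathcal{F}}$ with $D_{A}$. By definition $J_{\mathcal{F}}\left( x\right) =\bigcap_{B\in \mathcal{F}}D_{B}\left( x\right) $, and for $\mathcal{F}=\left\{ A\right\} $ this intersection collapses to the single term $D_{A}\left( x\right) $. Hence $J_{\mathcal{F}}=D_{A}$ as set-valued functions on $X$. Theorem \ref{T6} then asserts that $J_{\mathcal{F}}$ is USC, which is exactly the statement that $D_{A}$ is USC. For the second assertion I would simply recall that, by definition, Hausdorff continuity is the conjunction of upper and lower semicontinuity; since the first part already yields that $D_{A}$ is USC under the stated hypotheses, the only remaining requirement is lower semicontinuity, and therefore $D_{A}$ is Hausdorff continuous if and only if it is LSC.

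There is no genuine obstacle here, as the corollary is a pure specialization of Theorem \ref{T6}; the single point deserving care is confirming that the singleton $\left\{ A\right\} $ qualifies as a filter basis on connected subsets, which is immediate from $A\neq \emptyset $ and the connectedness of $A$. All the substantive work—the local-compactness argument producing $\mathcal{W}\in \mathcal{O}$ and a prolongation-controlling index with $A\mathrm{St}\left[ x,\mathcal{W}\right] \subset U$—has already been carried out in the proof of Theorem \ref{T6}, so nothing further is needed beyond the reduction above.
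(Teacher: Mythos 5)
Your proposal is correct and is exactly the paper's own argument: the corollary is stated there as an immediate specialization of Theorem \ref{T6} to the singleton filter basis $\mathcal{F}=\{A\}$, under which $J_{\mathcal{F}}=D_{A}$, and the second assertion is just the definition of Hausdorff continuity as the conjunction of USC and LSC. Your explicit verification that $\{A\}$ is a filter basis on the connected subsets of $S$ fills in the only detail the paper leaves implicit.
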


The following result means that the upper semicontinuity causes orbits with
trivial prolongations.

\begin{proposition}
\label{K} If $K_{A}$ is USC at $x$ then $K_{A}\left( x\right) =D_{A}\left(
x\right) $. The converse holds if $X$ is Hausdorff locally compact and $A$
is connected.
\end{proposition}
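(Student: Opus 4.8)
The plan is to prove the two implications separately, first isolating the inclusion that holds in full generality. Since $Ax\subset A\,\mathrm{St}[x,\mathcal{U}]$ for every $\mathcal{U}\in\mathcal{O}$, we have $K_A(x)=\mathrm{cls}(Ax)\subset\mathrm{cls}(A\,\mathrm{St}[x,\mathcal{U}])$ for each $\mathcal{U}$, and intersecting over $\mathcal{U}$ gives $K_A(x)\subset D_A(x)$. Thus the forward direction reduces to establishing the reverse inclusion $D_A(x)\subset K_A(x)$ under the assumption that $K_A$ is USC at $x$, while the converse reduces to producing upper semicontinuity of $K_A$ at $x$ from the equality together with the extra hypotheses.

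For the reverse inclusion I would argue through the net description of the prolongation. Take $y\in D_A(x)$ and fix nets $(t_\lambda)$ in $A$ and $(x_\lambda)$ in $X$ with $x_\lambda\to x$ and $t_\lambda x_\lambda\to y$. By Proposition \ref{P6}(1), upper semicontinuity of $K_A$ at $x$ gives $\rho_{K_A(x)}(K_A(x_\lambda))\to\mathcal{O}$. Fix $\mathcal{U}\in\mathcal{O}$ and choose $\mathcal{V}\leqslant\frac{1}{2}\mathcal{U}$. Since $t_\lambda x_\lambda\in Ax_\lambda\subset K_A(x_\lambda)$, the convergence above yields, for $\lambda$ large, $\mathcal{V}\in\rho(t_\lambda x_\lambda,K_A(x))$, so there is $z\in K_A(x)$ with $\mathcal{V}\in\rho(t_\lambda x_\lambda,z)$; and from $t_\lambda x_\lambda\to y$ together with Proposition \ref{P1}(5) we also have $\mathcal{V}\in\rho(y,t_\lambda x_\lambda)$ for $\lambda$ large. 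The triangle estimate of Proposition \ref{P1}(4) then gives
\begin{equation*}
\rho(y,z)\prec 1\left(\rho(y,t_\lambda x_\lambda)\cap\rho(t_\lambda x_\lambda,z)\right)\prec 1\{\mathcal{V}\}\prec\{\mathcal{U}\},
\end{equation*}
so $\mathcal{U}\in\rho(y,z)\subset\rho(y,K_A(x))$. As $\mathcal{U}$ is arbitrary, $\rho(y,K_A(x))=\mathcal{O}$, whence $y\in\mathrm{cls}(K_A(x))=K_A(x)$ by Proposition \ref{P2}(3). Combined with the easy inclusion this gives $K_A(x)=D_A(x)$.

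For the converse I would simply invoke the corollary to Theorem \ref{T6}: when $X$ is Hausdorff locally compact and $A$ is nonempty and connected, the prolongation map $D_A$ is USC. Granting $K_A(x)=D_A(x)$, let $V$ be open with $K_A(x)\subset V$; then $D_A(x)\subset V$, so USC of $D_A$ at $x$ furnishes a neighborhood $U$ of $x$ with $D_A(y)\subset V$ for all $y\in U$, and since $K_A(y)=\mathrm{cls}(Ay)\subset D_A(y)$ always holds, we obtain $K_A(y)\subset V$ on $U$. Hence $K_A$ is USC at $x$. The main obstacle lies entirely in the forward direction: one must feed the upper-semicontinuity characterization the very net $(x_\lambda)$ produced by the definition of $D_A(x)$, and then fuse the two convergences $t_\lambda x_\lambda\to y$ and $\rho(t_\lambda x_\lambda,K_A(x))\to\mathcal{O}$ through a single double-refinement; the converse, by contrast, is immediate once the preceding corollary is available.
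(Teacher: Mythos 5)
Your proof is correct and follows the same overall route as the paper: the forward direction feeds the net $(x_\lambda)$ coming from the net description of $D_A(x)$ into the upper-semicontinuity characterization of Proposition \ref{P6}(1), and the converse is exactly the paper's one-line appeal to Theorem \ref{T6} (via its corollary that $D_A$ is USC for connected $A$), combined with the always-valid inclusion $K_A\subset D_A$. The only divergence is in how the forward direction is closed. The paper, having obtained $\rho\left(t_\lambda x_\lambda, K_A(x)\right)\rightarrow\mathcal{O}$, invokes Proposition \ref{PKS} to extract a subnet of $(t_\lambda x_\lambda)$ converging into the compact set $K_A(x)$, and then identifies that limit with $y$. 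You instead run a direct double-refinement estimate to show $\rho\left(y,K_A(x)\right)=\mathcal{O}$ and conclude $y\in\mathrm{cls}\left(K_A(x)\right)=K_A(x)$ by Proposition \ref{P2}(3). Your variant is marginally more self-contained and robust: it does not use compactness of $K_A(x)$ at this step, and it avoids the tacit appeal to uniqueness of net limits (i.e., Hausdorffness of $X$) that is needed to identify the limit of the subnet produced by Proposition \ref{PKS} with the point $y$ --- an assumption the forward implication of the proposition does not officially make.
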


\begin{proof}
Suppose that $K_{A}$ is USC at $x$ and let $y\in D_{A}(x)$. Then there exist
nets $\left( t_{\lambda }\right) _{\lambda \in \Lambda }$ in $A$ and $\left(
x_{\lambda }\right) _{\lambda \in \Lambda }$ in $X$ such that $x_{\lambda
}\rightarrow x$ and $t_{\lambda }x_{\lambda }\rightarrow y$. Since $%
t_{\lambda }x_{\lambda }\in K_{A}(x_{\lambda })$ and $\rho _{K_{A}(x)}\left(
K_{A}(x_{\lambda })\right) \rightarrow \mathcal{O}$, it follows that $\rho
_{K_{A}(x)}\left( t_{\lambda }x_{\lambda }\right) \rightarrow \mathcal{O}$.
Since $K_{A}\left( x\right) $ is closed, we obtain $y\in K_{A}\left(
x\right) $, by Proposition \ref{PKS}. The converse follows by Theorem \ref%
{T6}.
\end{proof}

As a consequence of Corollary \ref{Co1} and Proposition \ref{K}, we have the
following.

\begin{corollary}
\label{Co2}Assume that $X$ is Hausdorff locally compact and $A\subset S$ is
nonempty and connected. The following statements are equivalent:

\begin{enumerate}
\item $K_{A}$ is Hausdorff continuous.

\item $K_{A}$ is USC.

\item $K_{A}=D_{A}$.
\end{enumerate}

Any one of these three conditions implies that $D_{A}$ is Hausdorff
continuous.
\end{corollary}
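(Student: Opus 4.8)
The plan is to assemble both the three-way equivalence and the final implication entirely from the preceding results, by running a short cycle of implications among the statements and then reading off the conclusion about $D_A$. No new argument is really needed; everything is supplied by earlier lemmas, so the work is in invoking the correct result in each direction.

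First I would establish $(1)\Leftrightarrow(2)$. Since $K_A$ is LSC by Proposition \ref{KA}, Hausdorff continuity of $K_A$ at a point reduces to upper semicontinuity at that point, which is exactly the content of Corollary \ref{Co1}. Applying this pointwise over all of $X$ yields that $K_A$ is Hausdorff continuous if and only if it is USC, giving $(1)\Leftrightarrow(2)$.

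Next I would prove $(2)\Leftrightarrow(3)$ using Proposition \ref{K}. For $(2)\Rightarrow(3)$: if $K_A$ is USC then it is USC at every $x\in X$, so the forward direction of Proposition \ref{K} gives $K_A(x)=D_A(x)$ for all $x$, that is, $K_A=D_A$. For $(3)\Rightarrow(2)$: assuming $K_A=D_A$ we have $K_A(x)=D_A(x)$ for every $x$, and since $X$ is Hausdorff locally compact and $A$ is connected, the converse half of Proposition \ref{K} yields that $K_A$ is USC at each $x$, hence USC. This closes the equivalence of $(1)$, $(2)$, and $(3)$.

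Finally I would deduce the Hausdorff continuity of $D_A$. Under the standing hypotheses, the corollary following Theorem \ref{T6} already guarantees that $D_A$ is USC. Assuming any one of the equivalent conditions, in particular $(3)$, we obtain $D_A=K_A$, and $K_A$ is LSC by Proposition \ref{KA}; hence $D_A$ is LSC. Being simultaneously USC and LSC, $D_A$ is Hausdorff continuous. I do not expect a genuine obstacle here, since each step is provided by an earlier result; the only point requiring care is to invoke the correct half of Proposition \ref{K} in each direction and to note that both the converse there and the upper semicontinuity of $D_A$ depend essentially on the hypotheses that $X$ is Hausdorff locally compact and $A$ is connected.
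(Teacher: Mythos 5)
Your proposal is correct and follows essentially the same route the paper intends: the paper derives this corollary precisely by combining Corollary \ref{Co1} (equivalence of USC and Hausdorff continuity, given that $K_{A}$ is LSC by Proposition \ref{KA}) with Proposition \ref{K} (equivalence of USC and $K_{A}=D_{A}$ under the locally compact, connected hypotheses), and your final step for $D_{A}$ — USC from the corollary to Theorem \ref{T6}, LSC from $D_{A}=K_{A}$ and Proposition \ref{KA} — is the intended (indeed, essentially forced) argument. No gaps.
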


If orbits have trivial prolongations then the prolongational limit sets
reduce to the limit sets, as the following.

\begin{theorem}
\label{T2}If $K_{A}$ is USC for all $A\in \mathcal{F}$ then $J_{\mathcal{F}%
}=L_{\mathcal{F}}$. The converse holds if $X$ is Hausdorff locally compact
and locally connected, and $\mathcal{F}$ is a co-compact filter basis on the
connected subsets of $S$.
\end{theorem}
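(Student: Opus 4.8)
The plan is to reduce both implications to Proposition \ref{K} and Corollary \ref{Co2}, which together identify the upper semicontinuity of $K_{A}$ with the equality $K_{A}=D_{A}$, and to exploit the decomposition $D_{A}\left( x\right) =K_{A}\left( x\right) \cup J_{\mathcal{F}}\left( x\right) $ that holds whenever $\mathcal{F}$ is co-compact.

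For the forward implication I would assume $K_{A}$ is USC for every $A\in \mathcal{F}$. The forward part of Proposition \ref{K} carries no extra hypotheses, so it yields $K_{A}\left( x\right) =D_{A}\left( x\right) $ for every $x\in X$ and every $A\in \mathcal{F}$. Intersecting over $A\in \mathcal{F}$ then gives
\[
J_{\mathcal{F}}\left( x\right) =\bigcap_{A\in \mathcal{F}}D_{A}\left( x\right) =\bigcap_{A\in \mathcal{F}}K_{A}\left( x\right) =L_{\mathcal{F}}\left( x\right) ,
\]
so $J_{\mathcal{F}}=L_{\mathcal{F}}$. This half is immediate and requires none of the topological hypotheses on $X$.

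For the converse I would fix $A\in \mathcal{F}$. Since $\mathcal{F}$ is a filter basis on connected subsets, $A$ is nonempty and connected, and $X$ is Hausdorff locally compact; hence Corollary \ref{Co2} applies and it suffices to prove $K_{A}=D_{A}$. The inclusion $K_{A}\left( x\right) \subset D_{A}\left( x\right) $ always holds because $Ax\subset A\mathrm{St}\left[ x,\mathcal{U}\right] $ for every $\mathcal{U}\in \mathcal{O}$. For the reverse inclusion I would invoke the co-compactness decomposition $D_{A}\left( x\right) =K_{A}\left( x\right) \cup J_{\mathcal{F}}\left( x\right) $. Because $A\in \mathcal{F}$, one has $L_{\mathcal{F}}\left( x\right) =\bigcap_{B\in \mathcal{F}}K_{B}\left( x\right) \subset K_{A}\left( x\right) $, and the standing hypothesis $J_{\mathcal{F}}=L_{\mathcal{F}}$ then gives $J_{\mathcal{F}}\left( x\right) \subset K_{A}\left( x\right) $. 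Consequently $D_{A}\left( x\right) =K_{A}\left( x\right) \cup J_{\mathcal{F}}\left( x\right) =K_{A}\left( x\right) $, so $K_{A}=D_{A}$, and Corollary \ref{Co2} yields that $K_{A}$ is USC.

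The argument is short rather than computational, so the real work is in making the hypotheses line up correctly: co-compactness is precisely what licenses the decomposition $D_{A}=K_{A}\cup J_{\mathcal{F}}$, while Hausdorff local compactness and the connectedness of each $A$ are what license the equivalence of $K_{A}=D_{A}$ with upper semicontinuity through Corollary \ref{Co2} (and local connectedness keeps the prolongation results of Theorem \ref{T6} and Proposition \ref{K} in force). The crux that I would verify with care is the trivial-looking containment $L_{\mathcal{F}}\left( x\right) \subset K_{A}\left( x\right) $ for $A\in \mathcal{F}$, since it is exactly this that makes the union in the decomposition collapse onto $K_{A}\left( x\right) $.
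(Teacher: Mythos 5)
Your forward implication coincides with the paper's own: Proposition \ref{K} gives $K_{A}=D_{A}$ for each $A\in \mathcal{F}$, and intersecting over $\mathcal{F}$ yields $J_{\mathcal{F}}=L_{\mathcal{F}}$. Your converse, however, is correct but follows a genuinely different route. The paper argues directly at each point $x$: it takes a compact neighborhood $V$ of $K_{A}\left( x\right) $, uses local connectedness of $X$ to choose connected neighborhoods $N_{\mathcal{U}}\subset \mathrm{St}\left[ x,\mathcal{U}\right] $, observes that $AN_{\mathcal{U}}$ is connected and so must meet $\mathrm{fr}\left( V\right) $ whenever it escapes $V$, extracts a limit $y\in \mathrm{fr}\left( V\right) $ of points $t_{\mathcal{U}}x_{\mathcal{U}}$, and then runs the co-compactness dichotomy by hand: either $\left( t_{\mathcal{U}}\right) $ $\mathcal{F}$-diverges, forcing $y\in J_{\mathcal{F}}\left( x\right) =L_{\mathcal{F}}\left( x\right) \subset K_{A}\left( x\right) $, or it has a subnet converging to some $t\in \mathrm{cls}\left( A\right) $, forcing $y=tx\in K_{A}\left( x\right) $; both contradict $y\in \mathrm{fr}\left( V\right) $. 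You outsource exactly these two ingredients to results already available in the paper: the dichotomy is packaged in the remark $D_{A}=K_{A}\cup J_{\mathcal{F}}$ (valid for $A\in \mathcal{F}$ when $\mathcal{F}$ is co-compact), and the passage from $K_{A}=D_{A}$ to upper semicontinuity is Corollary \ref{Co2}, i.e. Theorem \ref{T6} applied to the filter basis $\left\{ A\right\} $. Your collapse $D_{A}=K_{A}\cup J_{\mathcal{F}}=K_{A}\cup L_{\mathcal{F}}=K_{A}$ via the containment $L_{\mathcal{F}}\subset K_{A}$ is exactly right, and you correctly flag that containment as the crux. What your route buys: it is shorter, modular, and never uses local connectedness of $X$ — that hypothesis enters the paper's proof only to manufacture the connected neighborhoods $N_{\mathcal{U}}$, so your argument shows it is dispensable for the converse, granting the decomposition remark (which the paper states without proof, as a variant of a theorem of Souza--Tozatti). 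What the paper's route buys: it is self-contained, effectively re-proving that decomposition inline instead of leaning on the remark. One small correction to your closing commentary: local connectedness does not ``keep Theorem \ref{T6} and Proposition \ref{K} in force'' — neither result assumes it, and indeed your proof nowhere invokes it.
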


\begin{proof}
Suppose that $K_{A}$ is USC for all $A\in \mathcal{F}$. By Proposition \ref%
{K} we have
\begin{equation*}
J_{\mathcal{F}}\left( x\right) =\bigcap_{A\in \mathcal{F}}{D}_{A}\left( {x}%
\right) =\bigcap_{A\in \mathcal{F}}{K_{A}}\left( {x}\right) =L_{\mathcal{F}%
}\left( x\right)
\end{equation*}%
for every $x\in X$. As to the converse, suppose that $X$ is locally
connected, $\mathcal{F}$ is a filter basis on the connected subsets of $S$,
and $J_{\mathcal{F}}=L_{\mathcal{F}}$. Let $A\in \mathcal{F}$ and $x\in X$.
For a given open neighborhood $U$ of $K_{A}\left( x\right) $, take a compact
neighborhood $V$ of $K_{A}\left( x\right) $ such that $V\subset U$. We claim
that there is $\mathcal{U}\in \mathcal{O}$ such that $A\mathrm{St}\left[ x,%
\mathcal{U}\right] \subset V$. Indeed, suppose by contradiction that $A%
\mathrm{St}\left[ x,\mathcal{U}\right] \nsubseteq V$ for all $\mathcal{U}\in
\mathcal{O}$. For each $\mathcal{U}\in \mathcal{O}$, take a connected
neighborhood $N_{\mathcal{U}}$ of $x$ with $N_{\mathcal{U}}\subset \mathrm{St%
}\left[ x,\mathcal{U}\right] $. Then $AN_{\mathcal{U}}\nsubseteq V$ since
there is a star of $x$ inside $N_{\mathcal{U}}$. As $Ax\subset V$ and $AN_{%
\mathcal{U}}$ is connected, it follows that $AN_{\mathcal{U}}\cap \mathrm{fr}%
\left( V\right) \neq \emptyset $. Then we can take $t_{\mathcal{U}}x_{%
\mathcal{U}}\in AN_{\mathcal{U}}\cap \mathrm{fr}\left( V\right) $ with $t_{%
\mathcal{U}}\in A$ and $x_{\mathcal{U}}\in N_{\mathcal{U}}$. By the
compactness of $\mathrm{fr}\left( V\right) $, we may assume that $t_{%
\mathcal{U}}x_{\mathcal{U}}\rightarrow y$ for some $y\in \mathrm{fr}\left(
V\right) $. Now, if $t_{\mathcal{U}}\rightarrow _{\mathcal{F}}\infty $ then $%
y\in J_{\mathcal{F}}\left( x\right) =L_{\mathcal{F}}\left( x\right) \subset
K_{A}\left( x\right) $, which contradicts $K_{A}\left( x\right) \subset
\mathrm{int}\left( V\right) $. Hence $\left( t_{\mathcal{U}}\right) $ does
not $\mathcal{F}$-diverges. As $\mathcal{F}$ is co-compact, we may assume
that $t_{\mathcal{U}}\rightarrow t$ with $t\in \mathrm{cls}\left( A\right) $%
. It follows that $t_{\mathcal{U}}x_{\mathcal{U}}\rightarrow tx$, and hence $%
y=tx\in \mathrm{cls}\left( Ax\right) =K_{A}\left( x\right) $, that is again
a contradiction. This proves the claim. Finally, take $\mathcal{U}\in
\mathcal{O}$ such that $A\mathrm{St}\left[ x,\mathcal{U}\right] \subset V$.
For each $z\in \mathrm{St}\left[ x,\mathcal{U}\right] $ we have $K_{A}\left(
z\right) \subset \mathrm{cls}\left( A\mathrm{St}\left[ x,\mathcal{U}\right]
\right) \subset V\subset U$. Therefore $K_{A}$ is USC at $x$.
\end{proof}

We have the following consequence of Theorems \ref{T6} and \ref{T2}.

\begin{corollary}
Assume that $X$ is Hausdorff locally compact and $\mathcal{F}$ is a filter
basis on the connected subsets of $S$. If $K_{A}$ is USC, for every $A\in
\mathcal{F}$, then $L_{\mathcal{F}}$ is USC.
\end{corollary}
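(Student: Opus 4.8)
The plan is to deduce the statement immediately by chaining the two cited results, since every hypothesis needed for them is already available. The key observation is that the conclusion ``$L_{\mathcal{F}}$ is USC'' can be obtained by transporting the upper semicontinuity of $J_{\mathcal{F}}$ across the equality $J_{\mathcal{F}}=L_{\mathcal{F}}$, so the real content is verifying that both supporting theorems apply verbatim.

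First I would invoke Theorem \ref{T6}. Its hypotheses are exactly that $\mathcal{F}$ is a filter basis on the connected subsets of $S$ and that $X$ is Hausdorff locally compact, both of which are assumed here. This yields directly that the prolongational limit set function $J_{\mathcal{F}}$ is USC (on all of $X$). Note that this step uses neither the co-compactness of $\mathcal{F}$ nor local connectedness of $X$, so there is no hypothesis gap to bridge.

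Next I would apply the forward implication of Theorem \ref{T2}. That implication requires only that $K_{A}$ be USC for every $A\in\mathcal{F}$, which is precisely the standing assumption of the corollary; the additional hypotheses (local connectedness, co-compactness) are needed solely for the converse direction of Theorem \ref{T2} and are therefore irrelevant here. Consequently one obtains the pointwise identity $J_{\mathcal{F}}(x)=L_{\mathcal{F}}(x)$ for every $x\in X$, i.e.\ $J_{\mathcal{F}}=L_{\mathcal{F}}$ as set-valued functions.

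Finally I would combine the two conclusions: since $L_{\mathcal{F}}=J_{\mathcal{F}}$ and $J_{\mathcal{F}}$ is USC, the function $L_{\mathcal{F}}$ is USC as well, which is the desired statement. I do not anticipate any genuine obstacle; the only point requiring care is the bookkeeping of hypotheses—confirming that the forward direction of Theorem \ref{T2} is hypothesis-free beyond ``$K_{A}$ USC for all $A$,'' so that the stronger assumptions reserved for its converse are not silently imported. Everything else is a direct substitution.
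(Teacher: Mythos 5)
Your proof is correct and follows exactly the route the paper intends: this corollary is stated there as a direct consequence of Theorems \ref{T6} and \ref{T2}, obtained by deducing $J_{\mathcal{F}}$ USC from Theorem \ref{T6} and then identifying $L_{\mathcal{F}}=J_{\mathcal{F}}$ via the forward implication of Theorem \ref{T2}, which indeed needs only the hypothesis that $K_{A}$ is USC for every $A\in\mathcal{F}$. Your bookkeeping observation that local connectedness and co-compactness are reserved for the converse of Theorem \ref{T2} is accurate.
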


We also have the following relation between the continuity of the orbital
functions and the Conley definition of $\omega $-limit set.

\begin{corollary}
If $K_{A}$ is USC for all $A\in \mathcal{F}$ then $\omega \left( K,\mathcal{F%
}\right) =\bigcup_{x\in K}L_{\mathcal{F}}\left( x\right) $ for every compact
set $K\subset X$. The converse holds if $X$ is Hausdorff locally compact and
locally connected, and $\mathcal{F}$ is a co-compact filter basis on the
connected subsets of $S$.
\end{corollary}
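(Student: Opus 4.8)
The plan is to separate the two implications and use Theorem \ref{T2} as the bridge between Hausdorff upper semicontinuity of the orbit maps $K_A$ and the coincidence $J_{\mathcal{F}}=L_{\mathcal{F}}$, together with Remark \ref{R3} as the bridge between the prolongational limit sets and the $\omega$-limit set of a compact set. For the direct implication I would first record the always-valid inclusion $\bigcup_{x\in K}L_{\mathcal{F}}(x)\subset\omega(K,\mathcal{F})$ noted after the definition of the $\omega$-limit set (with $L_{\mathcal{F}}(x)=\omega(x,\mathcal{F})$). Assuming $K_A$ is USC for every $A\in\mathcal{F}$, Theorem \ref{T2} gives $J_{\mathcal{F}}=L_{\mathcal{F}}$, so for compact $K$ the first inclusion of Remark \ref{R3} yields $\omega(K,\mathcal{F})\subset\bigcup_{x\in K}J_{\mathcal{F}}(x)=\bigcup_{x\in K}L_{\mathcal{F}}(x)$. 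Combining the two inclusions produces the stated equality; this half is routine.

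For the converse I would argue, under the stated hypotheses, that the $\omega$-condition forces $J_{\mathcal{F}}=L_{\mathcal{F}}$, and then invoke the converse half of Theorem \ref{T2}, whose hypotheses coincide with the present ones, to conclude that each $K_A$ is USC. Since $K_A(x)\subset D_A(x)$ always gives $L_{\mathcal{F}}(x)\subset J_{\mathcal{F}}(x)$, the only point at issue is the reverse inclusion $J_{\mathcal{F}}(x)\subset L_{\mathcal{F}}(x)$. The first step is to localize: by Remark \ref{R2} one has $J_{\mathcal{F}}(x)=\bigcap_{\mathcal{U}\in\mathcal{O}}\omega(\mathrm{St}[x,\mathcal{U}],\mathcal{F})$, and since $X$ is locally compact the compact neighborhoods $N$ of $x$ are mutually cofinal with the stars $\mathrm{St}[x,\mathcal{U}]$ (each star contains a compact neighborhood and each compact neighborhood contains a star), so by monotonicity of $\omega$ one gets $J_{\mathcal{F}}(x)=\bigcap_{N}\omega(N,\mathcal{F})$, the intersection running over compact neighborhoods of $x$. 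Feeding the $\omega$-condition into each factor rewrites this as $J_{\mathcal{F}}(x)=\bigcap_{N}\bigcup_{z\in N}L_{\mathcal{F}}(z)$.

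The crux is then to upgrade the information \textquotedblleft $y\in L_{\mathcal{F}}(z_N)$ for a net $z_N\to x$\textquotedblright\ to the conclusion $y\in L_{\mathcal{F}}(x)$. I expect this to be the main obstacle, because a purely topological diagonal argument only returns $y\in J_{\mathcal{F}}(x)$ and is therefore circular; this is exactly where local connectedness and co-compactness of $\mathcal{F}$ must be used, as in the proof of the converse of Theorem \ref{T2}. The plan is to run a contradiction argument parallel to that proof: fixing $A\in\mathcal{F}$ and supposing $y\notin\mathrm{cls}(Ax)$, I would choose a compact neighborhood $V$ of $\mathrm{cls}(Ax)$ with $y\notin V$, use local connectedness to pick connected neighborhoods $N_{\mathcal{U}}\subset\mathrm{St}[x,\mathcal{U}]$ of $x$ so that each $AN_{\mathcal{U}}$ is connected and, if $A\,\mathrm{St}[x,\mathcal{U}]\nsubseteq V$, meets $\mathrm{fr}(V)$, and then extract $t_{\mathcal{U}}x_{\mathcal{U}}\to y'\in\mathrm{fr}(V)$ with $t_{\mathcal{U}}\in A$ and $x_{\mathcal{U}}\to x$. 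The co-compactness dichotomy splits into the case where $(t_{\mathcal{U}})$ subconverges to some $t\in\mathrm{cls}(A)$, which forces $y'=tx\in\mathrm{cls}(Ax)\subset\mathrm{int}(V)$ and contradicts $y'\in\mathrm{fr}(V)$, and the $\mathcal{F}$-divergent case. In the divergent case the plan is to apply the $\omega$-condition to a fixed compact neighborhood $N$ of $x$ containing the tail of $(x_{\mathcal{U}})$, obtaining $y'\in\omega(N,\mathcal{F})=\bigcup_{z\in N}L_{\mathcal{F}}(z)$, and to close the contradiction from there; making this closure genuinely use the $\omega$-condition rather than the still-unknown equality $J_{\mathcal{F}}=L_{\mathcal{F}}$ is the delicate point I would have to resolve. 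Once $A\,\mathrm{St}[x,\mathcal{U}]\subset V$ is secured for some $\mathcal{U}$, letting $V$ shrink gives $J_{\mathcal{F}}(x)\subset\mathrm{cls}(Ax)$ for every $A$, hence $J_{\mathcal{F}}(x)\subset\bigcap_{A\in\mathcal{F}}\mathrm{cls}(Ax)=L_{\mathcal{F}}(x)$, so $J_{\mathcal{F}}=L_{\mathcal{F}}$ and the converse of Theorem \ref{T2} completes the argument.
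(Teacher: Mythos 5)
Your proof of the first implication is correct and is exactly the paper's argument: the always-valid inclusion $\bigcup_{x\in K}L_{\mathcal{F}}(x)\subset \omega \left( K,\mathcal{F}\right) $, the first inclusion of Remark \ref{R3} for compact $K$, and Theorem \ref{T2} to replace $\bigcup_{x\in K}J_{\mathcal{F}}(x)$ by $\bigcup_{x\in K}L_{\mathcal{F}}(x)$. Nothing to add there.

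The converse is where your proposal has a genuine gap, and you flag it yourself. Your plan is the same reduction the paper makes --- derive $J_{\mathcal{F}}=L_{\mathcal{F}}$ from the $\omega $-condition and then quote the second part of Theorem \ref{T2} --- but the reduction is never completed. Everything you extract from the hypothesis has the following shape: if $y\in J_{\mathcal{F}}(x)$, then $y\in \omega \left( N,\mathcal{F}\right) =\bigcup_{z\in N}L_{\mathcal{F}}(z)$ for every compact neighborhood $N$ of $x$, so there are points $z_{N}\rightarrow x$ with $y\in L_{\mathcal{F}}(z_{N})$, i.e.\ $y\in \mathrm{cls}\left( Az_{N}\right) $ for every $A\in \mathcal{F}$ and every $N$. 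Passing from this to $y\in \mathrm{cls}\left( Ax\right) $ is exactly the assertion $y\in D_{A}(x)\Rightarrow y\in K_{A}(x)$, and by Proposition \ref{K} the inclusion $D_{A}(x)\subset K_{A}(x)$ is equivalent (under the standing hypotheses) to the upper semicontinuity of $K_{A}$ --- the very thing being proved; so this route is circular, as you suspected. Your fallback, rerunning the contradiction argument of Theorem \ref{T2} with the $\omega $-condition in place of $J_{\mathcal{F}}=L_{\mathcal{F}}$, stalls at the same spot: in the $\mathcal{F}$-divergent case you reach $y^{\prime }\in \mathrm{fr}\left( V\right) $ and $y^{\prime }\in \bigcup_{z\in N}L_{\mathcal{F}}(z)$, but nothing forces $\bigcup_{z\in N}L_{\mathcal{F}}(z)$ to avoid $\mathrm{fr}\left( V\right) $, since a priori $L_{\mathcal{F}}(z)$ for $z$ near $x$ may leave any prescribed neighborhood $V$ of $K_{A}(x)$ (that would be an upper-semicontinuity property of $L_{\mathcal{F}}$, again not available). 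So the contradiction never materializes and the converse remains unproved in your proposal. For comparison, the paper's own proof of the converse is the single sentence that, since each $\left\{ x\right\} $ is compact, the claim follows from the second part of Theorem \ref{T2}; but applied to singletons the hypothesis reads $\omega \left( \left\{ x\right\} ,\mathcal{F}\right) =L_{\mathcal{F}}(x)$, which is a definitional tautology and yields no information, so the step from the $\omega $-condition to $J_{\mathcal{F}}=L_{\mathcal{F}}$ is not justified there either. In other words, the point at which you got stuck is a real difficulty that the paper's proof elides rather than resolves; judged as a standalone argument, your converse is incomplete at precisely the point you identify.
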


\begin{proof}
Suppose that $K_{A}$ is USC for all $A\in \mathcal{F}$ and let $K\subset X$
be a compact set. By Theorem \ref{T2} and Remark \ref{R3}, we have
\begin{equation*}
\bigcup_{x\in K}L_{\mathcal{F}}\left( x\right) \subset \omega \left( K,%
\mathcal{F}\right) \subset \bigcup_{x\in K}J_{\mathcal{F}}\left( x\right)
=\bigcup_{x\in K}L_{\mathcal{F}}\left( x\right) .
\end{equation*}%
As each single set $\left\{ x\right\} $ is compact, the converse follows by
the second part of Theorem \ref{T2}.
\end{proof}

\begin{theorem}
\label{T1} If $K_{S}$ is USC then $K_{S}\left( x\right) $ is stable for
every $x\in X$. The converse holds if $x\in K_{S}\left( x\right) $ for every
$x\in X$.
\end{theorem}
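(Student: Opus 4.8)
The plan is to prove both implications straight from the definitions of upper semicontinuity and stability, the only structural input being the forward invariance of orbit closures. First I record that $\mathrm{cls}\left( Sx\right) $ is forward invariant: each translation $\mu _{s}$ is continuous and $s\left( Sx\right) =\left( sS\right) x\subset Sx$, so $\mu _{s}\left( \mathrm{cls}\left( Sx\right) \right) \subset \mathrm{cls}\left( \mu _{s}\left( Sx\right) \right) \subset \mathrm{cls}\left( Sx\right) $. Consequently, if $y\in K_{S}\left( x\right) =\mathrm{cls}\left( Sx\right) $ then $Sy\subset K_{S}\left( x\right) $ and hence $K_{S}\left( y\right) =\mathrm{cls}\left( Sy\right) \subset K_{S}\left( x\right) $.

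For the forward implication, suppose $K_{S}$ is USC and fix $x\in X$. Given an open neighborhood $U$ of $K_{S}\left( x\right) $ and a point $y\in K_{S}\left( x\right) $, the invariance above gives $K_{S}\left( y\right) \subset K_{S}\left( x\right) \subset U$. Upper semicontinuity of $K_{S}$ at $y$, applied to the open set $U\supset K_{S}\left( y\right) $, yields a neighborhood $W$ of $y$ with $K_{S}\left( z\right) \subset U$ for all $z\in W$; choosing $\mathcal{U}\in \mathcal{O}$ with $\mathrm{St}\left[ y,\mathcal{U}\right] \subset W$ gives $Sz\subset K_{S}\left( z\right) \subset U$ for every $z\in \mathrm{St}\left[ y,\mathcal{U}\right] $, that is, $S\mathrm{St}\left[ y,\mathcal{U}\right] \subset U$. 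This is precisely the stability of $K_{S}\left( x\right) $.

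For the converse, assume $x\in K_{S}\left( x\right) $ for every $x$ and that each $K_{S}\left( x\right) $ is stable; fix $x$ and an open $V\supset K_{S}\left( x\right) $. Since $K_{S}\left( x\right) $ is compact (by the blanket hypothesis on the functions $K_{A}$, taken with $A=S$), the second remark after Definition~\ref{Admiss} gives $\mathcal{V}\in \mathcal{O}$ with $\mathrm{St}\left[ K_{S}\left( x\right) ,\mathcal{V}\right] \subset V$, and I would then pick $\mathcal{V}^{\prime }\in \mathcal{O}$ with $\mathcal{V}^{\prime }\leqslant \tfrac{1}{2}\mathcal{V}$. Because $x\in K_{S}\left( x\right) $, stability applied at the point $x$ to the open neighborhood $\mathrm{St}\left[ K_{S}\left( x\right) ,\mathcal{V}^{\prime }\right] $ produces $\mathcal{U}\in \mathcal{O}$ with $S\mathrm{St}\left[ x,\mathcal{U}\right] \subset \mathrm{St}\left[ K_{S}\left( x\right) ,\mathcal{V}^{\prime }\right] $, so $Sz\subset \mathrm{St}\left[ K_{S}\left( x\right) ,\mathcal{V}^{\prime }\right] $ for every $z\in \mathrm{St}\left[ x,\mathcal{U}\right] $.

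The one genuine obstacle is the passage from $Sz\subset \mathrm{St}\left[ K_{S}\left( x\right) ,\mathcal{V}^{\prime }\right] $ to the required $K_{S}\left( z\right) =\mathrm{cls}\left( Sz\right) \subset V$, since taking closures could a priori push the orbit out of $V$. I would bridge it with the double-refinement estimate $\mathrm{cls}\left( \mathrm{St}\left[ Y,\mathcal{V}^{\prime }\right] \right) \subset \mathrm{St}\left[ Y,\mathcal{V}\right] $, valid whenever $\mathcal{V}^{\prime }\leqslant \tfrac{1}{2}\mathcal{V}$: if $w\in \mathrm{cls}\left( \mathrm{St}\left[ Y,\mathcal{V}^{\prime }\right] \right) $ then $\mathrm{St}\left[ w,\mathcal{V}^{\prime }\right] $ meets $\mathrm{St}\left[ Y,\mathcal{V}^{\prime }\right] $, producing two intersecting members of $\mathcal{V}^{\prime }$ containing $w$ and meeting $Y$ respectively, whose union lies in a single member of $\mathcal{V}$ that contains $w$ and meets $Y$. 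Applying this with $Y=K_{S}\left( x\right) $ gives $K_{S}\left( z\right) \subset \mathrm{cls}\left( \mathrm{St}\left[ K_{S}\left( x\right) ,\mathcal{V}^{\prime }\right] \right) \subset \mathrm{St}\left[ K_{S}\left( x\right) ,\mathcal{V}\right] \subset V$ for all $z$ in the open neighborhood $\mathrm{St}\left[ x,\mathcal{U}\right] $ of $x$, which is upper semicontinuity of $K_{S}$ at $x$. The remaining verifications are routine; the two nontrivial points are the forward invariance used in the first implication and this closure-of-star control used in the second.
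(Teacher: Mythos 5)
Your proof is correct, and its forward implication is essentially the paper's: forward invariance of orbit closures gives $K_{S}\left( y\right) \subset K_{S}\left( x\right) \subset U$ for $y\in K_{S}\left( x\right) $, and upper semicontinuity at $y$ then traps $Sz\subset K_{S}\left( z\right) \subset U$ on a star around $y$; you merely spell out the invariance argument and the passage from a neighborhood to a star, both of which the paper leaves implicit. The converse is where you diverge. The paper invokes its earlier remark that a compact stable set satisfies the neighborhood form of stability (for every neighborhood $U$ of $Y$ there is a neighborhood $V$ of the whole set $Y$ with $SV\subset U$), interposes $SV\subset W\subset \mathrm{cls}\left( W\right) \subset U$, and concludes $K_{S}\left( y\right) =\mathrm{cls}\left( Sy\right) \subset \mathrm{cls}\left( W\right) \subset U$ for $y\in V$, with $x\in K_{S}\left( x\right) $ making $V$ a neighborhood of $x$. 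You instead use only the pointwise definition of stability at the single point $x$ (legitimate for the same reason $x\in K_{S}\left( x\right) $) and control the closure by the explicit double-refinement estimate $\mathrm{cls}\left( \mathrm{St}\left[ Y,\mathcal{V}^{\prime }\right] \right) \subset \mathrm{St}\left[ Y,\mathcal{V}\right] $ for $\mathcal{V}^{\prime }\leqslant \frac{1}{2}\mathcal{V}$, whose proof you sketch correctly (two intersecting members of $\mathcal{V}^{\prime }$, one containing $w$ and one meeting $Y$, lie in a common member of $\mathcal{V}$). What your route buys is self-containedness: you bypass the unproven remark on compact stable sets and make the closure control explicit in terms of the admissible structure, using compactness of $K_{S}\left( x\right) $ only to place a star $\mathrm{St}\left[ K_{S}\left( x\right) ,\mathcal{V}\right] $ inside the target open set. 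What the paper's route buys is brevity, since the quoted remark performs in one line both the upgrade of stability and, together with regularity, the interposition of a closed neighborhood.
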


\begin{proof}
Suppose that $K_{S}$ is USC and take $x\in X$. Let $U\subset X$ be an open
neighborhood of $K_{S}\left( x\right) $. If $y\in K_{S}\left( x\right) $
then $K_{S}\left( y\right) \subset K_{S}\left( x\right) \subset U$. Hence
there is an open neighborhood $V$ of $y$ such that $K_{S}\left( z\right)
\subset U$ for all $z\in V$. It follows that $SV\subset U$, and therefore $%
K_{S}\left( x\right) $ is stable. As to the converse, suppose that $%
K_{S}\left( x\right) $ is stable and $x\in K_{S}\left( x\right) $ for every $%
x\in X$. Let $U\subset X$ be an open neighborhood of $K_{S}\left( x\right) $%
. As $K_{S}\left( x\right) $ is compact and stable, there are neighborhoods $%
V,W$ of $K_{S}\left( x\right) $ such that $SV\subset W\subset \mathrm{cls}%
\left( W\right) \subset U$. If $y\in V$, we have $K_{S}\left( y\right)
\subset \mathrm{cls}\left( SV\right) \subset \mathrm{cls}\left( W\right)
\subset U$. Since $V$ is a neighborhood of $x$, this means that $K_{S}$ is
USC at $x$.
\end{proof}

Note that $x\in K_{S}\left( x\right) $ is and only if $x$ is a weak
transitive point. For instance, every point is weak transitive if $S$ has
identity. The following result is a combination of Corollary \ref{Co2} and
Theorem \ref{T1}.

\begin{corollary}
Assume that $X$ is Hausdorff locally compact, $S$ is connected, and $x\in
K_{S}\left( x\right) $ for every $x\in X$. The following statements are
equivalent:

\begin{enumerate}
\item $K_{S}$ is Hausdorff continuous.

\item $K_{S}$ is USC.

\item $K_{S}=D_{S}$.

\item $K_{S}\left( x\right) $ is stable for every $x\in X$.
\end{enumerate}

Any one of these three conditions implies that $D_{S}$ is Hausdorff
continuous.
\end{corollary}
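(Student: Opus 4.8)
The plan is to combine Corollary \ref{Co2} (applied with $A = S$) and Theorem \ref{T1}. First I would observe that, since $S$ is connected and nonempty, Corollary \ref{Co2} applies directly with the choice $A = S$. This immediately yields the equivalence of statements (1), (2), and (3), together with the fact that any one of them forces $D_{S}$ to be Hausdorff continuous. Thus the only remaining task is to fold statement (4) into this chain of equivalences.

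For this, I would invoke Theorem \ref{T1}. The forward direction there gives that if $K_{S}$ is USC (statement (2)) then $K_{S}(x)$ is stable for every $x \in X$ (statement (4)); this supplies the implication (2) $\Rightarrow$ (4) with no extra hypotheses. The converse direction of Theorem \ref{T1} is precisely where the standing assumption $x \in K_{S}(x)$ for every $x \in X$ enters: under that assumption, stability of every $K_{S}(x)$ returns the upper semicontinuity of $K_{S}$, giving (4) $\Rightarrow$ (2).

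Putting these together, statement (4) is equivalent to statement (2), which is already equivalent to (1) and (3) by Corollary \ref{Co2}; hence all four statements are equivalent. The final clause is inherited from Corollary \ref{Co2}: since (1)--(3) are among the equivalent conditions and each of them implies $D_{S}$ is Hausdorff continuous, any one of the four conditions does as well.

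There is no genuine obstacle here -- the result is a bookkeeping combination of two earlier statements. The only point requiring care is verifying that the hypotheses line up: one must check that $S$ being connected is exactly what licenses the application of Corollary \ref{Co2} with $A = S$, and that the hypothesis $x \in K_{S}(x)$ is precisely the condition needed for the converse half of Theorem \ref{T1}. Both checks are immediate.
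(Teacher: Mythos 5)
Your proof is correct and is exactly the paper's intended argument: the paper presents this corollary as "a combination of Corollary \ref{Co2} and Theorem \ref{T1}", which is precisely your decomposition—Corollary \ref{Co2} with $A=S$ (licensed by connectedness of $S$) for the equivalence of (1)--(3) and the final claim about $D_{S}$, and Theorem \ref{T1} (whose converse uses $x\in K_{S}\left( x\right) $) to fold in (4).
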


We now relate upper semicontinuity of $L_{\mathcal{F}}$ to eventual
stability. We need the following sequence of lemmas.

\begin{lemma}
\label{L1} Assume that $X$ is Hausdorff locally compact and $\mathcal{F}$ is
a filter basis on the connected subsets of $S$. For any neighborhood $U$ of $%
L_{\mathcal{F}}\left( x\right) $, there exists $A\in \mathcal{F}$ such that $%
Ax\subset U$.
\end{lemma}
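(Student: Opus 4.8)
The plan is to exploit local compactness to replace $U$ by a neighborhood with compact frontier, and then to use the connectedness of the sets in $\mathcal{F}$ to force each orbit $Ax$ to lie entirely on one side of that frontier. First I would invoke the running hypothesis that $L_{\mathcal{F}}\left( x\right)$ is nonempty and compact. Since $X$ is Hausdorff locally compact and $L_{\mathcal{F}}\left( x\right) \subset U$ with $L_{\mathcal{F}}\left( x\right)$ compact, I can choose an open set $V$ with $L_{\mathcal{F}}\left( x\right) \subset V \subset \mathrm{cls}\left( V\right) \subset U$ and $\mathrm{cls}\left( V\right)$ compact. Then the frontier $\mathrm{fr}\left( V\right) = \mathrm{cls}\left( V\right) \setminus V$ is compact and, because $L_{\mathcal{F}}\left( x\right) \subset V$, disjoint from $L_{\mathcal{F}}\left( x\right)$.

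The key step is to produce a single $A \in \mathcal{F}$ with $\mathrm{cls}\left( Ax\right) \cap \mathrm{fr}\left( V\right) = \emptyset$. Recall that $L_{\mathcal{F}}\left( x\right) = \bigcap_{A\in \mathcal{F}} \mathrm{cls}\left( Ax\right)$ and that the family $\left\{ \mathrm{cls}\left( Ax\right) \right\}_{A\in \mathcal{F}}$ is downward directed, since $\mathcal{F}$ is a filter basis: given $A,B \in \mathcal{F}$ there is $C \in \mathcal{F}$ with $C \subset A \cap B$, whence $\mathrm{cls}\left( Cx\right) \subset \mathrm{cls}\left( Ax\right) \cap \mathrm{cls}\left( Bx\right)$. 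Consequently $\left\{ \mathrm{cls}\left( Ax\right) \cap \mathrm{fr}\left( V\right) \right\}_{A\in \mathcal{F}}$ is a downward directed family of compact subsets of $\mathrm{fr}\left( V\right)$ whose intersection equals $L_{\mathcal{F}}\left( x\right) \cap \mathrm{fr}\left( V\right) = \emptyset$. A directed family of nonempty compact sets in a Hausdorff space has nonempty intersection, so some member of this family must be empty; that is, there is $A \in \mathcal{F}$ with $\mathrm{cls}\left( Ax\right) \cap \mathrm{fr}\left( V\right) = \emptyset$, and in particular $Ax \cap \mathrm{fr}\left( V\right) = \emptyset$.

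For this $A$ I would then bring in connectedness. Since $A$ is connected and the map $s \mapsto sx$ is continuous, $Ax$ is connected. The identity $X \setminus \mathrm{fr}\left( V\right) = V \cup \left( X \setminus \mathrm{cls}\left( V\right) \right)$ exhibits $X \setminus \mathrm{fr}\left( V\right)$ as a disjoint union of two open sets, and $Ax$ is a connected subset of it, so either $Ax \subset V$ or $Ax \subset X \setminus \mathrm{cls}\left( V\right)$. The latter is impossible: it would give $Ax \cap V = \emptyset$, hence $\mathrm{cls}\left( Ax\right) \cap V = \emptyset$ because $V$ is open, contradicting $\emptyset \neq L_{\mathcal{F}}\left( x\right) \subset \mathrm{cls}\left( Ax\right) \cap V$. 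Therefore $Ax \subset V \subset U$, as required.

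I expect the main obstacle to be the second paragraph: recognizing that one must pass to a neighborhood with \emph{compact} frontier, so that local compactness is genuinely used, and that the directed-intersection-of-compacts principle is exactly what converts the defining intersection $\bigcap_{A} \mathrm{cls}\left( Ax\right)$ into a statement about a single $A$. The connectedness dichotomy in the last paragraph is the point where the hypothesis that $\mathcal{F}$ consists of connected sets enters decisively; without it, $Ax$ could straddle $\mathrm{fr}\left( V\right)$ and the conclusion would fail.
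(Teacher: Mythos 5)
Your proof is correct, and while it shares the paper's scaffolding --- shrink $U$ via local compactness to an open $V$ with $L_{\mathcal{F}}\left( x\right) \subset V\subset \mathrm{cls}\left( V\right) \subset U$ and $\mathrm{cls}\left( V\right)$ compact, then play connectedness of $Ax$ against the compact frontier --- the key step is executed with a genuinely different tool. The paper argues by contradiction: assuming $Ax\nsubseteq W$ for every $A\in \mathcal{F}$, connectedness produces a frontier point $t_{A}x$ for each $A$; compactness of the frontier extracts a convergent subnet $t_{A}x\rightarrow y$, and since $t_{A}\rightarrow _{\mathcal{F}}\infty $, the net characterization of the $\omega $-limit set forces $y\in L_{\mathcal{F}}\left( x\right)$, contradicting that $L_{\mathcal{F}}\left( x\right)$ misses the frontier. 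You instead work directly from the definition $L_{\mathcal{F}}\left( x\right) =\bigcap_{A\in \mathcal{F}}\mathrm{cls}\left( Ax\right) $: the sets $\mathrm{cls}\left( Ax\right) \cap \mathrm{fr}\left( V\right) $ form a downward directed family of compacts with empty intersection, so some member is already empty, and only then does connectedness enter, pinning $Ax$ on the $V$-side of the dichotomy $X\setminus \mathrm{fr}\left( V\right) =V\cup \left( X\setminus \mathrm{cls}\left( V\right) \right) $. The two compactness arguments are the same fact in different clothing --- the paper's subnet extraction is essentially a proof of the directed finite-intersection principle you invoke --- but your packaging buys real advantages: it is purely topological, never using nets or the dynamical (net-limit) description of $L_{\mathcal{F}}\left( x\right)$, so there is nothing to verify about $\mathcal{F}$-divergence of the chosen net of semigroup elements; it yields the slightly stronger conclusion $\mathrm{cls}\left( Ax\right) \subset V$; and it sidesteps a small imprecision in the paper's text, which asks a connected set meeting $W$ but not contained in $W$ to meet $\mathrm{fr}\left( \mathrm{cls}\left( W\right) \right)$, whereas such a set is only guaranteed to meet $\mathrm{fr}\left( W\right) =\mathrm{cls}\left( W\right) \setminus W$ --- exactly the frontier your argument uses.
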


\begin{proof}
There exists a neighborhood $W$ of $L_{\mathcal{F}}\left( x\right) $ such
that $W\subset U$ and $\mathrm{cls}\left( W\right) $ is compact. We claim
that $Ax\subset W$ for some $A\in \mathcal{F}$. Suppose by contradiction
that $Ax\nsubseteq W$ for all $A\in \mathcal{F}$. Since $W$ is a
neighborhood of $L_{\mathcal{F}}\left( x\right) $, we have $Ax\cap W\neq
\emptyset $ for all $A\in \mathcal{F}$. As $Ax$ is connected, it follows
that $Ax\cap \mathrm{fr}\left( \mathrm{cls}\left( W\right) \right) \neq
\emptyset $. For each $A\in \mathcal{F}$, take $t_{A}x\in Ax\cap \mathrm{fr}%
\left( \mathrm{cls}\left( W\right) \right) $. As $\mathrm{fr}\left( \mathrm{%
cls}\left( W\right) \right) $ is compact, we may assume that $%
t_{A}x\rightarrow y$ with $y\in \mathrm{fr}\left( \mathrm{cls}\left(
W\right) \right) $. Since $t_{A}\rightarrow _{\mathcal{F}}\infty $, this
means that $y\in L_{\mathcal{F}}\left( x\right) \cap \mathrm{fr}\left(
\mathrm{cls}\left( W\right) \right) $, which is impossible. Therefore $%
Ax\subset W$ for some $A\in \mathcal{F}$.
\end{proof}

\begin{proposition}
\label{T7}Assume that $X$ is Hausdorff locally compact and $\mathcal{F}$ is
a filter basis on the connected subsets of $S$ satisfying both hypotheses $%
\mathrm{H}_{1}$ and $\mathrm{H}_{3}$. Then $L_{\mathcal{F}}$ is USC on $L_{%
\mathcal{F}}\left( x\right) $ if and only if $L_{\mathcal{F}}\left( x\right)
$ is minimal and $\mathcal{F}$-eventually stable.
\end{proposition}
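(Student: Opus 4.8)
The plan is to prove the two implications separately, writing $M=L_{\mathcal{F}}(x)$. I first record the structural facts used throughout: $M$ is nonempty and compact by hypothesis, closed as an intersection of closed sets, and forward invariant because $\mathcal{F}$ satisfies $\mathrm{H}_{1}$. Moreover, for every $y\in M$ one has $L_{\mathcal{F}}(y)=\bigcap_{A}\mathrm{cls}(Ay)\subset \mathrm{cls}(Sy)\subset M$, since $Ay\subset Sy$ and $M$ is closed and forward invariant; in particular $L_{\mathcal{F}}(y)$ is a nonempty, compact, closed, forward-invariant subset of $M$.

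For the direction assuming $M$ minimal and $\mathcal{F}$-eventually stable, I would first note that minimality forces $L_{\mathcal{F}}(y)=M$ for every $y\in M$, because $L_{\mathcal{F}}(y)$ is a nonempty closed forward-invariant subset of the minimal set $M$ and hence cannot be proper. To check upper semicontinuity at a fixed $y\in M$, take an open $V\supset L_{\mathcal{F}}(y)=M$. Since $X$ is Hausdorff locally compact and $M$ is compact, choose an open $V'$ with $M\subset V'\subset \mathrm{cls}(V')\subset V$ and $\mathrm{cls}(V')$ compact. Eventual stability applied to the neighborhood $V'$ of $M$ yields a neighborhood $W$ of $M$ such that each $z\in W$ satisfies $Az\subset V'$ for some $A\in\mathcal{F}$, whence $L_{\mathcal{F}}(z)\subset \mathrm{cls}(Az)\subset \mathrm{cls}(V')\subset V$. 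As $y\in M\subset W$, the set $W$ is a neighborhood of $y$ witnessing upper semicontinuity, and $y$ was arbitrary.

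For the converse, assume $L_{\mathcal{F}}$ is USC at every point of $M$. Eventual stability is the easier half: given an open neighborhood $U$ of $M$, note $L_{\mathcal{F}}(y)\subset M\subset U$ for each $y\in M$, so USC at $y$ produces a neighborhood $N_{y}$ of $y$ with $L_{\mathcal{F}}(z)\subset U$ for all $z\in N_{y}$; covering the compact set $M$ by finitely many $N_{y_{1}},\dots,N_{y_{n}}$ and setting $V=\bigcup_{i}N_{y_{i}}$ gives a neighborhood of $M$ on which $L_{\mathcal{F}}(z)\subset U$. For each such $z$, since $U$ is open and contains $L_{\mathcal{F}}(z)$, Lemma \ref{L1} applied at $z$ furnishes $A\in\mathcal{F}$ with $Az\subset U$, which is exactly $\mathcal{F}$-eventual stability.

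The main obstacle is minimality in this converse direction, and the crucial ingredient is $\mathrm{H}_{3}$. Fix $y\in M$; since $L_{\mathcal{F}}(y)\subset \mathrm{cls}(Sy)\subset M$, it suffices to prove $M\subset L_{\mathcal{F}}(y)$, for then $M=L_{\mathcal{F}}(y)=\mathrm{cls}(Sy)$ for every $y\in M$, which is minimality. Write $y=\lim t_{\lambda}x$ for a net $t_{\lambda}\rightarrow_{\mathcal{F}}\infty$ in $S$; hypothesis $\mathrm{H}_{3}$ gives $L_{\mathcal{F}}(x)\subset L_{\mathcal{F}}(t_{\lambda}x)$ for every $\lambda$. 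For each $\mathcal{U}\in\mathcal{O}$, the star $\mathrm{St}[L_{\mathcal{F}}(y),\mathcal{U}]$ is open and contains $L_{\mathcal{F}}(y)$, so USC of $L_{\mathcal{F}}$ at $y$ together with $t_{\lambda}x\rightarrow y$ forces $L_{\mathcal{F}}(t_{\lambda}x)\subset \mathrm{St}[L_{\mathcal{F}}(y),\mathcal{U}]$ eventually; combined with the previous inclusion this gives $L_{\mathcal{F}}(x)\subset \mathrm{St}[L_{\mathcal{F}}(y),\mathcal{U}]$. Letting $\mathcal{U}$ range over $\mathcal{O}$ and invoking Remark \ref{R1}, I conclude $L_{\mathcal{F}}(x)\subset \bigcap_{\mathcal{U}\in\mathcal{O}}\mathrm{St}[L_{\mathcal{F}}(y),\mathcal{U}]=L_{\mathcal{F}}(y)$, which yields $M\subset L_{\mathcal{F}}(y)$ and completes the minimality argument.
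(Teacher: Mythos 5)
Your proof is correct and follows essentially the same route as the paper: minimality plus eventual stability yields USC via a compact (closure of an open) neighborhood and the inclusion $L_{\mathcal{F}}(z)\subset \mathrm{cls}\left( Az\right)$; USC yields eventual stability by combining the neighborhoods from upper semicontinuity with Lemma \ref{L1}; and USC yields minimality by approximating $y\in L_{\mathcal{F}}\left( x\right)$ with orbit points $sx$ and invoking hypothesis $\mathrm{H}_{3}$. The only difference is cosmetic, in the minimality step: you argue directly, obtaining $L_{\mathcal{F}}\left( x\right) \subset \bigcap_{\mathcal{U}\in \mathcal{O}}\mathrm{St}\left[ L_{\mathcal{F}}\left( y\right) ,\mathcal{U}\right] =L_{\mathcal{F}}\left( y\right)$ via Remark \ref{R1}, whereas the paper argues by contradiction, separating a point $z\in L_{\mathcal{F}}\left( x\right) \setminus \mathrm{cls}\left( Sy\right)$ from an open neighborhood of $\mathrm{cls}\left( Sy\right)$ — the same ingredients in either order.
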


\begin{proof}
Suppose that $L_{\mathcal{F}}$ is USC on $L_{\mathcal{F}}\left( x\right) $.
For a given $y\in L_{\mathcal{F}}\left( x\right) $, we have $L_{\mathcal{F}%
}\left( y\right) \subset \mathrm{cls}\left( Sy\right) \subset L_{\mathcal{F}%
}\left( x\right) $, because $L_{\mathcal{F}}\left( x\right) $ is forward
invariant by hypothesis $\mathrm{H}_{1}$. To prove that $L_{\mathcal{F}%
}\left( x\right) $ is minimal, suppose by contradiction that $\mathrm{cls}%
\left( Sy\right) \varsubsetneq L_{\mathcal{F}}\left( x\right) $ and take $%
z\in L_{\mathcal{F}}\left( x\right) \setminus \mathrm{cls}\left( Sy\right) $%
. Then there is an open neighborhood $U$ of $\mathrm{cls}\left( Sy\right) $
such that $z\notin U$. Since $L_{\mathcal{F}}\left( x\right) $ is USC at $y$%
, there is an open neighborhood $V$ of $y$ such that $L_{\mathcal{F}}\left(
v\right) \subset U$ for every $v\in V$. As $y\in L_{\mathcal{F}}\left(
x\right) $, there is $s\in S$ such that $sx\in V$, and then $L_{\mathcal{F}%
}\left( sx\right) \subset U$. By hypothesis $\mathrm{H}_{3}$, it follows
that $z\in L_{\mathcal{F}}\left( x\right) \subset L_{\mathcal{F}}\left(
sx\right) \subset U$, which is a contradiction. Hence $L_{\mathcal{F}}\left(
x\right) =\mathrm{cls}\left( Sy\right) $, and therefore $L_{\mathcal{F}%
}\left( x\right) $ is minimal. We now show that $L_{\mathcal{F}}\left(
x\right) $ is $\mathcal{F}$-eventually stable. Let $U$ be an open
neighborhood of $L_{\mathcal{F}}\left( x\right) $. For a given $y\in L_{%
\mathcal{F}}\left( x\right) $, we have $L_{\mathcal{F}}\left( y\right)
\subset L_{\mathcal{F}}\left( x\right) \subset U$. Since $L_{\mathcal{F}}$
is USC at $y$, there is an open neighborhood $V_{y}$ of $y$ such that $L_{%
\mathcal{F}}\left( z\right) \subset U$ for all $z\in V_{y}$. Then $V=%
\underset{y\in L_{\mathcal{F}}\left( x\right) }{\bigcup }V_{y}$ is an open
neighborhood of $L_{\mathcal{F}}\left( x\right) $. If $z\in V$ then $L_{%
\mathcal{F}}\left( z\right) \subset U$. By Lemma \ref{L1} there is $A\in
\mathcal{F}$ such that $Az\subset U$. Hence $L_{\mathcal{F}}\left( x\right) $
is $\mathcal{F}$-eventually stable. As to the converse, take $y\in L_{%
\mathcal{F}}\left( x\right) $ and let $U$ be an open neighborhood of $L_{%
\mathcal{F}}\left( y\right) $. As $L_{\mathcal{F}}\left( x\right) $ is
minimal, we have $L_{\mathcal{F}}\left( x\right) =L_{\mathcal{F}}\left(
y\right) $. Take a compact neighborhood $V$ of $L_{\mathcal{F}}\left(
y\right) $ with $V\subset U$. As $L_{\mathcal{F}}\left( x\right) $ is $%
\mathcal{F}$-eventually stable, there is a neighborhood $W$ of $L_{\mathcal{F%
}}\left( x\right) $ such that for every $w\in W$ there is some $A\in
\mathcal{F}$ such that $Aw\subset V$, and hence $L_{\mathcal{F}}\left(
w\right) \subset \mathrm{cls}\left( Aw\right) \subset V\subset U$. Since $W$
is a neighborhood of $y$, $L_{\mathcal{F}}$ is USC at $y$.
\end{proof}

\begin{lemma}
\label{L2} Assume that $X$ is Hausdorff locally compact and $\mathcal{F}$ is
a filter basis on the connected subsets of $S$ satisfying both hypotheses $%
\mathrm{H}_{1}$ and $\mathrm{H}_{3}$. Then $L_{\mathcal{F}}$ is USC on $L_{%
\mathcal{F}}\left( x\right) $ if and only if $L_{\mathcal{F}}$ is USC on the
$\mathcal{F}$-domain of attraction $\mathfrak{A}\left( L_{\mathcal{F}}\left(
x\right) ,\mathcal{F}\right) $.
\end{lemma}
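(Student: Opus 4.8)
The plan is to prove the two implications separately, using the characterization of Proposition \ref{T7} as the bridge for the non-trivial direction. First I would record the inclusion $L_{\mathcal{F}}(x)\subset \mathfrak{A}\left(L_{\mathcal{F}}(x),\mathcal{F}\right)$. Since hypothesis $\mathrm{H}_{1}$ makes $L_{\mathcal{F}}(x)$ forward invariant, every $y\in L_{\mathcal{F}}(x)$ satisfies $Ay\subset Sy\subset L_{\mathcal{F}}(x)\subset \mathrm{St}\left[L_{\mathcal{F}}(x),\mathcal{U}\right]$ for all $A\in \mathcal{F}$ and $\mathcal{U}\in \mathcal{O}$, so $y\in \mathfrak{A}\left(L_{\mathcal{F}}(x),\mathcal{F}\right)$. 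With this inclusion the implication ``USC on $\mathfrak{A}\left(L_{\mathcal{F}}(x),\mathcal{F}\right)$ $\Rightarrow$ USC on $L_{\mathcal{F}}(x)$'' is immediate, because upper semicontinuity at every point of the larger set restricts to upper semicontinuity at every point of the subset.

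For the converse, assume $L_{\mathcal{F}}$ is USC on $L_{\mathcal{F}}(x)$. By Proposition \ref{T7} the set $L_{\mathcal{F}}(x)$ is then minimal and $\mathcal{F}$-eventually stable; these two properties drive the whole argument. Fixing an arbitrary $z\in \mathfrak{A}\left(L_{\mathcal{F}}(x),\mathcal{F}\right)$, the first step is to show $L_{\mathcal{F}}(z)=L_{\mathcal{F}}(x)$. The remark on the domain of attraction, applied to the compact set $L_{\mathcal{F}}(x)$, gives $L_{\mathcal{F}}(z)=\omega\left(z,\mathcal{F}\right)\subset L_{\mathcal{F}}(x)$; since $L_{\mathcal{F}}(z)$ is nonempty by the standing assumption, closed, and forward invariant by $\mathrm{H}_{1}$, minimality of $L_{\mathcal{F}}(x)$ forces $L_{\mathcal{F}}(z)=L_{\mathcal{F}}(x)$. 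This equality is exactly what lets the stability of $L_{\mathcal{F}}(x)$ govern the limit set at $z$.

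The heart of the proof is then verifying upper semicontinuity at $z$. Given an open $U\supset L_{\mathcal{F}}(z)=L_{\mathcal{F}}(x)$, regularity of $X$ provides an open $U'$ with $L_{\mathcal{F}}(x)\subset U'\subset \mathrm{cls}(U')\subset U$. Applying $\mathcal{F}$-eventual stability to $U'$ produces a neighborhood $V$ of $L_{\mathcal{F}}(x)$ such that each $v\in V$ admits $A_{v}\in \mathcal{F}$ with $A_{v}v\subset U'$, whence $L_{\mathcal{F}}(v)\subset \mathrm{cls}(A_{v}v)\subset \mathrm{cls}(U')\subset U$. Because $L_{\mathcal{F}}(x)$ is compact there is $\mathcal{U}_{1}\in \mathcal{O}$ with $\mathrm{St}\left[L_{\mathcal{F}}(x),\mathcal{U}_{1}\right]\subset V$, and the membership $z\in \mathfrak{A}\left(L_{\mathcal{F}}(x),\mathcal{F}\right)$ yields $A_{1}\in \mathcal{F}$ with $A_{1}z\subset \mathrm{St}\left[L_{\mathcal{F}}(x),\mathcal{U}_{1}\right]\subset V$. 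Choosing any $s_{0}\in A_{1}$, continuity of $\mu_{s_{0}}$ gives a neighborhood $N$ of $z$ with $s_{0}N\subset V$; then for every $w\in N$ we have $s_{0}w\in V$, so $L_{\mathcal{F}}(s_{0}w)\subset U$, and hypothesis $\mathrm{H}_{3}$, which yields $L_{\mathcal{F}}(w)\subset L_{\mathcal{F}}(s_{0}w)$, delivers $L_{\mathcal{F}}(w)\subset U$. Hence $L_{\mathcal{F}}$ is USC at $z$, and as $z$ was arbitrary the conclusion follows.

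I expect the main obstacle to be the transfer of stability from $L_{\mathcal{F}}(x)$ to an arbitrary point $z$ of the basin: the eventual stability hypothesis only controls points lying near $L_{\mathcal{F}}(x)$, so the crux is first to pin down $L_{\mathcal{F}}(z)=L_{\mathcal{F}}(x)$ (so that the relevant neighborhoods $U$ of $L_{\mathcal{F}}(z)$ are genuinely neighborhoods of the stable set), and then to push a whole neighborhood of $z$ into the controlled region $V$ by a single semigroup element $s_{0}$ before invoking $\mathrm{H}_{3}$ to pass back from $L_{\mathcal{F}}(s_{0}w)$ to $L_{\mathcal{F}}(w)$. Getting the quantifier order right there, namely fixing $s_{0}$ and only afterwards using continuity of $\mu_{s_{0}}$, is the delicate point.
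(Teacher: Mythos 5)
Your proof is correct and follows essentially the same route as the paper's: Proposition \ref{T7} is the bridge giving minimality and $\mathcal{F}$-eventual stability, minimality of $L_{\mathcal{F}}\left( x\right) $ forces $L_{\mathcal{F}}\left( z\right) =L_{\mathcal{F}}\left( x\right) $ for $z$ in the basin, and upper semicontinuity at $z$ is obtained by pushing a neighborhood of $z$ into the stability region $V$ with a single element $s_{0}\in A_{1}$ via continuity of $\mu _{s_{0}}$ and then applying hypothesis $\mathrm{H}_{3}$ to pass from $L_{\mathcal{F}}\left( s_{0}w\right) $ back to $L_{\mathcal{F}}\left( w\right) $. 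The only minor difference is that where you use the definition of $\mathfrak{A}\left( L_{\mathcal{F}}\left( x\right) ,\mathcal{F}\right) $ together with the fact that stars around a compact set form a neighborhood base to produce $A_{1}z\subset V$, the paper instead invokes Lemma \ref{L1} at that point; both are valid, and your variant is marginally more economical there (though Lemma \ref{L1} is still needed indirectly, inside the proof of Proposition \ref{T7}).
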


\begin{proof}
Suppose that $L_{\mathcal{F}}$ is USC on $L_{\mathcal{F}}\left( x\right) $.
For a given $y\in \mathfrak{A}\left( L_{\mathcal{F}}\left( x\right) ,%
\mathcal{F}\right) $ and a neighborhood $U$ of $L_{\mathcal{F}}\left(
y\right) $, take a compact neighborhood $V$ of $L_{\mathcal{F}}\left(
y\right) $ with $V\subset U$. By Proposition \ref{T7}, $L_{\mathcal{F}%
}\left( x\right) $ is minimal and $\mathcal{F}$-eventually stable. As $L_{%
\mathcal{F}}\left( y\right) \subset L_{\mathcal{F}}\left( x\right) $, it
follows that $L_{\mathcal{F}}\left( y\right) =L_{\mathcal{F}}\left( x\right)
$ and there is an open neighborhood $W$ of $L_{\mathcal{F}}\left( y\right) $
such that for every $w\in W$ there is $A\in \mathcal{F}$ with $Aw\subset V$.
Moreover, by Lemma \ref{L1}, there is $B\in \mathcal{F}$ such that $%
By\subset W$. Pick $s\in B$. As $sy\in W$, there is an open neighborhood $%
V_{y}$ of $y$ such that $sV_{y}\subset W$. For a given $z\in V_{y}$, there
is some $A\in \mathcal{F}$ such that $Asz\subset V$. Then we have $L_{%
\mathcal{F}}\left( z\right) \subset L_{\mathcal{F}}\left( sz\right) \subset
\mathrm{cls}\left( Asz\right) \subset V\subset U$. Hence $L_{\mathcal{F}}$
is USC at $y$, and therefore $L_{\mathcal{F}}$ is USC on $\mathfrak{A}\left(
L_{\mathcal{F}}\left( x\right) ,\mathcal{F}\right) $. The converse is
obvious since $L_{\mathcal{F}}\left( x\right) \subset \mathfrak{A}\left( L_{%
\mathcal{F}}\left( x\right) ,\mathcal{F}\right) $.
\end{proof}

\begin{lemma}
\label{L3} Assume that $\mathcal{F}$ satisfies the hypotheses $\mathrm{H}%
_{1} $ and $\mathrm{H}_{3}$. If $L_{\mathcal{F}}\left( y\right) $ is $%
\mathcal{F}$-eventually stable for every $y\in L_{\mathcal{F}}\left(
x\right) $ then $L_{\mathcal{F}}\left( x\right) $ is minimal.
\end{lemma}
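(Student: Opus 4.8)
The plan is to establish minimality of $L_{\mathcal{F}}(x)$ through the characterization recalled earlier: a nonempty set $M$ is minimal precisely when $M=\mathrm{cls}(Sy)$ for every $y\in M$. Since $L_{\mathcal{F}}(x)$ is nonempty by the standing assumption and closed as an intersection of closed sets, and since hypothesis $\mathrm{H}_1$ makes it forward invariant, for any $y\in L_{\mathcal{F}}(x)$ the inclusion $Sy\subset L_{\mathcal{F}}(x)$ gives $\mathrm{cls}(Sy)\subset L_{\mathcal{F}}(x)$ at once. Thus the whole content is the reverse inclusion $L_{\mathcal{F}}(x)\subset\mathrm{cls}(Sy)$, which I would prove by contradiction: fix $y\in L_{\mathcal{F}}(x)$ and assume there is $z\in L_{\mathcal{F}}(x)\setminus\mathrm{cls}(Sy)$.

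Next I would convert the eventual stability hypothesis into a semicontinuity-type control, mimicking the minimality argument in the proof of Proposition \ref{T7} but with eventual stability playing the role of upper semicontinuity. Observe first that $L_{\mathcal{F}}(y)=\bigcap_{A\in\mathcal{F}}\mathrm{cls}(Ay)\subset\mathrm{cls}(Sy)$, so the chosen $z$ also lies outside the closed set $L_{\mathcal{F}}(y)$. Because $X$ is completely regular, hence regular, I can separate $z$ from $L_{\mathcal{F}}(y)$ by an open set $U\supset L_{\mathcal{F}}(y)$ with $z\notin\mathrm{cls}(U)$. Applying the $\mathcal{F}$-eventual stability of $L_{\mathcal{F}}(y)$ to this $U$ yields an open neighborhood $V$ of $L_{\mathcal{F}}(y)$ such that every $w\in V$ admits some $A\in\mathcal{F}$ with $Aw\subset U$; since $L_{\mathcal{F}}(w)\subset\mathrm{cls}(Aw)\subset\mathrm{cls}(U)$, this records that $L_{\mathcal{F}}(w)\subset\mathrm{cls}(U)$ for every $w\in V$.

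The decisive step, and the one I expect to be the main obstacle, is to feed a point of the orbit $Sx$ into the stability neighborhood $V$. Here I would exploit that $V$ is a neighborhood of the entire set $L_{\mathcal{F}}(y)$ and that $L_{\mathcal{F}}(y)\subset L_{\mathcal{F}}(x)\subset\mathrm{cls}(Sx)$: picking any $p\in L_{\mathcal{F}}(y)$, which is nonempty by assumption, $V$ is a neighborhood of $p\in\mathrm{cls}(Sx)$, so $V$ meets $Sx$ and there is $s\in S$ with $sx\in V$. Then $L_{\mathcal{F}}(sx)\subset\mathrm{cls}(U)$, and hypothesis $\mathrm{H}_3$, which gives $L_{\mathcal{F}}(x)\subset L_{\mathcal{F}}(sx)$, upgrades this to $L_{\mathcal{F}}(x)\subset\mathrm{cls}(U)$. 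This contradicts $z\in L_{\mathcal{F}}(x)$ together with $z\notin\mathrm{cls}(U)$. Hence $L_{\mathcal{F}}(x)=\mathrm{cls}(Sy)$ for every $y\in L_{\mathcal{F}}(x)$, and $L_{\mathcal{F}}(x)$ is minimal. The two delicate points to verify carefully are that a neighborhood of the whole set $L_{\mathcal{F}}(y)$, rather than of an individual point such as $y$ (which need not belong to $L_{\mathcal{F}}(y)$), already captures a translate $sx$, and that the passage from $Aw\subset U$ to $L_{\mathcal{F}}(w)$ must go through the closure $\mathrm{cls}(Aw)$, so the separating set must be chosen with $z\notin\mathrm{cls}(U)$ rather than merely $z\notin U$.
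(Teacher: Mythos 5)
Your proof is correct, but it takes a genuinely different route from the paper's. The paper argues by contradiction with non-minimality: it invokes the existence of a minimal set $M\varsubsetneq L_{\mathcal{F}}\left( x\right) $ inside the compact forward-invariant set $L_{\mathcal{F}}\left( x\right) $ (a Zorn-type argument it does not spell out), separates a point $y\in L_{\mathcal{F}}\left( x\right) \setminus M$ from $M$, and applies $\mathcal{F}$-eventual stability to $M=L_{\mathcal{F}}\left( z\right) $ for $z\in M$; the endgame (find $s\in S$ with $sx$ in the stability neighborhood, then $y\in L_{\mathcal{F}}\left( x\right) \subset L_{\mathcal{F}}\left( sx\right) \subset \mathrm{cls}\left( Asx\right) \subset \mathrm{cls}\left( U\right) $ via hypothesis $\mathrm{H}_{3}$) is the same trick you use. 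You instead verify the orbit-closure characterization $L_{\mathcal{F}}\left( x\right) =\mathrm{cls}\left( Sy\right) $ directly for each $y$, adapting the minimality argument of Proposition \ref{T7} with eventual stability in place of upper semicontinuity, and applying the hypothesis to $L_{\mathcal{F}}\left( y\right) $ itself rather than to a minimal subset. Your two flagged repairs are exactly the right ones: since eventual stability provides a neighborhood $V$ of the \emph{set} $L_{\mathcal{F}}\left( y\right) $ (and $y$ need not lie in $L_{\mathcal{F}}\left( y\right) $), you must route through a point $p\in L_{\mathcal{F}}\left( y\right) \subset \mathrm{cls}\left( Sx\right) $ to get $sx\in V$, and you must choose the separating set with $z\notin \mathrm{cls}\left( U\right) $ because eventual stability only controls $Aw$, not its closure. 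What each approach buys: the paper's proof uses the hypothesis more economically (eventual stability is only needed for the minimal sets $L_{\mathcal{F}}\left( z\right) $, $z\in M$), while yours dispenses with the existence of minimal subsets altogether --- hence with Zorn's lemma and, in fact, with compactness of the limit sets (nonemptiness, closedness, and complete regularity suffice) --- and it yields the sharper pointwise conclusion that every orbit closure $\mathrm{cls}\left( Sy\right) $, $y\in L_{\mathcal{F}}\left( x\right) $, equals $L_{\mathcal{F}}\left( x\right) $.
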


\begin{proof}
Suppose by contradiction that $L_{\mathcal{F}}\left( x\right) $ is not
minimal. Since $L_{\mathcal{F}}\left( x\right) $ is compact and forward
invariant, there is a minimal set $M\varsubsetneq L_{\mathcal{F}}\left(
x\right) $. Take $y\in L_{\mathcal{F}}\left( x\right) \setminus M$. Then
there is an open neighborhood $U$ of $M$ such that $y\notin \mathrm{cls}%
\left( U\right) $. For $z\in M$, we have $L_{\mathcal{F}}\left( z\right) =M$%
, and since $L_{\mathcal{F}}\left( z\right) $ is $\mathcal{F}$-eventually
stable, there is a neighborhood $V$ of $M$ such that for every $v\in V$
there is some $A\in \mathcal{F}$ such that $Av\subset U$. As $M\subset L_{%
\mathcal{F}}\left( x\right) \cap V$, we can find $s\in S$ such that $sx\in V$%
, and then there is $A\in \mathcal{F}$ such that $Asx\subset U$. It follows
that $y\in L_{\mathcal{F}}\left( x\right) \subset L_{\mathcal{F}}\left(
sx\right) \subset \mathrm{cls}\left( Asx\right) \subset \mathrm{cls}\left(
U\right) $, which is impossible.
\end{proof}

We now have the following results on the upper semicontinuity of the limit
set function.

\begin{proposition}
\label{T8}Assume that $X$ is Hausdorff locally compact and $\mathcal{F}$ is
a filter basis on the connected subsets of $S$ satisfying both hypotheses $%
\mathrm{H}_{1}$ and $\mathrm{H}_{3}$. Then $L_{\mathcal{F}}$ is USC if and
only if $L_{\mathcal{F}}\left( x\right) $ is $\mathcal{F}$-eventually stable
for every $x\in X$.
\end{proposition}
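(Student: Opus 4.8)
The plan is to obtain the equivalence by assembling Proposition \ref{T7}, Lemma \ref{L2}, and Lemma \ref{L3}, with Lemma \ref{L1} supplying the key fact that the base point $x$ sits inside the domain of attraction of its own limit set. All four auxiliary results are available here, since $X$ is Hausdorff locally compact and $\mathcal{F}$ is a filter basis on the connected subsets of $S$ satisfying $\mathrm{H}_{1}$ and $\mathrm{H}_{3}$.

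For the forward implication I would argue as follows. Suppose $L_{\mathcal{F}}$ is USC. Then for each $x\in X$ it is in particular USC at every point of $L_{\mathcal{F}}\left( x\right) $, that is, USC on $L_{\mathcal{F}}\left( x\right) $. Proposition \ref{T7} then gives that $L_{\mathcal{F}}\left( x\right) $ is minimal and $\mathcal{F}$-eventually stable; retaining only the second property yields exactly the desired $\mathcal{F}$-eventual stability of $L_{\mathcal{F}}\left( x\right) $ for every $x$. This direction is immediate.

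For the converse, assume $L_{\mathcal{F}}\left( x\right) $ is $\mathcal{F}$-eventually stable for every $x\in X$ and fix an arbitrary $x$. First I would observe that the hypothesis applies to every $y\in L_{\mathcal{F}}\left( x\right) $, so Lemma \ref{L3} forces $L_{\mathcal{F}}\left( x\right) $ to be minimal. Combining minimality with the assumed $\mathcal{F}$-eventual stability, Proposition \ref{T7} yields that $L_{\mathcal{F}}$ is USC on $L_{\mathcal{F}}\left( x\right) $, and Lemma \ref{L2} upgrades this to upper semicontinuity of $L_{\mathcal{F}}$ on the entire domain of attraction $\mathfrak{A}\left( L_{\mathcal{F}}\left( x\right) ,\mathcal{F}\right) $.

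The decisive final step is to certify that $x\in \mathfrak{A}\left( L_{\mathcal{F}}\left( x\right) ,\mathcal{F}\right) $. For a given $\mathcal{U}\in \mathcal{O}$, the set $\mathrm{St}\left[ L_{\mathcal{F}}\left( x\right) ,\mathcal{U}\right] $ is an open neighborhood of $L_{\mathcal{F}}\left( x\right) $, so Lemma \ref{L1} provides $A\in \mathcal{F}$ with $Ax\subset \mathrm{St}\left[ L_{\mathcal{F}}\left( x\right) ,\mathcal{U}\right] $; as $\mathcal{U}$ is arbitrary this is precisely the defining condition for membership in $\mathfrak{A}\left( L_{\mathcal{F}}\left( x\right) ,\mathcal{F}\right) $. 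Hence $L_{\mathcal{F}}$ is USC at $x$, and since $x$ was arbitrary, $L_{\mathcal{F}}$ is USC throughout $X$. I do not expect a genuine obstacle beyond sequencing the lemmas correctly: one must recover minimality through Lemma \ref{L3} \emph{before} invoking Proposition \ref{T7}, since only eventual stability — not minimality — is assumed at the outset.
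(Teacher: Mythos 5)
Your proposal is correct and follows essentially the same route as the paper's proof: Lemma \ref{L3} to recover minimality, then Proposition \ref{T7} for upper semicontinuity on $L_{\mathcal{F}}\left( x\right) $, then Lemma \ref{L2} to extend to $\mathfrak{A}\left( L_{\mathcal{F}}\left( x\right) ,\mathcal{F}\right) $, with the forward implication read off from Proposition \ref{T7}. The only difference is that you explicitly justify $x\in \mathfrak{A}\left( L_{\mathcal{F}}\left( x\right) ,\mathcal{F}\right) $ via Lemma \ref{L1}, a detail the paper asserts without comment, and your justification is valid since $L_{\mathcal{F}}\left( x\right) $ is assumed nonempty and compact throughout that section.
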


\begin{proof}
Suppose that $L_{\mathcal{F}}\left( x\right) $ is $\mathcal{F}$-eventually
stable for all $x\in X$. By Lemma \ref{L3}, $L_{\mathcal{F}}\left( x\right) $
is minimal. By Proposition \ref{T7}, $L_{\mathcal{F}}$ is USC on $L_{%
\mathcal{F}}\left( x\right) $. By Lemma \ref{L2}, $L_{\mathcal{F}}$ is USC
on $\mathfrak{A}\left( L_{\mathcal{F}}\left( x\right) ,\mathcal{F}\right) $.
Since $x\in \mathfrak{A}\left( L_{\mathcal{F}}\left( x\right) ,\mathcal{F}%
\right) $, $L_{\mathcal{F}}$ is USC at $x$. Thus $L_{\mathcal{F}}$ is USC.
The converse follows by Proposition \ref{T7}.
\end{proof}

\begin{proposition}
Assume that $X$ is Hausdorff locally compact and $\mathcal{F}$ is a filter
basis on the connected subsets of $S$ satisfying both hypotheses $\mathrm{H}%
_{1}$ and $\mathrm{H}_{3}$. Assume that $\left( S,X\right) $ has global $%
\mathcal{F}$-attractor $\mathcal{A}$. Then $L_{\mathcal{F}}$ is USC if and
only if it is USC on $\mathcal{A}$.
\end{proposition}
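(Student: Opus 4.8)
The backward implication is trivial, since upper semicontinuity on all of $X$ in particular gives upper semicontinuity on $\mathcal{A}\subset X$. The substance is the forward direction, and the plan is to show that upper semicontinuity on the attractor propagates along the domain of attraction to every point of $X$.

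Fix $x\in X$; the goal is to prove that $L_{\mathcal{F}}$ is USC at $x$. The decisive step is the inclusion $L_{\mathcal{F}}(x)=\omega\left(x,\mathcal{F}\right)\subset\mathcal{A}$. Since $\mathcal{A}$ is a global $\mathcal{F}$-attractor, $\mathfrak{A}\left(\mathcal{A},\mathcal{F}\right)=X$, so $x\in\mathfrak{A}\left(\mathcal{A},\mathcal{F}\right)$; taking $\mathcal{A}$ compact, the Remark bounding the domain of attraction of a compact set then gives $\omega\left(x,\mathcal{F}\right)\neq\emptyset$ and $\omega\left(x,\mathcal{F}\right)\subset\mathcal{A}$, which is exactly what is needed. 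Because $L_{\mathcal{F}}$ is USC at every point of $\mathcal{A}$ and $L_{\mathcal{F}}(x)\subset\mathcal{A}$, it follows immediately that $L_{\mathcal{F}}$ is USC at every point of $L_{\mathcal{F}}(x)$, i.e. $L_{\mathcal{F}}$ is USC on $L_{\mathcal{F}}(x)$.

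From here the argument is the same chain used to prove Proposition \ref{T8}. By Lemma \ref{L2}, being USC on $L_{\mathcal{F}}(x)$ is equivalent to being USC on the domain of attraction $\mathfrak{A}\left(L_{\mathcal{F}}(x),\mathcal{F}\right)$, so $L_{\mathcal{F}}$ is USC at every point of $\mathfrak{A}\left(L_{\mathcal{F}}(x),\mathcal{F}\right)$. It then remains only to verify that $x$ lies in this set, and this is precisely Lemma \ref{L1}: for each $\mathcal{U}\in\mathcal{O}$ the star $\mathrm{St}\left[L_{\mathcal{F}}(x),\mathcal{U}\right]$ is a neighborhood of $L_{\mathcal{F}}(x)$, hence there is $A\in\mathcal{F}$ with $Ax\subset\mathrm{St}\left[L_{\mathcal{F}}(x),\mathcal{U}\right]$, and since $\mathcal{U}$ is arbitrary, $x\in\mathfrak{A}\left(L_{\mathcal{F}}(x),\mathcal{F}\right)$. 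Therefore $L_{\mathcal{F}}$ is USC at $x$, and as $x$ was arbitrary, $L_{\mathcal{F}}$ is USC.

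The one genuine obstacle is the inclusion $L_{\mathcal{F}}(x)\subset\mathcal{A}$; this is where the global-attractor hypothesis does its work, and it rests on $\mathcal{A}$ being compact so that the Remark applies. If $\mathcal{A}$ is only assumed closed, the same inclusion can be obtained directly: for $y\in\omega\left(x,\mathcal{F}\right)$ and $\mathcal{U}\in\mathcal{O}$, pick $\mathcal{V}\leqslant\frac{1}{2}\mathcal{U}$ and $A\in\mathcal{F}$ with $Ax\subset\mathrm{St}\left[\mathcal{A},\mathcal{V}\right]$; then $y\in\mathrm{cls}\left(Ax\right)\subset\mathrm{cls}\left(\mathrm{St}\left[\mathcal{A},\mathcal{V}\right]\right)\subset\mathrm{St}\left[\mathcal{A},\mathcal{U}\right]$, and since $\mathcal{U}$ is arbitrary, Remark \ref{R1} gives $y\in\mathrm{cls}\left(\mathcal{A}\right)=\mathcal{A}$. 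Everything after this inclusion is routine bookkeeping through Lemmas \ref{L2} and \ref{L1}, with no new estimates required.
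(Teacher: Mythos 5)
Your proof is correct and takes essentially the same approach as the paper: the paper's one-line proof notes $X=\mathfrak{A}\left( \mathcal{A},\mathcal{F}\right) =\left\{ x\in X:L_{\mathcal{F}}\left( x\right) \subset \mathcal{A}\right\} $ and invokes Lemma \ref{L2}, which when unpacked is exactly your chain (the inclusion $L_{\mathcal{F}}\left( x\right) \subset \mathcal{A}$ via the remark on domains of attraction of compact sets, then Lemma \ref{L2}, then $x\in \mathfrak{A}\left( L_{\mathcal{F}}\left( x\right) ,\mathcal{F}\right) $ via Lemma \ref{L1}). Your extra argument covering the case where $\mathcal{A}$ is merely closed rather than compact is a careful addition, not a different route.
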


\begin{proof}
Note that $X=\mathfrak{A}\left( \mathcal{A},\mathcal{F}\right) =\left\{ x\in
X:L_{\mathcal{F}}\left( x\right) \subset \mathcal{A}\right\} $. Then the
proof follows by Lemma \ref{L2}.
\end{proof}

In the next results, we discuss the Hausdorff continuity of the function $L_{%
\mathcal{F}}$. We firstly prove a technical lemma.

\begin{lemma}
\label{L4}Assume that $X$ is Hausdorff locally compact and $\mathcal{F}$ is
a filter basis on the connected subsets of $S$ satisfying both hypotheses $%
\mathrm{H}_{1}$ and $\mathrm{H}_{2}$. If $L_{\mathcal{F}}$ is LSC on $L_{%
\mathcal{F}}\left( x\right) $ then for each $y\in L_{\mathcal{F}}\left(
x\right) $ and a neighborhood $U$ of $L_{\mathcal{F}}\left( y\right) $ there
is a neighborhood $V$ of $L_{\mathcal{F}}\left( y\right) $ such that $Av\cap
U\neq \emptyset $ for all $v\in V$ and $A\in \mathcal{F}$. The converse
holds if $L_{\mathcal{F}}\left( y\right) $ is minimal for every $y\in L_{%
\mathcal{F}}\left( x\right) $.
\end{lemma}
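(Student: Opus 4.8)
The plan is to prove the two implications separately; only the direct one is routine. For the direct implication, assume $L_{\mathcal{F}}$ is LSC at every point of $L_{\mathcal{F}}(x)$, fix $y\in L_{\mathcal{F}}(x)$ and a neighborhood $U$ of $L_{\mathcal{F}}(y)$, and replace $U$ by its interior so that $U$ is open. For each $z\in L_{\mathcal{F}}(y)$, hypothesis $\mathrm{H}_{1}$ makes $L_{\mathcal{F}}(y)$ forward invariant, hence $L_{\mathcal{F}}(z)\subset \mathrm{cls}(Sz)\subset L_{\mathcal{F}}(y)\subset U$ and in particular $L_{\mathcal{F}}(z)\cap U\neq \emptyset$. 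Applying lower semicontinuity at $z$ to the open set $U$ yields a neighborhood $N_{z}$ of $z$ with $L_{\mathcal{F}}(v)\cap U\neq \emptyset$ for all $v\in N_{z}$. I would then set $V=\bigcup_{z\in L_{\mathcal{F}}(y)}N_{z}$, an open neighborhood of $L_{\mathcal{F}}(y)$, and note that for $v\in V$ the inclusion $L_{\mathcal{F}}(v)\subset \mathrm{cls}(Av)$ forces $\mathrm{cls}(Av)\cap U\neq \emptyset$, whence $Av\cap U\neq \emptyset$ because $U$ is open. This settles the direct part and uses only $\mathrm{H}_{1}$.

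For the converse, assume the stated property together with minimality of $L_{\mathcal{F}}(y)$ for every $y\in L_{\mathcal{F}}(x)$, fix $y\in L_{\mathcal{F}}(x)$, and aim to show $L_{\mathcal{F}}$ is LSC at $y$. Let $W$ be open with $L_{\mathcal{F}}(y)\cap W\neq \emptyset$, pick $w\in L_{\mathcal{F}}(y)\cap W$, and write $M=L_{\mathcal{F}}(y)$. Using local compactness I would first choose an open $W_{0}$ with $w\in W_{0}$, $\mathrm{cls}(W_{0})$ compact, and $\mathrm{cls}(W_{0})\subset W$. The crux is that the assumed property controls only points near $M$, whereas LSC at $y$ concerns points near $y$; the bridge is the action together with $\mathrm{H}_{2}$. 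Since $w\in \omega(y,\mathcal{F})$ there is a net $s_{\mu}\rightarrow _{\mathcal{F}}\infty$ with $s_{\mu}y\rightarrow w$, and I would push a whole neighborhood of $y$ close to $w$ by a single $s_{\mu_{0}}$, so that $s_{\mu_{0}}v$ is near $w\in M$ whenever $v$ is near $y$; hypothesis $\mathrm{H}_{2}$ then gives $L_{\mathcal{F}}(s_{\mu_{0}}v)\subset L_{\mathcal{F}}(v)$, reducing the goal to producing a point of $L_{\mathcal{F}}(s_{\mu_{0}}v)$ inside $W$.

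To produce such a point I would use minimality of $M$ to upgrade the hypothesis into a genuine return to $W$. Since $M=\mathrm{cls}(Sq)$ for every $q\in M$ and $W_{0}$ meets $M$, for each $q\in M$ some $sq\in W_{0}$; by compactness of $M$ and continuity of the action there are finitely many $s_{1},\dots ,s_{k}\in S$ with $M\subset U:=\bigcup_{i=1}^{k}\mu_{s_{i}}^{-1}(W_{0})$. Feeding this open neighborhood $U$ of $M$ into the assumed property gives a neighborhood $V$ of $M$ with $Av'\cap U\neq \emptyset$ for all $v'\in V$ and $A\in \mathcal{F}$; choosing $s_{\mu_{0}}$ so that in addition $s_{\mu_{0}}y\in V\cap W_{0}$ guarantees $v':=s_{\mu_{0}}v\in V$ for $v$ in a neighborhood $N_{0}$ of $y$. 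For such $v'$, each $A$ yields $t_{A}\in A$ with $s_{i(A)}t_{A}v'\in W_{0}$; setting $\tau_{A}=s_{i(A)}t_{A}$, hypothesis $\mathrm{H}_{1}$ ensures $\tau_{A}\rightarrow _{\mathcal{F}}\infty$ (for a given $B\in \mathcal{F}$ pick $C_{i}$ with $s_{i}C_{i}\subset B$ and refine to a common $C\subset \bigcap_{i}C_{i}$), and since $\tau_{A}v'\in W_{0}\subset \mathrm{cls}(W_{0})$ a subnet converges to some $z\in \mathrm{cls}(W_{0})\subset W$ with $z\in \omega(v',\mathcal{F})=L_{\mathcal{F}}(v')$. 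Then $z\in L_{\mathcal{F}}(v')\subset L_{\mathcal{F}}(v)$ and $z\in W$, so $L_{\mathcal{F}}(v)\cap W\neq \emptyset$ for all $v\in N_{0}$, which is LSC at $y$.

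The main obstacle is precisely this last conversion: the assumed property only says that orbits of nearby points \emph{meet} a neighborhood of $M$, which is much weaker than accumulating inside the small target $W$. Overcoming it forces the three ingredients to cooperate — minimality to manufacture finitely many return maps $s_{i}$ sending $M$ into $W_{0}$, local compactness to extract an honest limit point $z\in \mathrm{cls}(W_{0})$, and $\mathrm{H}_{1}$ to certify that the composed transition elements $\tau_{A}$ still $\mathcal{F}$-diverge so that $z$ truly lies in the limit set. I expect the $\mathcal{F}$-divergence of $(\tau_{A})$ and the neighborhood bookkeeping (the interplay of $V\cap W_{0}$ and $N_{0}$) to be the only fiddly points, but neither is conceptually hard once the push-by-$s_{\mu_{0}}$ device is in place.
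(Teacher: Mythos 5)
Your proposal is correct, and the direct implication is essentially identical to the paper's (cover $L_{\mathcal{F}}(y)$ by the neighborhoods coming from LSC at each of its points, take the union, and use $L_{\mathcal{F}}(v)\subset\mathrm{cls}(Av)$ with $U$ open). The converse, however, follows a genuinely different route. The paper exploits minimality through the identity $L_{\mathcal{F}}(y)=K_{S}(z)$ and the unconditional lower semicontinuity of $K_{S}$ (its Proposition \ref{KA}): it covers $L_{\mathcal{F}}(y)$ by finitely many relatively compact sets $W_{z_i}$ sitting inside the LSC-neighborhoods $V_{z_i}$, feeds $W=\bigcup_i W_{z_i}$ into the hypothesis, uses Lemma \ref{L1} (which requires connectedness of the elements of $\mathcal{F}$) to produce a single $s$ pushing a neighborhood of $y$ into $V$, extracts a limit $w\in\mathrm{cls}(W_{z_i})$ of the points $t_{A}sn$, and only then converts membership $U\cap K_{S}(w)\neq\emptyset$ into $U\cap L_{\mathcal{F}}(sn)\neq\emptyset$ via forward invariance (hypothesis $\mathrm{H}_{1}$). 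You instead exploit minimality to manufacture finitely many return elements $s_{1},\dots,s_{k}$ with $M\subset\bigcup_{i}\mu_{s_i}^{-1}(W_{0})$, feed that preimage neighborhood into the hypothesis, and compose $\tau_{A}=s_{i(A)}t_{A}$ so that the orbit points land directly in $W_{0}$; the price is that you must verify $\tau_{A}\rightarrow_{\mathcal{F}}\infty$, which your $\mathrm{H}_{1}$-plus-filter-basis argument does correctly, and the payoff is that the limit point $z\in\mathrm{cls}(W_{0})\subset W$ lies in $L_{\mathcal{F}}(v')$ with no further conversion step. Both proofs then finish identically with $\mathrm{H}_{2}$. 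Two further remarks: (i) your substitute for Lemma \ref{L1} — finding one $s_{\mu_0}$ with $s_{\mu_0}y\in V$ from the net definition of $\omega(y,\mathcal{F})$ — means your converse never uses connectedness of the elements of $\mathcal{F}$, so your argument is marginally more general than the paper's on this point; (ii) in the direct part, both you and the paper tacitly use $L_{\mathcal{F}}(y)\subset L_{\mathcal{F}}(x)$ (from $\mathrm{H}_{1}$-forward-invariance of $L_{\mathcal{F}}(x)$) to justify invoking LSC at points $z\in L_{\mathcal{F}}(y)$; it would be worth making that one line explicit.
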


\begin{proof}
Suppose that $L_{\mathcal{F}}$ is LSC on $L_{\mathcal{F}}\left( x\right) $
and take $y\in L_{\mathcal{F}}\left( x\right) $. For a given open
neighborhood $U$ of $L_{\mathcal{F}}\left( y\right) $ and $z\in L_{\mathcal{F%
}}\left( y\right) $, we have $L_{\mathcal{F}}\left( z\right) \subset L_{%
\mathcal{F}}\left( y\right) \subset U$. As $L_{\mathcal{F}}$ is LSC at $z$,
there is an open neighborhood $V_{z}$ of $z$ such that $U\cap L_{\mathcal{F}%
}\left( v\right) \neq \emptyset $ for all $v\in V_{z}$. Set $V=\underset{%
z\in L_{\mathcal{F}}\left( y\right) }{\bigcup }V_{z}$. Then $V$ is an open
neighborhood of $L_{\mathcal{F}}\left( y\right) $ and $U\cap L_{\mathcal{F}%
}\left( v\right) \neq \emptyset $ for all $v\in V$. This means that $U\cap
Av\neq \emptyset $ for all $v\in V$ for all $v\in V$ and $A\in \mathcal{F}$.
For the converse, assume that $L_{\mathcal{F}}\left( y\right) $ is minimal
for every $y\in L_{\mathcal{F}}\left( x\right) $. Let $y\in L_{\mathcal{F}%
}\left( x\right) $ and let $U$ be an open set with $U\cap L_{\mathcal{F}%
}\left( y\right) \neq \emptyset $. For $z\in L_{\mathcal{F}}\left( y\right) $%
, we have $L_{\mathcal{F}}\left( y\right) =\mathrm{cls}\left( Sz\right)
=K_{S}\left( z\right) $. Since $K_{S}$ is LSC at $z$, there is an open
neighborhood $V_{z}$ of $z$ such that $U\cap K_{S}\left( v\right) \neq
\emptyset $ for every $v\in V_{z}$. Take a neighborhood $W_{z}$ of $z$ such
that $\mathrm{cls}\left( W_{z}\right) $ is compact and $\mathrm{cls}\left(
W_{z}\right) \subset V_{z}$. We have the open covering $L_{\mathcal{F}%
}\left( y\right) \subset \underset{z\in L_{\mathcal{F}}\left( y\right) }{%
\bigcup }W_{z}$. By the compactness of $L_{\mathcal{F}}\left( y\right) $, we
can take a finite subcovering $L_{\mathcal{F}}\left( y\right) \subset
\bigcup_{i=1}^{n}W_{z_{i}}$. Set $W=\bigcup_{i=1}^{n}W_{z_{i}}$. Now there
is a neighborhood $V$ of $L_{\mathcal{F}}\left( y\right) $ such that $Av\cap
W\neq \emptyset $ for all $v\in V$ and $A\in \mathcal{F}$. By Lemma \ref{L1}%
, there is $A_{0}\in \mathcal{F}$ with $A_{0}y\subset V$. For $s\in A_{0}$,
there is an open neighborhood $N$ of $y$ such that $sN\subset V$. If $n\in N$
we have $Asn\cap W\neq \emptyset $ for all $A\in \mathcal{F}$. For each $%
A\in \mathcal{F}$, take $t_{A}\in A$ such that $t_{A}sn\in W$. Since $%
\mathrm{cls}\left( W\right) $ is compact, we may assume that $%
t_{A}sn\rightarrow w$ with $w\in \mathrm{cls}\left( W\right) $. As $%
t_{A}\rightarrow _{\mathcal{F}}\infty $, we have $w\in L_{\mathcal{F}}\left(
sn\right) $. Since $w\in \mathrm{cls}\left( W_{z_{i}}\right) \subset
V_{z_{i}}$ for some $z_{i}$, we have $U\cap K_{S}\left( w\right) \neq
\emptyset $, and then $\emptyset \neq U\cap \mathrm{cls}\left( Sw\right)
\subset U\cap L_{\mathcal{F}}\left( sn\right) $. By hypothesis $\mathrm{H}%
_{2}$, $L_{\mathcal{F}}\left( sn\right) \subset L_{\mathcal{F}}\left(
n\right) $ and hence $U\cap L_{\mathcal{F}}\left( n\right) \neq \emptyset $
whenever $n\in N$. Therefore $L_{\mathcal{F}}$ is LSC at $y$.
\end{proof}

Combining the hypotheses of Proposition \ref{T7} and Lemma \ref{L4}, we have
the following.

\begin{proposition}
\label{T9} Assume that $X$ is Hausdorff locally compact and $\mathcal{F}$ is
a filter basis on the connected subsets of $S$ satisfying hypotheses $%
\mathrm{H}_{1}$, $\mathrm{H}_{2}$, and $\mathrm{H}_{3}$. Then $L_{\mathcal{F}%
}$ is Hausdorff continuous on $L_{\mathcal{F}}\left( x\right) $ if and only
if $L_{\mathcal{F}}\left( x\right) $ is $\mathcal{F}$-eventually stable and
minimal.
\end{proposition}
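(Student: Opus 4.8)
The plan is to read ``Hausdorff continuous on $L_{\mathcal{F}}(x)$'' as ``lower and upper semicontinuous at every point of $L_{\mathcal{F}}(x)$'' and to assemble the equivalence from the two characterizations already available: Proposition \ref{T7}, which identifies upper semicontinuity of $L_{\mathcal{F}}$ on $L_{\mathcal{F}}(x)$ with the conjunction of minimality and $\mathcal{F}$-eventual stability, and Lemma \ref{L4}, which governs lower semicontinuity. The forward implication is then immediate: if $L_{\mathcal{F}}$ is Hausdorff continuous on $L_{\mathcal{F}}(x)$, it is in particular upper semicontinuous there, so Proposition \ref{T7} directly yields that $L_{\mathcal{F}}(x)$ is $\mathcal{F}$-eventually stable and minimal.

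For the converse I would assume $L_{\mathcal{F}}(x)$ minimal and $\mathcal{F}$-eventually stable and prove the two semicontinuities separately. Upper semicontinuity on $L_{\mathcal{F}}(x)$ follows at once from the converse half of Proposition \ref{T7}. The real work lies in the lower semicontinuity, for which I intend to invoke the converse of Lemma \ref{L4}. That converse needs two inputs: first, that $L_{\mathcal{F}}(y)$ is minimal for every $y\in L_{\mathcal{F}}(x)$; second, the neighborhood condition that for each such $y$ and each neighborhood $U$ of $L_{\mathcal{F}}(y)$ there is a neighborhood $V$ of $L_{\mathcal{F}}(y)$ with $Av\cap U\neq\emptyset$ for all $v\in V$ and all $A\in\mathcal{F}$.

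The first input I would obtain from minimality of $L_{\mathcal{F}}(x)$ together with hypothesis $\mathrm{H}_{1}$: for $y\in L_{\mathcal{F}}(x)$, forward invariance gives $L_{\mathcal{F}}(y)\subset \mathrm{cls}(Sy)\subset L_{\mathcal{F}}(x)$, and since $L_{\mathcal{F}}(y)$ is nonempty, closed and forward invariant, minimality of $L_{\mathcal{F}}(x)$ forces $L_{\mathcal{F}}(y)=L_{\mathcal{F}}(x)$; in particular $L_{\mathcal{F}}(y)$ is minimal. The second input I would extract from $\mathcal{F}$-eventual stability: given a neighborhood $U$ of $L_{\mathcal{F}}(y)=L_{\mathcal{F}}(x)$, eventual stability supplies a neighborhood $V$ of $L_{\mathcal{F}}(x)$ such that each $v\in V$ satisfies $Av\subset U$ for some $A\in\mathcal{F}$. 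With these two inputs the converse of Lemma \ref{L4} delivers lower semicontinuity on $L_{\mathcal{F}}(x)$, and combined with upper semicontinuity this gives Hausdorff continuity.

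The step I expect to be the main obstacle is upgrading the ``for some $A\in\mathcal{F}$'' provided by eventual stability to the ``for all $A\in\mathcal{F}$'' demanded by the neighborhood condition of Lemma \ref{L4}. Here I would use that $\mathcal{F}$ is a filter basis: given $v\in V$ with $Av\subset U$ and an arbitrary $B\in\mathcal{F}$, choose $C\in\mathcal{F}$ with $C\subset A\cap B$; then $\emptyset\neq Cv\subset Bv\cap U$, so $Bv\cap U\neq\emptyset$ as required. Everything else is bookkeeping in the semicontinuity definitions and direct appeals to the cited results.
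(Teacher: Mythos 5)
Your proposal is correct and follows essentially the same route as the paper's proof: the forward direction and the USC half of the converse both come from Proposition \ref{T7}, and the LSC half comes from the converse of Lemma \ref{L4}, using the filter-basis intersection trick to upgrade ``$Av\subset U$ for some $A\in\mathcal{F}$'' (from $\mathcal{F}$-eventual stability) to ``$Bv\cap U\neq \emptyset$ for all $B\in\mathcal{F}$''. The only (harmless) difference is that you obtain minimality of each $L_{\mathcal{F}}\left( y\right) $, $y\in L_{\mathcal{F}}\left( x\right) $, directly from minimality of $L_{\mathcal{F}}\left( x\right) $ and hypothesis $\mathrm{H}_{1}$, which forces $L_{\mathcal{F}}\left( y\right) =L_{\mathcal{F}}\left( x\right) $, whereas the paper re-applies Proposition \ref{T7} at each such $y$; both derivations are valid.
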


\begin{proof}
Suppose that $L_{\mathcal{F}}\left( x\right) $ is $\mathcal{F}$-eventually
stable and minimal. By Proposition \ref{T7}, $L_{\mathcal{F}}$ is USC on $L_{%
\mathcal{F}}\left( x\right) $. It remains to show that $L_{\mathcal{F}}$ is
LSC on $L_{\mathcal{F}}\left( x\right) $. Indeed, for any $y\in L_{\mathcal{F%
}}\left( x\right) $, we have $L_{\mathcal{F}}\left( y\right) \subset L_{%
\mathcal{F}}\left( x\right) $, hence $L_{\mathcal{F}}$ is USC on $L_{%
\mathcal{F}}\left( y\right) $. By Proposition \ref{T7}, $L_{\mathcal{F}%
}\left( y\right) $ is minimal and $\mathcal{F}$-eventually stable. Now let $%
U $ be an open neighborhood of $L_{\mathcal{F}}\left( y\right) $. Then there
is an open neighborhood $V$ of $L_{\mathcal{F}}\left( y\right) $ such that
for each $v\in V$ there is $A_{0}\in \mathcal{F}$ such that $A_{0}v\subset U$%
. For any $A\in \mathcal{F}$, we have $A\cap A_{0}\neq \emptyset $, and then
$Av\cap U\neq \emptyset $. By Lemma \ref{L4}, $L_{\mathcal{F}}$ is LSC on $%
L_{\mathcal{F}}\left( x\right) $. The converse follows by Proposition \ref%
{T7}.
\end{proof}

We now present the main theorem on continuity of limit set function.

\begin{theorem}
\label{T10}Assume that $X$ is Hausdorff locally compact and $\mathcal{F}$ is
a filter basis on the connected subsets of $S$ satisfying hypotheses $%
\mathrm{H}_{1}$, $\mathrm{H}_{2}$, and $\mathrm{H}_{3}$. Then $L_{\mathcal{F}%
}$ is Hausdorff continuous if and only if $L_{\mathcal{F}}\left( x\right) $
is $\mathcal{F}$-eventually stable for every $x\in X$.
\end{theorem}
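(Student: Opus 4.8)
The plan is to prove the two implications separately, reducing everything to the local results already established. The forward implication is immediate: if $L_{\mathcal{F}}$ is Hausdorff continuous then it is in particular upper semicontinuous, so Proposition \ref{T8} gives that $L_{\mathcal{F}}\left( x\right) $ is $\mathcal{F}$-eventually stable for every $x\in X$. The substance of the theorem is the converse.

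So assume $L_{\mathcal{F}}\left( x\right) $ is $\mathcal{F}$-eventually stable for every $x\in X$. Upper semicontinuity of $L_{\mathcal{F}}$ is then handed to us directly by Proposition \ref{T8}, so it remains only to establish lower semicontinuity at an arbitrary point $x$. First I would record two consequences of the standing hypothesis. Since $L_{\mathcal{F}}\left( y\right) $ is $\mathcal{F}$-eventually stable for each $y\in L_{\mathcal{F}}\left( x\right) $, Lemma \ref{L3} shows that $L_{\mathcal{F}}\left( x\right) $ is minimal; and since $L_{\mathcal{F}}\left( x\right) $ is both $\mathcal{F}$-eventually stable and minimal, Proposition \ref{T9} shows that $L_{\mathcal{F}}$ is Hausdorff continuous, hence lower semicontinuous, on the whole set $L_{\mathcal{F}}\left( x\right) $.

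The remaining, and main, step is to propagate lower semicontinuity from $L_{\mathcal{F}}\left( x\right) $ back to the point $x$ itself, in the spirit of Lemma \ref{L2} but now for the lower semicontinuous side. Fix an open set $U$ with $U\cap L_{\mathcal{F}}\left( x\right) \neq \emptyset $. For each $p\in L_{\mathcal{F}}\left( x\right) $ minimality gives $L_{\mathcal{F}}\left( p\right) =L_{\mathcal{F}}\left( x\right) $, so $U\cap L_{\mathcal{F}}\left( p\right) \neq \emptyset $; applying lower semicontinuity of $L_{\mathcal{F}}$ at $p$ I obtain an open neighborhood $V_{p}$ of $p$ on which $L_{\mathcal{F}}\left( v\right) \cap U\neq \emptyset $. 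Setting $V=\bigcup_{p\in L_{\mathcal{F}}\left( x\right) }V_{p}$ produces an open neighborhood of $L_{\mathcal{F}}\left( x\right) $ with $L_{\mathcal{F}}\left( v\right) \cap U\neq \emptyset $ for all $v\in V$. By Lemma \ref{L1} there is $B\in \mathcal{F}$ with $Bx\subset V$; choosing any $s\in B$ and using continuity of $\mu _{s}$, the set $N=\mu _{s}^{-1}\left( V\right) $ is a neighborhood of $x$ with $sN\subset V$. For $n\in N$ we have $sn\in V$, hence $L_{\mathcal{F}}\left( sn\right) \cap U\neq \emptyset $, and hypothesis $\mathrm{H}_{2}$ yields $L_{\mathcal{F}}\left( sn\right) \subset L_{\mathcal{F}}\left( n\right) $, so $L_{\mathcal{F}}\left( n\right) \cap U\neq \emptyset $. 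Thus $N\subset L_{\mathcal{F}}^{-}\left( U\right) $ and $L_{\mathcal{F}}$ is lower semicontinuous at $x$. Together with the upper semicontinuity already secured, this gives Hausdorff continuity at every $x$.

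The hard part is precisely this last propagation step, where all three hypotheses enter: minimality (from $\mathrm{H}_{1},\mathrm{H}_{3}$ via Lemma \ref{L3}) lets me pass from $L_{\mathcal{F}}\left( x\right) $ to a full neighborhood $V$ of it, Lemma \ref{L1} (needing connectedness of the members of $\mathcal{F}$ together with local compactness) lets me steer $x$ into $V$ along the action, and $\mathrm{H}_{2}$ converts a meeting of $U$ by $L_{\mathcal{F}}\left( sn\right) $ into a meeting by $L_{\mathcal{F}}\left( n\right) $. The delicate point to get right is that $U$ must meet $L_{\mathcal{F}}\left( v\right) $ for $v$ ranging over a single neighborhood of \emph{all} of $L_{\mathcal{F}}\left( x\right) $ simultaneously, which is exactly what minimality combined with the already-proved lower semicontinuity on $L_{\mathcal{F}}\left( x\right) $ provides.
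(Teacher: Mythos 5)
Your proposal is correct and follows essentially the same route as the paper's proof: upper semicontinuity from Proposition \ref{T8}, minimality from Lemma \ref{L3}, lower semicontinuity on $L_{\mathcal{F}}\left( x\right) $ from the local results (you cite Proposition \ref{T9}, the paper cites Lemma \ref{L4}, but \ref{T9} is just the packaging of \ref{T7} and \ref{L4}), and then the identical propagation step combining the covering neighborhood $V$, Lemma \ref{L1}, continuity of $\mu _{s}$, and hypothesis $\mathrm{H}_{2}$. Your forward implication via Proposition \ref{T8} instead of Proposition \ref{T9} is an equally valid, slightly more direct citation.
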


\begin{proof}
Suppose that $L_{\mathcal{F}}$ is $\mathcal{F}$-eventually stable for all $%
x\in X$. By Proposition \ref{T8}, $L_{\mathcal{F}}$ is USC. It remains to
show that $L_{\mathcal{F}}$ is LSC. Let $x\in X$. By Lemma \ref{L3}, $L_{%
\mathcal{F}}\left( x\right) $ is minimal. Let $U$ be an open set such that $%
U\cap L_{\mathcal{F}}\left( x\right) \neq \emptyset $. For every $y\in L_{%
\mathcal{F}}\left( x\right) $, we have $L_{\mathcal{F}}\left( y\right) =L_{%
\mathcal{F}}\left( x\right) $, hence $U\cap L_{\mathcal{F}}\left( y\right)
\neq \emptyset $. By Lemma \ref{L4}, $L_{\mathcal{F}}$ is LSC on $L_{%
\mathcal{F}}\left( x\right) $. Then there is a neighborhood $V_{y}$ of such
that $U\cap L_{\mathcal{F}}\left( v\right) \neq \emptyset $ for all $v\in
V_{y}$. Set $V=\underset{y\in L_{\mathcal{F}}\left( x\right) }{\bigcup }%
V_{y} $. Then $V$ is a neighborhood of $L_{\mathcal{F}}\left( x\right) $. By
Lemma \ref{L1}, $Ax\subset V$ for some $A\in \mathcal{F}$. Pick $s\in A$. As
$sx\in V$, there is a neighborhood $W$ of $x$ such that $sW\subset V$. If $%
w\in W$, we have $sw\in V$, and hence $U\cap L_{\mathcal{F}}\left( sw\right)
\neq \emptyset $. As $L_{\mathcal{F}}\left( sw\right) \subset L_{\mathcal{F}%
}\left( w\right) $, it follows that $U\cap L_{\mathcal{F}}\left( w\right)
\neq \emptyset $. Thus $L_{\mathcal{F}}$ is LSC at $x$. The converse follows
by Proposition \ref{T9}.
\end{proof}

Proposition \ref{T9} and Theorem \ref{T10} together imply the following.

\begin{theorem}
\label{T11} Assume that $X$ is Hausdorff locally compact and $\mathcal{F}$
is a filter basis on the connected subsets of $S$ satisfying hypotheses $%
\mathrm{H}_{1}$, $\mathrm{H}_{2}$, and $\mathrm{H}_{3}$. Assume that $\left(
S,X\right) $ has global $\mathcal{F}$-attractor $\mathcal{A}$. Then $L_{%
\mathcal{F}}$ is Hausdorff continuous if and only if it is Hausdorff
continuous on $\mathcal{A}$.
\end{theorem}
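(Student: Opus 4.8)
The plan is to derive both implications from Proposition~\ref{T9} and Theorem~\ref{T10}, exactly as the remark preceding the statement suggests. The forward direction is immediate: if $L_{\mathcal{F}}$ is Hausdorff continuous at every point of $X$, then in particular it is Hausdorff continuous at every point of $\mathcal{A}\subset X$, i.e. Hausdorff continuous on $\mathcal{A}$. So the content of the theorem is the reverse implication.

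First I would extract the single consequence of the global attractor hypothesis that the argument needs, namely that every limit set lands in $\mathcal{A}$. Since $\mathcal{A}$ is a global $\mathcal{F}$-attractor we have $\mathfrak{A}(\mathcal{A},\mathcal{F})=X$; and under the standing assumptions ($X$ locally compact, each orbit $Ax$ connected) the domain of attraction is identified with $\{x\in X:L_{\mathcal{F}}(x)\subset\mathcal{A}\}$, exactly as recorded in the proof of the upper-semicontinuous analogue. Hence $L_{\mathcal{F}}(x)=\omega(x,\mathcal{F})\subset\mathcal{A}$ for every $x\in X$.

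Now assume $L_{\mathcal{F}}$ is Hausdorff continuous on $\mathcal{A}$ and fix an arbitrary $x\in X$. Because $L_{\mathcal{F}}(x)\subset\mathcal{A}$ and Hausdorff continuity at a point is a property that does not depend on the ambient set containing that point, $L_{\mathcal{F}}$ is Hausdorff continuous at every point of $L_{\mathcal{F}}(x)$; that is, $L_{\mathcal{F}}$ is Hausdorff continuous on $L_{\mathcal{F}}(x)$. Proposition~\ref{T9} then forces $L_{\mathcal{F}}(x)$ to be $\mathcal{F}$-eventually stable (and minimal). Since $x$ was arbitrary, $L_{\mathcal{F}}(y)$ is $\mathcal{F}$-eventually stable for every $y\in X$, and Theorem~\ref{T10} immediately upgrades this to Hausdorff continuity of $L_{\mathcal{F}}$ on all of $X$.

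Apart from this chaining, the only step that requires genuine care is the inclusion $L_{\mathcal{F}}(x)\subset\mathcal{A}$, which is where local compactness and the connectedness of the orbits enter (through the description of $\mathfrak{A}(\mathcal{A},\mathcal{F})$). I do not expect any real obstacle beyond it: once each $L_{\mathcal{F}}(x)$ is known to sit inside $\mathcal{A}$, the passage from continuity on $\mathcal{A}$ to $\mathcal{F}$-eventual stability of each limit set is precisely Proposition~\ref{T9}, and the global reconstruction is precisely Theorem~\ref{T10}.
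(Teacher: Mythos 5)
Your proof is correct and follows essentially the route the paper intends: reduce to the reverse implication, use the global attractor identification $X=\mathfrak{A}\left( \mathcal{A},\mathcal{F}\right) =\left\{ x\in X:L_{\mathcal{F}}\left( x\right) \subset \mathcal{A}\right\}$ so that continuity on $\mathcal{A}$ gives continuity on each $L_{\mathcal{F}}\left( x\right)$, then apply Proposition \ref{T9} to get $\mathcal{F}$-eventual stability of every limit set and Theorem \ref{T10} to conclude. This matches the paper's stated derivation of Theorem \ref{T11} from Proposition \ref{T9} and Theorem \ref{T10}, with the attraction-domain step handled exactly as in the paper's preceding proposition on upper semicontinuity.
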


A subset $Y\subset X$ is usually called $\mathcal{F}$-topologically
transitive if $Y=\omega \left( y,\mathcal{F}\right) $ for some $y\in Y$. As
an immediate consequence from Theorem \ref{T11} we have the following.

\begin{corollary}
Assume that $X$ is Hausdorff locally compact and $\mathcal{F}$ is a filter
basis on the connected subsets of $S$ satisfying hypotheses $\mathrm{H}_{1}$%
, $\mathrm{H}_{2}$, and $\mathrm{H}_{3}$. Assume that $\left( S,X\right) $
has global $\mathcal{F}$-asymptotically stable set $\mathcal{A}$, that is, $%
\mathcal{A}$ is the global $\mathcal{F}$-attractor and is stable. If $%
\mathcal{A}$ is $\mathcal{F}$-topologically transitive then $L_{\mathcal{F}}$
is Hausdorff continuous.
\end{corollary}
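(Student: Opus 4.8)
The plan is to reduce the global statement to a statement on $\mathcal{A}$ and then invoke the characterization of Hausdorff continuity on a single limit set. First I would apply Theorem \ref{T11}: since $X$ is Hausdorff locally compact, $\mathcal{F}$ lives on connected subsets of $S$ and satisfies $\mathrm{H}_{1},\mathrm{H}_{2},\mathrm{H}_{3}$, and $\mathcal{A}$ is the global $\mathcal{F}$-attractor, it suffices to prove that $L_{\mathcal{F}}$ is Hausdorff continuous on $\mathcal{A}$. Because $\mathcal{A}$ is $\mathcal{F}$-topologically transitive, I may fix $y\in\mathcal{A}$ with $\mathcal{A}=\omega(y,\mathcal{F})=L_{\mathcal{F}}(y)$. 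Proposition \ref{T9} then applies at the point $y$: the map $L_{\mathcal{F}}$ is Hausdorff continuous on $L_{\mathcal{F}}(y)=\mathcal{A}$ if and only if $\mathcal{A}$ is $\mathcal{F}$-eventually stable and minimal. So the whole corollary reduces to verifying these two properties of $\mathcal{A}$.

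The eventual stability is the easy half. By the standing assumption every limit set is compact, so $\mathcal{A}=L_{\mathcal{F}}(y)$ is compact; being also stable by hypothesis, it is uniformly stable, and a uniformly stable set is $\mathcal{F}$-eventually stable (this is precisely the chain recorded in the remark stating that a compact stable set is uniformly stable and that uniformly stable sets are $\mathcal{F}$-eventually stable). Hence $\mathcal{A}$ is $\mathcal{F}$-eventually stable with no further work.

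The minimality of $\mathcal{A}$ is the heart of the matter and the step I expect to be the main obstacle. The natural tool is Lemma \ref{L3}: under $\mathrm{H}_{1},\mathrm{H}_{3}$, if $L_{\mathcal{F}}(z)$ is $\mathcal{F}$-eventually stable for every $z\in L_{\mathcal{F}}(y)=\mathcal{A}$, then $\mathcal{A}=L_{\mathcal{F}}(y)$ is minimal. Thus I would try to show that each $L_{\mathcal{F}}(z)$, for $z\in\mathcal{A}$, is $\mathcal{F}$-eventually stable. Since $\mathcal{A}$ is the global $\mathcal{F}$-attractor and compact, the remark on domains of attraction gives $L_{\mathcal{F}}(z)\subset\mathcal{A}$ for every $z$; combined with forward invariance (from $\mathrm{H}_{1}$), each $L_{\mathcal{F}}(z)$ is a nonempty closed forward-invariant subset of $\mathcal{A}$. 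Inspecting the proof of Lemma \ref{L3}, the eventual stability is only used on a minimal subset $M\subset\mathcal{A}$, where $L_{\mathcal{F}}(z)=M$; so it is in fact enough to show that every minimal subset of $\mathcal{A}$ is $\mathcal{F}$-eventually stable. This is exactly where the combination ``stable global attractor plus transitive'' must be made to do its work, and it is the delicate point: Lyapunov stability of $\mathcal{A}$ controls the approach from outside but says nothing a priori about the internal recurrence, so some genuine use of the transitivity $\mathcal{A}=\mathrm{cls}(Sy)$ together with $\mathrm{H}_{2},\mathrm{H}_{3}$ (which force $L_{\mathcal{F}}(sz)=L_{\mathcal{F}}(z)$ along orbits) will be needed to pin down $L_{\mathcal{F}}(z)$ and conclude that the minimal pieces of $\mathcal{A}$ inherit the eventual stability already established for $\mathcal{A}$.

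Once minimality is secured, the corollary closes at once, and in two equivalent ways. Either I finish through Proposition \ref{T9} applied at $y$, obtaining Hausdorff continuity on $\mathcal{A}$, and then globalize by Theorem \ref{T11}; or I observe that minimality forces $L_{\mathcal{F}}(x)=\mathcal{A}$ for every $x\in X$, each $L_{\mathcal{F}}(x)$ being a nonempty closed forward-invariant subset of the minimal set $\mathcal{A}$, so that every limit set is $\mathcal{F}$-eventually stable and Theorem \ref{T10} yields Hausdorff continuity of $L_{\mathcal{F}}$ on all of $X$.
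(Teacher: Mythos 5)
Your overall route --- reduce to Hausdorff continuity on $\mathcal{A}$ via Theorem \ref{T11}, write $\mathcal{A}=L_{\mathcal{F}}\left( y\right) $ by transitivity, and then verify the two conditions of Proposition \ref{T9} (eventual stability and minimality) --- is precisely the natural unpacking of the paper's own proof, which consists of the single remark that the corollary is ``an immediate consequence of Theorem \ref{T11}''. The eventual-stability half of your argument is correct: $\mathcal{A}$ is compact by the standing assumption and stable by hypothesis, hence uniformly stable, hence $\mathcal{F}$-eventually stable. But the other half, minimality of $\mathcal{A}$, is never proved: you correctly isolate it as the heart of the matter, reduce it via Lemma \ref{L3} to the $\mathcal{F}$-eventual stability of the minimal subsets of $\mathcal{A}$, and then only assert that transitivity together with $\mathrm{H}_{2}$, $\mathrm{H}_{3}$ ``must be made to do its work''. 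That is a plan, not a proof, so the proposal is incomplete exactly at its critical step.

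Moreover, this gap cannot be filled, because minimality does not follow from the stated hypotheses; in fact the corollary as printed fails. Take $X=\mathbb{T}^{2}$, let $V$ be a constant vector field of irrational slope, let $f\geq 0$ be smooth and vanish exactly at one point $p$, and let $S=[0,\infty )$ act by the flow of the vector field $fV$, with $\mathcal{F}=\left\{ [t,\infty ):t\geq 0\right\} $. All hypotheses hold: $X$ is compact Hausdorff, the elements of $\mathcal{F}$ are connected, and $\mathrm{H}_{1}$, $\mathrm{H}_{2}$, $\mathrm{H}_{3}$ are immediate for translates of intervals. For any $y$ not on the integral curve through $p$, every forward orbit tail of $y$ is a reparametrized dense orbit of the irrational flow, so $\mathcal{A}:=\mathbb{T}^{2}=\omega \left( y,\mathcal{F}\right) $ is $\mathcal{F}$-topologically transitive; being the whole space, $\mathcal{A}$ is (vacuously) stable and is the global $\mathcal{F}$-attractor. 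Yet $\mathcal{A}$ is not minimal, since $\left\{ p\right\} $ is a proper nonempty closed forward invariant subset, and $L_{\mathcal{F}}$ is not Hausdorff continuous: $L_{\mathcal{F}}\left( p\right) =\left\{ p\right\} $ while $L_{\mathcal{F}}\left( x\right) =\mathbb{T}^{2}$ for points $x$ arbitrarily close to $p$, so $L_{\mathcal{F}}$ is not USC at $p$. (One can make $\mathcal{A}$ a proper subset by crossing with a contracting factor $\dot{r}=-r$, so the failure is not an artifact of $\mathcal{A}=X$.) This is consistent with Proposition \ref{T9} itself: $L_{\mathcal{F}}\left( y\right) =\mathbb{T}^{2}$ is $\mathcal{F}$-eventually stable but not minimal, so Hausdorff continuity on $\mathcal{A}$ must fail. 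The conclusion is that the step you could not prove is not yours to fix but is an error hidden in the paper's one-line proof: stability of $\mathcal{A}$ controls the approach from outside, exactly as you observed, and says nothing about internal recurrence. The corollary needs minimality of $\mathcal{A}$ as an additional hypothesis (equivalently, replace ``$\mathcal{F}$-topologically transitive'' by ``minimal''), and under that hypothesis your own closing observation already finishes the proof trivially, since then every $L_{\mathcal{F}}\left( x\right) $, being a nonempty closed forward invariant subset of $\mathcal{A}$, equals $\mathcal{A}$, so $L_{\mathcal{F}}$ is constant on $X$.
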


It should be observed that the existence of the global $\mathcal{F}$%
-asymptotically stable set is not sufficient for Hausdorff continuity. See
Example \ref{Ex2} in the next section.

\section{\label{Examples}Examples}

In this last section we provide illustrating examples for the setting of
this paper.

\subsection{Homogeneous spaces}

Let $G$ be a topological group and $\mathcal{V}$ a basis of symmetric
neighborhoods at the identity $e$ of $G$. For each $V\in \mathcal{V}$,
define the open covering of $G$
\begin{equation*}
\mathcal{U}_{V}=\left\{ Vg:g\in G\right\} .
\end{equation*}%
Let $\mathcal{O}$ be the family of all open covering $\mathcal{U}_{V}$, $%
V\in \mathcal{V}$. This family is admissible (see \cite[Section 4]{Souza})
and we have
\begin{equation*}
\mathrm{St}\left[ g,\mathcal{U}_{V}\right] =V^{2}g,\text{\qquad for every }%
g\in G.
\end{equation*}%
In general, a topological group is not metrizable.

Let $S\subset G$ be a closed subgroup of $G$ and $\left. G\right/ S$ the set
of all left coset $gS$. Then $\left. G\right/ S$ is a subset of the
hyperspace $\mathcal{H}\left( G\right) $. Now consider the standard left
action of $S$ on $G$. We have $K_{S}\left( g\right) =gS$ for every $g\in G$.
Then the function $K_{S}:G\rightarrow \mathcal{H}\left( G\right) $ has
values in $\left. G\right/ S$ and corresponds to the standard projection $%
\pi :G\rightarrow \left. G\right/ S$.

Provide $\left. G\right/ S$ with the quotient topology. Then $\pi
:G\rightarrow \left. G\right/ S$ is an open continuous map. The following
theorem shows that the quotient topology and the uniform topology induced
from $\mathcal{H}\left( G\right) $ are equivalent.

\begin{theorem}
\label{T12} The quotient topology on $\left. G\right/ S$ coincides with the
uniform topology induced from $\mathcal{H}\left( G\right) $ and the quotient
map $\pi :G\rightarrow \left. G\right/ S$ is uniformly continuous.
\end{theorem}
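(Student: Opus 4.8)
The plan is to show that both topologies on $\left.G\right/S$ admit the common neighborhood base $\{\pi(V^{2}g):V\in\mathcal{V}\}$ at each coset $gS$, and then to pull back the basic hyperspace surroundings directly.

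First I would compute stars of cosets. Since $\mathrm{St}\left[g,\mathcal{U}_{V}\right]=V^{2}g$, for any coset $hS$ one has $\mathrm{St}\left[hS,\mathcal{U}_{V}\right]=\bigcup_{x\in hS}V^{2}x=V^{2}hS$, a right-$S$-invariant set. By Proposition \ref{P3}(6), $\mathcal{U}_{V}\in\rho_{H}\left(gS,hS\right)$ if and only if $gS\subset V^{2}hS$ and $hS\subset V^{2}gS$; because these sets are unions of cosets, this is equivalent to $g\in V^{2}hS$ and $h\in V^{2}gS$. The crucial point is that, $V$ (hence $V^{2}$) being symmetric, $g\in V^{2}hS$ holds if and only if $h\in V^{2}gS$, so the two conditions collapse to the single condition $h\in V^{2}gS$. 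Consequently
\begin{equation*}
\mathrm{B}_{H}\left(gS,\mathcal{U}_{V}\right)\cap\left(\left.G\right/S\right)=\left\{hS:h\in V^{2}gS\right\}=\pi\left(V^{2}g\right).
\end{equation*}

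Next I would identify this set as a neighborhood base in both topologies. Since the stars $V^{2}g=\mathrm{St}\left[g,\mathcal{U}_{V}\right]$ form a neighborhood base at $g$ in $G$ and $\pi$ is open and continuous, the images $\pi\left(V^{2}g\right)$ form a neighborhood base at $gS=\pi\left(g\right)$ in the quotient topology. On the other hand, Theorem \ref{T3} shows that the sets $\mathrm{B}_{H}\left(gS,\mathcal{U}_{V}\right)$ form a neighborhood base at $gS$ for the uniform topology of $\mathcal{H}\left(G\right)$; restricting to $\left.G\right/S$ and using the displayed identity, this base becomes exactly $\{\pi\left(V^{2}g\right):V\in\mathcal{V}\}$. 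Two topologies sharing the same neighborhood base at every point coincide, so the quotient topology and the induced uniform topology agree.

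For uniform continuity I would pull back the basic hyperspace surroundings $D_{\mathcal{U}_{V}}$ along the covering uniformity of $G$, whose basic surroundings are $E_{\mathcal{U}_{V}}=\bigcup_{x\in G}\mathrm{St}\left[x,\mathcal{U}_{V}\right]\times\mathrm{St}\left[x,\mathcal{U}_{V}\right]$. If $\left(g,h\right)\in E_{\mathcal{U}_{V}}$, pick $x$ with $g,h\in V^{2}x$; then $g,h\in V^{2}xS$, so $\mathcal{U}_{V}\in\rho_{H}\left(gS,xS\right)\cap\rho_{H}\left(hS,xS\right)$, that is $\pi\left(g\right),\pi\left(h\right)\in\mathrm{B}_{H}\left(xS,\mathcal{U}_{V}\right)$, whence $\left(\pi\left(g\right),\pi\left(h\right)\right)\in\mathrm{B}_{H}\left(xS,\mathcal{U}_{V}\right)\times\mathrm{B}_{H}\left(xS,\mathcal{U}_{V}\right)\subset D_{\mathcal{U}_{V}}$. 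Thus $\left(\pi\times\pi\right)\left(E_{\mathcal{U}_{V}}\right)\subset D_{\mathcal{U}_{V}}$ with the same index, proving $\pi$ uniformly continuous. The main obstacle is the first step: correctly reducing the symmetric pair of star-containments to the single coset condition $h\in V^{2}gS$ and recognizing the restricted ball $\mathrm{B}_{H}\left(gS,\mathcal{U}_{V}\right)\cap\left(\left.G\right/S\right)$ as the open set $\pi\left(V^{2}g\right)$; once this identification is secured, both the coincidence of topologies and the uniform continuity reduce to direct verifications.
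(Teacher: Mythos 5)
Your proposal is correct and follows essentially the same route as the paper: the heart of both arguments is the identification $\mathrm{B}_{H}\left(gS,\mathcal{U}_{V}\right)\cap\left(\left.G\right/S\right)=\pi\left(\mathrm{St}\left[g,\mathcal{U}_{V}\right]\right)$ obtained from the symmetry of $V$, after which the coincidence of topologies follows from Theorem \ref{T3} together with the openness and continuity of $\pi$. Your only departure is cosmetic: you phrase the topological comparison via common neighborhood bases and spell out uniform continuity explicitly at the level of surroundings $(\pi\times\pi)\left(E_{\mathcal{U}_{V}}\right)\subset D_{\mathcal{U}_{V}}$, a verification the paper leaves implicit in the same identification.
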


\begin{proof}
By Theorem \ref{T3}, it is enough to compare the open sets of $\left.
G\right/ S$ and the sets $\mathrm{B}_{H}\left( gS,\mathcal{U}_{V}\right)
\cap \left. G\right/ S$. We claim that $\pi \left( \mathrm{St}\left[ g,%
\mathcal{U}_{V}\right] \right) =\mathrm{B}_{H}\left( gS,\mathcal{U}%
_{V}\right) \cap \left. G\right/ S$ for all $g\in G$ and $\mathcal{U}_{V}\in
\mathcal{O}$. Firstly, by considering $gS$ as a subset of $G$, we have $%
\mathrm{St}\left[ gS,\mathcal{U}_{V}\right] =V^{2}gS$. Note also that $%
g_{1}S\in \pi \left( \mathrm{St}\left[ g,\mathcal{U}_{V}\right] \right) =\pi
\left( V^{2}g\right) $ if and only if $g_{1}S\subset V^{2}gS$. By the
symmetry of $V$, $g_{1}S\subset V^{2}gS$ if and only if $gS\subset
V^{2}g_{1}S$. Hence $g_{1}S\in \pi \left( \mathrm{St}\left[ g,\mathcal{U}_{V}%
\right] \right) $ if and only if $g_{1}S\in \mathrm{B}_{H}\left( gS,\mathcal{%
U}_{V}\right) $, and therefore $\pi \left( \mathrm{St}\left[ g,\mathcal{U}%
_{V}\right] \right) =\mathrm{B}_{H}\left( gS,\mathcal{U}_{V}\right) \cap
\left. G\right/ S$, proving the claim. This means that $\pi $ is uniformly
continuous with respect to $\mathcal{O}$ and the covering uniformity induced
from $\mathcal{H}\left( G\right) $. Now, since $\pi $ is an open map, $%
\mathrm{B}_{H}\left( gS,\mathcal{U}_{V}\right) \cap \left. G\right/ S$ is an
open set of the quotient topology on $\left. G\right/ S$, for all $g\in G$
and $\mathcal{U}_{V}\in \mathcal{O}$. On the other hand, let $U\subset
\left. G\right/ S$ be an open set. Then $\pi ^{-1}\left( U\right) $ is an
open set in $G$. Take $g\in \pi ^{-1}\left( U\right) $. There is $\mathcal{U}%
_{V}$ such that $\mathrm{St}\left[ g,\mathcal{U}_{V}\right] \subset \pi
^{-1}\left( U\right) $, and then $\mathrm{B}_{H}\left( gS,\mathcal{U}%
_{V}\right) \cap \left. G\right/ S=\pi \left( \mathrm{St}\left[ g,\mathcal{U}%
_{V}\right] \right) \subset U$. This finalize the proof.
\end{proof}

Thus the function $K_{S}:G\rightarrow \mathcal{H}\left( G\right) $ is
Hausdorff continuous. This means that $\rho _{H}\left( g_{\lambda
}S,gS\right) \rightarrow \mathcal{O}$ whenever $g_{\lambda }\rightarrow g$.
If $S$ is compact, each left coset $gS$ is stable, by Theorem \ref{T1}.

\subsection{Transformation semigroups on function spaces}

Let $E$ be a normed vector space endowed with the admissible family $%
\mathcal{O}_{\mathrm{d}}$ as stated in Example \ref{Ex1}. For finite
sequences $\alpha =\left\{ x_{1},...,x_{k}\right\} $ in $E$ and $\epsilon
=\left\{ \mathcal{U}_{\varepsilon _{1}},...,\mathcal{U}_{\varepsilon
_{k}}\right\} $ in $\mathcal{O}_{\mathrm{d}}$, let $\mathcal{U}_{\alpha
}^{\epsilon }$ be the cover of $E^{E}$ given by the sets of the form $%
\tprod_{x\in E}U_{x}$ where $U_{x_{i}}=\mathrm{B}\left( a_{i},\varepsilon
_{i}\right) \in \mathcal{U}_{\varepsilon _{i}}$, for $i=1,...,k$, and $%
U_{x}=E$ otherwise. The family $\mathcal{O}_{\mathrm{p}}=\left\{ \mathcal{U}%
_{\alpha }^{\epsilon }\right\} $ is a base for the uniformity of pointwise
convergence on $E^{E}$ (see e.g. \cite[Corollary 37.13]{Will}).

Let $E^{E}$ endowed with the pointwise convergence topology. Then the
inclusion map $i:E\hookrightarrow E^{E}$, where $i\left( x\right) $ is the
constant function $i\left( x\right) \equiv x$, is a continuous map. It is
well-known that $E^{E}$ is not metrizable with the pointwise convergence
topology.

\begin{example}
\label{Ex4}Let $\mathbb{N}$ be the semigroup of positive integers with
multiplication and $\mu :\mathbb{N}\times E^{E}\rightarrow E^{E}$ the action
given by $\mu \left( n,f\right) =f^{n}$. Consider the filter basis $\mathcal{%
F}=\left\{ A_{n}:n\in \mathbb{N}\right\} $, where $A_{n}=\left\{ k\in
\mathbb{N}:k\geq n\right\} $. In this case, $n_{\lambda }\rightarrow _{%
\mathcal{F}}\infty $ in $\mathbb{N}$ means $n_{\lambda }\rightarrow +\infty $%
. Let $F\subset E$ be a compact set and $X\subset E^{E}$ be the subspace of
all contraction maps of $E$ with fixed point in $F$ and same Lipschitz
constant $L<1$. Then $X$ is forward invariant and $i\left( F\right) \subset
X $ is a compact, closed, and forward invariant set in $X$. Consider the
restriction action $\mu :\mathbb{N}\times X\rightarrow X$. We have $J_{%
\mathcal{F}}\left( X\right) =i\left( F\right) $, and therefore $i\left(
F\right) $ is the global $\mathcal{F}$-asymptotically stable set for $\left(
\mathbb{N},X\right) $ (the details will appear latter in \cite{RichardB}).
Note that $L_{\mathcal{F}}\left( f\right) =\left\{ i\left( x_{f}\right)
\right\} $, where $x_{f}$ denotes the fixed point of $f$. In particular, $L_{%
\mathcal{F}}\left( i\left( x\right) \right) =\left\{ i\left( x\right)
\right\} $, for every $x\in F$, and therefore $L_{\mathcal{F}}$ is Hausdorff
continuous on $i\left( F\right) $. Unfortunately, we can not apply Theorem %
\ref{T11} here to conclude that $L_{\mathcal{F}}$ is Hausdorff continuous on
$X$, since $A_{n}$ is desconnected for all $n\in \mathbb{N}$. Then we
provide a direct proof for it. Firstly, note that upper semicontinuity and
lower semicontinuity are equivalent in this case, since each limit set
consists of a single point. Then let $f\in X$ and $\mathcal{U}_{\alpha
}^{\epsilon }\in \mathcal{O}_{\mathrm{p}}$, with $\alpha =\left\{
x_{1},...,x_{k}\right\} $ and $\epsilon =\left\{ \mathcal{U}_{\varepsilon
_{1}},...,\mathcal{U}_{\varepsilon _{k}}\right\} $. Define $\delta =\dfrac{%
\varepsilon \left( 1-L\right) }{2}$, where $\varepsilon =\min \left\{
\varepsilon _{1},...,\varepsilon _{k}\right\} $, and take $g\in \mathrm{St}%
\left[ f,\mathcal{U}_{\left\{ x_{f}\right\} }^{\left\{ \delta \right\} }%
\right] $. Then $g,f\in \tprod_{x\in E}U_{x}$ where $U_{x_{f}}=\mathrm{B}%
\left( a,\delta \right) \in \mathcal{U}_{\delta }$ and $U_{x}=E$ otherwise.
Hence we have
\begin{eqnarray*}
\left\Vert x_{g}-x_{f}\right\Vert &\leq &\left\Vert g\left( x_{g}\right)
-g\left( x_{f}\right) \right\Vert +\left\Vert g\left( x_{f}\right) -f\left(
x_{f}\right) \right\Vert \\
&<&L\left\Vert x_{g}-x_{f}\right\Vert +2\delta
\end{eqnarray*}%
and thus $\left\Vert x_{g}-x_{f}\right\Vert <\dfrac{2\delta }{1-L}%
=\varepsilon $. This means that
\begin{equation*}
\left\Vert i\left( x_{g}\right) \left( x_{i}\right) -x_{f}\right\Vert
=\left\Vert x_{g}-x_{f}\right\Vert <\varepsilon \leq \varepsilon _{i}
\end{equation*}%
for every $x_{i}\in \alpha $. Hence $i\left( x_{g}\right) \left(
x_{i}\right) \in \mathrm{B}\left( x_{f},\varepsilon _{i}\right) $, for any $%
x_{i}\in \alpha $, and therefore $i\left( x_{g}\right) \in \mathrm{St}\left[
i\left( x_{f}\right) ,\mathcal{U}_{\alpha }^{\epsilon }\right] $ for every $%
g\in \mathrm{St}\left[ f,\mathcal{U}_{\left\{ x_{f}\right\} }^{\left\{
\delta \right\} }\right] $. This means that $L_{\mathcal{F}}$ is Hausdorff
continuous at $f$.
\end{example}

\subsection{Multi-time dynamical systems}

An $n$-time dynamical system is an action of a convex cone $S\subset \mathbb{%
R}^{n}$ on a topological space $X$. Take a nonzero vector $u\in S$ and
consider the family of translates
\begin{equation*}
\mathcal{F}=\left\{ S+tu:t\geq 0\right\} .
\end{equation*}%
Then $\mathcal{F}$ is a filter basis on the connected subsets of $S$
satisfying hypotheses $\mathrm{H}_{1}$, $\mathrm{H}_{2}$, and $\mathrm{H}%
_{3} $, and the limit behavior with respect to $\mathcal{F}$ means the limit
behavior on the direction of $u$.

\begin{example}
Consider the $2$-time dynamical system $\left( S,\mathbb{R}^{2}\right) $
where $S=\left\{ \left( s,t\right) \in \mathbb{R}^{2}:s,t\geq 0\right\} $
and $\left( s,t\right) \left( x,y\right) =\left( b^{s}x,b^{t}y\right) $,
with $b$ a fixed real number in the interval $\left( 0,1\right) $. Consider
the filter basis $\mathcal{F}=\left\{ S+te_{1}:t\geq 0\right\} $ where the
vector $e_{1}$ determines the direction of the axis $0x$. In this case, $%
\left( s_{\lambda },t_{\lambda }\right) \rightarrow _{\mathcal{F}}\infty $
means $s_{\lambda }\rightarrow +\infty $. For any $\left( x,y\right) \in
\mathbb{R}^{2}$, we have $L_{\mathcal{F}}\left( x,y\right) =\left\{ \left(
0,z\right) :0\leq z\leq by\right\} $. Hence the axis $0y$ is the global $%
\mathcal{F}$-attractor of $\left( S,\mathbb{R}^{2}\right) $. It is easily
seen that $L_{\mathcal{F}}\left( x,y\right) $ is stable, and then it is $%
\mathcal{F}$-eventually stable. By Theorem \ref{T10}, $L_{\mathcal{F}}$ is
Hausdorff continous.
\end{example}

\subsection{Control systems}

Consider a control system $\dot{x}=X\left( x,u\left( t\right) \right) $ on a
connected $C^{\infty }$-Riemannian manifold $M$, with control range $%
U\subset \mathbb{R}^{n}$ and piecewise control functions $\mathcal{U}%
_{pc}=\left\{ u:\mathbb{R}\rightarrow U:u\text{ piecewise constant}\right\} $%
. Assume that, for each $u\in \mathcal{U}_{pc}$ and $x\in M$, the preceding
equation has a unique solution $\varphi \left( t,x,u\right) $, $t\in \mathbb{%
R}$, with $\varphi \left( 0,x,u\right) =x$, and the vector fields $X\left(
\cdot ,u\right) $, $u\in U$, are complete. Set $F=\left\{ X\left( \cdot
,u\right) :~u\in U\right\} $. The system semigroup $S$ is defined as
\begin{equation*}
S=\left\{ \mathrm{e}^{t_{n}Y_{n}}\mathrm{e}^{t_{n-1}Y_{n-1}}...\mathrm{e}%
^{t_{0}Y_{0}}:Y_{j}\in F,t_{j}\geq 0,n\in \mathbb{N}\right\} .
\end{equation*}%
The family of vector fields $F$ and the system semigroup determine the
trajectories of the control system in the sense that $Sx=\left\{ \varphi
\left( t,x,u\right) :t\geq 0,u\in \mathcal{U}_{pc}\right\} $ for every $x\in
M$.

For $t>0$ we define the set
\begin{equation*}
S_{\geq t}=\left\{ \mathrm{e}^{t_{n}Y_{n}}\mathrm{e}^{t_{n-1}Y_{n-1}}...%
\mathrm{e}^{t_{0}Y_{0}}:Y_{j}\in F,t_{j}\geq 0,\overset{n}{\underset{j=0}{%
\sum }}t_{j}\geq t,n\in \mathbb{N}\right\}
\end{equation*}%
and take the family $\mathcal{F}=\left\{ S_{\geq t}:t>0\right\} $. This
family is a filter basis on the connected subsets of $S$ and satisfies both
hypotheses $\mathrm{H}_{1}$ and $\mathrm{H}_{2}$ but need not satisfy
hypothesis $\mathrm{H}_{3}$. Note that $\varphi \left( t_{\lambda
},x_{\lambda },u_{\lambda }\right) \rightarrow _{\mathcal{F}}\infty $ means $%
t_{\lambda }\rightarrow +\infty $.

We consider a Riemannian distance $\mathrm{d}$ on $M$ and endow $M$ with the
admissible family $\mathcal{O}_{\mathrm{d}}$ as stated in Example \ref{Ex1}.

\begin{example}
Consider the control system
\begin{equation*}
\dot{x}=\left(
\begin{array}{ll}
-u\left( t\right) & 1 \\
-1 & 0%
\end{array}%
\right) x\left( t\right) ,\quad U=\left[ 0,1\right] ,
\end{equation*}%
on $\mathbb{R}^{2}$. For $u\equiv 0$ the system moves on circles centered at
$0$; for $u>0$ the system moves on spirals centered at $0$. For any $u\in
\mathbb{R}^{2}$ we have
\begin{equation*}
K_{S_{\geq t}}\left( u\right) =K_{S}\left( u\right) =\left\{ v\in \mathbb{R}%
^{2}:\left\Vert v\right\Vert \leq \left\Vert u\right\Vert \right\}
\end{equation*}%
for all $t>0$, and then $L_{\mathcal{F}}\left( u\right) =K_{S}\left(
u\right) =\left\{ v\in \mathbb{R}^{2}:\left\Vert v\right\Vert \leq
\left\Vert u\right\Vert \right\} $. Let $\varepsilon >0$ and take $w\in
\mathrm{B}\left( u,\varepsilon \right) $. Then we have $L_{\mathcal{F}%
}\left( w\right) =\left\{ v\in \mathbb{R}^{2}:\left\Vert v\right\Vert \leq
\left\Vert w\right\Vert \right\} $. If $v\in L_{\mathcal{F}}\left( w\right) $%
, it follows that $\left\Vert v\right\Vert \leq \left\Vert w-u\right\Vert
+\left\Vert u\right\Vert <\varepsilon +\left\Vert u\right\Vert $, and hence $%
v\in \mathrm{B}\left( L_{\mathcal{F}}\left( u\right) ,\varepsilon \right) $.
This means that $L_{\mathcal{F}}\left( w\right) \subset \mathrm{B}\left( L_{%
\mathcal{F}}\left( u\right) ,\varepsilon \right) $ and therefore $L_{%
\mathcal{F}}$ is USC. Since $L_{\mathcal{F}}=K_{S}$ is LSC, $L_{\mathcal{F}}$
is Hausdorff continuous.
\end{example}

\begin{example}
\label{Ex2}Consider the control system
\begin{equation*}
\dot{x}\left( t\right) =X_{0}\left( x\left( t\right) \right) +u\left(
t\right) X_{1}\left( x\left( t\right) \right) ,\qquad U=\left[ 1,2\right] ,
\end{equation*}%
on $\mathbb{R}^{2}$, where $X_{0},X_{1}$ are vector fields given by
\begin{equation*}
X_{0}\left( x,y\right) =\left( y,-x\right) \text{\quad and\quad }X_{1}\left(
x,y\right) =\left( x-xy^{2}-x^{3},y-yx^{2}-y^{3}\right) .
\end{equation*}%

The unit disk $\mathbb{D}^{1}=\left\{ x\in \mathbb{R}^{2}:\left\Vert
x\right\Vert \leq 1\right\} $ is the global $\mathcal{F}$-asymptotically
stable set of the system (see \cite[Example 7]{Ra} for details).
Nevertheless, $L_{\mathcal{F}}\left( x\right) $ is the unit sphere $\mathbb{S%
}^{1}$, if $\left\Vert x\right\Vert \geq 1$, and $L_{\mathcal{F}}\left(
x\right) $ is the origin $0$, if $\left\Vert x\right\Vert <1$. Hence $L_{%
\mathcal{F}}$ is not USC.
\end{example}

\end{document}